\newcommand{\R}{{\mathbb{R}}}
\newcommand{\C}{{\mathbb{C}}}
\newcommand{\Z}{{\mathbb{Z}}}
\let\det=\undefined\DeclareMathOperator*{\det}{det}
\let\Re=\undefined\DeclareMathOperator*{\Re}{Re}
\DeclareMathOperator*{\Arg}{Arg}
\DeclareMathOperator*{\dist}{dist}
\DeclareMathOperator*{\diam}{diam}
\DeclareMathOperator*{\wlim}{weak-lim}
\DeclareMathOperator*{\supp}{supp}
\newtheorem{theorem}{Theorem}[section]
\newtheorem{proposition}[theorem]{Proposition}
\newtheorem{prop}[theorem]{Proposition}
\newtheorem{lemma}[theorem]{Lemma}
\newtheorem{corollary}[theorem]{Corollary}
\newtheorem{conjecture}[theorem]{Conjecture}
\theoremstyle{definition}
\newtheorem{definition}[theorem]{Definition}
\theoremstyle{remark}
\newtheorem{remark}[theorem]{Remark}
\newcounter{smalllist}
\newenvironment{SL}{\begin{list}{{\rm(\roman{smalllist})\hss}}{%
\setlength{\topsep}{0mm}\setlength{\parsep}{0mm}\setlength{\itemsep}{0mm}%
\setlength{\labelwidth}{2.0em}\setlength{\itemindent}{2.5em}\setlength{\leftmargin}{0em}\usecounter{smalllist}%
}}{\end{list}}
\newenvironment{CI}{\begin{list}{{\ $\bullet$\ }}{%
\setlength{\topsep}{0mm}\setlength{\parsep}{0mm}\setlength{\itemsep}{0mm}%
\setlength{\labelwidth}{0mm}\setlength{\itemindent}{0mm}\setlength{\leftmargin}{0mm}%
\setlength{\labelsep}{0mm} }}{\end{list}}
\newcommand{\eps}{\varepsilon}
\newcommand{\jp}[1]{\langle{#1}\rangle}
\newcommand{\jpn}{\langle\nabla\rangle}
\newcommand{\qtq}[1]{\quad\text{#1}\quad}
\newcommand{\vtt}{\tilde{\tilde{v}}}
\newcommand{\ulin}{u^{\text{lin}}}
\newcommand{\uolin}{u^{0,\text{lin}}}
\newcommand{\Id}{\mathrm{Id}}
\newcommand{\W}{\text{\rm\L}}
\numberwithin{equation}{section}
\begin{document}

\title[The cubic Klein--Gordon equation in two space dimensions]{Scattering for the cubic Klein--Gordon equation in two space dimensions}
\author{Rowan Killip}
\address{University of California, Los Angeles}
\author{Betsy Stovall}
\address{University of California, Los Angeles}
\author{Monica Visan}
\address{University of California, Los Angeles}

\begin{abstract}
We consider both the defocusing and focusing cubic nonlinear Klein--Gordon equations
$$
u_{tt} - \Delta u + u \pm u^3 =0
$$
in two space dimensions for real-valued initial data $u(0)\in H^1_x$ and $u_t(0)\in L^2_x$.  We show that in the
defocusing case, solutions are global and have finite global $L^4_{t,x}$ spacetime bounds.  In the focusing case, we
characterize the dichotomy between this behaviour and blowup for initial data with energy less than that of the ground
state.

These results rely on analogous statements for the two-dimensional cubic nonlinear Schr\"odinger equation, which are known in the
defocusing case and for spherically-symmetric initial data in the focusing case. Thus, our results are mostly unconditional.

It was previously shown by Nakanishi that spacetime bounds for Klein--Gordon equations imply the same for nonlinear Schr\"odinger equations.
\end{abstract}

\maketitle

\tableofcontents

%
\section{Introduction}\label{S:intro}
%
%

We consider the cubic nonlinear Klein--Gordon equation
\begin{equation}\label{nlkg}
 u_{tt} - \Delta u + u + \mu u^3 =0,
\end{equation}
for real-valued $u:\R_t\times \R^2_x\to\R$.  Here $\mu =\pm 1$ with $\mu=+1$ known as the defocusing equation and $\mu=-1$ as the focusing case.

We consider initial data in the energy space $u(0)\in H^1_x$ and $u_t(0)\in L^2_x$.  This is precisely the set of initial data for which the energy
\begin{equation}\label{E:energy}
E(u):= E\bigl(u(t),u_t(t)\bigr) := \int_{\R^2} \tfrac12 |u_t(t,x)|^2 +\tfrac12 |\nabla u(t,x)|^2 + \tfrac12 |u(t,x)|^2 + \tfrac{\mu}{4} |u(t,x)|^{4} \,dx
\end{equation}
is finite.  The energy is conserved.

\begin{definition}[Solution]\label{D:solution}
We say that a function $u: I \times \R^2 \to \R$ on a non-empty time interval $0\in I \subset \R$ is a \emph{(strong) solution} to
\eqref{nlkg} if $(u,u_t)\in C^0_t (K ;H^1_x\times L^2_x)$, $u\in L_{t,x}^{4}(K \times \R^2)$ for all compact $K \subset I$, and $u$ obeys the Duhamel formula
\begin{equation}\label{old duhamel}
\begin{aligned}
\begin{pmatrix} u(t) \\ u_t(t) \end{pmatrix}
&=\begin{bmatrix} \cos(\jpn t) & \jpn^{-1}\sin(\jpn t) \\ -\jpn\sin(\jpn t) & \cos(\jpn t)\end{bmatrix}
    \begin{pmatrix} u(0) \\ u_t(0) \end{pmatrix} \\
&\quad + \mu \int_0^t \begin{bmatrix} \jpn^{-1}\sin(\jpn (t-s)) \\ \cos(\jpn (t-s))\end{bmatrix} u^3(s)\,ds
\end{aligned}
\end{equation}
for all $t \in I$.  We refer to the interval $I$ as the \emph{lifespan} of $u$. We say that $u$ is a \emph{maximal-lifespan
solution} if the solution cannot be extended to any strictly larger interval. We say that $u$ is a \emph{global solution} if $I=\R$.
\end{definition}

Our main goal is to prove that global strong solutions exist and have finite spacetime norms.  As the equation is energy-subcritical, global well-posedness follows easily in the defocusing case.
In the focusing case, this is known not to be true for arbitrary initial data.  Indeed, explicit counterexamples are known.  Let $Q$ denote the unique positive radial $H^1_x(\R^2)$ solution to
\begin{equation}\label{E:Q defn}
\Delta Q + Q^3 = Q,
\end{equation}
which is known as the \emph{ground state} and is an optimizer in the Gagliardo--Nirenberg inequality.  (See subsection~\ref{SS:elliptic} for more about this.)
Then $u(t,x)=Q(x)$ is a static solution to \eqref{nlkg}.  In particular, it does not have finite global spacetime bounds nor does it scatter to free waves
at future/past infinity.  Moreover, finite-time blowup occurs for a large class of initial data that are slightly larger, including
the case $u(0)=(1+\eps)Q$ and $u_t(0)=0$ for any $\eps>0$.  This is proved by the method of Payne and Sattinger \cite{PayneSattinger}; see Theorem~\ref{T:blowup}
below and Section~\ref{S:blowup} for more details.  Nevertheless, it is believed that $Q$ is the minimal counterexample to the existence of spacetime bounds, in a certain sense.

In the focusing case, it is not appropriate to measure the `size' of the initial data purely in terms of the energy because of the negative sign appearing in front of the potential energy term.
Indeed, the energy of the solution with initial data $u(0)=(1+\eps)Q$ and $u_t(0)=0$ for any $\eps>0$ is strictly less that that of the static solution
$Q$.  For this reason we introduce a second notion of size, namely, the \emph{mass}:
$$
M(u(t)):=\int_{\R^2} |u(t,x)|^2\, dx.
$$
Unlike the energy, this is not conserved.

These considerations, together with our assertions regarding the defocusing case, can be summarized as follows:

\begin{conjecture}[NLKG Conjecture]\label{Conj:NLKG}
Fix $\mu = \pm 1$.  Let $(u_0,u_1) \in H^1_x\times L^2_x$ and in the focusing case assume also that $M(u_0) < M(Q)$ and $E(u_0,u_1) < E(Q)$.  Then there exists a global solution
$u$ to \eqref{nlkg} with initial data $u(0) = u_0$ and $u_t(0) = u_1$.  Moreover, this solution obeys global spacetime bounds
\begin{equation}\label{E:C(E)}
\|u\|_{L^{\infty}_t H^1_x} + \|u_t\|_{L^{\infty}_t L^2_x} + \|u\|_{L^4_{t,x}} \leq C(E(u_0,u_1)).
\end{equation}
As a consequence, the solution scatters both forward and backwards in time, that is, there exist $(u_0^\pm, u_1^\pm)\in H^1_x\times L_x^2$ such that
\begin{equation*}
\begin{pmatrix} u(t) \\ u_t(t) \end{pmatrix}
 - \begin{bmatrix} \cos(\jpn t) & \jpn^{-1}\sin(\jpn t) \\ -\jpn\sin(\jpn t) & \cos(\jpn t)\end{bmatrix}
    \begin{pmatrix} u_0^\pm \\ u_1^\pm \end{pmatrix} \longrightarrow \begin{pmatrix} 0 \\ 0 \end{pmatrix}
\end{equation*}
in $H^1_x\times L^2_x$ as $t\to\pm\infty$.
\end{conjecture}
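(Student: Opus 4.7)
The plan is to follow the Kenig--Merle concentration--compactness/rigidity framework, reducing scattering for NLKG to the analogous NLS result (known in the defocusing case, and in the focusing radial sub-threshold case) by reversing Nakanishi's KG-to-NLS reduction. The architecture is: local theory, small-data scattering, and stability, followed by a contradiction argument that extracts a minimal non-scattering solution and eliminates it via rigidity.

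First I would assemble the standard toolkit: local well-posedness in $H^1_x\times L^2_x$ via Strichartz estimates for the Klein--Gordon propagator, a small-data global $L^4_{t,x}$ bound, and a perturbative stability lemma that upgrades an approximate solution with small $L^4_{t,x}$ error to a true nearby solution. In the focusing case, Payne--Sattinger-type variational analysis shows that the sub-threshold set $\{M<M(Q),\ E<E(Q)\}$ splits into an invariant scattering branch and an invariant blow-up branch (Theorem~\ref{T:blowup} of the introduction handles the latter); our task is scattering on the former.

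Next, assuming the conjecture fails, there is a critical threshold at which scattering just fails. Using a linear profile decomposition adapted to the Klein--Gordon propagator in $L^4_{t,x}$, I would extract a \emph{critical element} $u_c$: a minimal non-scattering solution whose orbit is precompact modulo the symmetries of \eqref{nlkg}. Klein--Gordon profiles fall into two families. First, \emph{relativistic profiles} whose frequencies remain bounded, where the nonlinear evolution is handled by Strichartz plus finite propagation speed. Second, \emph{high-frequency profiles} whose non-relativistic rescaling asymptotically solves the 2D cubic NLS; for these I would invoke the assumed NLS scattering theory to bound the nonlinear evolution. Conservation of $E$ and $M$, combined with the sub-threshold hypotheses, must be used to verify that every extracted NLS profile inherits the NLS sub-threshold condition, so that the NLS result applies. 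Combining the nonlinear profile decomposition with stability then forces $u_c$ to be a single profile.

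Finally, the rigidity step: a virial/Morawetz-type estimate on the almost periodic $u_c$, combined in the focusing case with the variational characterization of $Q$, yields $u_c\equiv 0$, a contradiction. Scattering is then standard: the global $L^4_{t,x}$ bound, fed back into \eqref{old duhamel} via Strichartz, produces the free asymptotic states in $H^1_x\times L^2_x$. The hard part I anticipate is the quantitative high-frequency-to-NLS comparison: one must approximate the nonlinear Klein--Gordon flow on a high-frequency profile by a nonlinear NLS flow with error small in the relevant Strichartz spaces, \emph{uniformly} in the concentration parameters, so that the stability lemma applies. The sub-threshold bookkeeping in the focusing case — verifying that every NLS profile extracted from a sub-threshold Klein--Gordon datum itself satisfies the NLS versions of $M<M(Q)$ and $E<E(Q)$, even after the Lorentz-type rescalings that appear in the profile decomposition — is the central technical hurdle.
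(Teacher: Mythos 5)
Your proposal follows essentially the same route as the paper: Kenig--Merle induction on energy, a linear profile decomposition adapted to the Klein--Gordon propagator, an approximate embedding of NLS to dispose of the profiles that degenerate to the Schr\"odinger regime (with the sub-threshold mass/energy bookkeeping and the uniform-in-parameters stability step you correctly identify as the main hurdles), and a virial rigidity argument against the resulting almost periodic minimal solution. One correction of terminology that matters for running the argument: the profiles that asymptotically solve NLS are those concentrating at \emph{large length scales} (shrinking Fourier support, possibly recentered by a Lorentz boost), not high-frequency profiles --- at high frequencies the Klein--Gordon dispersion relation degenerates to the wave equation rather than to Schr\"odinger.
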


The main point here is the $L^4_{t,x}$ spacetime bound and concomitant proof of scattering.  Global well-posedness was known previously.  In the defocusing case,
it is a simple consequence of energy conservation --- the equation is energy-subcritical.  In the focusing case, our hypotheses are sufficient to imply control of
the $L^\infty_t (H^1_x\times L^2_x)$ norm and hence also global well-posedness; the key ingredient here is the sharp Gagliardo--Nirenberg inequality.  See the discussion
in subsection~\ref{SS:elliptic}.

Nakanishi \cite{Nakanishi2008} has shown that spacetime bounds for the complex-valued Klein--Gordon equation imply spacetime bounds for the corresponding nonlinear Schr\"o\-dinger equation
(in the mass- and energy-critical settings and all dimensions).  By employing the $X^{s,b}$ arguments we describe in Section~\ref{S:embed}, one may adapt his proof to the case
of the real-valued Klein--Gordon equation discussed in this paper.  The key observation behind these results is that the Klein--Gordon equation degenerates to the Schr\"odinger equation
in the non-relativistic limit.  Thus any solution to NLS can be used to produce a solution to Klein--Gordon by suitable rescaling and other minor modifications.  We caution the reader
that this produces a very narrow subclass of solutions to Klein--Gordon and that the long-time behaviour of the two types of solution is not identical.

To summarize, a resolution to Conjecture~\ref{Conj:NLKG} implies a positive answer to the analogue for NLS, specifically,

\begin{conjecture}[NLS Conjecture]\label{Conj:NLS}
Fix $\mu = \pm 1$.  Let $w_0 \in L^2_x(\R^2)$ and in the focusing case assume also that $M(w_0)<M(Q)$.  Then there exists a unique global solution $w$ to
\begin{align}\label{normal nls}
iw_t+\Delta w= \mu |w|^2w
\end{align}
with $w(0) = w_0$.  Furthermore, this solution satisfies
$$
\|w\|_{L^4_{t,x}(\R \times \R^2)} \leq C(M(w_0))
$$
for some continuous function $C$.  As a consequence, the solution $w$ scatters both forward and backwards in time, that is, there exist $w_\pm\in L_x^2$ such that
$$
\|w(t) -e^{it\Delta} w_\pm\|_{L_x^2} \to 0 \qtq{as} t\to \pm \infty.
$$
\end{conjecture}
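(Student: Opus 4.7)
The plan is to follow the Kenig--Merle concentration-compactness/rigidity roadmap, which is the standard template for scattering statements at critical regularity. The first step is the short-time theory: the $L^4_{t,x}$ Strichartz estimate for the 2D Schr\"odinger propagator yields local well-posedness for \eqref{normal nls} in $L^2_x$, small-data scattering, and a stability theory permitting perturbations in a range of Strichartz norms. In the focusing case, the sharp Gagliardo--Nirenberg inequality together with $M(w_0) < M(Q)$ gives coercive control of the energy whenever the data happen to lie in $H^1_x$; more fundamentally, the strict gap $M(Q) - M(w_0) > 0$ is preserved by the flow and prevents a full ``ground-state bubble'' from appearing in the profile analysis below.

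Assuming the conjecture fails, set
\[
m_c := \sup \bigl\{ m : \|w\|_{L^4_{t,x}(\R \times \R^2)} \le C(M(w_0)) \text{ for every solution with } M(w_0) \le m \bigr\},
\]
which is finite by hypothesis (and is $< M(Q)$ in the focusing case). Taking data $w_0^{(n)}$ with $M(w_0^{(n)}) \to m_c$ and $\|w^{(n)}\|_{L^4_{t,x}} \to \infty$ and running a Keraani-style linear profile decomposition --- with profiles modulated by the full symmetry group (spatial translations, Galilean boosts, parabolic scalings, time translations) --- the nonlinear stability theory collapses the sum to a single profile, producing a minimal-mass blowup solution $w_\infty$ whose orbit is precompact in $L^2_x$ modulo these symmetries. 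Such an almost periodic solution must fall into one of three scenarios (soliton-like with bounded frequency scale, self-similar, or low-to-high frequency cascade). The final step is to extract additional regularity, via the double Duhamel trick or a long-time Strichartz estimate, that upgrades almost periodicity from $L^2_x$ to $H^s_x$ for some $s>0$, and then apply a frequency-localized interaction Morawetz inequality to kill the soliton-like case, while the self-similar and cascade cases contradict mass conservation once the additional regularity is in hand.

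The principal obstacle is the Galilean invariance of \eqref{normal nls}: unlike energy-critical problems, one cannot arrange by symmetry alone for the frequency parameter to vanish, and the natural interaction Morawetz quantity is not Galilean-invariant. In the spherically-symmetric case this is automatic, which is why that case is known, but in general one must track the frequency center dynamically and propagate frequency localization on long intervals --- the role of Dodson's long-time Strichartz estimates. I expect this to be the hardest part of the argument; indeed, at the time of writing the non-spherically-symmetric focusing case remains open beyond the setting described in the excerpt, which is precisely why the present paper formulates the NLS statement as a conjecture and treats the NLKG result as ``mostly unconditional.''
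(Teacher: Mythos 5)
The statement you have attempted to prove is labeled a \emph{conjecture} in the paper, and the paper contains no proof of it: Conjecture~\ref{Conj:NLS} is the standing hypothesis of Theorem~\ref{T:ST bounds}, and the cases in which it is known (defocusing data, and spherically symmetric data in the focusing case) are imported wholesale from \cite{Dodson:2D} and \cite{KTV} in Corollary~\ref{C:ST bounds}; the authors state explicitly that the ideas developed in their paper do not resolve the remaining case. So there is no in-paper argument to compare yours against. What you have written is a faithful high-level summary of the strategy of those external works --- mass-critical local theory and stability, a Keraani-style profile decomposition with the full symmetry group, reduction to an almost periodic minimal-mass solution, additional regularity, and a (frequency-localized) interaction Morawetz estimate --- and you correctly identify Galilean invariance and the need for Dodson's long-time Strichartz estimates as the crux.

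As a proof, however, this is a roadmap and not an argument: every decisive step (the long-time Strichartz estimate, the propagation of frequency localization along a moving Galilean frame, the upgrade of almost periodicity to positive regularity, the Morawetz computation itself) is named rather than executed, and these are exactly the places where the work lives. Two smaller points: your definition of $m_c$ is circular as written (the function $C$ appears inside the set whose supremum defines the threshold at which $C$ exists; one should instead define $L(m)=\sup\{S_\R(w): M(w(0))\le m\}$ and take $m_c=\sup\{m: L(m)<\infty\}$), and in the focusing case the remark that the mass gap is ``preserved by the flow'' is simply mass conservation, not an additional coercivity input. Your final sentence concedes the essential point: the non-radial focusing case was open when this paper was written, which is precisely why the statement appears as a conjecture and why Theorem~\ref{T:ST bounds} is conditional on it. A correct response here is to prove the known cases by citing \cite{Dodson:2D} and \cite{KTV} (or to reproduce their arguments in full), not to sketch a template whose hardest steps remain unverified.
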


This conjecture has been proved in the defocusing case \cite{Dodson:2D} and for spherically symmetric data in the focusing case \cite{KTV}.  (See also \cite{Dodson:3D} and
\cite{KVZ} for these results in higher dimensions in the defocusing and focusing cases, respectively.)  The ideas developed in this paper are not sufficient to resolve the remaining case
of either conjecture.  Nevertheless, we will prove that resolution of Conjecture~\ref{Conj:NLS} is the only obstruction to spacetime bounds for NLKG.  As NLS only represents the non-relativistic limit,
this means providing spacetime bounds for all solutions except those living at astronomic length-scales.  We contend that it is more natural to treat these excluded solutions
in the NLS setting because scale invariance is restored there.

Here at last is the precise statement of the main theorem in this paper.

\begin{theorem}[Spacetime bounds]\label{T:ST bounds}
Fix $\mu = \pm 1$ and assume that Conjecture~\ref{Conj:NLS} holds for this choice.  Then Conjecture~\ref{Conj:NLKG} holds.
\end{theorem}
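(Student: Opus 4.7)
The plan is to follow the concentration-compactness and rigidity scheme of Kenig--Merle, taking advantage of the fact that Klein--Gordon solutions at very high frequencies are well approximated by nonlinear Schr\"odinger solutions (the non-relativistic limit). The hypothesis that Conjecture~\ref{Conj:NLS} holds will be used precisely to absorb the NLS-type profiles that arise in the profile decomposition; the NLKG-type profiles will be handled by a rigidity argument.

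\textbf{Step 1: Local theory.} I would build a robust local well-posedness and stability/perturbation theory for \eqref{nlkg} in $H^1_x \times L^2_x$ using Strichartz estimates, with the $L^4_{t,x}$ norm as the scattering-controlling norm. This gives the usual small-data scattering, blow-up criterion, and the kind of long-time perturbation lemma needed to close the concentration-compactness argument.

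\textbf{Step 2: Linear profile decomposition.} Following Nakanishi, I would establish a linear profile decomposition for the free Klein--Gordon flow at energy regularity in which the profiles fall into two disjoint families: \emph{Klein--Gordon profiles}, whose characteristic frequencies remain in a bounded region, and \emph{Schr\"odinger profiles}, whose frequencies tend to infinity along a scale and which, after the natural rescaling of the non-relativistic limit, converge to initial data for the 2D cubic NLS \eqref{normal nls}. Making this decomposition useful requires the embedding of NLS solutions into approximate NLKG solutions via the $X^{s,b}$ machinery of Section~\ref{S:embed}, with errors small enough to feed into the Step 1 perturbation theory.

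\textbf{Step 3: Minimal counterexample.} Assume Theorem~\ref{T:ST bounds} fails and extract a minimal blow-up solution. In the defocusing case, the minimality is with respect to energy. In the focusing case, I would use the sharp Gagliardo--Nirenberg inequality and the Payne--Sattinger invariant region to define an appropriate notion of size (for instance $E(u)$ within the sub-ground-state regime) and minimize accordingly. Applying the profile decomposition to a near-minimal sequence and invoking stability, one reduces to a single-profile scenario.

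\textbf{Step 4: Killing the profiles.} If the surviving profile is of Schr\"odinger type, then the associated NLS solution has finite $L^4_{t,x}$ norm by Conjecture~\ref{Conj:NLS}, and this bound transfers back to NLKG through the embedding, contradicting the failure hypothesis. Here one must check that the NLS mass constraint $M(w_0) < M(Q)$ in the focusing case is implied by the NLKG sub-ground-state hypothesis, via the variational characterization of $Q$. Consequently, the surviving profile is a Klein--Gordon profile, yielding an almost periodic (modulo spatial translations) minimal counterexample to NLKG. A Morawetz/virial-type estimate, together with the Payne--Sattinger trapping in the focusing case, then rules out such a solution.

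\textbf{Main obstacle.} The most delicate step is Step~2: producing the two-species profile decomposition and, critically, the quantitative NLS-to-NLKG embedding with errors controlled in the precise Strichartz norms used by the stability theory. Nakanishi's reverse direction provides the conceptual template, but adapting it to the real-valued setting, in energy space, and with errors good enough to drive the concentration-compactness argument, will require careful $X^{s,b}$ work, together with a delicate compatibility check between the NLKG sub-$Q$ constraint and the NLS sub-$M(Q)$ constraint in the focusing case.
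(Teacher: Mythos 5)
Your overall architecture --- local and stability theory, a two-species linear profile decomposition, transfer of $L^4_{t,x}$ bounds from NLS via an $X^{s,b}$-flavoured embedding, and a rigidity argument for the surviving almost periodic profile --- is the same as the paper's. But there are two genuine problems with Step 2 as you describe it. First, you have the non-relativistic limit backwards: the Schr\"odinger regime for the dispersion relation $-\omega=\jp{\xi}$ is the \emph{low}-frequency, large-length-scale limit ($\jp{\xi}\approx 1+\tfrac12|\xi|^2$), whereas high frequencies degenerate to the wave equation. The Schr\"odinger profiles in the paper take the form $T_{x_n}e^{it_n\jpn}\W_{\nu_n}D_{\lambda_n}P_{\leq\lambda_n^\theta}\phi$ with the \emph{length} scale $\lambda_n\to\infty$, so their Fourier supports shrink; a decomposition hunting for concentration at diverging frequencies would never detect the NLS profiles that are the actual obstruction, and in this energy-subcritical problem boundedness of the energy already rules out length scales tending to zero.

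Second, you omit Lorentz boosts entirely, and these are precisely the new difficulty of the $L^2_x$-critical case that the paper must confront. Because the characteristic frequency scale of a profile can shrink to zero, it can be far smaller than the (bounded) boost parameter, so boosts act as a genuine non-compact symmetry: they must appear in the inverse Strichartz inequality (each dyadic annulus is subdivided into unit-width tubes and a boost carries the selected tube to a neighbourhood of the origin where Tao's bilinear restriction theorem applies), in the orthogonality conditions of the profile decomposition, and in the embedding theorem, whose general case $\nu_n\to\nu$ requires passing through the real-valued equation and a stress-energy-tensor estimate since $\W_\nu$ only intertwines correctly with the \emph{linear} flow. Without them the decomposition does not exhaust the loss of compactness and the decoupling statements fail. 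Relatedly, your Step 4 rigidity argument as stated is incomplete: the virial estimate only closes once one knows $x(t)=o(t)$, and this is obtained by first boosting the minimal blowup solution to zero momentum via Einstein's relation $E^2=P^2+m^2$ and minimality of the energy --- another place where the Lorentz group is indispensable.
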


As mentioned above, Conjecture~\ref{Conj:NLS} has been resolved except for the focusing case with non-radial data.  As a consequence, our results are unconditional in
all but this case:

\begin{corollary}\label{C:ST bounds}
Conjecture~\ref{Conj:NLS} holds in the defocusing case and for spherically symmetric data in the focusing case.
\end{corollary}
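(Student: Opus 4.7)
The corollary is a direct quotation of results already in the literature; no new argument is required, and the plan is simply to verify that the hypotheses of Conjecture~\ref{Conj:NLS} match the conclusions proved in the cited papers, and then to invoke them.

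For the defocusing case, the plan is to appeal to Dodson's theorem \cite{Dodson:2D}, which establishes global well-posedness together with the quantitative $L^4_{t,x}(\R\times\R^2)$ spacetime bound for the two-dimensional mass-critical defocusing cubic NLS with arbitrary $L^2_x$ initial data. Dodson's argument is organized around the Kenig--Merle concentration-compactness/rigidity paradigm: assuming failure of spacetime bounds, one constructs a minimal-mass almost periodic solution, and then excludes each enemy profile (soliton-like, self-similar, double high-to-low cascade) using frequency-localized long-time Strichartz estimates combined with an interaction Morawetz inequality. Since the conclusion Dodson extracts is precisely the bound $\|w\|_{L^4_{t,x}}\le C(M(w_0))$ and the $L^2_x$ scattering statement appearing in Conjecture~\ref{Conj:NLS}, nothing more is needed.

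For the focusing case with spherically symmetric data, the plan is to invoke the result of Killip--Tao--Visan \cite{KTV}. Under the threshold $M(w_0)<M(Q)$ together with spherical symmetry, their theorem delivers exactly the global existence, $L^4_{t,x}$ bound, and $L^2_x$ scattering demanded by Conjecture~\ref{Conj:NLS}. The mass constraint enters through the sharp Gagliardo--Nirenberg inequality, which converts $M(w_0)<M(Q)$ into a coercive a priori bound controlling the focusing nonlinearity, while radiality is used in the compactness step to rule out spatially translating minimal blowup solutions that are otherwise consistent with the concentration-compactness reduction.

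The only verification to perform is that the precise formulation of Conjecture~\ref{Conj:NLS} stated above matches verbatim the conclusions of \cite{Dodson:2D} and \cite{KTV}: the same Cauchy problem \eqref{normal nls}, the same mass threshold $M(w_0)<M(Q)$ (vacuous in the defocusing case), the same $L^4_{t,x}$ estimate as a continuous function of the mass, and the same notion of scattering to a free Schr\"odinger evolution in $L^2_x$. Each of these matchings is immediate, so the corollary follows by direct citation. I do not anticipate any nontrivial obstacle; the substantive content lies entirely in \cite{Dodson:2D} and \cite{KTV}.
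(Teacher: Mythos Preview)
Your reading of the corollary is too literal. The label \verb|Conj:NLS| in the displayed statement is almost certainly a typo for \verb|Conj:NLKG|; this is clear from the sentence immediately preceding it (``As a consequence, our results are unconditional in all but this case''), from the label \verb|C:ST bounds| echoing Theorem~\ref{T:ST bounds}, and from the fact that a bare restatement of \cite{Dodson:2D} and \cite{KTV} would not be a \emph{corollary} of anything proved in the paper. The intended assertion is that Conjecture~\ref{Conj:NLKG} holds unconditionally in the defocusing case and for spherically symmetric data in the focusing case.

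With that reading, your proposal supplies only half of the argument. You correctly identify the external inputs: \cite{Dodson:2D} for defocusing NLS and \cite{KTV} for radial focusing NLS. What is missing is the invocation of Theorem~\ref{T:ST bounds}, which converts the NLS spacetime bounds into the NLKG spacetime bounds. In the defocusing case this is immediate, since \cite{Dodson:2D} verifies Conjecture~\ref{Conj:NLS} outright and Theorem~\ref{T:ST bounds} then yields Conjecture~\ref{Conj:NLKG}.

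In the focusing radial case there is an additional point you should address. Theorem~\ref{T:ST bounds} is stated under the hypothesis that Conjecture~\ref{Conj:NLS} holds for \emph{all} $L^2_x$ data with the given sign of $\mu$, whereas \cite{KTV} only delivers the radial case. To close the argument you must observe that the machinery of the paper respects spherical symmetry: if the NLKG data $(u_0,u_1)$ are radial, then so is the solution, and in the profile decomposition of Section~\ref{S:lpd} one may take $x_n^j\equiv 0$ and $\nu_n^j\equiv 0$ (boosts and off-center translations are incompatible with rotational invariance), forcing each $\phi^j$ to be radial. Hence the only NLS solutions that enter Theorem~\ref{T:embed} are radial, and \cite{KTV} suffices. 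This step is routine but should be stated; otherwise the deduction from Theorem~\ref{T:ST bounds} is formally incomplete.
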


We have chosen to present our principal result as Theorem~\ref{T:ST bounds} since it more honestly represents what is achieved in this paper.  It
also reiterates our belief that the natural way to attack dispersive PDE with broken symmetries is first to treat independently the limit cases
where the symmetries are restored.

In Section~\ref{S:blowup}, we show that the hypotheses in the focusing case are sharp in the following sense:

\begin{theorem}[Blowup]\label{T:blowup}
Let $u$ be a maximal-lifespan solution to \eqref{nlkg} in the focusing case with initial data obeying
$$
E(u) < E(Q) \qtq{and} M(u(0)) > M(Q).
$$
Then the solution $u$ blows up in finite time in at least one time direction.
\end{theorem}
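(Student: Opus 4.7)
The plan is to establish a Payne--Sattinger-type convexity blowup, with the key preliminary being a sign identification for the Nehari functional $K(\phi) := \int_{\R^2}(|\nabla\phi|^2 + \phi^2 - \phi^4)\,dx$. Writing $E_{\mathrm{pot}}(\phi) := \tfrac12\|\phi\|_{H^1_x}^2 - \tfrac14\|\phi\|_{L^4_x}^4$, so that $E(u_0, u_1) = \tfrac12\|u_1\|_{L^2_x}^2 + E_{\mathrm{pot}}(u_0)$ and the identity $-K(u) = \|u\|_{H^1_x}^2 + 2\|u_t\|_{L^2_x}^2 - 4E$ holds, two variational facts govern the sublevel set $\Omega := \{\phi \in H^1_x \setminus \{0\} : E_{\mathrm{pot}}(\phi) < E(Q)\}$: by the Nehari characterization of $Q$ one has $K(\phi) \neq 0$ on $\Omega$, and by the sharp Gagliardo--Nirenberg inequality $\|\phi\|_{L^4_x}^4 \leq \tfrac{2M(\phi)}{M(Q)}\|\nabla\phi\|_{L^2_x}^2$, the equality $M(\phi) = M(Q)$ would force $E_{\mathrm{pot}}(\phi) \geq \tfrac12 M(Q) = E(Q)$, so $M(\phi) \neq M(Q)$ on $\Omega$ as well. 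Using $\tfrac{d}{ds}E_{\mathrm{pot}}(s\phi) = K(s\phi)/s$, I would track the scaling ray $s \mapsto s\,u_0$: when $K(u_0) > 0$ the segment $s \in (0, 1]$ stays in $\Omega$ and $M(s u_0) = s^2 M(u_0)$ sweeps continuously from $0$ to $M(u_0)$, so the forbidden value $M(Q)$ is not crossed only if $M(u_0) < M(Q)$; when $K(u_0) < 0$ the ray $s \in [1, \infty)$ stays in $\Omega$ and $M(s u_0)$ sweeps to $\infty$, so only if $M(u_0) > M(Q)$. Hence the hypothesis $M(u_0) > M(Q)$ forces $K(u_0) < 0$, and energy conservation together with continuity of $t \mapsto K(u(t))$ propagates this sign along the entire maximal lifespan.

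From $K(u(t)) \leq 0$ and sharp Gagliardo--Nirenberg one extracts the a priori bound $\|u(t)\|_{H^1_x}^2 \geq 2M(Q)$; substituting this into the energy identity yields the quantitative gap
\begin{equation*}
-K(u(t)) \;\geq\; 2\|u_t(t)\|_{L^2_x}^2 + 4\eta, \qquad \eta := E(Q) - E(u_0, u_1) > 0.
\end{equation*}
With this in hand, I would run Levine's concavity argument on
\[
F(t) := \|u(t)\|_{L^2_x}^2 + \eta(t+\tau)^2,
\]
choosing $\tau > 0$ large enough that $F'(0) = 2\int u_0 u_1\,dx + 2\eta\tau > 0$. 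Direct computation gives $F''(t) = 2\|u_t\|_{L^2_x}^2 - 2K(u(t)) + 2\eta \geq 6\|u_t\|_{L^2_x}^2 + 10\eta$, while Cauchy--Schwarz applied in $L^2_x \oplus \R$ yields $(F')^2 \leq 4F(\|u_t\|_{L^2_x}^2 + \eta)$. Combining produces $FF'' - \tfrac{3}{2}(F')^2 \geq 4F\eta > 0$, whence $(F^{-1/2})'' \leq 0$. Since $(F^{-1/2})'(0) < 0$, the concavity of $F^{-1/2}$ forces this strictly positive function to reach zero at the finite time $t^* = 2F(0)/F'(0)$; as this is incompatible with $u$ remaining in $H^1_x \times L^2_x$ past $t^*$, we obtain forward blowup in finite time.

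The main obstacle is the opening variational step, namely the identification $\{K < 0\} = \{M > M(Q)\}$ on the sublevel set $\Omega$; once that trichotomy is in place, the remainder is the classical Payne--Sattinger--Levine concavity scheme, carefully adapted to accommodate the mass term $+u$ present in the Klein--Gordon equation.
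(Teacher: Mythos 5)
Your proof is correct, and it belongs to the same Payne--Sattinger family as the paper's argument, which also runs a convexity argument on $M(t)=\|u(t)\|_{L^2_x}^2$; but the execution differs at both key steps, so a comparison is worthwhile. For the sign condition, the paper (Proposition~\ref{P:coercive}(ii)) bootstraps $M(u(t))>M(Q)$ and $\|\nabla u(t)\|_{L^2_x}^2>M(Q)$ directly from the sharp Gagliardo--Nirenberg inequality and reads off $\partial_{tt}M>6\|u_t\|_{L^2_x}^2$ from the identity $\partial_{tt}M=-8E+\int 6|u_t|^2+2|\nabla u|^2+2|u|^2\,dx$; you instead pass through the Nehari functional and the scaling-ray trichotomy. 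The two routes are equivalent in content (note $\partial_{tt}M=2\|u_t\|_{L^2_x}^2-2K(u)$), but yours leans on the variational input $\inf\{E_{\mathrm{pot}}(\phi):K(\phi)=0,\ \phi\neq0\}=E(Q)$, which you should verify explicitly: it follows from Theorem~\ref{T:SharpGN}, the elementary bound $\|\phi\|_{L^2_x}^2\|\nabla\phi\|_{L^2_x}^2\le\tfrac14\|\phi\|_{H^1_x}^4$, and the Pohozaev identity \eqref{E:Poh}. Likewise, the continuity propagation of $K(u(t))<0$ requires $u(t)\neq0$ on the closure of the set where $K<0$; your own trichotomy supplies this, since $K<0$ forces $M>M(Q)>0$ there, but the open--closed bootstrap should be recorded. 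At the concluding step the two proofs genuinely diverge: the paper shows $(M')^2<\tfrac23\,MM''$ and derives a contradiction from strict concavity of the positive function $M^{-1/2}$ on all of $\R$, which yields blowup in at least one direction determined by the sign of $M'(0)$; your Levine-type auxiliary function $F=M+\eta(t+\tau)^2$ uses the quantitative gap $\eta=E(Q)-E(u)>0$ to make $F'(0)>0$ irrespective of the sign of $\int u_0u_1\,dx$, and therefore delivers the strictly stronger conclusion that blowup occurs in \emph{both} time directions.
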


Both our well-posedness and blowup results for the focusing Klein--Gordon equation constrain the behaviour of the mass and the energy, while in the NLS setting, the natural
conjecture considers only the mass.  In connection with this, we note that there are Schwartz-space solutions to focusing NLKG with mass that is arbitrarily small uniformly
in time but do not scatter (or admit global spacetime bounds).  For instance, given $\nu\in\R^2$,
$$
  u^{\nu}(t,x) := Q(x^{\perp} + \jp{\nu}x^{\parallel} - \nu t), \qtq{where} x=x^\parallel+x^{\perp} \qtq{and} x^\perp \perp \nu,
$$
is a solution to the focusing case of \eqref{nlkg} with mass $M(u^{\nu}(t)) = \jp{\nu}^{-1}M(Q)$, which can be made arbitrarily small by sending $\nu\to\infty$.  This example arises from the
fact that Lorentz boosts do not preserve the mass.  By comparison, the corresponding Galilei symmetry of NLS does conserve the mass and so this phenomenon does not occur.

The analogue of Conjecture~\ref{Conj:NLKG} for focusing nonlinearities of the form $|u|^pu$ with $p>2$ was resolved recently in \cite{IMN}, the defocusing case having been
treated previously in \cite{Nakanishi:scattering}.  To properly explain the relation between these works, our efforts here, and other work in higher dimensions, we need
to introduce the notion of criticality; the fact that our equation does not possess a scaling symmetry makes this a little more awkward than in related problems such as
the Schr\"odinger and wave equations.

The dispersion relation $-\omega=\jp\xi:=(1+|\xi|^2)^{1/2}$ for the free Klein--Gordon equation has two natural scaling limits: the wave equation at high frequencies and
the Schr\"odinger equation at large length scales.  We deliberately did not refer to low frequencies here because the action of Lorentz boosts means that a solution
characterized by a large length scale may actually be centered around any frequency it wishes.  (As we will see, Lorentz boosts will be a source of particular vexation in
the treatment of \eqref{nlkg}; this is a phenomenon that is peculiar to our nonlinearity as compared to $|u|^pu$ with $p>2$.)

Both the Schr\"odinger and wave scaling limits restore scale invariance and hence define a notion of criticality, indeed, a \emph{common} notion of criticality:
$$
\text{The nonlinearity} \quad \pm |u|^p u \quad \text{is $\dot H^{s_c}_x(\R^d)$ critical precisely when} \quad s_c=\tfrac d2-\tfrac2p.
$$

On the other hand, the energy \eqref{E:energy} controls both $L^2_x$ and $\dot H^1_x$, at least in the defocusing case, and hence all intermediate Sobolev spaces.  In two space dimensions,
$0\leq s_c<1$ corresponds to nonlinearities with $p\geq 2$.  Note that the case $p=2$ is $L^2_x$-critical, while there is no $\dot H^1_x$-critical nonlinearity of power-type in two space dimensions.
In $d>2$ dimensions, $|u|^{\frac 4d}u$ is $L_x^2$-critical and known as the mass-critical case, while $|u|^{\frac4{d-2}}u$ is $\dot H^1_x$-critical and is known as the energy-critical case.
In two dimensions the name energy-critical is used for exponential-type nonlinearities inspired by inequalities of Trudinger--Moser type; see \cite{IMMN, IMN}.

For simplicity, let us begin with some historical remarks in the defocusing case in dimensions $d\geq 3$.  For both the Schr\"odinger and Klein--Gordon equations in the
inter-critical regime $\frac4{d}<p<\frac4{d-2}$ and data in the energy space, scattering was proved some time ago; see \cite{GV:decay} and references therein, as well as
\cite{Nakanishi2001, Matador} for simplified treatments.  The case of energy-critical nonlinearities was first treated in the NLS setting in breakthrough work of Bourgain
\cite{borg:scatter}.  This paper was for radial initial data, the non-radial case being treated in \cite{CKSTT:gwp,RV,Monica:thesis art} and (non-radial) Klein--Gordon in \cite{Nakanishi}.
The $L^2_x$-critical case was resolved recently in \cite{Dodson:3D}; see also \cite{KVZ,TVZ:cc,TVZ:sloth} for earlier work in the radial case.
For the latest on the focusing case in dimensions $d\geq 3$, see \cite{AkahoriNawa,IMN,KenigMerle:H1,Berbec,KVZ}.

For the Klein--Gordon equation in the case $d=2$ discussed in this paper, scattering has been proved for data in the energy space in the inter-critical cases
\cite{Nakanishi:scattering} (defocusing) and \cite{IMN} (focusing), as well as in the energy-critical case \cite{IMMN} (defocusing) and \cite{IMN} (focusing).
The remaining $L^2_x$-critical case is the topic of this paper.

As noted above, scattering in the inter-critical regime (for both the Klein--Gordon and the Schr\"odinger equations) was proved significantly before any progress was made in the critical cases.
This is with good reason: the fact that conservation laws control both higher and lower regularity norms gives excellent control over all scale-invariant quantities.  In the critical cases,
one of the two controls becomes redundant/unusable --- any attempt to incorporate it results in non-scale invariant (and hence patently ridiculous) statements.  Of the two critical cases,
we contend that the $L^2_x$-critical case is more difficult.  Our reasons for making this claim, which go beyond the mere fact that one has been resolved and the other not, will become
apparent when we present the outline of the proof.

\subsection{Outline of the proof}
The key to proving scattering is to show finiteness of a global spacetime norm.  The most natural choice in the case of \eqref{nlkg} is $L^4_{t,x}(\R\times\R^2)$.  This is also the
natural choice for the $L^2_x$-critical NLS and, in particular, is scale-invariant in that setting.  For a general time interval $I\subseteq \R$ we will use the notation
\begin{equation}
 S_I(u) = \bigl\| u \bigr\|_{L^4_{t,x}(I\times\R^2)}^4,
\end{equation}
which we refer to as the scattering size of $u$ on the interval $I$. We will also write $S_{\geq t}(u)$ to denote the scattering size on the the interval $[t,\infty)$ and similarly,
$S_{\leq t}(u)$ for the interval $(-\infty,t]$.  The Strichartz inequality (Lemma~\ref{L:Strichartz}) shows that $S_\R(u)$ is finite for solutions $u$ of the
linear Klein--Gordon equation.

To prove the existence of a global solution $u$ with $S_\R(u)$ finite, we will employ an induction on energy/contradiction argument in the style of Kenig and Merle \cite{KenigMerle:H1}.
(See also \cite{borg:book,keraani-l2} for some key steps in the development of this methodology, as well as the review \cite{ClayNotes}.)  The idea is as follows: If Conjecture~\ref{Conj:NLKG}
were false, there would be a sequence of solutions $u_n:I_n\times\R^2\to\R$ for which
\begin{equation}\label{enemy}
E(u_n) \to E_c < \infty \qtq{but} S_{I_n}(u_n) \to \infty.
\end{equation}
In the focusing case we would also have
$$
\limsup_{n\to\infty} M(u_n(0)) < M(Q) \qtq{and} E_c < E(Q).
$$
Without loss of generality we may choose $E_c$ to be the smallest number for which such a sequence exists, which is then called the \emph{critical energy};
it is positive by virtue of the small-data theory expounded in Section~\ref{S:LT}.  A key observation of Keraani (originally made in the mass-critical NLS setting)
was that from this sequence one can extract a minimal counterexample to the conjecture.

The proof of the existence of a minimal counterexample is not trivial.  Non-compact symmetries provide an obvious means for a minimizing sequence to fail to have a convergent subsequence.
For the Klein--Gordon equation these include space and time translations, as well as Lorentz boosts.  While not a true symmetry, dilations (specifically, to large length scales in our case)
also provide a manner in which the minimizing sequence may fail to converge.  The usual method for studying variational problems with symmetry is the concentration compactness technique and
this is what we will employ here.  Recall from \cite{Lions:IHP} that concentration compactness presents us with three scenarios: \emph{compactness} (the desired outcome)
\emph{vanishing} (the complete lack of concentration) or \emph{dichotomy} (the splitting into two or more wave packets).

The key to disproving vanishing is an inverse Strichartz inequality (see Theorem~\ref{T:InvStrich}), which shows that the scattering size cannot be large without possessing a bubble
of concentration.  When the critical regularity is positive, this type of result can be deduced from an inverse Sobolev embedding inequality.  It was in this manner that concentration compactness
techniques were first introduced in the dispersive setting; see \cite{bahouri-gerard}.  In the $L^2_x$-critical setting, there is no possibility to involve Sobolev embedding and all
known inverse inequalities rely on deep results in harmonic analysis, specifically, on progress toward the Restriction Conjecture of Stein, \cite{Stein:RC}.  (Sections~4.2 and~4.4
of the lecture notes \cite{ClayNotes} discuss inverse Sobolev and Strichartz inequalities, respectively, in a consistent manner and should aid the reader in making comparisons.)

We will use the sharp bilinear restriction theorem of Tao, \cite{taoGAFA03}.  The result is global for the paraboloid (the Schr\"odinger case), but only applies to compact subsets
of other elliptic surfaces (for example $\omega+\jp{\xi}=0$, which is the dispersion relation in our case).  This necessitates several preliminary reductions: first to a frequency
annulus (cf. Lemma~\ref{L:Annular}) and then to a narrow (unit width) sector inside this annulus (cf. Corollary~\ref{C:tube decoupling}).  A Lorentz boost is then used to bring this
tube into a fixed neighbourhood of the origin where Tao's result may then be applied.

The necessity of incorporating Lorentz boosts in the discussion of the inverse Strichartz inequality is (as mentioned
before) a peculiarity of the $L^2_x$-critical case. $L^2_x$-supercriticality of an equation prevents the characteristic
frequency scale of the functions in the minimizing sequence from shrinking to zero. This reasoning also explains why
previous work on scattering in the $L^2_x$-supercritical regime has not had to directly address (or appeal to) the
connection to NLS, namely, because NLS is only revealed when the characteristic frequency scale of a solution shrinks
to zero.  On the other hand, by playing boundedness of the energy against the $\dot H^{1/2}_x$ scaling of Lorentz
boosts, one can deduce that the boost parameters of minimizing sequences must stay bounded (that is, do not approach
the speed of light).  As the speed associated to any boosting of the minimizing sequence is then comparable to the
variation in speed already found in the solution (that is, the variation in $\xi/\jp\xi$ as $\xi$ varies over the Fourier
support), the very existence of a boost becomes moot.

In the $L^2_x$-critical case, there is no lower bound on the characteristic frequency scale of a solution and, in particular, it may be much less than the (still bounded) parameter of a boost.
This is why we must address the action of Lorentz boosts in this paper.

We turn now to a brief discussion of the second unfavourable concentration compactness scenario: dichotomy.  This will be excluded on the basis of the fact that we are dealing
with a \emph{minimizing} sequence (cf. Case~II in Section~\ref{S:min blowup}).  The key tools here are the decoupling results discussed in Section~\ref{S:lpd} and the
stability theory recorded in Section~\ref{S:LT}. Together they show that despite the fact that our equation is nonlinear, multiple wave packets act independently; hence,
if a solution consisting of multiple wave packets has infinite scattering size, then so does one of the constituent wave packets.  Passing to the single wave packet would
then give a smaller value for $E_c$ appearing in \eqref{enemy}.  Contradiction!

On the basis of all we have discussed so far, we find ourselves in the desired (concentration) compactness scenario. In the NLS (or semilinear wave) context this would mean that, after
applying symmetries of the equation to our minimizing sequence, we may exhibit a convergent subsequence and hence a minimal-energy blowup solution.  In our case, however, one wrinkle remains:
the Klein--Gordon equation does not possess scaling symmetry.  As a consequence, the limiting minimal blowup solution may be a solution to NLS instead of NLKG!  Indeed, this occurs whenever
the characteristic length scale of our minimizing sequence diverges to infinity.  (In the energy-subcritical regime, boundedness of the energy prevents divergence to zero.)

Previously, Nakanishi \cite{Nakanishi2008} used this approximate embedding of NLS inside (complex) NLKG to show that the spacetime norm of a solution to the former provides
a \emph{lower bound} on the function $C$ appearing in \eqref{E:C(E)}.  In this way, he proved that spacetime bounds for NLKG imply spacetime bounds for NLS. To use Conjecture~\ref{Conj:NLS}
to prove scattering for NLKG, we need to do what is essentially the exact opposite: transfer \emph{upper bounds} from NLS to NLKG.  This reversal introduces two new aspects.
First, for lower bounds one merely needs to control the quality of the approximation for the amount of time it takes for the NLS solution to accumulate the majority of its
scattering size; for upper bounds, we need to control the NLKG solution globally in time.  In the long-time regime, the quality of the NLS to NLKG approximation deteriorates
to an unsatisfactory degree.  We deal with this issue by noting that by the time the approximation breaks down, the NLS solution has started emulating a solution to the linear
Schr\"odinger equation and that consequently, the future linear Klein--Gordon evolution is small.  This is the topic of Proposition~\ref{P:large times approx};
the same philosophy was employed in \cite{KKSV}, which considers the embedding of NLS inside gKdV.

The second new aspect is that to prove lower bounds, one may embed the NLS solution in whichever manner is convenient.  For upper bounds, we must contend with all embeddings, including
the possible incorporation of a Lorentz boost.

A third difference between this work and \cite{Nakanishi2008} stems from our decision to consider the Klein--Gordon equation for real-valued functions.  This distinction is essentially
irrelevant for most of the arguments, with the notable exception of the NLS/NLKG correspondence.  In order to get an appreciation for the difference, we invite the reader to consider
the nonlinearities
$$
|v|^2v \qtq{versus} [\Re v]^2 \Re v
$$
for a plane wave $v = e^{i\xi x-i\jp\xi t}$.
The fact that the NLS/NLKG embedding is still valid is a testament to the fact that $\cos^2(\xi x-\jp\xi t)$ behaves sufficiently like a constant function; however, this resemblance
cannot be captured using $L^q_t L^r_x$ norms.  Instead, we need to use $X^{s,b}$-inspired methods which capture the fact that the error terms present in this approximation do not exhibit
spacetime resonance with the linear propagator.  See also \cite{CCT, KKSV, Tao:gkdv} for similar arguments in the NLS/gKdV context.

The details of how these hurdles are overcome may be found in Section~\ref{S:embed}.  Ultimately, we show that if the characteristic length scale of a minimizing sequence were to diverge
(to infinity) then the solutions would inherit spacetime bounds from NLS, which contradicts the required divergence of their scattering sizes (cf. \eqref{enemy}).  In this way we
deduce that the failure of Conjecture~\ref{Conj:NLKG} implies the existence of a minimal-energy solution to NLKG of infinite scattering size.  This solution is global (in time).
Moreover, by utilizing the action of time translations (in the now standard manner), we see that this minimal-energy solution is almost periodic (= has precompact orbit) modulo translations:

\begin{definition}[Almost periodicity modulo translations] \label{D:apmt}
Fix $\mu = \pm1$.  We say that a global solution $u$ to \eqref{nlkg} is {\it almost periodic modulo translations} (in $H^1_x \times L^2_x$) if there exist functions $x:\R \to \R^2$
and $C:\R^+ \to \R^+$ such that for every $t \in \R$ and $\eta > 0$, we have
\begin{gather} \label{E:apmt u}
\int_{|x-x(t)| > C(\eta)} |u(t,x)|^2 + |\nabla u(t,x)|^2 + |u_t(t,x)|^2\, dx < \eta \\
\label{E:apmt uhat}
\int_{|\xi| > C(\eta)}  |\jp{\xi}\hat u(t, \xi)|^2 + |\hat u_t(t, \xi)|^2\, d\xi < \eta.
\end{gather}
We refer to $x(t)$ as the {\it spatial center function} and to $C$ as the {\it compactness modulus function}.
\end{definition}

\begin{remark}\label{R:conc E}  By \eqref{E:apmt u} and the Gagliardo--Nirenberg inequality (enlarging $C(\eta)$ if necessary),
\begin{equation*}
\int_{|x-x(t)| > C(\eta)}|u(t,x)|^2 + |\nabla u(t,x)|^2 + |u_t(t,x)|^2 + |u(t,x)|^4\, dx < \eta.
\end{equation*}
Similarly, by compactness one may ensure that for each $t \in \R$,
\begin{equation} \label{E:apmt uhat2}
\int_{|\xi| < 1/C(\eta)} |\jp{\xi}\hat u(t,\xi)|^2 + |\hat u_t(t,\xi)|^2\, d\xi < \eta.
\end{equation}
\end{remark}

To recap, the arguments discussed thus far in this overview culminate in the proof (in Section~\ref{S:min blowup}) of the following:

\begin{theorem}[Reduction to almost periodic solutions] \label{T:reduct}  Fix $\mu = \pm 1$ and suppose that Conjecture~\ref{Conj:NLS} holds but Conjecture~\ref{Conj:NLKG}
fails for this value of $\mu$.  Then there exists a global solution $u$ to \eqref{nlkg} with energy $E(u) = E_c$ $($and mass $M(u(0)) < M(Q)$ in the focusing case$)$.  Moreover, $u$ is almost periodic modulo translations
and blows up $($that is, possesses infinite scattering size$)$ both forward and backward in time.
\end{theorem}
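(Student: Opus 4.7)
The plan is to run the Kenig--Merle concentration-compactness/induction-on-energy argument. Starting from a sequence $u_n : I_n \times \R^2 \to \R$ of solutions with $E(u_n) \to E_c$ and $S_{I_n}(u_n) \to \infty$ (together with $\limsup M(u_n(0)) < M(Q)$ and $E_c < E(Q)$ in the focusing case), first perform a time translation balancing the scattering size about $t=0$, so that $S_{\leq 0}(u_n), S_{\geq 0}(u_n) \to \infty$. The goal is then to show that $(u_n(0), \partial_t u_n(0))$ converges strongly in $H^1_x \times L^2_x$ along a subsequence and modulo spatial translations to some $(u_0, u_1)$; its NLKG evolution will be the desired minimal blowup solution.

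The engine is a linear profile decomposition of $(u_n(0), \partial_t u_n(0))$, obtained by iterating the inverse Strichartz inequality (Theorem~\ref{T:InvStrich}) on the free Klein--Gordon evolution. It produces
\[
\begin{pmatrix} u_n(0) \\ \partial_t u_n(0) \end{pmatrix} = \sum_{j=1}^{J} T_n^j \begin{pmatrix} \phi^j \\ \psi^j \end{pmatrix} + \begin{pmatrix} w_n^J \\ \tilde w_n^J \end{pmatrix},
\]
where each $T_n^j$ encodes the asymptotic noncompact symmetries (space and time translations, a bounded Lorentz boost as explained in the introduction, and possibly a dilation to large length scales), the free evolution of the remainder has vanishing scattering size as $J, n \to \infty$, and both energy and mass decouple asymptotically. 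Two profile types arise: \emph{Klein--Gordon profiles}, where the characteristic length scale is of unit order, and \emph{Schr\"odinger profiles}, where the length scale diverges, in which case $(\phi^j,\psi^j)$ is reinterpreted via the non-relativistic limit as NLS data. Each profile is promoted to a nonlinear object: Klein--Gordon profiles of energy strictly below $E_c$ generate NLKG solutions with scattering size $\leq C(E(\phi^j,\psi^j))$; Schr\"odinger profiles generate NLS solutions, global with finite $L^4_{t,x}$ norm by Conjecture~\ref{Conj:NLS}, which the embedding machinery of Section~\ref{S:embed} (in particular Proposition~\ref{P:large times approx} for global-in-time control) converts into approximate NLKG solutions of bounded scattering size.

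Using the decoupling results of Section~\ref{S:lpd} and the long-time stability theory of Section~\ref{S:LT}, the superposition of nonlinear profiles plus the small remainder is an approximate solution to \eqref{nlkg} whose scattering size is controlled by a sum indexed by the profiles. Since $\sum_j E(\phi^j,\psi^j) \leq E_c$, minimality of $E_c$ leaves only two possibilities: (i) every profile has energy strictly less than $E_c$, in which case stability transfers the uniform bound to $S_{I_n}(u_n)$ and contradicts \eqref{enemy}; or (ii) a single profile exhausts the energy and the remainder tends to zero. If the surviving profile were a Schr\"odinger profile, Conjecture~\ref{Conj:NLS} combined with the embedding would yield finite scattering size and again contradict \eqref{enemy}. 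Hence exactly one Klein--Gordon profile survives with energy $E_c$; absorbing its (bounded) symmetry parameters into the sequence yields strong convergence $(u_n(0), \partial_t u_n(0)) \to (u_0, u_1)$ in $H^1_x \times L^2_x$, with $M(u_0) < M(Q)$ in the focusing case.

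The nonlinear NLKG evolution $u$ of $(u_0, u_1)$ then has $E(u) = E_c$ and inherits $S_{\leq 0}(u) = S_{\geq 0}(u) = \infty$ from the $u_n$ by stability, which also forces the lifespan to be all of $\R$ (any finite-time breakdown would contradict global well-posedness following from energy and mass bounds). Almost periodicity modulo translations is obtained by applying the same extraction procedure to the time-translated data $(u(\tau_n), u_t(\tau_n))$ for any sequence $\tau_n \in \R$: the resulting profile decomposition must again collapse to a single Klein--Gordon profile of full energy, yielding precompactness of $\{(u(\tau), u_t(\tau)) : \tau \in \R\}$ in $H^1_x \times L^2_x$ modulo spatial translations, from which \eqref{E:apmt u} and \eqref{E:apmt uhat} follow. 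I expect the principal obstacle to be the handling of Schr\"odinger profiles in the presence of bounded but nontrivial Lorentz boosts: the NLS/NLKG embedding, the applicability of Conjecture~\ref{Conj:NLS} to the boosted profile, and the focusing-case mass constraint all interact delicately with boosts (which, unlike Galilei transformations, do not preserve mass), and this is exactly what the machinery of Section~\ref{S:embed} must be engineered to control.
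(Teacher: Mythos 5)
Your proposal follows essentially the same route as the paper: a Palais--Smale argument built on the linear profile decomposition, with large-length-scale (Schr\"odinger) profiles handled through Conjecture~\ref{Conj:NLS} and the embedding machinery of Section~\ref{S:embed}, multiple-profile configurations excluded by decoupling plus the stability theory, and almost periodicity obtained by rerunning the extraction on arbitrary time translates. The one step you gloss over is the single-profile case with unit length scale but diverging time parameter $t_n^1\to\pm\infty$, which the paper rules out (Case IB of Proposition~\ref{P:palais smale}) by noting that the corresponding nonlinear solution then has finite scattering size in one time direction, contradicting the balanced divergence $S_{\leq 0}(u_n),S_{\geq 0}(u_n)\to\infty$ that you arranged at the outset.
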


The proof of this theorem actually shows that all minimal-energy blowup solutions are almost periodic, but this fact is not needed to prove Theorem~\ref{T:ST bounds}.

We refer to solutions of the type described in Theorem~\ref{T:reduct} as being \emph{soliton-like}.  While they move around in space (potentially arbitrarily), their profile does not change
very much; specifically, it remains inside some compact set in $H^1_x$.  To complete the proof of Theorem~\ref{T:ST bounds}, we merely need to prove that such soliton-like solutions
do not exist.  The first step in doing this is to provide some control over the spatial location $x(t)$ of the soliton.  From Einstein's relation $E^2=P^2c^2 + m^2c^4$ relating the energy $E$,
momentum
\begin{gather}
P := - \int_{\R^2} u_t \nabla u \, dx,
\end{gather}
and the rest mass $m$ ($c=1$ denotes the speed of light), we see that a minimal-energy blowup solution must have zero momentum.  In Lemma~\ref{L:x(t)} we use this to show that
$x(t)=o(t)$.  This is then combined with a monotonicity formula of virial type to obtain a contradiction.

\subsection{Notations}

Our convention for the Fourier transform is as follows:
$$
\hat f(\xi) = \tfrac1{2\pi} \int_{\R^2} e^{-ix\xi} f(x)\,dx.
$$

\begin{definition}[Littlewood--Paley projections]  Fix, once and for all, a smooth function $\phi:\R^2\to[0,1]$ obeying $\phi(\xi)=1$ for $|\xi|\leq 1$
and $\phi(\xi)=0$ for $|\xi| \geq \tfrac{99}{98}$.
For $N\in 2^{\Z}$ with $N\geq 1$, we define
\begin{align}\label{E:LP defn}
\widehat{\!P_N f} (\xi) = \begin{cases}  \phi(\xi)\hat f(\xi) & \text{if } N=1\\  [\phi(\xi/N) - \phi(2\xi/N)] \hat f(\xi) & \text{otherwise}.\end{cases}
\end{align}
We will typically abbreviate $f_N := P_N f$.
\end{definition}

\subsection*{Acknowledgments}  The first author was supported by NSF grants DMS-0701085 and DMS-1001531.  The second author was supported by an NSF Postdoctoral
Fellowship.  The third author was supported by NSF grant DMS-0901166 and a Sloan Foundation
Fellowship.  We are grateful to K.~Nakanishi for bringing this problem to our attention.

%
\section{Basic tools}\label{S:tools}
%

\subsection{Strichartz inequalities}

It will be more convenient for us to recast Klein--Gordon as a first-order equation for a complex-valued function via the map
\begin{equation} \label{E:real to complex}
(u,u_t ) \mapsto v= u + i\jpn^{-1}u_t .
\end{equation}
This is easily seen to be a bijection between real-valued solutions of \eqref{nlkg}
and complex-valued solutions of
\begin{align} \label{nlkg1st}
-i v_t + \jpn v +\mu \jpn^{-1} `(\Re v)^3 = 0,
\end{align}
with $\mu$ as in \eqref{nlkg}. As such, the local/global theories for the two equations are equivalent.

Corresponding to Definition~\ref{D:solution}, a solution to \eqref{nlkg1st} with initial data $v(0)$ satisfies the Duhamel formula
\begin{equation} \label{E:integral equation 1st}
v(t) = e^{-it\jpn}v(0) - i\mu \jpn^{-1}\int_0^t e^{-i(t-s)\jpn} \bigl( \Re v(s) \bigr)^3\, ds.
\end{equation}
We will consistently use the letter $u$ to denote solutions to \eqref{nlkg} and $v$ the corresponding solution to \eqref{nlkg1st}.  Note that the energies and scattering sizes
are related in the following ways:
\begin{align*}
S_I(u) &= S_I(v) = \int_I \int_{\R^2} |\Re v(t,x)|^4\, dx\, dt \\
E(u(t)) &= E(v(t)) = \int_{\R^2} \tfrac{1}2 |\jpn v(t,x)|^2 +\tfrac{\mu}4 |\Re v(t,x)|^4\, dx.
\end{align*}
Consistent with Definition~\ref{D:solution}, strong solutions of \eqref{nlkg1st} must have finite scattering size on compact subsets of the interval of existence.

The linear propagator associated to this first-order equation is $e^{-it\jpn}$. We will need to understand how this interacts with scaling.
For this reason, we record the basic dispersive estimate in the following form:

\begin{lemma}[Dispersive estimate] \label{L:Dispersive}
\begin{equation}\label{E:Dispersive}
\bigl\| e^{-i\lambda^2t\jp{\lambda^{-1}\nabla}} P_N f \bigr\|_{L^\infty_x(\R^2)} \lesssim  |t|^{-1} \jp{\lambda^{-1} N}^2 \| f \|_{L^1_x(\R^2)}.
\end{equation}
\end{lemma}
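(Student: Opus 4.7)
The plan is to reduce the rescaled estimate to the standard (unrescaled) 2D Klein--Gordon dispersive estimate
$$
\bigl\| e^{-is\jpn} P_M g \bigr\|_{L^\infty_x(\R^2)} \lesssim |s|^{-1} \jp{M}^2 \|g\|_{L^1_x(\R^2)}, \qquad s\in\R,\ M\in 2^{\Z},\ M\geq 1,
$$
via the change of variable $y = \lambda x$. Writing $\tilde f(y) := f(y/\lambda)$, a Fourier computation shows that
$$
\bigl(\langle \lambda^{-1}\nabla_x\rangle f\bigr)(y/\lambda) = \bigl(\langle \nabla_y\rangle \tilde f\bigr)(y),
$$
and, since $\widehat{\tilde f}(\eta) = \lambda^2\hat f(\lambda\eta)$, the Littlewood--Paley projection $P_N$ in $x$ corresponds to frequency localization at $|\eta|\sim N/\lambda$ in $y$. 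Calling this projection $\tilde P_{N/\lambda}$, we obtain the identity
$$
\bigl(e^{-i\lambda^2 t \jp{\lambda^{-1}\nabla_x}}P_N f\bigr)(y/\lambda) = \bigl(e^{-i\lambda^2 t\jp{\nabla_y}} \tilde P_{N/\lambda} \tilde f\bigr)(y).
$$

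From here I would apply the unrescaled dispersive estimate with $s = \lambda^2 t$ and $M\sim N/\lambda$, then keep track of the Jacobian $\|\tilde f\|_{L^1_y} = \lambda^2 \|f\|_{L^1_x}$ and the invariance $\|\cdot\|_{L^\infty_y} = \|\cdot\|_{L^\infty_x}$. The numerology gives
$$
\bigl\| e^{-i\lambda^2 t \jp{\lambda^{-1}\nabla}}P_N f \bigr\|_{L^\infty_x} \lesssim (\lambda^2|t|)^{-1} \jp{\lambda^{-1}N}^2\cdot \lambda^2 \|f\|_{L^1_x} = |t|^{-1}\jp{\lambda^{-1}N}^2\|f\|_{L^1_x},
$$
which is exactly the claim. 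A minor technicality is that $N/\lambda$ is not dyadic; this is harmless because the cutoff $\phi_N(\lambda\eta)$ is still a smooth bump on the annulus $|\eta|\sim N/\lambda$ (or on the ball $|\eta|\lesssim 1/\lambda$ if $N=1$), and the unrescaled dispersive estimate applies equally well to such bumps.

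The main obstacle is therefore the unrescaled 2D estimate, which is classical but worth recalling. Its proof reduces to showing $\|K_M(s,\cdot)\|_{L^\infty}\lesssim |s|^{-1}\jp{M}^2$ for the kernel
$$
K_M(s,x) = c \int_{\R^2} e^{i(x\cdot\xi - s\jp\xi)} \phi_M(\xi)\,d\xi.
$$
A direct computation in coordinates adapted to $\xi$ gives $\det\partial^2_\xi\jp\xi = \jp\xi^{-4}$ in two dimensions, so stationary phase at the (unique) critical point $\xi_0$ determined by $x/s=\xi_0/\jp{\xi_0}$ (when $|x|<|s|$) produces a factor $|s|^{-1}|\det\partial^2_\xi\jp{\xi_0}|^{-1/2}\sim |s|^{-1}\jp{M}^2$; the non-stationary region $|x|\geq |s|$ is handled by repeated integration by parts and gives rapid decay. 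For the small-time regime $|s|\lesssim 1$, the trivial bound $\|K_M(s,\cdot)\|_\infty\lesssim \jp M^2$ (the volume of the support of $\phi_M$) is absorbed into the statement since $|s|^{-1}\jp M^2 \geq \jp M^2$ there. Once this uniform bound is in hand, the rescaling step above concludes the proof.
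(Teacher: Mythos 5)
Your overall strategy is in essence the same as the paper's: the estimate is ultimately a stationary-phase bound for the kernel of $e^{-i\lambda^2 t\jp{\lambda^{-1}\nabla}}P_N$, resting on the fact that in two dimensions $\det \partial^2_\xi\jp{\xi}=\jp{\xi}^{-4}$. The paper simply applies stationary phase directly to the rescaled phase $\Phi(\xi)=x\cdot\xi-\lambda^2 t\jp{\lambda^{-1}\xi}$, whose Hessian determinant is $t^2\jp{\lambda^{-1}\xi}^{-4}$, and reads off the factor $|t|^{-1}\jp{\lambda^{-1}N}^2$ in one stroke; you instead change variables first so as to quote the $\lambda=1$ estimate. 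Your Jacobian bookkeeping ($s=\lambda^2t$, $\|\tilde f\|_{L^1_y}=\lambda^2\|f\|_{L^1_x}$) is correct.

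The one real hole is the sentence declaring the passage from dyadic $M\geq 1$ to the bump at scale $N/\lambda$ ``harmless.'' The issue is not dyadicity: after rescaling, the frequency localization sits at scale $a:=N/\lambda$, which tends to $0$ precisely in the regime the lemma is built for ($\lambda\to\infty$ with $N$ fixed), and uniformity of the constant as $a\to0$ is the entire content of the ``independent of $\lambda$'' assertion. Your stated unrescaled estimate covers only $M\geq1$, and a naive application of stationary phase to an amplitude supported at scale $a\ll1$ is \emph{not} uniform, since $\partial^\alpha_\eta\bigl[\phi_N(\lambda\eta)\bigr]\sim a^{-|\alpha|}$ blows up. The bound does hold uniformly for all $a\in(0,\infty)$, but this requires one more line: substitute $\eta=a\zeta$ in the kernel, so the amplitude becomes a fixed bump while the phase Hessian $s\,\partial^2_\zeta\jp{a\zeta}$ has determinant $s^2a^4\jp{a\zeta}^{-4}\sim s^2a^4$ on the support (for $a\lesssim1$); stationary phase then yields $a^2\cdot(|s|a^2)^{-1}=|s|^{-1}\sim|s|^{-1}\jp{a}^2$, i.e.\ the Schr\"odinger-type decay expected at low frequency. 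With that supplement (or by folding the rescaling back in and running stationary phase on $\Phi$ directly, as the paper does), your argument is complete.
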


\begin{proof}
The phase function $\Phi(\xi)=\xi\cdot x - \lambda^2 t\langle\lambda^{-1}\xi\rangle$ obeys
$$
\text{Hessian}(\Phi)(\xi) = \det( \partial_j\partial_k \Phi(\xi) ) = t^2 \langle \lambda^{-1} \xi \rangle^{-4}.
$$
With this information, the method of stationary phase yields the requisite bounds on the integral kernel of
$e^{-i\lambda^2t\jp{\lambda^{-1}\nabla}} P_N$.
\end{proof}

Combining this dispersive estimate and the conservation of $L^2_x$ in the usual manner (cf. \cite{tao:keel} and references therein) yields the following:

\begin{lemma}[Strichartz inequality]\label{L:half Strichartz} For each $2< q \leq \infty$ and $2\leq r<\infty$ obeying the scaling condition $\tfrac2q + \tfrac 2r = 1$,
\begin{equation}\label{E:half Strichartz}
\bigl\| e^{-i\lambda^2t\jp{\lambda^{-1}\nabla}} f \bigr\|_{L^q_tL^r_x(\R\times\R^2)} \lesssim  \bigl\| \jp{\lambda^{-1}\nabla}^{\frac{r-2}r} f \bigr\|_{L^2_x(\R^2)}.
\end{equation}
Note that the implicit constant is independent of $\lambda$.
\end{lemma}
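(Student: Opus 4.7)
The plan is to combine the frequency-localized dispersive estimate of Lemma~\ref{L:Dispersive} with conservation of the $L^2_x$-norm under the unitary propagator $U(t) := e^{-i\lambda^2t\jp{\lambda^{-1}\nabla}}$ and then to assemble the full estimate via a Littlewood--Paley square-function argument.  The $\lambda$-dependence only enters through the frequency weight $\jp{\lambda^{-1}N}^2$ in the dispersive constant, so nothing new is needed to handle general $\lambda$; it will be absorbed into the weight $\jp{\lambda^{-1}\nabla}^{(r-2)/r}$ at the end.

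Step 1: frequency-localized Strichartz.  Fix a dyadic frequency $N \in 2^{\Z}$, $N\geq 1$.  By Lemma~\ref{L:Dispersive}, $\|U(t)P_N f\|_{L^\infty_x}\lesssim |t|^{-1}\jp{\lambda^{-1}N}^2 \|P_N f\|_{L^1_x}$, while Plancherel gives $\|U(t)P_N f\|_{L^2_x}=\|P_N f\|_{L^2_x}$.  Interpolation yields, for every $r\geq 2$,
\[
\|U(t)P_N f\|_{L^r_x}\lesssim (|t|^{-1}\jp{\lambda^{-1}N}^2)^{1-2/r}\|P_N f\|_{L^{r'}_x}.
\]
The standard $TT^*$ / Hardy--Littlewood--Sobolev argument of Ginibre--Velo (applicable in the non-endpoint range $q>2$) then produces
\[
\|U(t)P_N f\|_{L^q_tL^r_x}\lesssim \jp{\lambda^{-1}N}^{2/q}\|P_N f\|_{L^2_x}=\jp{\lambda^{-1}N}^{(r-2)/r}\|P_N f\|_{L^2_x},
\]
where in the last equality I used the admissibility relation $2/q=1-2/r=(r-2)/r$.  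The endpoint case $(q,r)=(\infty,2)$ is simply the identity $\|U(t)f\|_{L^\infty_tL^2_x}=\|f\|_{L^2_x}$.

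Step 2: summing in $N$.  Since $2<q<\infty$ and $2\leq r<\infty$, the Littlewood--Paley square function characterizes the $L^q_tL^r_x$ norm, and because $q,r\geq 2$ one may pull $\ell^2_N$ inside $L^q_tL^r_x$ by Minkowski:
\[
\|U(t)f\|_{L^q_tL^r_x}\lesssim \bigl\|(\textstyle\sum_N|U(t)P_N f|^2)^{1/2}\bigr\|_{L^q_tL^r_x}\leq \bigl(\textstyle\sum_N\|U(t)P_N f\|_{L^q_tL^r_x}^2\bigr)^{1/2}.
\]
Inserting the frequency-localized bound from Step~1 and applying Plancherel on the $L^2_x$ side gives
\[
\|U(t)f\|_{L^q_tL^r_x}\lesssim \bigl(\textstyle\sum_N\jp{\lambda^{-1}N}^{2(r-2)/r}\|P_N f\|_{L^2_x}^2\bigr)^{1/2}\lesssim \bigl\|\jp{\lambda^{-1}\nabla}^{(r-2)/r}f\bigr\|_{L^2_x},
\]
which is the desired inequality.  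There is no real obstacle: the only novelty over the scale-invariant Schr\"odinger case is keeping track of the $N$-dependent factor $\jp{\lambda^{-1}N}^2$ in the dispersive estimate, and this is precisely what motivates localizing in frequency before summing, and what forces the weight $\jp{\lambda^{-1}\nabla}^{(r-2)/r}$ on the right-hand side.  The $\lambda$-independence of the implicit constant is manifest since $\lambda$ only appears through the weight.
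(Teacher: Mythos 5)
Your proof is correct and follows exactly the route the paper intends: the paper simply says the lemma follows from the frequency-localized dispersive estimate and $L^2_x$ conservation ``in the usual manner'' (citing Keel--Tao), which is precisely your $TT^*$/Hardy--Littlewood--Sobolev argument on each Littlewood--Paley piece followed by square-function reassembly, with the $N$-dependent dispersive constant $\jp{\lambda^{-1}N}^{2}$ accounting for the weight $\jp{\lambda^{-1}\nabla}^{(r-2)/r}$. The exponent bookkeeping ($C_N^{1/2}=\jp{\lambda^{-1}N}^{2/q}=\jp{\lambda^{-1}N}^{(r-2)/r}$) and the separate treatment of the trivial endpoint $(q,r)=(\infty,2)$ are both right.
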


Using the $\lambda=1$ case of this lemma, the form of the free propagator, and Duhamel's principle, yields the following:

\begin{lemma}[Strichartz inequality]\label{L:Strichartz}
Let $I$ be a time interval and let $u$ and $v$ be solutions to the forced Klein--Gordon equations
$$
 u_{tt} - \Delta u + u = F \qtq{and} -i v_t + \jpn v = \jpn^{-1} G.
$$
Then,
\begin{align*}
\| \jp{\nabla_{t,x}}^{\frac{2}r} u\|_{L^q_tL^r_x(I\times\R^2)} &\lesssim \bigl\| \jp{\nabla_{t,x}} u(t_0) \bigr\|_{L^2_x(\R^2)}
    + \bigl\|\jpn^{1-\frac{2}{\tilde r}} F \bigr\|_{L^{\tilde q'}_tL^{\tilde r'}_x(I\times\R^2)} \\
\|\jpn^{\frac{2}r} v\|_{L^q_tL^r_x(I\times\R^2)}
        &\lesssim \bigl\| \jpn v(t_0) \bigr\|_{L^2_x(\R^2)} + \bigl\|\jpn^{1-\frac{2}{\tilde r}} G \bigr\|_{L^{\tilde q'}_tL^{\tilde r'}_x(I\times\R^2)}
\end{align*}
for any $t_0\in I$ and each $2< q,\tilde q\leq \infty$ and $2\leq r,\tilde r <\infty$ obeying the scaling condition $\tfrac2q + \tfrac 2r = \tfrac2{\tilde q} + \tfrac2{\tilde r} = 1$.
\end{lemma}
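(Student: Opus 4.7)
The plan is to derive Lemma~\ref{L:Strichartz} from Lemma~\ref{L:half Strichartz} via the standard $TT^{*}$/Christ--Kiselev machinery, and then to transfer the estimates between the first- and second-order formulations using the correspondence \eqref{E:real to complex}.

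First I would prove the bound on $v$. The homogeneous piece $e^{-i(t-t_0)\jpn}v(t_0)$ is treated by commuting $\jpn^{2/r}$ through the propagator and invoking Lemma~\ref{L:half Strichartz} with $\lambda=1$: this gives
\begin{equation*}
\|\jpn^{2/r}e^{-i(t-t_0)\jpn}v(t_0)\|_{L^q_tL^r_x}\lesssim \|\jpn^{1-2/r}\jpn^{2/r}v(t_0)\|_{L^2_x}=\|\jpn v(t_0)\|_{L^2_x}.
\end{equation*}
For the Duhamel term I would frequency-localize via $P_N$ and feed the dispersive estimate $\|e^{-it\jpn}P_Nf\|_{L^\infty_x}\lesssim|t|^{-1}\jp{N}^2\|f\|_{L^1_x}$ from Lemma~\ref{L:Dispersive} into the Keel--Tao abstract framework applied to the unitary group $e^{-it\jpn}$. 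This yields
\begin{equation*}
\Bigl\|\int_{-\infty}^\infty e^{-i(t-s)\jpn}P_NF(s)\,ds\Bigr\|_{L^q_tL^r_x}\lesssim \jp{N}^{(1-2/r)+(1-2/\tilde r)}\|P_NF\|_{L^{\tilde q'}_tL^{\tilde r'}_x}
\end{equation*}
for every admissible pair. The Christ--Kiselev lemma converts this to the corresponding retarded bound for $\int_{t_0}^t$; this is legal because $\tilde q'<2<q$ under our hypotheses. Setting $F=\jpn^{-1}G$, so that the $\jpn^{-1}$ compensates part of the frequency loss, and summing dyadically via the Littlewood--Paley square-function equivalence in $L^q_tL^r_x$ (valid for $1<q,r<\infty$) produces
\begin{equation*}
\Bigl\|\jpn^{2/r}\int_{t_0}^t e^{-i(t-s)\jpn}\jpn^{-1}G(s)\,ds\Bigr\|_{L^q_tL^r_x}\lesssim \|\jpn^{1-2/\tilde r}G\|_{L^{\tilde q'}_tL^{\tilde r'}_x},
\end{equation*}
which combined with the homogeneous bound gives the estimate for $v$. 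The borderline $(q,r)=(\infty,2)$ case is handled separately by duality: the dual form of Lemma~\ref{L:half Strichartz} provides $\|\int e^{is\jpn}G(s)\,ds\|_{L^2_x}\lesssim\|\jpn^{1-2/\tilde r}G\|_{L^{\tilde q'}_tL^{\tilde r'}_x}$, and applying $e^{-it\jpn}$ is an isometry on $L^2_x$.

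The second step is to transfer this bound to the real-valued $u$. Since the correspondence \eqref{E:real to complex} yields $u=\Re v$ and $u_t=\jpn\Im v$, and the Fourier multiplier $\jpn^s$ (being real and even) commutes with complex conjugation, one gets $|\jpn^su|\le|\jpn^s v|$ and $|\jpn^{s-1}u_t|\le|\jpn^s v|$ pointwise. The spacetime multiplier $\jp{\nabla_{t,x}}^{2/r}$ is then reduced to $\jpn^{2/r}$ by exploiting the fact that on (or near) the characteristic variety $\tau=-\jp{\xi}$, one has $\jp{(\tau,\xi)}\sim\jp{\xi}$; for the homogeneous solution this is immediate, and for the Duhamel term one writes $u_{tt}=\Delta u-u+F$ so that $\jp{\partial_t}^{2/r}u$ is controlled by $\jpn^{2/r}u$ plus a source contribution absorbed into $\jpn^{1-2/\tilde r}F$. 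The stated initial data norm $\|\jp{\nabla_{t,x}}u(t_0)\|_{L^2_x}$ is identified with $\|\jpn v(t_0)\|_{L^2_x}$ by direct computation.

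The main piece of bookkeeping to watch is how the frequency loss $\jp{N}^{(1-2/r)+(1-2/\tilde r)}$ inherent to the Klein--Gordon dispersive estimate balances against the $\jpn^{-1}$ that accompanies the source $G$ and against the derivative counts on both sides of the inequality; once this is tracked through the Christ--Kiselev step and the Littlewood--Paley summation, the rest is standard. Conceptually there is no new ingredient beyond Lemma~\ref{L:half Strichartz} plus $TT^{*}$, but the non-homogeneous $\jpn$-weights create several easy-to-misplace half-derivatives.
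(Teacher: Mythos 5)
The paper gives no written proof here beyond the remark that the lemma follows from the $\lambda=1$ case of Lemma~\ref{L:half Strichartz}, the form of the free propagator, and Duhamel's principle, and your proposal is a correct, standard fleshing-out of exactly that route: homogeneous bound by commuting $\jpn^{2/r}$ through the propagator, retarded inhomogeneous bound via $TT^*$/Keel--Tao on dyadic pieces plus Christ--Kiselev (legitimate since $\tilde q'<2<q$), and transfer to the real-valued $u$ through $u=\Re v$, $u_t=\jpn\Im v$. Your derivative bookkeeping is right --- $\jp{N}^{2/r-1}\cdot\jp{N}^{(1-2/r)+(1-2/\tilde r)}=\jp{N}^{1-2/\tilde r}$ --- so no correction is needed.
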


\subsection{Symmetries}

As for the wave equation, the full Poincar\'e group acts as symmetries of our equation.  Non-compact symmetries provide a clear obstruction to proving the existence
of minimal blowup solutions --- they provide an easy means for minimizing sequences to fail to converge.  In view of this, we will need some basic information about (as
well as notation for) the action of translations and Lorentz boosts.  As noted in the introduction, even though our equation is not scale invariant, dilations play an
important role in its analysis; thus, we will need to discuss these as well.

\subsubsection{Translations}

Our notation for translations is
\begin{equation}
[T_{y} f \bigr](x) := f(x-y).
\end{equation}

\subsubsection{Lorentz Boosts}\label{SS:boosts}

We parameterize Lorentz boosts in a manner inspired by their action on the Fourier side: Given a frequency parameter $\nu\in\R^2$, we define
\begin{align}\label{E:L boost}
(\tilde t, \tilde x) = L_\nu(t,x) := \bigl( \jp{\nu} t - \nu\cdot x, x^\perp + \jp{\nu} x^{\|} - \nu t \bigr).
\end{align}
Here $x^\perp$ and $x^{\|}$ denote (respectively) the components of $x$ perpendicular and parallel  to $\nu$. This
corresponds to a boost by velocity $\nu/\jp{\nu}$, which is to say that the observer with coordinates $(t,x)$ perceives
the observer with coordinates $(\tilde t, \tilde x)$ as moving with this velocity.  An easy computation shows
\begin{align}\label{E:L inv boost}
 L_\nu^{-1}(\tilde t,\tilde x) = \bigl( \jp{\nu} \tilde t + \nu\cdot \tilde x, \tilde x^\perp + \jp{\nu} \tilde x^{\|} + \nu \tilde t \, \bigr) = L_{-\nu}(\tilde t,\tilde x).
\end{align}

Note that the linear transformation \eqref{E:L boost} has determinant one and hence preserves spacetime volume.

Lorentz invariance of the linear (or nonlinear) Klein--Gordon equation is precisely the fact that $u\circ L_\nu^{-1}$ is a
solution if and only if $u$ is a solution. In particular,
\begin{equation}\label{E:boost a character}
u(t,x)= \exp(-i\jp{\xi} t + i\xi\cdot x) \implies u\circ L_\nu^{-1}(\tilde t,\tilde x) = \exp(-i\jp{\tilde \xi}\tilde t + i\tilde \xi\cdot\tilde x),
\end{equation}
where the new frequency parameters are related to the old via
\begin{align}\label{tildexi from xi}
\bigl( \jp{\tilde \xi} , \tilde \xi\, \bigr) = L_\nu\bigl(\jp{\xi},\xi\bigr) \qtq {or equivalently,} \tilde \xi=\ell_\nu(\xi):=\xi^\perp + \jp{\nu} \xi^{\|} - \nu\jp{\xi}.
\end{align}
Note in particular that $\tilde\xi=0$ if and only if $\xi=\nu$, which matches with the fact that $u$ represents a wave
traveling with velocity $\xi/\jp{\xi}$.  Note also that $\ell_{-\nu}\circ \ell_\nu=\Id$.

Associated to the action of Lorentz boosts on solutions to the linear Klein--Gordon equation there is a corresponding
action on initial data.  We denote this by the symbol $\W_\nu$ which is defined as follows:
\begin{equation}\label{E:Wa def}
[\W_\nu f](x) := [e^{-i\,\cdot\,\jpn} f]\circ L_\nu(0,x).
\end{equation}
By \eqref{E:L inv boost}, this is equivalent to
\begin{equation}\label{E:Wa inv def}
[\W_\nu^{-1} f](x) := [e^{-i\,\cdot\,\jpn} f]\circ L_\nu^{-1} (0,x).
\end{equation}
Note that $\W_\nu$ has been specifically defined so that
\begin{equation} \label{E:Boost linear soln}
[e^{-it\jpn}\W_\nu^{-1} f](x) := [e^{-i\,\cdot\,\jpn} f]\circ L_\nu^{-1}(t,x).
\end{equation}
In view of \eqref{E:boost a character}, the action of $\W_\nu$ is easily understood on the Fourier side.  In particular, we
have the following:

\begin{lemma}[Action of boosts] \label{L:boost action}
Let $\tilde\xi=\ell_\nu(\xi)$, as in \eqref{tildexi from xi}.  Then
\begin{align}\label{E:Fourier Boost}
\bigl( \W_\nu^{-1} f\bigr)\widehat{\ } (\tilde\xi) = \jp{\xi} \jp{\tilde\xi}^{-1} \hat f(\xi).
\end{align}
As a result, boosts do not commute with space/time translations:
\begin{align}\label{E:Boost translates}
 \W_\nu^{-1} T_y \, e^{i\tau\jpn} = T_{\tilde y} \, e^{i\tilde\tau\jpn} \W_\nu^{-1} \qtq{where} (\tilde\tau,\tilde y)=L_{\nu}(\tau,y).
\end{align}
The operator $\W_\nu$ is unitary in $H^{1/2}_x$, that is,
\begin{align} \label{E:H12 unitary}
\big\langle \W_\nu^{-1} f,\ \jpn g\bigr\rangle_{L^2_x} = \big\langle f,\ \jpn\W_\nu g\bigr\rangle_{L^2_x},
\end{align}
but not in general $H^s_x$ spaces:
\begin{align*}
\big\langle \W_\nu^{-1} f,\ g\bigr\rangle_{H^s_x} = \big\langle f,\ m_s(\nabla) \W_\nu g\bigr\rangle_{H^s_x},  \qtq{with}
    m_s(\xi) = m_s(\xi;\nu) =\biggl(\frac{\jp{\tilde\xi}}{\jp{\xi}}\biggr)^{2s-1}.
\end{align*}
However, $\| m_s\|_{L^\infty_\xi} + \| m_s^{-1} \|_{L^\infty_\xi} \lesssim \jp{\nu}^{|2s-1|}$.
\end{lemma}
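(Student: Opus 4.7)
The entire lemma hinges on the Fourier formula \eqref{E:Fourier Boost}; once that is in hand, the remaining assertions follow by routine manipulation. My plan is therefore to establish \eqref{E:Fourier Boost} first, derive its dual as a corollary, and then read off \eqref{E:Boost translates} and the inner-product identities.

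For \eqref{E:Fourier Boost}, I would unfold \eqref{E:Wa inv def} via Fourier inversion. Writing $(t',x') := L_\nu^{-1}(0,x)$, the definition reads
\begin{equation*}
[\W_\nu^{-1} f](x) = \frac{1}{2\pi}\int e^{ix'\cdot\xi - it'\jp{\xi}}\,\hat f(\xi)\,d\xi.
\end{equation*}
The phase is the Minkowski inner product of $(t',x')$ with $(\jp\xi,\xi)$ (up to sign), which $L_\nu$ preserves, and hence equals $x\cdot\ell_\nu(\xi) = x\cdot\tilde\xi$; a coordinate calculation aligning with $\nu$ and using $\xi = \xi^\perp+\xi^\|$ verifies this directly. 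I would then change variables to $\tilde\xi = \ell_\nu(\xi)$, whose Jacobian is $\jp{\tilde\xi}/\jp\xi$ --- equivalently the Lorentz invariance of the measure $d\xi/\jp\xi$ on the mass-shell. This reads off \eqref{E:Fourier Boost}. Substituting $\nu \mapsto -\nu$ and invoking $\ell_{-\nu}\circ\ell_\nu = \Id$ produces the dual formula $(\W_\nu g)\widehat{\ }(\xi) = (\jp{\tilde\xi}/\jp\xi)\,\hat g(\tilde\xi)$, which I will use throughout the remaining steps.

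For \eqref{E:Boost translates}, the cleanest approach is to show that both sides, applied to any $f$, equal $x \mapsto [e^{-i\,\cdot\,\jpn}f]\bigl(L_\nu^{-1}(0,x) - (\tau,y)\bigr)$. This follows from \eqref{E:Boost linear soln}, the linearity of $L_\nu^{-1}$, and the identity $L_\nu^{-1}(\tilde\tau,\tilde y) = (\tau,y)$. For the general $H^s$ inner-product identity --- which contains \eqref{E:H12 unitary} as the special case $s=\tfrac12$, where $m_s$ collapses to $1$ --- the plan is to insert \eqref{E:Fourier Boost} into $\langle \W_\nu^{-1} f,\, g\rangle_{H^s_x}$, change variables $\xi \mapsto \tilde\xi$ with the same Jacobian, and compare against the right-hand side expanded via the dual formula. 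The residual mismatch between $\jp{\tilde\xi}^{2s}$ and $\jp\xi^{2s}$ forces precisely $m_s(\xi) = (\jp{\tilde\xi}/\jp\xi)^{2s-1}$.

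Finally, the bound on $m_s$ reduces to showing $\jp\nu^{-1}\lesssim \jp{\tilde\xi}/\jp\xi \lesssim \jp\nu$ uniformly in $\xi$. Reading off the time component of $L_\nu(\jp\xi,\xi) = (\jp{\tilde\xi},\tilde\xi)$ gives
\begin{equation*}
\frac{\jp{\tilde\xi}}{\jp\xi} \,=\, \jp\nu - \frac{\nu\cdot\xi}{\jp\xi},
\end{equation*}
whence the upper bound is immediate from $|\nu\cdot\xi|/\jp\xi \leq |\nu|$ and the lower bound follows from $\jp\nu - |\nu| = (\jp\nu+|\nu|)^{-1} \geq (2\jp\nu)^{-1}$. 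Raising to the power $|2s-1|$ yields the claimed control on $\|m_s\|_\infty$ and $\|m_s^{-1}\|_\infty$. The only step requiring genuine care is the Jacobian and change-of-variable bookkeeping in the derivation of \eqref{E:Fourier Boost}; everything else is mechanical.
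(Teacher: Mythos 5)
Your proposal is correct and follows essentially the same route as the paper: both hinge on establishing \eqref{E:Fourier Boost} via the change of variables $\xi\mapsto\tilde\xi=\ell_\nu(\xi)$ with Jacobian $\jp{\tilde\xi}/\jp{\xi}$ (which the paper computes directly from the triangular structure of $\ell_\nu$ and which you identify with the Lorentz invariance of $d\xi/\jp\xi$ on the mass shell), after which the $H^s$ pairings and the bound on $m_s$ are mechanical. The only cosmetic difference is that you verify \eqref{E:Boost translates} in physical space via \eqref{E:Boost linear soln} and the linearity of $L_\nu$, whereas the paper argues on the Fourier side using preservation of the Minkowski pairing; the paper's remark containing \eqref{E:alt commute} makes clear these are the same observation.
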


\begin{proof}
From \eqref{E:Wa inv def} and \eqref{E:boost a character} we have
\begin{align*}
[\W_\nu^{-1} f] (x) &= (2\pi)^{-1} \int_{\R^2} e^{i\tilde\xi x} \hat f(\xi)\,d\xi = (2\pi)^{-1} \int_{\R^2} e^{i\tilde\xi x}  \hat f(\xi)\,\jp{\tilde\xi}^{-1} \jp{\xi}\,  d\tilde\xi.
\end{align*}
In the last equality we used that
\begin{align*}
\tilde\xi^{\|} = \jp{\nu}\xi^{\|} - \nu \jp{\xi}  \qtq{and so}
\frac{\partial \tilde\xi^{\|}}{\partial \xi^{\|}} = \jp{\nu} - \frac{\nu \xi^{\|}}{\jp{\xi}} = \frac{\jp{\tilde\xi}}{\jp{\xi}};
\end{align*}
hence (by triangularity) the full Jacobian is
\begin{equation}\label{E:Boost Jacobian}
\biggl| \frac{\partial \tilde\xi}{\partial\xi}\biggr| = \frac{ \jp{\tilde\xi} }{ \jp{\xi} },
    \qtq{that is,} d\xi = \jp{\tilde\xi}^{-1} \jp{\xi}  \,d\tilde\xi.
\end{equation}

Next we turn to \eqref{E:Boost translates}.  As Lorentz boosts preserve the Minkowski metric,
$$
- \tau \jp{\xi} + y\xi = -\tilde\tau\jp{\tilde\xi} + \tilde y \tilde\xi.
$$
Hence by \eqref{E:Fourier Boost},
\begin{align*}
\bigl( \W_\nu^{-1} T_{y} e^{i\tau\jpn} f\bigr)\widehat{\mathstrut\ }(\tilde\xi) = \jp{\xi} \jp{\tilde\xi}^{-1} e^{i\tau\jp{\xi}-iy\xi} \hat f(\xi)
    = e^{i\tilde\tau\jp{\tilde\xi} - i \tilde y \tilde\xi} \bigl( \W_\nu^{-1} f\bigr)\widehat{\mathstrut\ }(\tilde\xi).
\end{align*}
The result now follows after inverting the Fourier transforms.

The interaction of $\W_\nu$ with inner products in $H^{1/2}_x$, or any $H^s_x$ space, follows easily from \eqref{E:Fourier Boost} and \eqref{E:Boost Jacobian}.
\end{proof}

\begin{remark}
The behaviour of the $H^1_x$ norm becomes less mysterious if we consider instead the physical quantities of energy and momentum defined in the introduction.
If $u$ is a solution of the linear Klein--Gordon equation and $\tilde u=u\circ L_\nu^{-1}$, then the energy-momentum vectors are related by
$$
\bigl( \tilde E , \tilde P \bigr) = L_\nu\bigl(E,P),
$$
which follows from \eqref{E:boost a character}, \eqref{E:Boost Jacobian}, and Plancherel.  This relation also holds in the nonlinear case; see Corollary~\ref{C:boostable}.
\end{remark}

\begin{remark}
When interpreted in terms of its action on solutions $u(t,x)$ of linear Klein--Gordon (as opposed to initial data), the relation \eqref{E:Boost translates} takes the form
\begin{equation}\label{E:alt commute}
u(\cdot - \tau, \cdot - y) \circ L_\nu^{-1} = u \circ L_\nu^{-1}(\cdot - \tilde\tau, \cdot - \tilde y)
    \qtq{when} (\tilde\tau,\tilde y)=L_{\nu}(\tau,y),
\end{equation}
which is, of course, nothing but the linearity of the transformation $L_\nu$.
\end{remark}

\subsubsection{Scaling}

As noted in the introduction, the (nonlinear) Klein--Gordon equation does not possess a scaling symmetry; indeed, one of the key themes of this paper is how solutions with
small Fourier support behave as solutions of  the (nonlinear) Schr\"odinger equation.  Because shrinking Fourier support is a way an optimizing sequence may fail to converge,
this is something we need to address in our concentration compactness principle.  In this subsection, we merely introduce some notation for ($L^2_x$-preserving) dilation/scaling
operators and note how these interact with Fourier multipliers, including the free evolution.

\begin{definition}
For each $\lambda\in(0,\infty)$ we define a unitary operator $D_\lambda$ on $L^2_x$ by
$$
\bigl[D_\lambda f\bigr] (x) = \lambda^{-1} f(x/\lambda)
$$
Observe that $D_\lambda$ dilates by a factor $\lambda$ in the sense that the diameter of the support of $D_\lambda f$ is $\lambda$ times larger than that of $f$.
\end{definition}

Note that
\begin{equation}\label{E:dilate m}
m(\nabla) D_\lambda f  = D_\lambda m(\lambda^{-1}\nabla) f
\end{equation}
for any Fourier multiplier $m(\nabla)$.

\subsection{Useful lemmas}

The remainder of this section contains certain manipulations of symmetries that we will need in the proof of the inverse Strichartz inequality, Theorem~\ref{T:InvStrich}.

\begin{lemma}\label{L:h_n}
Fix $h\in L^2_x$ and $B>0$. Then with $m_0$ as in Lemma~\ref{L:boost action}, the set
\begin{equation*}
\mathcal K :=\bigl\{ D_\lambda^{-1} \W_\nu^{-1} m_0(\nabla)^{-1} e^{i\nu x} D_\lambda h : |\nu|\leq B \text{ and } B^{-1} \leq \lambda < \infty \bigr\}
\end{equation*}
is precompact in $L^2_x$.  Moreover, the closure of $\mathcal K$ does not contain $0$ unless $h\equiv 0$.

If $\hat h$ is the characteristic function of $[-1,1]^2$, then
\begin{equation}\label{E:K fourier decay}
\!\! \supp(\hat g)\subseteq \{|\xi|\lesssim \jp B\}, \quad \|g\|_{L^2_x} \gtrsim \jp{B}^{-1}, \qtq{and}
    \int_{|x|\sim R} |g(x)|^2\,dx \lesssim \frac{\jp{B}}{\jp{R}},
\end{equation}
all uniformly for $g\in\mathcal K$.
\end{lemma}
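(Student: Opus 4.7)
The plan is to derive an explicit formula for $\hat g$ and use it to extract all of the claimed properties.  Using the standard Fourier rules for $D_\lambda$ and $e^{i\nu x}$, together with the identity $[\W_\nu^{-1} m_0(\nabla)^{-1} f]\widehat{\ }(\tilde\xi) = \hat f(\xi)$---which follows from Lemma~\ref{L:boost action} because the multipliers $\jp{\xi}\jp{\tilde\xi}^{-1}$ (from $\W_\nu^{-1}$) and $\jp{\tilde\xi}\jp{\xi}^{-1}$ (from $m_0(\nabla)^{-1}$) cancel exactly---one finds
\begin{equation*}
\hat g(\zeta) = \hat h\bigl(\lambda(\ell_{-\nu}(\zeta/\lambda) - \nu)\bigr).
\end{equation*}
This presents $g$ as an anisotropic Lorentz-rescaling of $h$.

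For precompactness, I would regard $(\nu,\lambda)\mapsto g_{\nu,\lambda}$ as a continuous map from the compactification $\{|\nu|\leq B\}\times[B^{-1},\infty]$ into $L^2$.  Continuity at finite $\lambda$ is read off the formula; the behavior as $\lambda\to\infty$ is governed by the Taylor expansion $\lambda(\ell_{-\nu}(\zeta/\lambda)-\nu) = D\ell_{-\nu}(0)\zeta + O(|\zeta|^2/\lambda)$, where one computes $D\ell_{-\nu}(0)\zeta = \zeta^\perp + \jp{\nu}\zeta^\|$.  This yields the pointwise limit $\hat g_{\nu,\infty}(\zeta) = \hat h(\zeta^\perp + \jp{\nu}\zeta^\|)$, which upgrades to $L^2$-convergence first for $\hat h\in C_c$ by dominated convergence and then extends to all $h\in L^2$ via the uniform operator estimate $\|g_{\nu,\lambda}\|_{L^2}\lesssim \jp{B}^{3/2}\|h\|_{L^2}$ (using $\|\W_\nu^{-1}\|_{L^2\to L^2}\lesssim \jp{\nu}^{1/2}$ and $\|m_0(\nabla)^{-1}\|_{L^2\to L^2}\leq \jp{\nu}$).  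Thus $\overline{\mathcal K}$ is the image of a compact parameter set under a continuous map, and hence compact.  Since each constituent operator is injective---and $\hat g_{\nu,\infty}$ arises from $\hat h$ by an invertible linear change of variable---one has $g = 0$ only when $h = 0$ at every point of the compactified parameter space, so $0\notin\overline{\mathcal K}$ whenever $h\not\equiv 0$.

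For the specific estimates with $\hat h = \chi_{[-1,1]^2}$, the support bound comes from $\hat g(\zeta)\neq 0 \implies |\xi-\nu|\leq \sqrt2/\lambda$ combined with $|\zeta| = \lambda|\ell_\nu(\xi)-\ell_\nu(\nu)|\leq \lambda\|D\ell_\nu\|_\infty|\xi-\nu|\leq \sqrt2\jp{\nu}\lesssim\jp{B}$.  For the $L^2$ lower bound, the change of variables $\eta = \lambda(\xi-\nu)$ gives
\begin{equation*}
\|g\|_{L^2}^2 = \int_{[-1,1]^2} \frac{\jp{\tilde\xi}}{\jp{\xi}}\,d\eta \gtrsim \frac{1}{\jp{B}},
\end{equation*}
using $\jp{\tilde\xi}\geq 1$ and $\jp{\xi}\lesssim\jp{B}$ on the region of integration, whence $\|g\|_{L^2}\gtrsim\jp{B}^{-1/2}\geq\jp{B}^{-1}$.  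For the shell decay, write $g(x) = (2\pi)^{-1}\int_{[-1,1]^2} e^{i\phi(\eta)}a(\eta)\,d\eta$ with phase $\phi(\eta) = \lambda x\cdot \ell_\nu(\nu + \eta/\lambda)$ and amplitude $a(\eta) = \jp{\tilde\xi}/\jp{\xi}\lesssim 1/\jp{\nu}$.  In coordinates aligned with $\nu$, $\nabla_\eta\phi = D\ell_\nu(\xi)^T x$ satisfies $|\partial_{\eta^\|}\phi|\gtrsim |x^\||/\jp{\nu}$ and $|\partial_{\eta^\perp}\phi|\gtrsim |x^\perp|$; successive integrations by parts in these two directions produce the pointwise bound
\begin{equation*}
|g(x)|\lesssim \frac{1}{\max(\jp{\nu},|x^\||)\cdot\max(1,|x^\perp|)},
\end{equation*}
which, integrated over $|x|\sim R$ (splitting into the regimes $R\lesssim\jp{\nu}$ and $R\gtrsim\jp{\nu}$), yields the claimed bound $\jp{B}/\jp{R}$.

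The main obstacle is the shell decay.  The IBP must be executed in Lorentz-boosted coordinates to exploit simultaneously the anisotropic geometry of $\supp\hat g$---an ellipse with short axis of length $\sim 1/\jp{\nu}$ along $\nu$---and the $\jp{\nu}^{-1}$-smallness of $a$ must precisely cancel the $\jp{\nu}$-loss incurred when inverting the degenerate Jacobian in the $\nu$-direction; any imbalance in this pairing introduces an unwanted factor of $\jp{B}$.
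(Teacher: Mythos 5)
Your derivation of the explicit formula $\hat g(\zeta)=\hat h\bigl(\lambda[\ell_{-\nu}(\zeta/\lambda)-\nu]\bigr)$, the precompactness via continuity over the compactified parameter set, the non-vanishing of the closure via the two-sided Jacobian bounds, the support inclusion, and the $L^2_x$ lower bound all follow the paper's proof essentially verbatim and are fine. The genuine gap is in the shell decay, precisely the step you flag as the main obstacle.

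The lower bound $|\partial_{\eta^\perp}\phi|\gtrsim|x^\perp|$ is false. In coordinates aligned with $\nu$ one computes, with $\xi=\nu+\eta/\lambda$,
\[
\partial_{\eta^\perp}\phi \;=\; x^\perp \;-\; \frac{|\nu|\,\xi^\perp}{\jp{\xi}}\,x^{\|},
\]
and since $\xi^\perp$ ranges over $[-\lambda^{-1},\lambda^{-1}]$ with $\lambda$ permitted to be as small as $B^{-1}$, the second term can be as large as $|\nu|\jp{\nu}^{-1}\lambda^{-1}|x^{\|}|$. Consequently, whenever $|x^\perp|\lesssim |\nu|\jp{\nu}^{-1}\lambda^{-1}|x^{\|}|$ --- which for $\lambda\sim B^{-1}$ covers essentially the whole shell $\{|x|\sim R\}$ --- the phase has a genuine critical point in $\eta^\perp$ inside the domain of integration, and no non-stationary-phase bound of the form $|x^\perp|^{-1}$ is available. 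Geometrically, $\supp\hat g$ is not a rectangle: the $-\nu\jp{\xi}$ term in $\ell_\nu$ bends its long sides by an amount $\sim\lambda^{-1}$ over unit length, and in directions conormal to these arcs the Fourier transform of the characteristic function exhibits $|x|^{-3/2}$-type decay with curvature-dependent constants. A stationary-phase computation at such points shows your pointwise bound $|g(x)|\lesssim \max(\jp{\nu},|x^{\|}|)^{-1}\max(1,|x^\perp|)^{-1}$ actually fails by an unbounded factor; the integrated bound can still be rescued because the bad region has small measure, but that requires a careful treatment of the critical points which your sketch does not contain.

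The paper avoids the curvature issue entirely. It observes that $\hat g$ is the characteristic function of a set whose boundary has length $\lesssim\jp{\nu}$, so that the measure of the symmetric difference with its translate by $\eta$ is $\lesssim\jp{\nu}|\eta|$, and then uses the identity
\[
\int_{\R^2}|x|\,|g(x)|^2\,\frac{|e^{i\eta x}-1|^2}{|\eta|\,|x|}\,dx \;=\; |\eta|^{-1}\int_{\R^2}\bigl|\hat g(\zeta-\eta)-\hat g(\zeta)\bigr|^2\,d\zeta\;\lesssim\;\jp{\nu}.
\]
Summing over a fixed finite collection of directions $\eta$ with $|\eta|=R^{-1}$ (so that $|e^{i\eta x}-1|^2/(|\eta||x|)\gtrsim 1$ on $|x|\sim R$ for at least one of them) gives $\int_{|x|\sim R}|g|^2\lesssim\jp{\nu}/R$ directly. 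You should either adopt this perimeter argument or carry out the full stationary-phase analysis including the critical points of $\partial_{\eta^\perp}\phi$.
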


\begin{proof}
Careful computation shows that
\begin{equation}\label{E:h_n mess}
\bigl[ D_\lambda^{-1} \W_\nu^{-1} m_0(\nabla)^{-1} e^{i\nu x} D_\lambda h \bigr]\widehat{\ }(\tilde\xi) = \hat h \circ G(\tilde\xi)
\end{equation}
where $G(\tilde\xi) =\lambda[\ell_\nu^{-1}\bigl(\tilde\xi/\lambda\bigr)-\nu]= \tilde\xi^\perp + \jp\nu \tilde\xi^\| + \lambda \nu [\jp{\lambda^{-1}\tilde\xi} - 1]$.
The conclusions of the lemma will follow from some basic properties of this function $G$.

First we note that $G$ is a bijection on $\R^2$; indeed, the computation \eqref{E:h_n mess} reveals it to be the composition of dilations, translations (cf. $e^{i\nu x}$),
and the bijection $\ell_\nu^{-1}$ associated to $\W_\nu^{-1}$.  Moreover, the Jacobian of $G$ is
$$
\det[ G' ] = \jp{\nu} + \lambda^{-1} \jp{\lambda^{-1}\tilde\xi}^{-1} \nu\cdot\tilde\xi,
$$
which is uniformly bounded both above and below:
\begin{equation}\label{Bound det G'}
\jp{\nu}^{-1} \lesssim \bigl| \det[ G' ] \bigr| \lesssim \jp{\nu}.
\end{equation}
(Here we used $|\lambda^{-1}\tilde\xi| \leq \jp{\lambda^{-1}\tilde\xi}$ and $[\jp\nu-|\nu|][\jp\nu+|\nu|]=1$.)

From \eqref{Bound det G'} we can conclude that $\hat h\mapsto \hat h\circ G$ is a uniformly bounded family of
operators on $L^2_x$ for $|\nu|\leq B$ and $\lambda\in[B^{-1},\infty]$. Note the inclusion of $\lambda=\infty$ here; this is possible
since $\lim_{\lambda\to\infty} G(\tilde\xi) = \tilde\xi^\perp + \jp\nu \tilde\xi^\|$, which is still a bijection with bounded Jacobian.

To finish the proof of precompactness, it suffices to show that $\hat h\circ G$ varies continuously in $L^2_x$ as $\lambda$ and $\nu$ vary over over the compactified region.
By virtue of the uniform boundedness of $\hat h\mapsto \hat h\circ G$ we may safely replace $\hat h$ by an element of $C^\infty_c(\R^2)$.  With this reduction,
the result becomes an easy consequence of the continuity of $G(\tilde\xi)$ as a function of $\lambda$ and $\nu$ and the fact that
\begin{equation}\label{E:G transport}
[\jp{\nu}-|\nu|] |\tilde\xi| \lesssim |G(\tilde\xi)| \lesssim [\jp{\nu}+|\nu|] |\tilde\xi|,
\end{equation}
which follows from $|\jp{\lambda^{-1}\tilde\xi} - 1|\leq\lambda^{-1}|\tilde\xi|$, a consequence of the subadditivity of the square-root.

That $\mathcal K$ stays away from zero follows immediately from the upper bound in \eqref{Bound det G'}.

Lastly we turn to \eqref{E:K fourier decay}.  Inclusion of the Fourier support follows immediately from the lower bound in \eqref{E:G transport},
while the second claim and the case $R\leq 1$ of the last inequality follow directly from \eqref{Bound det G'}.
To treat the case $R\geq 1$, we note that by \eqref{E:h_n mess}, the Fourier transform of a fixed $g\in\mathcal K$ is the characteristic function
of a set with piecewise smooth boundary and
$$
\text{Length}(\partial\supp \hat g) \lesssim \|(G^{-1})'\|_{L^\infty(\R^2;\R^{2\times 2})} \lesssim \jp{\nu}.
$$
Therefore, for each vector $|\eta|\leq 1$,
$$
\int_{\R^2} |x| |g(x)|^2 \frac{|e^{i\eta x} -1|^2}{|\eta| |x|} \,dx = |\eta|^{-1} \int_{\R^2} |\hat g(\tilde\xi-\eta) - \hat g(\tilde\xi)|^2\,d\tilde\xi
    \lesssim \jp\nu.
$$
The estimate then follows by adding together this estimate for vectors $\eta$ of length $R^{-1}$ pointing in a fixed collection of
directions.  (The exact number of vectors needed is dictated by the constants in $|x|\sim R$.)
\end{proof}

\begin{lemma}\label{L:pointwise}
(a) Suppose $g_n\rightharpoonup g$ weakly in $H^1_x$ and $\lambda_n\to\lambda\in(0,\infty)$.  Then there is a subsequence so that
\begin{equation*}
\bigl[e^{-i\lambda_n^2 t \jp{\lambda_n^{-1}\nabla}} g_n \bigr](x) \to [e^{-i\lambda^2 t \jp{\lambda^{-1}\nabla}} g](x)  \quad\text{for almost every $(t,x)\in\R\times\R^2$.}
\end{equation*}
(b) For $\lambda_n\to\lambda\in(0,\infty)$ and fixed $g\in H^1_x$,
\begin{equation*}
\bigl\| e^{-i\lambda_n^2 t \jp{\lambda_n^{-1}\nabla}} g - e^{-i\lambda^2 t \jp{\lambda^{-1}\nabla}} g \bigr\|_{L^4_{t,x}} \to 0.
\end{equation*}
(c) Fix $\theta\in(0,\frac12)$ and suppose $g_n\rightharpoonup g$ weakly in  $L^2_x$ and $\lambda_n\to\infty$.  Then there is a subsequence so that
\begin{equation*}
\bigl[e^{-i\lambda_n^2 t [\jp{\lambda_n^{-1}\nabla}-1]} P_{\leq \lambda_n^\theta} g_n \bigr](x) \to [e^{it\Delta/2} g](x) \quad\text{for almost every $(t,x)\in\R\times\R^2$.}
\end{equation*}
(d) For $\lambda_n\to\infty$, $\theta\in(0,\frac12)$, and fixed $g\in L^2_x$,
\begin{equation*}
\bigl\| e^{-i\lambda_n^2 t [\jp{\lambda_n^{-1}\nabla}-1]} P_{\leq \lambda_n^\theta} g - e^{it\Delta/2} g \bigr\|_{L^4_{t,x}} \to 0.
\end{equation*}
\end{lemma}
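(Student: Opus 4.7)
My plan is to split the four parts by their nature: (a) and (c) assume weak convergence of the data, so I would pair the Fourier representation of the linear evolution against a test function converging strongly in $L^2_\xi$; (b) and (d) concern norm convergence for a fixed function, so I would use density of Schwartz functions together with the Strichartz bounds of Lemma~\ref{L:half Strichartz} to reduce matters to the case where $g$ is Schwartz.

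Part~(a) is essentially immediate: write
\begin{equation*}
\bigl[e^{-i\lambda_n^2 t\jp{\lambda_n^{-1}\nabla}} g_n\bigr](x) = \tfrac{1}{2\pi}\int e^{ix\xi-i\lambda_n^2 t\jp{\lambda_n^{-1}\xi}}\jp{\xi}^{-1}\cdot\jp\xi\hat g_n(\xi)\,d\xi,
\end{equation*}
observe that the first factor is dominated by $\jp\xi^{-1}\in L^2_\xi$ and converges pointwise in $\xi$, hence converges in $L^2_\xi$ by dominated convergence; since $\jp\nabla g_n\rightharpoonup\jp\nabla g$ in $L^2_x$, the pairing converges at every $(t,x)$. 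For parts~(b) and (d), I would choose $g_\eps\in C^\infty_c(\R^2)$ close to $g$ in $H^1_x$ (resp.\ $L^2_x$). Strichartz with $q=r=4$ controls $\|e^{-i\lambda_n^2 t\jp{\lambda_n^{-1}\nabla}}(g-g_\eps)\|_{L^4_{t,x}}$ and its $\lambda$-limit by $\|\jp{\lambda_n^{-1}\nabla}^{1/2}(g-g_\eps)\|_{L^2}$, which is uniformly $\lesssim \|g-g_\eps\|_{H^{1/2}}$ in~(b) (since $\lambda_n$ lies in a compact subset of $(0,\infty)$) and uniformly $\lesssim \|g-g_\eps\|_{L^2}$ in~(d) (since $\jp{\lambda_n^{-1}\xi}\leq 2$ on $|\xi|\leq\lambda_n^\theta$ once $n$ is large). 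For the Schwartz piece, pointwise convergence of the Fourier symbols (using $\lambda_n\to\lambda$ in~(b), and the Taylor expansion $\lambda_n^2[\jp{\lambda_n^{-1}\xi}-1]\to\tfrac12|\xi|^2$ uniformly on $\supp\hat g_\eps$ in~(d)), combined with the dispersive decay from Lemma~\ref{L:Dispersive} to supply an $L^4_{t,x}$ dominating function, yields $L^4_{t,x}$ convergence via dominated convergence.

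The substantive step is part~(c). I would decompose $g_n = g + h_n$ with $h_n\rightharpoonup 0$ weakly in $L^2_x$; part~(d) handles the $g$ contribution, since $L^4_{t,x}$ convergence yields a.e.\ convergence along a subsequence. For $h_n$, Taylor expansion gives $|\lambda_n^2[\jp{\lambda_n^{-1}\xi}-1]-\tfrac12|\xi|^2| \lesssim |\xi|^4/\lambda_n^2\lesssim \lambda_n^{4\theta-2}$ on $|\xi|\leq\lambda_n^\theta$, which is $o(1)$ precisely because $\theta<\tfrac12$. Hence, on each bounded $t$-interval,
\begin{equation*}
\bigl\|e^{-i\lambda_n^2 t[\jp{\lambda_n^{-1}\nabla}-1]}P_{\leq\lambda_n^\theta} h_n - e^{it\Delta/2}P_{\leq\lambda_n^\theta}h_n\bigr\|_{L^\infty_t L^2_x} \longrightarrow 0,
\end{equation*}
and a subsequence thereof converges to zero a.e.\ in $(t,x)$. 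This reduces matters to the Schr\"odinger statement: if $k_n:=P_{\leq\lambda_n^\theta}h_n\rightharpoonup 0$ in $L^2_x$, then a subsequence of $e^{it\Delta/2}k_n$ tends to zero a.e. My strategy here is to fix a dyadic scale $R$, observe that $P_{\leq R}e^{it\Delta/2}k_n$ is uniformly bounded in $L^\infty_t H^1_x\cap W^{1,\infty}_t L^2_x$, so an Aubin--Lions argument produces a subsequence converging in $C^0_t L^2_{x,\mathrm{loc}}$ to the weak limit $0$; I would then diagonalize over $R\in 2^{\mathbb N}$. The main technical obstacle is the high-frequency tail $P_{>R}e^{it\Delta/2}k_n$: here I would invoke the refined (bilinear-restriction-based) Strichartz machinery developed elsewhere in this paper to extract a final subsequence on which this tail is summably small in $L^4_{t,x}$, thereby upgrading the low-frequency a.e.\ convergence to a.e.\ convergence of the full sequence.
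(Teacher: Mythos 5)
Your argument for part (a) breaks down at the very first step: in two space dimensions $\jp{\xi}^{-1}\notin L^2_\xi$ (the integral $\int\jp{\xi}^{-2}\,d\xi$ diverges logarithmically), so the factor $e^{ix\xi-i\lambda_n^2t\jp{\lambda_n^{-1}\xi}}\jp{\xi}^{-1}$ is not an admissible $L^2_\xi$ test function and the strong-against-weak pairing is unavailable. Indeed $H^1(\R^2)\not\hookrightarrow L^\infty$, so convergence at \emph{every} $(t,x)$ cannot be expected and some compactness device is unavoidable; the paper obtains a.e.\ convergence of a subsequence on each cube $[-L,L]^3$ from the Strichartz bound on $\jp{\partial_t}^{1/4}\jpn^{1/4}e^{-i\lambda_n^2t\jp{\lambda_n^{-1}\nabla}}g_n$ together with Rellich's theorem, identifying the limit afterwards by testing against $C^\infty_c$ functions. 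A related (smaller) issue affects your (b)/(d): your Schwartz reduction via the uniform Strichartz bound is fine, but Lemma~\ref{L:Dispersive} only yields $\|\cdot\|_{L^\infty_x}\lesssim\min(1,|t|^{-1})$, which has no spatial decay and hence does not supply an $L^4_{t,x}(\R\times\R^2)$ dominating function; the paper instead splits time, bounding the region $|t|\ge T$ by $T^{-1/4}\|g\|_{L^{4/3}_x}$ (dispersive estimate interpolated with mass conservation and integrated in $t$) and the region $|t|\le T$ by interpolating the $L^\infty_tL^2_x$ symbol estimate \eqref{1} with the $L^3_tL^6_x$ Strichartz bound \eqref{2}.

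The decisive gap is in part (c), in your treatment of $e^{it\Delta/2}k_n$ with $k_n:=P_{\le\lambda_n^\theta}h_n\rightharpoonup0$ (your preliminary reductions --- splitting off $g$ via part (d) and replacing the propagator by $e^{it\Delta/2}$ on compact time intervals --- do match the paper). The high-frequency tail $P_{>R}k_n$ need not be small in any norm: take $h_n(x)=e^{i\xi_n\cdot x}\psi(x)$ with $\hat\psi$ compactly supported, $|\xi_n|\to\infty$ and $|\xi_n|\le\tfrac12\lambda_n^\theta$. Then $k_n\rightharpoonup0$ by Riemann--Lebesgue, yet by Galilei invariance $\|e^{it\Delta/2}P_{>R}k_n\|_{L^4_{t,x}}=\|e^{it\Delta/2}\psi\|_{L^4_{t,x}}>0$ for every fixed $R$ once $|\xi_n|>2R$. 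No subsequence extraction or bilinear-restriction input can make this tail ``summably small''; the refined Strichartz estimates give lower bounds on concentration when the $L^4$ norm is large, not smallness of high-frequency pieces. What rescues the argument (and what the paper uses) is the local smoothing estimate \eqref{E:LocalSmoothing}: it gains half a spatial derivative locally in $x$, uniformly over all frequencies, so that $\jp{\partial_t}^{1/8}\jpn^{1/4}e^{it\Delta/2}g_n$ is bounded in $L^2_{t,x}([-L,L]^3)$ and Rellich produces an $L^2_{t,x}$-convergent, hence a.e.-convergent, subsequence whose limit is identified by duality. Your Aubin--Lions step for the frequencies below $R$ is correct but cannot be completed without this smoothing input.
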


\begin{proof}
With regard to almost everywhere convergence of a subsequence, it suffices (via Cantor's diagonal argument) to work on a generic cube,
say, $(t,x)\in [-L,L]^3$. This in turn can be deduced from $L_{t,x}^2$ convergence of a larger subsequence there.

Consider part (a).  First we show the existence of an almost everywhere convergent subsequence; only after that will we identify the limit.  By the Strichartz inequality
Lemma~\ref{L:half Strichartz},
$$
\limsup_{n\to \infty}\bigl\| \jp{\partial_t}^{\frac14} \jpn^{\frac14} e^{-i\lambda_n^2 t \jp{\lambda_n^{-1}\nabla}} g_n \bigr\|_{L^4_{t,x}(\R\times\R^2)} \lesssim_\lambda \| g \|_{H^{1}_x}.
$$
Combining this with Rellich's Theorem, specifically, compactness of the embedding $W^{1/4,4}(\R^3)\hookrightarrow
L^2([-L,L]^3)$, we obtain an $L^2_{t,x}$ (and thence a.e.) convergent subsequence on this cube.

The fact that we have local convergence in $L^2_{t,x}$ also allows us to identify the limit: for all $F\in\C^\infty_c(\R\times\R^2)$,
\begin{align*}
\lim_{n\to\infty} \int_\R \int_{\R^2} \overline{ F(t,x) } & [e^{-i\lambda_n^2 t \jp{\lambda_n^{-1}\nabla}} g_n](x) \,dx\,dt \\
={}& \lim_{n\to\infty} \int_{\R^2} g_n(x) \int_\R \overline{ [e^{i\lambda_n^2 t \jp{\lambda_n^{-1}\nabla}} F(t,\cdot)](x) } \,dt\,dx \\
={}& \int_{\R^2} g(x) \int_\R \overline{ [e^{i\lambda^2 t \jp{\lambda^{-1}\nabla}} F(t,\cdot)](x) } \,dt\,dx \\
={}& \int_\R \int_{\R^2} \overline{ F(t,x) } [e^{-i\lambda^2 t \jp{\lambda^{-1}\nabla}} g](x) \,dx\,dt.
\end{align*}

The proof of (b) is easily adapted from the proof of (d) which we give below.

We now turn to the more subtle part (c) where $\lambda_n\to\infty$.  By noting that
\begin{equation}\label{phase approx}
\lambda_n^2 t [\jp{\lambda_n^{-1}\xi}-1]  = \tfrac12 t |\xi|^2 + O\bigl(t\lambda_n^{-2} |\xi|^4\bigr) \qtq{as} \lambda_n\to\infty,
\end{equation}
we deduce that for $\theta<\frac12$,
$$
\bigl\| e^{-i\lambda_n^2 t [\jp{\lambda_n^{-1}\nabla}-1]} P_{\leq \lambda_n^\theta} - e^{it\Delta/2} P_{\leq \lambda_n^\theta} \bigr\|_{L^2_x \to L^2_x}
    \underset{n\to\infty}\longrightarrow 0.
$$
Thus it suffices to prove convergence of a subsequence of $e^{it\Delta/2} P_{\leq \lambda_n^\theta} g_n$ on our generic
cube $[-L,L]^3$.  The key to doing so is the well-known local smoothing estimate
for the Schr\"odinger equation:
\begin{equation}\label{E:LocalSmoothing}
\int_\R \! \int_{[-L,L]^2} \bigl|\bigl[\jpn^{\frac12} e^{it\Delta/2} f\bigr](x)\bigr|^2 \,dx\,dt
    \lesssim L \cdot \|f\|_{L^2_x(\R^2)}^2 ;
\end{equation}
see \cite{ConsSaut,Sjolin87,Vega88}.  This estimate implies
\begin{equation}\label{E:LocalSmoothing'}
\bigl\| \jp{\partial_t}^{\frac18} \jpn^{\frac14} e^{it\Delta/2} g_n \bigr\|_{L^2_{t,x}([-L,L]^3)} \lesssim L^{1/2} \cdot \|g_n\|_{L^2_x(\R^2)}
\end{equation}
and hence by Rellich's Theorem, the existence of an $L^2_{t,x}$ convergent subsequence on $[-L,L]^3$.
The identification of the limit follows by testing against $F\in\C^\infty_c(\R\times\R^2)$, as above.

Lastly, we address part (d).  By the Strichartz inequality Lemma~\ref{L:half Strichartz} and its analogue for $e^{it\Delta/2}$, it suffices to treat the case when $g$ is a Schwartz function.
(Note the importance of uniformity in $\lambda$ in Lemma~\ref{L:half Strichartz}.)  Next we note that by the (uniform in $\lambda$) dispersive estimate \eqref{E:Dispersive}
and its analogue for the Schr\"odinger propagator,
$$
\bigl\| e^{-i\lambda_n^2 t [\jp{\lambda_n^{-1}\nabla}-1]} P_{\leq \lambda_n^\theta} g \bigr\|_{L^4_{t,x}(|t|\geq T)}
    + \bigl\| e^{it\Delta/2} g \bigr\|_{L^4_{t,x}(|t|\geq T)}
    \lesssim T^{-\frac14} \| g\|_{L^{4/3}_x}.
$$
Thus we are left to control the region $|t|\leq T$.  First we note that
\begin{align}\label{1}
\bigl\| e^{-i\lambda_n^2 t [\jp{\lambda_n^{-1}\nabla}-1]} P_{\leq \lambda_n^\theta} g - e^{it\Delta/2} g \bigr\|_{L^\infty_t L^2_x(|t|\leq T)}
    \lesssim (\lambda_n^{-2} T + \lambda_n^{-4\theta}) \|g\|_{H^4_x},
\end{align}
which follows from \eqref{phase approx} and the fact that
$$
\bigl\|P_{> \lambda_n^\theta}g \bigr\|_{L^\infty_t L^2_x(|t|\leq T)}\lesssim \lambda_n^{-4\theta} \|g\|_{H^4_x}.
$$
On the other hand, by the Strichartz estimates and Sobolev embedding,
\begin{align}\label{2}
\bigl\| e^{-i\lambda_n^2 t [\jp{\lambda_n^{-1}\nabla}-1]} P_{\leq \lambda_n^\theta} g \bigr\|_{L^3_t L^6_x}
    + \bigl\| e^{it\Delta/2} g \bigr\|_{L^3_t L^6_x} \lesssim \| g \|_{H^{2/3}_x}.
\end{align}
Interpolating between \eqref{1} and \eqref{2}, we obtain
\begin{align*}
\lim_{n\to\infty} \bigl\| e^{-i\lambda_n^2 t [\jp{\lambda_n^{-1}\nabla}-1]} P_{\leq \lambda_n^\theta} g - e^{it\Delta/2} g \bigr\|_{L^4_{t,x}(|t|\leq T)} = 0
\end{align*}
for each fixed $T$.
\end{proof}

The significance of this lemma for us is that it provides the input for the following variant of Fatou's lemma
due to Br\'ezis and Lieb  (see also \cite[Theorem~1.9]{LiebLoss}):

\begin{lemma}[Refined Fatou, \cite{BrezisLieb}]\label{L:BrezisLieb}
Let $d\geq 1$ and $1\leq p<\infty$ and suppose $\{F_n\}\subseteq L^p(\R^d)$ with $\limsup \|F_n\|_p<\infty$.  If $F_n\to F$ almost everywhere, then
\begin{align*}
\int_{\R^d} \Bigl| |F_n|^p - |F_n-F|^p - |F|^p \Bigr|\,dx \to 0.
\end{align*}
In particular, if $G_n\to F$ in $L^p$ sense, then
\begin{equation}\label{E:BrezisLieb}
\limsup_{n\to\infty} \|F_n-G_n\|_{L^p} \leq \limsup_{n\to\infty} \; \Bigl( \, \|F_n\|_{L^p}^p \! - \|F\|_{L^p}^p \Bigr)^{1/p}.
\end{equation}
\end{lemma}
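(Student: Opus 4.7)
The plan is to follow the standard Br\'ezis--Lieb argument, which is based on an elementary pointwise inequality combined with a dominated convergence argument.

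First I would establish the pointwise inequality: for each $p\in[1,\infty)$ and $\eps>0$, there is a constant $C_\eps$ such that for all $a,b\in\C$,
\begin{equation*}
\Bigl| |a+b|^p - |a|^p \Bigr| \leq \eps\,|a|^p + C_\eps\,|b|^p.
\end{equation*}
For $p=1$ this is immediate from the triangle inequality; for $p>1$ it follows from the mean value theorem applied to $t\mapsto |t|^p$ after separating into the regimes $|b|\leq \delta|a|$ and $|b|\geq \delta|a|$, with $\delta=\delta(\eps,p)$ chosen small.

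Next, apply this with $a = F_n-F$ and $b=F$ to obtain
\begin{equation*}
W_n := \Bigl| |F_n|^p - |F_n-F|^p - |F|^p \Bigr| \leq \eps\,|F_n-F|^p + (C_\eps + 1)|F|^p.
\end{equation*}
Rearranging, $\bigl(W_n - \eps|F_n-F|^p\bigr)_+ \leq (C_\eps+1)|F|^p$, and since $F_n\to F$ a.e.\ implies $W_n\to 0$ a.e., the dominated convergence theorem (with dominant $(C_\eps+1)|F|^p\in L^1$, which lies in $L^1$ by Fatou applied to $|F_n|^p$) gives $\int (W_n - \eps|F_n-F|^p)_+\,dx\to 0$. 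Therefore
\begin{equation*}
\limsup_{n\to\infty} \int_{\R^d} W_n\,dx \leq \eps\,\limsup_{n\to\infty} \|F_n-F\|_{L^p}^p \leq \eps\,C,
\end{equation*}
since $\|F_n-F\|_{L^p}\leq \|F_n\|_{L^p}+\|F\|_{L^p}$ is uniformly bounded. Sending $\eps\downarrow 0$ proves the first assertion.

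For \eqref{E:BrezisLieb}, note that the first assertion, once integrated without absolute values, yields $\|F_n\|_{L^p}^p - \|F_n-F\|_{L^p}^p - \|F\|_{L^p}^p \to 0$, so
\begin{equation*}
\limsup_{n\to\infty} \|F_n-F\|_{L^p}^p = \limsup_{n\to\infty} \bigl(\|F_n\|_{L^p}^p - \|F\|_{L^p}^p\bigr).
\end{equation*}
Since $G_n\to F$ in $L^p$, the triangle inequality $\|F_n-G_n\|_{L^p}\leq \|F_n-F\|_{L^p}+\|F-G_n\|_{L^p}$ and its reverse give $\limsup\|F_n-G_n\|_{L^p} = \limsup\|F_n-F\|_{L^p}$, and combining yields \eqref{E:BrezisLieb}. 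The only nontrivial step is the pointwise inequality; the rest is a routine dominated convergence argument, and I do not anticipate any real obstacle.
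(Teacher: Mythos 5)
Your argument is correct and is precisely the standard Br\'ezis--Lieb proof: the pointwise inequality $\bigl||a+b|^p-|a|^p\bigr|\leq\eps|a|^p+C_\eps|b|^p$, the domination of $(W_n-\eps|F_n-F|^p)_+$ by $(C_\eps+1)|F|^p$, dominated convergence, and then $\eps\downarrow 0$. The paper does not prove this lemma itself but cites \cite{BrezisLieb} and \cite[Theorem~1.9]{LiebLoss}, and your write-up (including the routine deduction of \eqref{E:BrezisLieb} via the triangle inequality) is exactly the argument those references supply; no gaps.
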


\subsection{Elliptic estimates}\label{SS:elliptic}

In this subsection, we first record a special case of the sharp Gagliardo--Nirenberg inequality of Weinstein \cite{weinstein} and then discuss some consequences for our equation
in the focusing setting.

\begin{theorem}[Sharp Gagliardo--Nirenberg, \cite{weinstein}]\label{T:SharpGN}   For all $f\in H^1_x(\R^2)$,
\begin{equation}\label{E:General SharpGN}
\| f \|_{L^4_x}^{4} \leq 2\|Q\|_{L^2_x}^{-2} \, \|f\|_{L^{2}_x}^{2} \, \|\nabla f\|_{L^{2}_x}^{2}.
\end{equation}
Here $Q$ denotes the unique positive radial Schwartz solution to $\Delta Q + Q^{3} = Q$.
Moreover, equality holds in \eqref{E:General SharpGN} if and only if $f(x)=\alpha Q(\lambda (x-x_0))$ for some $\alpha\in\C$, $\lambda\in(0,\infty)$, and $x_0\in\R^2$.
\end{theorem}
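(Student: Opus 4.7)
The plan is to establish the sharp constant via the variational method of Weinstein, using the Weinstein functional
\[
J(f) := \frac{\|f\|_{L^2_x}^2\,\|\nabla f\|_{L^2_x}^2}{\|f\|_{L^4_x}^4},
\]
and to show that $\inf_{f \not\equiv 0} J(f) = \tfrac12 \|Q\|_{L^2_x}^2$. Rearranging gives the stated inequality, and the characterization of equality will follow from identifying the minimizers. Note that $J$ is invariant under the three-parameter family of symmetries $f \mapsto \alpha f(\lambda(\cdot - x_0))$, so a direct minimizing sequence cannot converge without factoring these out.

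The first step would be to construct a minimizer. Given a minimizing sequence $\{f_n\}$, I would replace each $f_n$ by its Schwarz symmetrization $f_n^*$; by the Pólya--Szegő inequality $\|\nabla f_n^*\|_2 \leq \|\nabla f_n\|_2$ while the $L^2_x$ and $L^4_x$ norms are preserved, so $J(f_n^*) \leq J(f_n)$ and we may assume $f_n$ is nonnegative, radial, and radially decreasing. Exploiting the scaling invariance of $J$, I would normalize so that $\|f_n\|_{L^2_x} = \|\nabla f_n\|_{L^2_x} = 1$. Then $f_n$ is bounded in $H^1_x$, and by the compact embedding $H^1_{\text{rad}}(\R^2) \hookrightarrow L^4_x$ for radial decreasing functions (a consequence of Strauss's pointwise decay), a subsequence converges strongly in $L^4_x$ to a nontrivial limit $f_*$, which is a minimizer. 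This step is the main technical obstacle, since compactness of radial embeddings must be used carefully in two dimensions.

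The second step is to identify $f_*$. The Euler--Lagrange equation for $J$ at $f_*$ takes the form $-a\Delta f_* + b f_* = c f_*^3$ for positive constants; after a rescaling $f_*(x) = \alpha Q(\lambda x)$, this becomes exactly $\Delta Q + Q^3 = Q$. Invoking the uniqueness theorem of Kwong for positive radial $H^1_x$ solutions of this equation gives that $f_*$ equals $\alpha Q(\lambda(\cdot - x_0))$. Finally, to compute the value $J(Q)$ I would use the two integral identities obtained by multiplying $\Delta Q + Q^3 = Q$ by $Q$ and by $x\cdot\nabla Q$ respectively, namely
\begin{align*}
\|\nabla Q\|_{L^2_x}^2 + \|Q\|_{L^2_x}^2 &= \|Q\|_{L^4_x}^4, \\
\|Q\|_{L^2_x}^2 &= \tfrac12 \|Q\|_{L^4_x}^4 \qquad \text{(Pohozaev in $d=2$)}.
\end{align*}
Solving simultaneously yields $\|\nabla Q\|_{L^2_x}^2 = \|Q\|_{L^2_x}^2$ and $\|Q\|_{L^4_x}^4 = 2\|Q\|_{L^2_x}^2$, whence $J(Q) = \tfrac12 \|Q\|_{L^2_x}^2$, proving the claimed sharp constant.

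For a general (possibly complex-valued) extremizer $f$, applying Pólya--Szegő to $|f|$ gives $J(|f|^*) \leq J(f) = J_{\min}$, so $|f|^*$ is also a minimizer; tracing back through the rearrangement equality cases shows $f$ is, up to a constant phase, a translate and dilate of $Q$, completing the characterization of equality.
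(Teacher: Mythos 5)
The paper does not prove this theorem at all --- it simply cites Weinstein --- and your argument is precisely Weinstein's variational proof: minimize the functional $J$, restore compactness via Schwarz symmetrization and the compact embedding of radial $H^1_x(\R^2)$ into $L^4_x$, identify the minimizer through the Euler--Lagrange equation and Kwong's uniqueness theorem, and evaluate the sharp constant via the two integral identities (the second of which the paper records as \eqref{E:Poh}). The one step worth tightening is the equality case: rather than invoking the delicate equality case of the P\'olya--Szeg\H{o} inequality, it is more robust to observe that if $f$ is an extremizer then so is $|f|$, which therefore satisfies the Euler--Lagrange equation, is strictly positive by the strong maximum principle, hence radial about some point (Gidas--Ni--Nirenberg) and equal to a rescaled translate of $Q$ by Kwong's theorem, while equality in the diamagnetic inequality $|\nabla|f||\leq|\nabla f|$ forces $f$ to be a constant phase times $|f|$.
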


It is not difficult to prove the existence of $Q$, for example by variational methods.  A proof of uniqueness (along with earlier references) can be found in Kwong \cite{kwong}.
Integrating the equation for $Q$ against $Q$ and $x\cdot \nabla Q$ yields
\begin{align}\label{E:Poh}
E(Q):= \int_{\R^2} \tfrac12 |Q|^2 +\tfrac12 |\nabla Q|^2 - \tfrac{1}{4} |Q|^{4} \,dx = \int_{\R^2} \tfrac12 |Q|^2=:\tfrac12 M(Q).
\end{align}
This is known as Pohozaev's identity.

\begin{proposition}[Energy coercivity]\label{P:coercive}
Let $u:I\times\R^2\to\R$ be a solution to \eqref{nlkg} in the focusing case with initial data $(u(0), u_t(0))=(u_0, u_1)\in H^1_x\times L_x^2$
whose energy
$$
E(u)= E(u_0,u_1) = \int_{\R^2} \tfrac12 |u_1|^2 +\tfrac12 |\nabla u_0|^2 + \tfrac12 |u_0|^2 - \tfrac{1}{4} |u_0|^{4} \,dx
$$
obeys $E(u)<E(Q)$.
\begin{SL}
\item If $M(u(0))<M(Q)$, then
\begin{align} \label{E:Mass trap}
\int_{\R^2} |u(t)|^2 + |u_t(t)|^2 \,dx &\leq 2E(u) < M(Q)  \\
\int_{\R^2} |\nabla u(t)|^2 + |u_t(t)|^2 \,dx &\leq 2E(u) < M(Q)  \label{E:Grad trap}
\end{align}
for all $t\in I$.  As a consequence,
\begin{align}\label{E:coercive E}
2E(u)\leq \|u(t)\|_{H^1_x}^2 + \|u_t(t)\|_{L_x^2}^2 \leq 4E(u).
\end{align}
\item If $M(u(0))>M(Q)$ then
\begin{align}\label{E:big mass}
M(u(t)) > M(Q) \qtq{and} \int_{\R^2} |\nabla u(t)|^2 > M(Q)
\end{align}
for all $t\in I$.  Moreover,
\begin{align}\label{E:M_tt}
\partial_{tt} M(u(t)) = 2\int_{\R^2} |u_t|^2 - |\nabla u|^2 - |u|^2 + |u|^{4} \,dx  > 6\int_{\R^2} |u_t|^2 \,dx.
\end{align}
\end{SL}
\end{proposition}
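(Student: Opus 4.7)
The overall plan for both parts is a continuity (bootstrap) argument driven by the sharp Gagliardo--Nirenberg inequality and conservation of energy. Setting $k(t)=\|u_t(t)\|_2^2$, $a(t)=M(u(t))$, $b(t)=\|\nabla u(t)\|_2^2$ and $P(t)=\tfrac14\|u(t)\|_4^4$, conservation of energy gives $k+a+b-2P=2E(u)$ at every $t\in I$, Theorem~\ref{T:SharpGN} yields $2P\leq \tfrac{ab}{M(Q)}$, and \eqref{E:Poh} reads $2E(Q)=M(Q)$. The key algebraic observation is that whenever $a\leq M(Q)$ one has $2P\leq b$, which forces $k+a\leq 2E(u)$; symmetrically, whenever $b\leq M(Q)$ one has $2P\leq a$ and hence $k+b\leq 2E(u)$.

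For part (i), I would apply this on the set $B:=\{t\in I:a(t)<M(Q)\text{ and }b(t)<M(Q)\}$. The set is open by continuity of $a(t)$ and $b(t)$; it contains $t=0$ because $a(0)<M(Q)$ is hypothesized, while $b(0)<M(Q)$ follows by substituting the Gagliardo--Nirenberg bound into the energy and invoking the strict inequality $E(u)<M(Q)/2$. For closedness, along any $t_n\in B$ converging to $t_\infty\in I$ the two trap inequalities give $a(t_n),b(t_n)\leq 2E(u)<M(Q)$, which pass to $a(t_\infty),b(t_\infty)<M(Q)$ in the limit. Hence $B=I$, yielding \eqref{E:Mass trap} and \eqref{E:Grad trap}; comparing these with the trivial upper bound $2E(u)\leq \|u\|_{H^1_x}^2+\|u_t\|_{L^2_x}^2$, obtained by discarding the non-positive $-\tfrac14|u|^4$ term from the energy integrand, yields \eqref{E:coercive E}.

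For part (ii), I would bootstrap symmetrically on $B':=\{t\in I:a(t)>M(Q)\text{ and }b(t)>M(Q)\}$. The condition $a(0)>M(Q)$ is hypothesized; for $b(0)>M(Q)$, assume toward contradiction that $b(0)\leq M(Q)$ and combine the Gagliardo--Nirenberg bound with $a(0)>M(Q)$ to estimate
\begin{equation*}
E(u)\geq \tfrac12 k(0)+\tfrac12 b(0)+\tfrac12 a(0)\bigl(1-\tfrac{b(0)}{M(Q)}\bigr)>\tfrac12 M(Q)=E(Q),
\end{equation*}
a contradiction with $E(u)<E(Q)$. Closedness of $B'$ is handled by the trap inequalities of (i): if some boundary time satisfied $a=M(Q)$ or $b=M(Q)$, the corresponding trap would force $2E(u)\geq M(Q)$, again impossible. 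Hence $B'=I$ and \eqref{E:big mass} follows.

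Finally, the first equality in \eqref{E:M_tt} is obtained by differentiating $M(u(t))$ twice, substituting $u_{tt}=\Delta u-u+u^3$, and integrating by parts. Using the energy identity $\|u\|_4^4=2k+2(a+b)-4E(u)$, this rearranges to $\partial_{tt}M(u)=6k+2\bigl(a+b-4E(u)\bigr)$; since $a+b>2M(Q)=4E(Q)>4E(u)$ by the bootstrap in (ii), the parenthetical term is strictly positive, giving the stated inequality. The only delicate point in the whole argument is the bootstrap, and the strict gap $E(u)<E(Q)$ is exactly what prevents the trap inequalities from degenerating at the critical level $a=M(Q)$ or $b=M(Q)$.
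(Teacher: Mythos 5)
Your proof is correct and follows essentially the same route as the paper: sharp Gagliardo--Nirenberg plus the Pohozaev identity $2E(Q)=M(Q)$ feed into a continuity/bootstrap argument, and the virial computation for $\partial_{tt}M$ is rearranged using energy conservation exactly as in the text. The only (cosmetic) difference is that you bootstrap the mass and gradient bounds simultaneously on the set where both lie on the correct side of $M(Q)$, whereas the paper bootstraps the mass first and then extracts the gradient bound from the factored inequality $[M(Q)-M(u(t))][M(Q)-\|\nabla u(t)\|_{L^2_x}^2]>0$.
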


\begin{proof}
We begin with part (i).  As energy is conserved, $E(u(t))<E(Q)$ for all $t\in I$.  Combining this with \eqref{E:Poh} and the sharp Gagliardo--Nirenberg inequality shows that
$$
M(u(t)) \leq M(Q) \implies M(Q) > 2E(u(t)) \geq \int_{\R^2} |u(t)|^2 + |u_t(t)|^2 \geq M(u(t)).
$$
By definition, solutions are continuous in $H^1_x\times L^2_x$ and so we see that \eqref{E:Mass trap} follows by a simple bootstrap/continuity argument.

We now turn to the proof of \eqref{E:Grad trap}. By the sharp Gagliardo--Nirenberg inequality and \eqref{E:Poh},
$$
M(Q) > 2E(u(t)) \geq \int_{\R^2} |u_t(t)|^2 + |u(t)|^2 + \bigl[ 1 - \tfrac{M(u(t))}{M(Q)} \bigr] |\nabla u|^2 \,dx
$$
and so, neglecting the $u_t$ term and doing a little rearranging, we find that
\begin{equation}\label{E:grad vs mass}
\bigl[ M(Q) - M(u(t)) \bigr] \bigl[ M(Q) - \|\nabla u(t)\|_{L^2_x}^2 \bigr] > 0.
\end{equation}
From \eqref{E:Mass trap}, we see that $M(u(t))<M(Q)$ throughout the interval of existence.  Thus $\|\nabla u(t)\|_{L^2_x}^2 < M(Q)$ and so, by the sharp Gagliardo--Nirenberg inequality,
$$
\tfrac14 \int_{\R^2} |u(t,x)|^4 \,dx \leq \tfrac12 \int_{\R^2} |u(t,x)|^2 \,dx.
$$
The estimate \eqref{E:Grad trap} now follows directly from the definition of energy.

The proof of part (ii) closely parallels that of part (i).  The first inequality in \eqref{E:big mass} follows from a simple bootstrap argument
based on the fact that if $M(u(t))=M(Q)$ then by the sharp Gagliardo--Nirenberg inequality, $2E(u(t))\geq M(Q)=2E(Q)$.
The second inequality in \eqref{E:big mass} follows from the first via \eqref{E:grad vs mass}.

The evaluation of $\partial_{tt} M(u(t))$ is an elementary computation.  Note that the answer can be rewritten in the form
$$
\partial_{tt} M(u(t)) =  -8 E(u) + \int_{\R^2} 6|u_t|^2 + 2|\nabla u|^2 + 2|u|^2,
$$
from which \eqref{E:M_tt} follows from $2E(u)<M(Q)$ and both inequalities in \eqref{E:big mass}.
\end{proof}


\section{Local theory}\label{S:LT}


By using $L^4_{t,x}$ and $L^\infty_t H^1_x$ as the basic spaces in a contraction mapping argument, the Strichartz estimates directly yield local well-posedness,
persistence of regularity, and stability results.  The arguments leading to local well-posedness can be found in any textbook on dispersive PDE, for example, \cite{cazenave:book}.
A proof of the persistence of regularity result may be adapted from the proof for NLS given in \cite[Lemma 3.10]{Matador}.  We formulate these basic statements in the context of the first-order
equation \eqref{nlkg1st}; the reader should have no difficulty reformulating them for solutions to \eqref{nlkg}.

\begin{proposition}[Local well-posedness for $H^1$ initial data] \label{P:lwp}
\leavevmode\quad
Let $v_0 \in H^1_x(\R^2)$.  Then there exists a unique maximal-lifespan (strong) solution $v:I\times\R^2\to \C$ to \eqref{nlkg1st} with $v(0) = v_0$.  Furthermore, the following hold:
\begin{CI}
\item (Blowup alternative) If $T=\sup I$ is finite, then $\| v(t) \|_{H^1_x} \to \infty$ as $t\to T$.
\item (Conservation laws)  The energy and momentum are finite and constant in time.
\item (Scattering)  If $v$ does not blow up forward in time, that is, if $S_{[0,\infty)}(v) < \infty$, then there exists $v_+ \in H^1_x(\R^2)$ such that
\begin{equation} \label{E:scattering}
\lim_{t \to \infty} \|v(t) - e^{-it\jpn}v_+\|_{H^1_x(\R^2)} = 0.
\end{equation}
Furthermore, for each $v_+ \in H^1_x(\R^2)$, there exists a unique $v$ which solves \eqref{nlkg1st} in a neighbourhood of $+\infty$ and satisfies \eqref{E:scattering}.  In either case,
\begin{equation} \label{E:scattering nrg}
E(v) = \tfrac{1}{2}\|v_+\|_{H^1_x(\R^2)}^2.
\end{equation}
Similar statements hold backward in time.
\item (Small data result) If $\,\|v_0\|_{H^1_x}$ is sufficiently small, then $v$ is global and moreover,
$$
S_{\R}(v) \lesssim E(v)^2.
$$
\item (Small solution to LKG implies small solution to NLKG) If $I$ is an interval, $0 \in I$, and $\|\Re e^{-it\jpn}v_0\|_{L^4_{t,x}(I \times \R^2)}$ is sufficiently small,
then $I$ is contained in the lifespan of $v$ and
$$
S_I(v) \lesssim \|\Re e^{-it\jpn}v_0\|_{L^4_{t,x}(I \times \R^2)}^4.
$$
\item (Persistence of regularity) Let $I \subset \R$ and assume that $S_{I}(v) < L$.  Given $s \geq 0$,
\begin{equation} \label{E:nlkg persistence}
\|\jpn^{s+\frac2r} v\|_{L^q_t L^r_x(I \times \R^2)} \lesssim_{L,s,q,r}\| \jpn^{s+1} v_0\|_{L^2_x(\R^2)}
\end{equation}
for each $q$ and $r$ obeying $2<q\leq \infty$ and $\frac1q+\frac1r=\frac12$.
\end{CI}
\end{proposition}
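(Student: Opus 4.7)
The plan is to recast \eqref{nlkg1st} as the fixed-point problem $v = \Phi v$, where
\[ \Phi v(t) := e^{-it\jpn}v_0 - i\mu\!\int_0^t e^{-i(t-s)\jpn}\jpn^{-1}(\Re v)^3\,ds, \]
and contract in
\[ \|v\|_{X_I} := \|\jpn v\|_{L^\infty_tL^2_x(I\times\R^2)} + \|\jpn^{1/2}v\|_{L^4_{t,x}(I\times\R^2)}. \]
Applying Lemma~\ref{L:Strichartz} twice on the left---once with $(q,r)=(\infty,2)$ and once with $(q,r)=(4,4)$---and on the right with $(\tilde q,\tilde r)=(4,4)$, which forces the weight $\jpn^{1/2}$ on the forcing, gives
\[ \|\Phi v\|_{X_I} \lesssim \|v_0\|_{H^1_x} + \bigl\|\jpn^{1/2}\bigl((\Re v)^3\bigr)\bigr\|_{L^{4/3}_{t,x}(I\times\R^2)}. \]
Combining the inhomogeneous Kato--Ponce estimate, $|\Re v|\leq|v|$, H\"older in time, and the boundedness of $\jpn^{-1/2}$ on $L^4_x$ (so that $\|v\|_{L^4_x}\lesssim\|\jpn^{1/2}v\|_{L^4_x}$, via integrability of the Bessel kernel of order $\tfrac12$ on $\R^2$) yields
\[ \bigl\|\jpn^{1/2}\bigl((\Re v)^3\bigr)\bigr\|_{L^{4/3}_{t,x}(I\times\R^2)} \lesssim \|v\|_{L^4_{t,x}(I\times\R^2)}^2\,\|\jpn^{1/2}v\|_{L^4_{t,x}(I\times\R^2)} \lesssim \|v\|_{X_I}^3, \]
together with the corresponding difference estimate for $\Phi v_1-\Phi v_2$. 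Thus $\Phi$ contracts on a ball in $X_I$ once $\|\jpn^{1/2}e^{-it\jpn}v_0\|_{L^4_{t,x}(I\times\R^2)}\leq\eta$ for a fixed absolute $\eta>0$. By Lemma~\ref{L:half Strichartz} (with $\lambda=1$) and monotone convergence, this smallness is arranged either by shrinking $|I|$ with lifespan depending only on $\|v_0\|_{H^1_x}$ (giving uniqueness, maximal lifespan, and the blowup alternative) or globally under a small-data hypothesis, yielding $S_\R(v)\lesssim\|v\|_{X_\R}^4\lesssim\|v_0\|_{H^1_x}^4\lesssim E(v)^2$. The ``small LKG implies small NLKG'' statement runs the same contraction in the weaker norm $\|v\|_{L^4_{t,x}}$, with smallness placed directly on $\|\Re e^{-it\jpn}v_0\|_{L^4_{t,x}(I\times\R^2)}$ and the unweighted bound $\|(\Re v)^3\|_{L^{4/3}_{t,x}}=\|\Re v\|_{L^4_{t,x}}^3$.

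For persistence of regularity \eqref{E:nlkg persistence}, I would partition $I$ into $N=N(L,\eta)$ subintervals $I_k$ on which $\|\jpn^{1/2}v\|_{L^4_{t,x}(I_k)}\leq\eta$ (possible thanks to $S_I(v)<L$ together with the estimate just established on unit-sized subintervals). On each $I_k$, the version of the above estimate with $\jpn^{s+1}$ in place of $\jpn$ reads
\[ \|\jpn^{s+1}v\|_{L^\infty_tL^2_x(I_k)} + \|\jpn^{s+1/2}v\|_{L^4_{t,x}(I_k)} \lesssim \|\jpn^{s+1}v(t_k)\|_{L^2_x} + \eta^2\|\jpn^{s+1/2}v\|_{L^4_{t,x}(I_k)}; \]
picking $\eta$ small absorbs the last term, and iterating across the $N$ subintervals yields the target bound for $(q,r)\in\{(\infty,2),(4,4)\}$; other admissible pairs then follow from one more application of Lemma~\ref{L:Strichartz}. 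Conservation of energy and momentum is then obtained by a density argument: approximate $v_0$ in $H^1_x$ by Schwartz data, use persistence of regularity to produce classical solutions for which the conserved integrals can be differentiated in time by hand, and pass to the limit in $H^1_x\times L^2_x$.

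For the scattering claim, persistence of regularity applied on $[0,\infty)$ gives a finite partition into subintervals with $\|\jpn^{1/2}v\|_{L^4_{t,x}}\leq\eta$, and \eqref{E:integral equation 1st} yields
\[ \|e^{it_2\jpn}v(t_2)-e^{it_1\jpn}v(t_1)\|_{H^1_x} \lesssim \bigl\|\jpn^{1/2}\bigl((\Re v)^3\bigr)\bigr\|_{L^{4/3}_{t,x}([t_1,t_2]\times\R^2)}\to 0 \]
as $t_1,t_2\to\infty$, so $v_+:=\lim_{t\to\infty}e^{it\jpn}v(t)$ exists in $H^1_x$; the converse (solving from a prescribed asymptote) is a contraction on a tail $[T,\infty)$ where $\|\jpn^{1/2}e^{-it\jpn}v_+\|_{L^4_{t,x}([T,\infty)\times\R^2)}$ is small. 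The identity \eqref{E:scattering nrg} then follows from $E(v(t))=\tfrac12\|\jpn v(t)\|_{L^2_x}^2+O(\|v(t)\|_{L^4_x}^4)$ by letting $t\to\infty$ and using the dispersive decay of $e^{-it\jpn}v_+$. The main technical nuisance I anticipate is a clean justification of the inhomogeneous Kato--Ponce step, since the classical formulation concerns $|\nabla|^s$ rather than $\jpn^s$; this I would handle by a Littlewood--Paley split of $\jpn^{1/2}$ into its low-frequency piece (where $\jpn\sim 1$ and the bound is trivial) and its high-frequency piece (where $\jpn^{1/2}\sim|\nabla|^{1/2}$ and the standard homogeneous estimate applies).
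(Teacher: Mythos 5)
Your proposal is correct and follows the same route the paper indicates: the paper in fact omits the proof, pointing to the standard contraction-mapping argument in $L^\infty_t H^1_x$ and (a derivative-weighted) $L^4_{t,x}$ built on Lemma~\ref{L:Strichartz}, with persistence of regularity adapted from \cite[Lemma 3.10]{Matador}. Your choice of norms, the nonlinear estimate via the inhomogeneous fractional Leibniz rule, and the partition/bootstrap arguments for persistence, conservation laws, and scattering are precisely the standard implementation of that outline.
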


\begin{remark}\label{R:small D}
Note that in the small-data setting, the Gagliardo--Nirenberg inequality shows that $\|v(t)\|_{H^1_x}^2 \sim E(v)$.  As a consequence,
$\|v\|_{L^\infty_t H^1_x} \lesssim \|v_0\|_{H^1_x}$.
\end{remark}

In the defocusing case, energy controls $\| v(t) \|_{H^1_x}^2$ and so finite-time blowup cannot occur. In the focusing case,
the energy is no longer coercive in general; nevertheless, by part~(i) of Proposition~\ref{P:coercive} we do obtain the following:

\begin{corollary}[Global well-posedness] \label{Cor:gwp}
In the defocusing case, any initial data $v_0\in H^1_x$ leads to a global solution to \eqref{nlkg1st}.  In the focusing case, any initial data $v_0\in H^1_x$
obeying
$$
\int_{\R^2} |\Re v_0|^2\,dx \leq \int_{\R^2} Q^2\,dx \qtq{and} E(v_0) < E(Q)
$$
leads to a global solution.  Here $Q$ denotes the ground state, as in Theorem~\ref{T:SharpGN}.
\end{corollary}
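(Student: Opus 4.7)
The plan is to reduce the corollary to an application of the blowup alternative in Proposition~\ref{P:lwp}(i), which says that a maximal-lifespan solution is global provided $\|v(t)\|_{H^1_x}$ remains bounded on the lifespan. Everything else is supplied by energy conservation (defocusing) or Proposition~\ref{P:coercive}(i) (focusing), after translating between the first-order and second-order formulations via \eqref{E:real to complex}.

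For the defocusing case, conservation of energy gives
\[
\tfrac12 \|v(t)\|_{H^1_x}^2 = \tfrac12\|\jpn v(t)\|_{L^2_x}^2 \leq E(v(t)) = E(v_0),
\]
since the quartic term in the energy has favorable sign. Hence $\|v(t)\|_{H^1_x}$ is uniformly bounded on the maximal-lifespan interval, and the blowup alternative forces $I=\R$.

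For the focusing case, let $u$ be the corresponding real-valued solution of \eqref{nlkg}, so that $u(0)=\Re v_0$ and $u_t(0)=\jpn\Im v_0$. The hypotheses translate to $M(u(0))\leq M(Q)$ and $E(u(0),u_t(0))=E(v_0)<E(Q)$. Before invoking Proposition~\ref{P:coercive}(i), I need to upgrade the mass hypothesis to the strict inequality $M(u(0))<M(Q)$ required there. The short observation is: if $M(u(0))=M(Q)$, then the sharp Gagliardo--Nirenberg inequality \eqref{E:General SharpGN} gives $\tfrac14\|u(0)\|_{L^4}^4\leq \tfrac12\|\nabla u(0)\|_{L^2}^2$, so
\[
E(u(0),u_t(0)) \geq \tfrac12\|u_t(0)\|_{L^2}^2 + \tfrac12 M(u(0)) \geq \tfrac12 M(Q) = E(Q)
\]
by Pohozaev's identity \eqref{E:Poh}, contradicting $E(v_0)<E(Q)$. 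So $M(u(0))<M(Q)$ holds strictly.

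Now Proposition~\ref{P:coercive}(i) applies on the lifespan and yields $\|u(t)\|_{H^1_x}^2 + \|u_t(t)\|_{L^2_x}^2 \leq 4E(v_0)$ for all $t\in I$. Since $\|v(t)\|_{H^1_x}^2 = \|u(t)\|_{H^1_x}^2 + \|\jpn^{-1}u_t(t)\|_{H^1_x}^2 \leq \|u(t)\|_{H^1_x}^2 + \|u_t(t)\|_{L^2_x}^2$, we again obtain a uniform-in-$t$ bound on $\|v(t)\|_{H^1_x}$ and conclude by the blowup alternative. There is no real obstacle: Proposition~\ref{P:coercive}(i) has already carried out the hard work of trapping the norm below the ground-state threshold via a bootstrap off the sharp Gagliardo--Nirenberg inequality, and the corollary is just the book-keeping needed to feed that output into Proposition~\ref{P:lwp}(i).
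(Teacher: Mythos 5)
Your proof is correct and follows essentially the same route the paper indicates (energy conservation plus the blowup alternative in the defocusing case, Proposition~\ref{P:coercive}(i) in the focusing case). Your explicit treatment of the boundary case $M(u(0))=M(Q)$ --- showing via sharp Gagliardo--Nirenberg and Pohozaev that it would force $E\geq E(Q)$ --- is a detail the paper leaves implicit, and you handle it correctly.
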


In view of this corollary, the main objective of this paper is to prove that these global solutions obey spacetime bounds, which, by the local theory
mentioned above, will also imply scattering.  As described in the introduction, we will argue by contradiction, showing that failure of Theorem~\ref{T:ST bounds}
implies the existence of minimal-energy counterexamples (see Section~\ref{S:min blowup}).  A key ingredient is the following stability theory.
The significance of stability theory has really only come to the fore with the investigation of scaling-critical equations.  The archetypal argument appears in \cite{CKSTT:gwp};
see also \cite{TV}. A detailed proof for the mass-critical nonlinear Schr\"odinger equation is given in \cite{ClayNotes}, and only minor modifications are required to extend that argument
to the first-order nonlinear Klein--Gordon \eqref{nlkg1st}.

\begin{prop}[Stability theory] \label{P:stability}
Let $I$ be an interval and let $\tilde{v}$ be an approximate solution to \eqref{nlkg1st} on $I$ in the sense that
$$
-i \tilde v_t + \jpn\tilde{v} +\mu {\jpn}^{-1} (\Re \tilde{v})^3 + e_1 + e_2 = 0,
$$
with small error terms $e_1$ and $e_2$.  Assume that
$$
\bigl\| \jpn^{1/2} \tilde{v} \bigr\|_{L^{\infty}_t L^2_x(I \times \R^2)} \leq M \qtq{and} \|\Re \tilde{v}\|_{L^4_{t,x}(I \times \R^2)} \leq L
$$
for some positive constants $M$ and $L$.  Let $t_0 \in I$ and let $v_0$ satisfy the condition
$$
\bigl\| \jpn^{1/2} (v_0 - \tilde{v}(t_0)) \bigr\|_{L^2_x(\R^2)} \leq M'
$$
for some positive constant $M'$.  Then if $0 < \eps < \eps_1(L,M,M')$ and if $v_0$ and the error terms satisfy
\begin{align*}
\| e^{-i(t-t_0) \jpn} (v_0 - \tilde{v}(t_0))\|_{L^4_{t,x}(I \times \R^2)} \leq \eps, \\
\|\jpn e_1\|_{L^{4/3}_{t,x}(I \times \R^2)} + \|e_2\|_{L_t^1 H_x^{1/2}(I \times \R^2)} \leq \eps,
\end{align*}
then there exists a solution $v$ to \eqref{nlkg1st} with initial data $v_0$ at time $t=t_0$.  Furthermore, the solution $v$ satisfies
\begin{align*}
\|v - \tilde{v}\|_{L^4_{t,x}(I \times \R^2)} \leq \eps C(M,M',L) \\
\|v - \tilde{v}\|_{L^{\infty}_t H^{1/2}_x (I \times \R^2)} \leq M'C(M,M',L).
\end{align*}
\end{prop}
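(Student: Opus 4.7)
The plan is the standard divide-and-iterate argument, adapted to the first-order Klein--Gordon setting where the natural Strichartz-dual regularity at the endpoint $(q,r)=(4,4)$ is $\jpn^{1/2}$ (since $\tfrac{r-2}{r}=\tfrac12$ in Lemma~\ref{L:half Strichartz}). First I would partition $I$ into $J=J(L,\eta)$ consecutive subintervals $I_1,\dots,I_J$ on each of which $\|\Re\tilde v\|_{L^4_{t,x}(I_j\times\R^2)}\leq \eta$, where $\eta=\eta(M,M',L)$ is a small parameter to be chosen. The number $J\lesssim (L/\eta)^4$ is finite, so it suffices to propagate closeness across one subinterval and then iterate.

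Set $w:=v-\tilde v$, so $w$ formally solves
\begin{equation*}
-iw_t+\jpn w+\mu\jpn^{-1}\bigl[(\Re(\tilde v+w))^3-(\Re\tilde v)^3\bigr]=e_1+e_2,
\end{equation*}
with $w(t_0)=v_0-\tilde v(t_0)$. Expanding the cubic difference as $3(\Re\tilde v)^2\Re w+3(\Re\tilde v)(\Re w)^2+(\Re w)^3$, the Duhamel representation together with the Strichartz inequality (Lemma~\ref{L:Strichartz}) on a single subinterval $I_j$ containing the initial time yields
\begin{equation*}
\|w\|_{L^4_{t,x}(I_j)}+\|\jpn^{1/2}w\|_{L^\infty_tL^2_x(I_j)}\lesssim A_j+\eta^2\|w\|_{L^4_{t,x}(I_j)}+\eta\|w\|_{L^4_{t,x}(I_j)}^2+\|w\|_{L^4_{t,x}(I_j)}^3+\eps,
\end{equation*}
where $A_j:=\|e^{-i(t-t_j)\jpn}w(t_j)\|_{L^4_{t,x}(I_j\times\R^2)}$ encodes the free evolution of the initial error on $I_j$, and where I used the Hölder inequalities
$\|(\Re\tilde v)^2\Re w\|_{L^{4/3}_{t,x}}\lesssim\eta^2\|w\|_{L^4_{t,x}}$, $\|\Re\tilde v(\Re w)^2\|_{L^{4/3}_{t,x}}\lesssim\eta\|w\|_{L^4_{t,x}}^2$, and $\|(\Re w)^3\|_{L^{4/3}_{t,x}}\lesssim\|w\|_{L^4_{t,x}}^3$ to absorb the cubic differences into the $L^{4/3}_{t,x}$-portion of the hypothesis on $\jpn e_1$. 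A standard continuity/bootstrap argument (as in \cite{ClayNotes}) then produces a genuine solution $v$ on $I_j$ with
\begin{equation*}
\|w\|_{L^4_{t,x}(I_j)}+\|\jpn^{1/2}w\|_{L^\infty_tL^2_x(I_j)}\lesssim A_j+\eps,
\end{equation*}
provided $A_j+\eps$ is small enough depending only on $\eta$.

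Iteration is the part that requires the most care: the quantity $A_{j+1}$ on the next subinterval is controlled by $A_j+\eps$ plus the Strichartz norm of the free propagation of the Duhamel correction accumulated on $I_j$, which in turn is bounded by the same right-hand side. Consequently $A_{j+1}\leq C_0(A_j+\eps)$ for an absolute constant $C_0$, so by induction $A_j\leq (2C_0)^j\eps$, and after $J$ steps the final $L^4_{t,x}$ difference is bounded by $(2C_0)^{J+1}\eps=:\eps\cdot C(M,M',L)$. Choosing $\eps_1(L,M,M')$ small enough that $\eps_1\cdot C(M,M',L)$ stays below the single-step smallness threshold makes the bootstrap close at every step. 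The $L^\infty_tH^{1/2}_x$ bound on $w$ then follows from the same Strichartz estimate, controlled by $M'+\eps\cdot C(M,M',L)\leq M'\cdot C(M,M',L)$ after redefining $C$.

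The main technical obstacle is the exponential-in-$J$ growth of constants through the iteration: the stability estimate is only qualitative (no explicit dependence on $L$), and one must carefully track that each bootstrap argument on $I_j$ uses only the per-step smallness $\eta$ (which is fixed by $L,M,M'$) and not the globally small $\eps$. A secondary nuisance is splitting the error into the $L^{4/3}_{t,x}$ piece $e_1$ (absorbed through the Strichartz dual at the $(4,4)$-endpoint) and the $L^1_tH^{1/2}_x$ piece $e_2$ (handled by the energy-type endpoint of the Strichartz estimate, which is why the hypothesis decouples into these two components); both must be bounded by $\eps$ uniformly across subintervals, which is automatic since the hypotheses are global in $I$.
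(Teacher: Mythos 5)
Your argument is correct and is essentially the proof the paper has in mind: the paper does not write out a proof of Proposition~\ref{P:stability} but instead cites the archetypal divide-and-iterate stability argument (as in \cite{CKSTT:gwp,TV,ClayNotes}), which is exactly what you carry out, including the correct identification of how $e_1$ and $e_2$ enter through the $(4/3,4/3)$ and $L^1_tH^{1/2}_x$ dual Strichartz endpoints and the exponential-in-$J$ growth of the constant. No substantive discrepancy.
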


The local theory described so far treats time as an absolute, which jars with the Lorentz invariance of our equation.  Moreover, it does not allow us to consider
boosted solutions $u\circ L_\nu$, even for small values of $\nu$.  The next lemma remedies this by proving local existence in a larger spacetime region; we then
make some basic observations about the behaviour of the boosted solutions in Corollary~\ref{C:boostable}.

\begin{lemma}[Boostable local solutions]\label{L:boostable}
Given initial data $(u_0,u_1)\in  H^1_x \times L^2_x$, there is an $\eps>0$ and a local solution $u$ to \eqref{nlkg} matching this data
$($at $t=0)$ and defined in the spacetime region $\Omega=\{ (t,x) : |t| - \eps |x| < \eps\}$.  Moreover,
\begin{gather*}
\| u \|_{L^q_t L^r_x(\Omega)} := \| \chi_\Omega u \|_{L^q_t L^r_x(\R\times\R^2)} < \infty \quad\text{for each $2<q\leq\infty$ and $\tfrac1q + \tfrac1r=\tfrac12$,} \\
\| u \|_{L^\infty_t(H^1_x\times L^2_x)(\Omega)} ^2 := \sup_t \int_{\R^2} \chi_\Omega(t,x) \bigl[ |u_t(t,x)|^2 + |\nabla u(t,x)|^2 + |u(t,x)|^2 \bigr] \,dx < \infty,
\end{gather*}
and
\begin{align}\label{E:boost tightness}
\lim_{R\to\infty} \sup_{|t|<\eps R} \int_{|x|>R} \bigl[ |u_t(t,x)|^2 + |\nabla u(t,x)|^2 + |u(t,x)|^2 \bigr] \,dx = 0.
\end{align}
The solution $u$ with these properties is unique.
\end{lemma}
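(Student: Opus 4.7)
The plan is to construct $u$ in $\Omega$ by combining two ingredients --- a standard local-in-time solution with the full initial data, and a global small-data solution with a tail cutoff --- and gluing them via finite propagation speed.

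First, since $H^1_x\times L^2_x$ functions are spatially tight, I would choose $R_0$ so large that $\|(1-\chi) u_0\|_{H^1_x} + \|(1-\chi) u_1\|_{L^2_x}$ falls below the small-data threshold of Proposition~\ref{P:lwp}(iv), where $\chi\in C^\infty_c(\R^2)$ satisfies $\chi\equiv 1$ on $B(0,R_0+1)$ and $\supp\chi\subset B(0,R_0+2)$. Split $(u_0,u_1) = (u_0^c,u_1^c) + (u_0^t,u_1^t)$ with $(u_0^c,u_1^c) := \chi(u_0,u_1)$ and $(u_0^t,u_1^t) := (1-\chi)(u_0,u_1)$. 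Applying Proposition~\ref{P:lwp} to the full data yields a local strong solution $u^{\mathrm{full}}$ on $[-T_\ast,T_\ast]\times\R^2$ for some $T_\ast > 0$, with all admissible Strichartz bounds; applying Proposition~\ref{P:lwp}(iv) to the small tail data produces a global solution $u^t$ on $\R\times\R^2$ with $L^\infty_t(H^1_x\times L^2_x)$ norm and all admissible $L^q_t L^r_x$-norms finite.

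Next, I would invoke finite propagation speed for \eqref{nlkg}, a standard consequence of the local energy identity $\partial_t e + \nabla\cdot p = 0$ with $e = \tfrac12 u_t^2 + \tfrac12|\nabla u|^2 + \tfrac12 u^2 + \tfrac{\mu}{4} u^4$ and $p = -u_t\nabla u$, valid at the $H^1_x\times L^2_x$ regularity. It implies that two $H^1$-solutions whose data agree on an open set $U$ coincide in the spacetime region where the backward light cone lies in $U$. Applied here, since the data for $u^{\mathrm{full}}$ and $u^t$ agree on $\{|x|>R_0+2\}$, we get $u^{\mathrm{full}}(t,x) = u^t(t,x)$ whenever $|x| - |t| > R_0+2$ and $|t|\leq T_\ast$. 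I would then define $u := u^{\mathrm{full}}$ on $\{|t|\leq T_\ast\}$ and $u := u^t$ on $\{|x|-|t|>R_0+2\}$; these are consistent on their overlap. A short computation shows that if $\eps < T_\ast/(T_\ast+R_0+3)$ then $\Omega \subseteq \{|t|\leq T_\ast\}\cup\{|x|-|t|>R_0+2\}$, so $u$ is defined on all of $\Omega$.

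Finally, the norm bounds and tightness follow from the corresponding properties of the two building blocks: the $L^\infty_t(H^1_x\times L^2_x)$ and $L^q_t L^r_x(\Omega)$ estimates are immediate by dominating $\chi_\Omega u$ by $u^{\mathrm{full}}\mathbf 1_{|t|\leq T_\ast}$ and by $u^t$ elsewhere. For \eqref{E:boost tightness}, when $|t|\leq T_\ast$ it follows from tightness of the initial data plus finite propagation speed applied to $u^{\mathrm{full}}$; when $T_\ast<|t|<\eps R$, the backward light cone from $\{|x|>R\}$ lies in $\{|y|>(1-\eps)R\}$, so finite propagation speed applied to $u^t$ controls the mass there by $\|(u_0^t,u_1^t)\|_{H^1\times L^2(\{|y|>(1-\eps)R\})}\to 0$. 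Uniqueness is inherited from Proposition~\ref{P:lwp}. The main technical nuisance I anticipate is verifying finite propagation speed in the focusing case, where $e$ is not positive-definite; however, since the nonlinearity is energy-subcritical in two dimensions, one can control the quartic term by Gagliardo--Nirenberg against the quadratic terms on the cone boundary and carry out the standard cone-energy argument.
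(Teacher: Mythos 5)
Your proposal is correct and follows essentially the same route as the paper: a local-in-time solution from the full data, a global small-data solution capturing the spatial tail, gluing via finite speed of propagation, and the same arithmetic condition on $\eps$; the tightness claim \eqref{E:boost tightness} is likewise obtained in the paper by comparing $u$ on $\{|x|-|t|>\tilde R\}$ with the global solution of the data truncated at radius $\tilde R$, whose $L^\infty_t(H^1_x\times L^2_x)$ norm vanishes as $\tilde R\to\infty$ by the small-data theory --- which is the quantitative input your phrase ``finite propagation speed controls the mass by the tail data norm'' implicitly requires. Your worry about the focusing case is unnecessary: finite speed of propagation is a uniqueness statement proved by a Gronwall/energy argument for the difference of two solutions, and does not rely on positivity of the nonlinear energy density.
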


\begin{remark} \label{R:big eps}
As we will see from the proof, if $(u_0,u_1)$ leads to a global solution, then we may take any $0 < \eps < 1$.
\end{remark}

\begin{proof}
Both existence and uniqueness follow directly by combining the local theory described so far with finite speed of propagation.  First we note that by Proposition~\ref{P:lwp}
there exists $T_0=T_0(u_0,u_1)>0$ so that there is a (unique) local solution $u$ to \eqref{nlkg} defined on the spacetime slab $|t| < T_0$ and having finite spacetime norms there.

Next, let $\phi$ denote a smooth cutoff function with $\phi(x)=1$ outside the unit ball and $\phi(x)=0$ when $|x|<\frac12$.  Given any $\eta>0$ (in particular, the threshold
for the small data theory), there is an $R_0$ sufficiently large so that
$$
  \int_{\R^2}  \bigl[ |\phi(x/R_0) u_1(x)|^2 + |\nabla [\phi(x/R_0) u_0(x)]|^2 + |\phi(x/R_0)u_0(x)|^2 \bigr] \,dx < \eta.
$$
Thus by Proposition~\ref{P:lwp}, there is a global solution $\tilde u$ to \eqref{nlkg} with initial data $\tilde u(0,x) = \phi(x/R_0) u_0(x)$ and $\tilde u_t(0,x) = \phi(x/R_0) u_1(x)$.
By uniqueness and finite speed of propagation, $\tilde u$ provides an extension of $u$ to the spacetime region where $|x|-|t| > R_0$.  Note that $\tilde u$ has finite (global) spacetime norms.

To recap, we have proved that there is a unique local solution with finite spacetime bounds on the region where $|t|<T_0$ or $|x|-|t|>R_0$.  This includes the region
$\Omega$ provided we choose $\eps <T_0/(1+R_0+T_0)$ and so settles the majority of the lemma; it remains only to prove the tightness statement \eqref{E:boost tightness}.

The proof of \eqref{E:boost tightness} is a simple variation on the $\tilde u$ construction above.  Indeed, if $\tilde u^{(\tilde R)}$ is the solution to \eqref{nlkg} with
initial data $\tilde u^{(\tilde R)}(0,x) = \phi(x/\tilde R) u_0(x)$ and $\tilde u_t^{(\tilde R)}(0,x) = \phi(x/\tilde R) u_1(x)$ then (cf. Remark~\ref{R:small D})
$$
\lim_{\tilde R\to\infty}   \| \tilde u^{(\tilde R)} \|_{L^\infty_t H^1_x(\R\times\R^2)}
     + \| \partial_t \tilde u^{(\tilde R)} \|_{L^\infty_t L^2_x(\R\times\R^2)} =0.
$$
Taking $\tilde R< (1-\eps)R$, this proves \eqref{E:boost tightness} because $u$ and $\tilde u^{(\tilde R)}$ agree on the region $|x| - |t| > \tilde R$.
\end{proof}

\begin{corollary}\label{C:boostable}
In view of Lemma~\ref{L:boostable}, any initial data $u(0)\in H^1_x$ and $u_t(0)\in L^2_x$ lead to a solution $u$ to \eqref{nlkg} in a spacetime region of the form
$\Omega=\{ (t,x) : |t| - \eps |x| < \eps\}$ for some $\eps>0$.  For $\tfrac{|\nu|}{\jp{\nu}}<\eps$, we have
\begin{SL}
\item $u\circ L_\nu(t,x)$ is a (strong) solution to \eqref{nlkg} on $(-\eps,\eps)\times\R^2$.
\item $\nu \mapsto \bigl(u\circ L_\nu(0,x), [u \circ L_\nu]_t(0,x) \bigr)$ is continuous with values in $H^1_x\times L^2_x$.
\item The energy and momentum obey Einstein's relation:
\begin{equation}\label{E:boosted EP}
\bigl(E(u\circ L_\nu), P(u\circ L_\nu) \bigr)=L_\nu^{-1}\bigl(E(u), P(u)\bigr).
\end{equation}
In particular,
\begin{equation}\label{E:Einstein}
E\bigl(u\circ L_\nu(t)\bigr)^2 - P\bigl(u\circ L_\nu(t)\bigr)^2 \quad\text{is independent of $t$ and $\nu$.}
\end{equation}
\end{SL}
\end{corollary}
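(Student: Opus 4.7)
The plan is to derive the three statements from the Lorentz invariance of \eqref{nlkg} together with the geometric compatibility of the boost $L_\nu$ with the domain $\Omega$ provided by Lemma~\ref{L:boostable}. The basic observation is that \eqref{E:L boost} identifies the image of the slab as the tilted slab
\begin{equation*}
L_\nu\bigl((-\eps,\eps)\times\R^2\bigr)=\bigl\{(\tilde t,\tilde x):|\jp\nu\tilde t+\nu\cdot\tilde x|<\eps\bigr\},
\end{equation*}
on which the reverse triangle inequality yields $\jp\nu|\tilde t|-|\nu||\tilde x|<\eps$; using $\jp\nu\geq 1$ and $|\nu|/\jp\nu<\eps$, this gives $|\tilde t|<\eps/\jp\nu+(|\nu|/\jp\nu)|\tilde x|\leq\eps(1+|\tilde x|)$, so the tilted slab lies inside $\Omega$. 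With this containment in hand, Lorentz invariance of the linear Klein--Gordon operator and the pointwise character of the nonlinearity make $u\circ L_\nu$ a classical solution of \eqref{nlkg} on the slab. To verify the strong-solution conditions in Definition~\ref{D:solution}: the $L^4_{t,x}$ bound is preserved because $L_\nu$ has unit Jacobian and maps into $\Omega$; the chain rule writes $\partial_t(u\circ L_\nu)$ and $\nabla_x(u\circ L_\nu)$ as fixed linear combinations of $(u_t)\circ L_\nu$ and $(\nabla u)\circ L_\nu$, which lie in $L^\infty_t L^2_x$ by Lemma~\ref{L:boostable}; continuity with values in $H^1_x\times L^2_x$ follows from continuity of $(u,u_t)$ on the time slices of $\Omega$ together with the spatial tightness \eqref{E:boost tightness}; and the Duhamel identity for $u\circ L_\nu$ is inherited from that for $u$ via Lorentz invariance of the linear propagator. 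This proves (i).

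For (ii), the boosted initial data is precisely the trace of the pair $(u,u_t)$, with fixed coefficients determined by the linear map $L_\nu$, on the spacelike hyperplane $\{\jp\nu t'+\nu\cdot x'=0\}$, which depends smoothly on $\nu$. Continuity in $\nu$ therefore reduces to continuity of $(u,u_t)$ in $H^1_x\times L^2_x$ along slices of $\Omega$, smooth dependence of the hyperplane on $\nu$, and the tightness \eqref{E:boost tightness} which rules out escape to spatial infinity. A clean execution approximates by Schwartz initial data (for which continuity is transparent via persistence of regularity, Proposition~\ref{P:lwp}) and then passes to the $H^1_x\times L^2_x$ limit by the stability result of Proposition~\ref{P:stability}.

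For (iii), introduce the symmetric energy-momentum tensor $T^{\mu\nu}$ of \eqref{nlkg}, so that $T^{00}$ is the energy density and $T^{0i}=-u_t\partial_i u$ the momentum density, and on solutions $\partial_\mu T^{\mu\nu}=0$ distributionally. Integrating this conservation law over the spacetime region bounded by the hyperplane $\{t=0\}$ and the boosted hyperplane $\{\jp\nu t'+\nu\cdot x'=0\}$, and discarding the side boundary at spatial infinity via \eqref{E:boost tightness}, yields \eqref{E:boosted EP}; the identity \eqref{E:Einstein} then follows immediately because $L_\nu$ preserves the Minkowski form $E^2-|P|^2$. The main obstacle is the rigorous justification of this divergence-theorem computation at the energy regularity $T^{\mu\nu}\in L^\infty_t L^1_x$: well-definedness of the trace of $T^{\mu\nu}$ on the boosted hyperplane and vanishing of the boundary at spatial infinity both rest on the uniform tightness \eqref{E:boost tightness} supplied by Lemma~\ref{L:boostable}.
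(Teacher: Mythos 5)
Your geometric containment $L_\nu((-\eps,\eps)\times\R^2)\subseteq\Omega$ and your outline of part (iii) (divergence theorem for the conserved nonlinear stress-energy tensor, with the side boundary killed by \eqref{E:boost tightness}) both match the paper. The gap is in parts (i) and (ii), and it sits exactly where the paper spends almost all of its effort. You assert that $\partial_t(u\circ L_\nu)(t,\cdot)$ and $\nabla(u\circ L_\nu)(t,\cdot)$ lie in $L^2_x$ ``by Lemma~\ref{L:boostable}'' and that continuity ``follows from continuity of $(u,u_t)$ on the time slices of $\Omega$.'' But $(u\circ L_\nu)(t,\cdot)$ is the restriction of $u$ to the \emph{tilted} hyperplane $L_\nu(\{t\}\times\R^2)$, whereas the bound $\|u\|_{L^\infty_t(H^1_x\times L^2_x)(\Omega)}<\infty$ from Lemma~\ref{L:boostable} controls only horizontal slices; at energy regularity a function in $L^\infty_t L^2_x$ need not even have a well-defined $L^2$ trace on a tilted hyperplane. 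Showing that these tilted traces exist, are bounded in $H^1_x\times L^2_x$, and vary continuously in $(t,\nu)$ \emph{is} the content of the corollary. The paper does this by writing $u=\ulin+\tilde u$, handling $\ulin$ exactly through the Fourier-side action of $e^{-it\jpn}\W_\nu$, and controlling the tilted trace of $\tilde u$ (which has vanishing Cauchy data at $t=0$) via the divergence theorem applied to the flux vector $\mathfrak{p}^\alpha=\jp{\nu}\mathcal{T}^{0\alpha}+\nu_j\mathcal{T}^{j\alpha}$ built from the \emph{linear} stress-energy tensor of $\tilde u$: its flux through $L_\nu(\{t\}\times\R^2)$ equals the boosted $H^1_x\times L^2_x$ norm, while its divergence is the nonlinear forcing $-\mu u^3[\jp{\nu}\tilde u_t-\nu\cdot\nabla\tilde u]$, which is integrable over the lens-shaped region between the two hyperplanes by Strichartz and tends to $0$ as $(t,\nu)\to 0$.

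Your proposed repair for (ii) --- approximate by Schwartz data and pass to the limit via Proposition~\ref{P:stability} --- does not close this gap either: that stability result compares solutions only on horizontal time slices and only in $H^{1/2}_x$, so it gives no uniform-in-$\nu$ control of the difference of the boosted data in $H^1_x\times L^2_x$. To make the limiting argument rigorous you would need an energy-flux estimate on the tilted hyperplane for the difference of two solutions, which is again the $\mathfrak{p}$-argument above. In short, the one estimate your write-up omits is the one the proof actually turns on.
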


\begin{remark}
The square-root of the quantity in \eqref{E:Einstein} is usually known as the \emph{rest mass}; compare the famous equation $E^2=P^2c^2+m^2c^4$, or its $P=0$ case $E=mc^2$.
We will not use the term \emph{rest mass} here so as to avoid confusion with the (non-conserved) quantity $M(u(t))$.
\end{remark}

\begin{proof}
It is not difficult to verify that if $|\nu|\jp{\nu}^{-1}<\eps$ then $L_\nu$ maps $(-\eps,\eps)\times\R^2$ into $\Omega$.  Thus $u\circ L_\nu$ is defined in the region
claimed.  As $L_\nu$ is volume preserving, we also see that
\begin{equation}\label{boost 44}
\int_{-\eps}^\eps \int_{\R^2} \bigl|u\circ L_\nu(t,x)\bigr|^4\,dx\,dt \leq \iint_\Omega \bigl|u(t,x)\bigr|^4\,dx\,dt < \infty.
\end{equation}
This estimate allows one to justify the elementary manipulations which guarantee that $u\circ L_\nu$ is a distributional solution.
To prove that it is a strong solution we need to prove that it belongs to $C^0_t(H^1_x\times L^2_x)$.  We will settle both this and part (ii) of the
corollary by showing that
$$
(t,\nu) \mapsto \bigl(u\circ L_\nu(t,x), [u \circ L_\nu]_t(t,x) \bigr)
$$
is a continuous function from $\{|t|<\eps,\ |\nu|<\eps\}$ to $H^1_x\times L^2_x$.

Let $\ulin$ denote the solution to the linear Klein--Gordon equation that has the same initial data as $u$, namely,
$$
\ulin(t) = \cos(t\jpn) u(0) + \jpn^{-1}\sin(t\jpn) u_t(0),
$$
and let $\tilde u = u - \ulin$ denote the difference.  The action of Lorentz boosts on solutions of the linear equation was described in subsection~\ref{SS:boosts};
in particular,
\begin{equation} \label{E:lin o L is Wa}
\ulin\circ L_\nu (t,x) + i\jpn^{-1}\partial_t [\ulin\circ L_\nu] (t,x) = e^{-it\jpn} \W_\nu [ u(0) + i \jpn^{-1} u_t(0) ].
\end{equation}
The action of $e^{-it\jpn}\W_\nu$ on the Fourier side (cf. Lemma~\ref{L:boost action}) clearly shows that
$$
(t,\nu) \mapsto \bigl(\ulin\circ L_\nu(t,x), [\ulin \circ L_\nu]_t(t,x) \bigr)
$$
has the required continuity.  This leaves us to consider the effect of Lorentz boosts on $\tilde u$, which obeys
$$
\tilde u_{tt} - \Delta \tilde u + \tilde u=-\mu u^3
\qtq{and} \tilde u(0,x)=\tilde u_t(0,x)=0.
$$
As $\tilde u=u-\ulin$, Lemma~\ref{L:boostable} and the Strichartz inequality imply
\begin{gather}\label{tilde S}
\| \tilde u \|_{L^q_t L^r_x(\Omega)} + \| \nabla_{t,x} \tilde u \|_{L^\infty_t L^2_x(\Omega)} < \infty
    \quad\text{for each $2<q\leq\infty$ and $\tfrac1q + \tfrac1r=\tfrac12$.}
\end{gather}
We also have
\begin{align}\label{E:boost tightness 2}
\lim_{R\to\infty} \sup_{|t|<\eps R} \int_{|x|>R} \bigl[ |\tilde u_t(t,x)|^2 + |\nabla \tilde u(t,x)|^2 + |\tilde u(t,x)|^2 \bigr] \,dx = 0.
\end{align}
Again this follows by writing $\tilde u=u-\ulin$:  For $u$ we use \eqref{E:boost tightness}; the analogous estimate for $\ulin$ follows from
finite speed of propagation and energy conservation (cf. the proof of \eqref{E:boost tightness}).

We now turn to the main part of the argument.  We will give complete details for the proof that $\tilde u \circ L_\nu$ and its time
derivative are bounded in the requisite spaces and that
continuity holds at the point $(t=0,\nu=0)$.  The reader should have little difficulty adapting the argument to prove continuity
at other points, for example, by using the group property of the transformations.

Let $\mathcal{T}$ denote the stress-energy tensor for $\tilde u$, which has components
\begin{gather*}
\mathcal{T}^{00}= \tfrac12 |\tilde u_t|^2 + \tfrac12|\nabla \tilde u|^2 + \tfrac12|\tilde u|^2,
    \qquad \mathcal{T}^{0j}=\mathcal{T}^{j0}=-\tilde u_t \tilde u_j,\\
\text{and} \qquad \mathcal{T}^{jk} = \tilde u_j \tilde u_k -\delta_{jk}\bigl[ \mathcal{T}^{00} - |\tilde u_t|^2 \bigr],
\end{gather*}
where $j,k\in\{1,2\}$.  Note that this is the stress-energy tensor associated to the linear Klein--Gordon equation (cf. \eqref{E:stressE} below).
For our immediate purposes, we do not need to consider the full tensor, but merely the $3$-vector $\mathfrak{p}$ with components
$$
\mathfrak{p}^\alpha := \jp{\nu} \mathcal{T}^{0\alpha} + \nu_1 \mathcal{T}^{1\alpha} + \nu_2 \mathcal{T}^{2\alpha}, \qquad \alpha\in\{0,1,2\}.
$$
This vector has divergence
$$
\nabla_{t,x} \cdot \mathfrak{p} = \partial_t \mathfrak{p}^0 + \partial_1 \mathfrak{p}^1 + \partial_2 \mathfrak{p}^2
    = -\mu u^3[\jp{\nu} \tilde u_t - \nu\cdot\nabla\tilde u],
$$
and was deliberately constructed so that
\begin{align*}
\int_{L_\nu(t,\R^2)} \mathfrak{p} \cdot d\mathbf{S} &= \int_{\R^2} [\jp{\nu} \mathfrak{p}^0 + \nu_j \mathfrak{p}^j]\circ L_\nu(t,x) \,dx \\
    &=  \tfrac12 \int_{\R^2} |\partial_t (\tilde u \circ L_{\nu})|^2 + |\nabla(\tilde u \circ L_{\nu})|^2 + |\tilde u \circ L_{\nu}|^2 \,dx,
\end{align*}
where $d\mathbf{S}$ denotes surface measure times the unit normal.

Both $\tilde u$ and $\nabla_{t,x}\tilde u$ vanish on the surface $t=0$. Thus we may estimate the required norm by applying the divergence theorem
to $\mathfrak{p}$ on the region
$$
\Omega_{t,\nu} := \{(s,y) : 0< s < \jp{\nu}^{-1} (t-\nu\cdot y)\} \cup \{ (s,y) : \jp{\nu}^{-1}(t-\nu\cdot y) < s <0\}\subseteq \Omega,
$$
whose boundary comprises $(0, \R^2)\cup L_\nu (t, \R^2)$.
There are two technical obstacles to doing this: $\mathfrak{p}$ may not be smooth enough and $\Omega_{t,\nu}$ is not compact. The former can be dealt with
by the usual mollification technique of convolving with a $C^\infty_c$ function.  The latter was the reason for proving \eqref{E:boost tightness 2}, as we will explain.

The main estimate required to prove boundedness and continuity is the following:
\begin{align} \label{E:1}
\iint_{\Omega_{t,\nu}} |\nabla_{t,x} \cdot \mathfrak{p}| \,dy\,ds
    & \lesssim \jp{\nu} \iint_{\Omega_{t,\nu}} |u(s,y)|^3 |\nabla_{t,y} \tilde u(s,y)| \,dy\,ds \\ \notag
&\lesssim \| u \|_{L^3_s L^6_y(\Omega_{t,\nu})} \| \nabla_{t,x} \tilde u \|_{L^\infty_s L^2_y(\Omega)}\to 0 \qtq{as} (t,\nu)\to 0.
\end{align}
The last step follows from Lemma~\ref{L:boostable}, \eqref{tilde S}, and the dominated convergence theorem since $\Omega_{t,\nu}\to \varnothing$ as $(t,\nu)\to 0$.

We are now ready to apply the divergence theorem.  Let $\phi:\R\to[0,1]$ be a smooth function with $\phi(r)=1$ when $r<1$ and $\phi(r)=0$ when $r>2$.
Now let $\psi_R(s,y)=\phi(\tfrac{|s|+|y|}{R})$ where $R>0$ will be sent to infinity.  Applying the divergence theorem to $\mathfrak{p} \psi_R$
and invoking \eqref{E:boost tightness 2} and \eqref{E:1} yields
\begin{align} \notag
\tfrac12 \int_{\R^2}|\partial_t (\tilde u \circ L_{\nu})|^2 &+ |\nabla(\tilde u \circ L_{\nu})|^2 + |\tilde u \circ L_{\nu}|^2 \,dx\\ \notag
&= \lim_{R\to \infty}\tfrac12 \int_{\R^2} \bigl[ |\partial_t (\tilde u \circ L_{\nu})|^2 + |\nabla(\tilde u \circ L_{\nu})|^2 + |\tilde u \circ L_{\nu}|^2\bigr]\psi_R \,dx \\ \notag
&\leq \limsup_{R\to \infty} \iint_{\Omega_{t,\nu}} |\psi_R\nabla_{t,x} \cdot \mathfrak{p}| + |\mathfrak{p} \cdot \nabla_{t,x} \psi_R|\,dy\,ds\\ \notag
&\leq \iint_{\Omega_{t,\nu}} |\nabla_{t,x} \cdot \mathfrak{p}| + \limsup_{R\to \infty} \frac1R \! \int_{-\eps R}^{\eps R} \int_{|x|\sim R} |\jp{\nabla_{t,x}} \tilde u|^2\,dx\,dt\\ \label{E:2}
&\leq  \iint_{\Omega_{t,\nu}} |\nabla_{t,x} \cdot \mathfrak{p}| \longrightarrow 0 \qtq{as} (t,\nu)\to 0.
\end{align}
This settles parts (i) and (ii) of the corollary.

The proof of part (iii) revolves around the stress-energy tensor associated to the nonlinear Klein--Gordon equation:
\begin{equation}\label{E:stressE}
\begin{gathered}
\mathcal{T}^{00}= \tfrac12  u_t^2 + \tfrac12|\nabla u|^2 + \tfrac12|u|^2 + \tfrac{\mu}4 |u|^4,
    \qquad \mathcal{T}^{0j}=\mathcal{T}^{j0}=-u_t u_j,\\
\text{and} \qquad \mathcal{T}^{jk} = u_j  u_k -\delta_{jk}\bigl[ \mathcal{T}^{00} - | u_t|^2 \bigr].
\end{gathered}
\end{equation}
As $u$ is a solution, this is divergence free, that is,
$$
\partial_t \mathcal{T}^{\alpha 0} + \partial_1 \mathcal{T}^{\alpha 1} + \partial_2 \mathcal{T}^{\alpha 2}=0 \qtq{for all} \alpha\in\{0, 1,2\}.
$$
Applying the divergence theorem for all values of $\alpha$, one deduces that \eqref{E:boosted EP}, and hence \eqref{E:Einstein}, holds.  The estimates needed to deal with the non-compactness
of $\Omega_{t,\nu}$ can be found in Lemma~\ref{L:boostable}.
\end{proof}

\begin{corollary}[Boosting to zero momentum] \label{C:rest mass} Let $(u_0, u_1) \in H^1_x \times L^2_x$.  In the focusing case assume also that $M(u(0)) < M(Q)$ and $E(u) < E(Q)$.
Let $u:\R \times \R^2 \to \R$ be the solution to \eqref{nlkg} with this initial data.  Then there exists $\nu \in \R^2$ such that $u^{\nu} := u \circ L_{\nu}$ is a global (strong)
solution to \eqref{nlkg} with
\begin{align}
\label{E:P=0 C}
P(u^{\nu}) &= 0,\\
\label{E:E<E(u)}
E(u^{\nu}) &\leq E(u),\\
\label{E:M<M0}
M(u^{\nu}(0)) &< M(Q) \qtq{in the focusing case.}
\end{align}
\end{corollary}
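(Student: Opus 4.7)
My plan is: (1) establish the Einstein-type inequality $E(u) \ge |P(u)|$, with equality only if $u\equiv 0$; (2) use it to define an explicit boost parameter $\nu$ taking the solution to its rest frame; (3) verify the three conclusions using Einstein's relation \eqref{E:boosted EP}. Global existence of $u$ is given by Corollary~\ref{Cor:gwp}, and Remark~\ref{R:big eps} allows Corollary~\ref{C:boostable} to be applied for any $\nu \in \R^2$ (the constraint $|\nu|/\jp{\nu} < 1$ being automatic). If $u \equiv 0$ take $\nu = 0$; otherwise set
$$
\nu := -P(u)\big/\sqrt{E(u)^2 - |P(u)|^2},
$$
so that $|\nu|/\jp{\nu} = |P(u)|/E(u) < 1$ and $\jp{\nu} = E(u)/\sqrt{E(u)^2-|P(u)|^2}$. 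Plugging into \eqref{E:boosted EP} yields $P(u^{\nu}) = 0$ and $E(u^{\nu}) = \sqrt{E(u)^2 - |P(u)|^2} \le E(u)$, establishing \eqref{E:P=0 C} and \eqref{E:E<E(u)}.

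The main obstacle is the inequality $E(u)\ge |P(u)|$, which requires work in the focusing case. Write $a = \int u_t^2$, $b = \int|\nabla u|^2$, $c = \int u^2$, $d = \int u^4$. Cauchy-Schwarz gives $|P(u)| \le \sqrt{ab}$, and AM-GM gives $\sqrt{ab}\le \tfrac12(a+b)$, hence
$$
E(u) - |P(u)| \ \ge\ \bigl[\tfrac12(a+b) - \sqrt{ab}\bigr] + \tfrac12 c + \tfrac{\mu}{4}d.
$$
In the defocusing case ($\mu=+1$) every summand is non-negative, and equality forces $c=0$, i.e., $u\equiv 0$. In the focusing case ($\mu=-1$), the sharp Gagliardo-Nirenberg inequality (Theorem~\ref{T:SharpGN}) gives $\tfrac14 d \le bc/(2M(Q))$, while Proposition~\ref{P:coercive}(i) gives $b < M(Q)$; therefore
$$
E(u) - |P(u)| \ \ge\ \bigl[\tfrac12(a+b) - \sqrt{ab}\bigr] + \tfrac12 c\bigl(1 - b/M(Q)\bigr) \ \ge\ 0,
$$
with equality requiring $c=0$, i.e., $u\equiv 0$.

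The mass bound \eqref{E:M<M0} follows from a continuity argument along the segment $s\mapsto s\nu$, $s\in[0,1]$. By Corollary~\ref{C:boostable}(ii) the map $s \mapsto M(u^{s\nu}(0))$ is continuous, and equals $M(u(0))<M(Q)$ at $s=0$. A direct computation using \eqref{E:boosted EP} shows $E(u^{s\nu}) = \jp{s\nu}E(u) + s\nu\cdot P(u)$ is monotone decreasing from $E(u)$ to $\sqrt{E(u)^2 - |P(u)|^2}$, hence stays strictly below $E(Q)$ throughout. Were $M(u^{s\nu}(0)) = M(Q)$ to occur at some $s$, the sharp Gagliardo-Nirenberg inequality would give
$$
E(u^{s\nu}) \ \ge\ \tfrac12\int|u^{s\nu}_t|^2 + \tfrac12\bigl(1 - M(u^{s\nu}(0))/M(Q)\bigr)\!\int|\nabla u^{s\nu}|^2 + \tfrac12 M(u^{s\nu}(0)) \ =\ \tfrac12 M(Q) = E(Q),
$$
contradicting $E(u^{s\nu}) < E(Q)$. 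Hence $M(u^{\nu}(0)) < M(Q)$; this combined with $E(u^{\nu}) < E(Q)$ yields the global existence of $u^{\nu}$ in the focusing case via Corollary~\ref{Cor:gwp}.
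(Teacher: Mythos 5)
Your proof is correct and follows essentially the same route as the paper's: the same explicit formula for $\nu$, the same use of Einstein's relation \eqref{E:boosted EP} to obtain \eqref{E:P=0 C} and \eqref{E:E<E(u)}, and the same sharp Gagliardo--Nirenberg plus continuity bootstrap along $s\mapsto s\nu$ for \eqref{E:M<M0}. The only cosmetic difference is in establishing $|P(u)|<E(u)$: the paper gets strictness from the equality case of Cauchy--Schwarz and cites \eqref{E:Grad trap} directly, whereas you re-derive the needed coercivity via Gagliardo--Nirenberg and extract strictness from the leftover term $\tfrac12 c\,(1-b/M(Q))$.
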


\begin{remark}\label{R:3:10} In view of \eqref{E:boosted EP}, if $u$ is not identically zero then there is at most one value of $\nu$ (cf.\ \eqref{E:def nu}) so that $P(u^\nu)=0$.
Moreover, by \eqref{E:Einstein} we have $E(u^{\nu}) = E(u)$ if and only if $P(u^{\nu}) = P(u) =  0$.
\end{remark}

\begin{proof}
Without loss of generality, we may assume that $P(u)\neq 0$, the result being trivial otherwise.  In particular, this means that $u(t)\not\equiv 0$ for all $t$.

By Remark~\ref{R:big eps}, for every $t_0 \in \R$, the function $u(\cdot - t_0)$ satisfies the conclusions of Lemma~\ref{L:boostable} for every $0 < \eps < 1$.
Thus by Corollary~\ref{C:boostable}, $u^{\nu}$ is a global strong solution to \eqref{nlkg} for each $\nu \in \R^2$.

By \eqref{E:boosted EP}, it is possible to find $\nu$ such that $P(u^{\nu}) = 0$ if and only if
\begin{equation} \label{E:P<E}
|P(u)| < E(u).
\end{equation}
Moreover, in this case, one must choose
\begin{equation} \label{E:def nu}
\nu = - \frac{P(u)}{\sqrt{E(u)^2-|P(u)|^2}}.
\end{equation}

To see that \eqref{E:P<E} holds in our case we observe that
\begin{equation}\label{Why P<E}
|P(u)| < \|\nabla u\|_{L^2_x} \|u_t\|_{L^2_x} \leq \tfrac12\|\nabla u\|_{L^2_x}^2 + \tfrac12 \|u_t\|_{L^2_x}^2 \leq E(u).
\end{equation}
The first step here is the Cauchy--Schwarz inequality; note that equality would imply that $\nabla u(t)$ points in only one direction for all $x\in\R$, which is inconsistent with
$u(t)\not\equiv 0$.  The last inequality in \eqref{Why P<E} rests solely on the definition of $E(u)$ in the defocusing case and is an application of \eqref{E:Grad trap} in
the focusing case.

The bound \eqref{E:E<E(u)} is a trivial consequence of \eqref{E:Einstein}.  Indeed, we have
\begin{equation} \label{E:snu}
E(u^{s\nu}) \leq E(u) \qtq{for all} 0 \leq s \leq 1.
\end{equation}
This follows from the fact that
$$
E(u^{s\nu}) = \jp{s\nu} E(u) + s\nu \cdot P(u),
$$
which is convex in $s$ and is bounded by $E(u)$ at both 0 and 1.  By the sharp Gagliardo--Nirenberg inequality (cf.\ the proof of \eqref{E:Mass trap}), $M(u^{s\nu}(0)) < M(Q)$ implies that
$$
M(u^{s\nu}(0)) \leq 2E(u^{s\nu}) \leq 2E(u) < M(Q).
$$
Thus by the continuity established in Corollary~\ref{C:boostable}, we must have $M(u^{s\nu}(0)) < M(Q)$ for each $0 \leq s \leq 1$, and in particular at $s=1$.  This settles \eqref{E:M<M0}
and completes the proof of the corollary.
\end{proof}


\section{Refinements of the Strichartz inequality}

The goal of this section is to prove an inverse Strichartz inequality (Theorem~\ref{T:InvStrich}), which is an essential ingredient in the concentration compactness argument.
As described in the introduction, we will use Tao's sharp bilinear restriction estimate; sharpness here refers to the minimality of the spacetime integrability exponent.
This sharpness is not important for our purposes; what is important is that it provides exponents beyond the range of the Strichartz inequality.

To apply Tao's estimate, we need to reduce to a fixed compact set in Fourier space.  This is the role of Lemmas~\ref{L:Annular}~and~\ref{L:BilinearStrich}.  The former lemma demonstrates that
a linear evolution cannot be big without a significant contribution from one of its Littlewood--Paley pieces.  The proof employs the same ideas as proofs of the inverse Sobolev inequalities
needed in energy-critical cases; however, the precise formulation and argument reflects the authors' particular perspective, as advertised in \cite{ClayNotes}.

In the $L^2_x$-critical case (as opposed to the $L^2_x$-supercritical regime), the characteristic frequency scale of a minimal blowup solution can be arbitrarily small.  Also, to a first
approximation, Lorentz boosts act as translations on the Fourier side (cf. Lemma~\ref{L:boost action}); thus we see that the characteristic length scale of a wave packet is not indicated
by the Littlewood--Paley annulus to which it belongs.  We capture the dominant portion of the boost parameter by subdividing annuli into tubes of unit width; these tubes are natural since
they are images of the unit cube under the action of $\ell_\nu$.  The proof that a large linear evolution can be attributed to some tube rests on the bilinear Strichartz inequality
Lemma~\ref{L:BilinearStrich}.

\begin{lemma}[Annular decoupling]\label{L:Annular} For $f\in H^{1/2}_x$,
\begin{equation*}
\bigl\| e^{-it\jpn} f \bigr\|_{L^{4}_{t,x}}^2 \lesssim \sup_N \bigl\| e^{-it\jpn} f_N \bigr\|_{L^{4}_{t,x}}  \bigl\| f \bigr\|_{H^{1/2}_x} .
\end{equation*}
\end{lemma}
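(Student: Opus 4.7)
The plan is to reduce the quartic $L^4_{t,x}$ norm to a bilinear $L^2_{t,x}$ expression via the Littlewood--Paley square function, and then play H\"older's inequality against a bilinear Strichartz gain for dyadically separated frequencies. Write $F := e^{-it\jpn}f$ and $F_N := P_N F = e^{-it\jpn}f_N$. Since $1<4<\infty$, the Littlewood--Paley square function estimate (applied in $x$ for each fixed $t$ and then integrated in $t$) combined with the expansion of the square yields
\begin{equation*}
\|F\|_{L^4_{t,x}}^4 \sim \bigl\|{\textstyle\sum_N}|F_N|^2\bigr\|_{L^2_{t,x}}^2 = \sum_{N_1,N_2}\|F_{N_1}F_{N_2}\|_{L^2_{t,x}}^2,
\end{equation*}
so by the symmetry $N_1\leftrightarrow N_2$ it suffices to control the sum over $N_1\leq N_2$.

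For each pair $N_1\leq N_2$, I would estimate $\|F_{N_1}F_{N_2}\|_{L^2_{t,x}}^2$ by factoring it as a product of two copies of $\|F_{N_1}F_{N_2}\|_{L^2_{t,x}}$ and applying two different bounds. One factor is handled by H\"older followed by taking the supremum:
\begin{equation*}
\|F_{N_1}F_{N_2}\|_{L^2_{t,x}} \leq \|F_{N_1}\|_{L^4_{t,x}}\|F_{N_2}\|_{L^4_{t,x}} \leq \sup_N\|F_N\|_{L^4_{t,x}}^{2}.
\end{equation*}
The other factor is bounded by the (forthcoming) bilinear Strichartz estimate, Lemma~\ref{L:BilinearStrich}, which in the relevant form yields a gain
\begin{equation*}
\|F_{N_1}F_{N_2}\|_{L^2_{t,x}} \lesssim \bigl(N_1/N_2\bigr)^{\alpha}\,\|f_{N_1}\|_{H^{1/2}_x}\|f_{N_2}\|_{H^{1/2}_x}
\end{equation*}
for some $\alpha>0$. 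The exact value of $\alpha$ is immaterial; the diagonal case $N_1=N_2$ reduces to Strichartz (Lemma~\ref{L:half Strichartz}) composed with H\"older, so only a positive power is needed.

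Combining the two bounds and summing, I would then invoke a Schur-type estimate for the dyadic kernel $K(N_1,N_2)=(\min(N_1,N_2)/\max(N_1,N_2))^{\alpha}$, which has bounded row and column sums. With $a_N:=\|f_N\|_{H^{1/2}_x}$ (so that $\sum_N a_N^2\sim\|f\|_{H^{1/2}_x}^2$ by Plancherel plus finite overlap of the Littlewood--Paley supports), this gives
\begin{equation*}
\sum_{N_1\leq N_2}(N_1/N_2)^{\alpha}a_{N_1}a_{N_2} \lesssim \|f\|_{H^{1/2}_x}^{2},
\end{equation*}
and hence $\|F\|_{L^4_{t,x}}^{4}\lesssim \sup_N\|F_N\|_{L^4_{t,x}}^{2}\cdot\|f\|_{H^{1/2}_x}^{2}$, from which the lemma follows by taking square roots. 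The principal obstacle is the bilinear Strichartz estimate itself: the geometry of the mass hyperboloid $\omega=\jp{\xi}$ in two space dimensions is more delicate than either the paraboloid or the cone, especially because group velocities $\xi/\jp{\xi}$ fail to be transverse when the frequencies are aligned. However, for the present argument any positive dyadic gain suffices to close the Schur summation, so even a weak bilinear improvement is enough.
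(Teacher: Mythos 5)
Your overall architecture --- Littlewood--Paley square function, an off-diagonal gain in the frequency ratio, Schur summation --- is exactly the paper's, and your reduction of the lemma to the single estimate $\|F_{N_1}F_{N_2}\|_{L^2_{t,x}}\lesssim (N_1/N_2)^{\alpha}\,\|f_{N_1}\|_{H^{1/2}_x}\|f_{N_2}\|_{H^{1/2}_x}$ for $N_1\le N_2$ is sound. The gap is in the source of that gain. You cite Lemma~\ref{L:BilinearStrich}, but that lemma concerns two tubes $T_1,T_2\in\mathcal T_N$ lying in the \emph{same} dyadic annulus; its gain $N/\jp{\dist(T_1,T_2)}$ comes from angular separation and it provides no decay whatsoever in $N_1/N_2$ for a pair of full annuli at different heights. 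Worse, a genuine transversality-based bilinear improvement is exactly what fails in the regime you yourself flag: when $\xi_1$ and $\xi_2$ point in the same direction with $1\ll N_1\ll N_2$, both group velocities $\xi/\jp{\xi}$ are close to the same unit vector, the Jacobian $\bigl|\xi_1/\jp{\xi}-\eta_1/\jp{\eta}\bigr|$ in the standard change of variables degenerates, and no power of $N_1/N_2$ can be extracted from transversality with only $L^2_x$ norms on the right. So the key estimate is neither ``forthcoming'' from Lemma~\ref{L:BilinearStrich} nor supplied elsewhere in your argument; asserting that ``any positive gain suffices'' does not tell us where the gain comes from.

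The estimate you need is nonetheless true, with $\alpha=1/6$ measured in $\jp{N_1}/\jp{N_2}$, and it follows from purely \emph{linear} tools; this is how the paper proceeds. Split asymmetrically by H\"older, $\|F_{N_1}F_{N_2}\|_{L^2_{t,x}}\le\|F_{N_1}\|_{L^3_tL^6_x}\|F_{N_2}\|_{L^6_tL^3_x}$ (both pairs are admissible for Lemma~\ref{L:half Strichartz}), obtaining $\|f_{N_1}\|_{H^{2/3}_x}\|f_{N_2}\|_{H^{1/3}_x}$, and then use Bernstein to rewrite this as $\jp{N_1}^{1/6}\jp{N_2}^{-1/6}\|f_{N_1}\|_{H^{1/2}_x}\|f_{N_2}\|_{H^{1/2}_x}$. (The paper folds the two $\sup_K\|F_K\|_{L^4_{t,x}}$ factors into a single four-fold H\"older application on $\int|F_{N_1}|^2|F_{N_2}|^2$ rather than squaring and splitting as you do, but that difference is cosmetic.) With this substitution your Schur summation closes and the proof is complete.
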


\begin{proof}
Using the Littlewood--Paley square function estimate, the Strichartz inequality \eqref{E:half Strichartz}, and Bernstein's inequality,
\begin{align*}
\bigl\| e^{-it\jpn} & f \bigr\|_{L^{4}_{t,x}}^4 \\
&\lesssim \sum_{M,N} \int_\R \int_{\R^2}  \bigl|e^{-it\jpn} f_M \bigr|^2 \bigl|e^{-it\jpn} f_N\bigr|^2  \,dx\,dt \\
&\lesssim \sum_{M\leq N} \bigl\|e^{-it\jpn} f_M\bigr\|_{L^3_tL^6_x} \bigl\|e^{-it\jpn} f_M\bigr\|_{L^4_{t,x}} \bigl\|e^{-it\jpn} f_N \bigr\|_{L^4_{t,x}} \bigl\|e^{-it\jpn} f_N\bigr\|_{L^6_tL^3_x} \\
&\lesssim \sup_K \bigl\|e^{-it\jpn} f_K\bigr\|_{L^4_{t,x}}^2 \sum_{M\leq N} \bigl\| f_M\bigr\|_{H^{2/3}_x} \bigl\| f_N \bigr\|_{H^{1/3}_x} \\
&\lesssim \sup_K \bigl\|e^{-it\jpn} f_K\bigr\|_{L^4_{t,x}}^2 \sum_{M\leq N} \frac{\jp{M}^{1/6}}{\jp{N}^{1/6}} \bigl\| f_M \bigr\|_{H^{1/2}_x} \bigl\| f_N \bigr\|_{H^{1/2}_x} \\
&\lesssim \sup_K \bigl\|e^{-it\jpn} f_K\bigr\|_{L^4_{t,x}}^2 \bigl\| f \bigr\|_{H^{1/2}_x}^2.
\end{align*}
The last step is an application of Schur's test.
\end{proof}

As noted, this lemma shows that if the $L^4_{t,x}$~norm of the free evolution of $f$ is large, then one of its Littlewood--Paley pieces must take responsibility for this.
To find the wave packets inside the evolution of $f$ that are responsible, we need to subdivide each dyadic annulus into tubes.  More accurately, they are strips or sectors
in the two-dimensional case we are discussing; nevertheless, we use vocabulary adapted to the case of arbitrary dimension.

\begin{definition}\label{D:Tubes}
Given $N\in 2^\Z$ with $N\geq 1$, we (almost-everywhere) cover the Fourier support of $P_N$ by a finite collection $\mathcal T_N$ of non-overlapping tubes.  When $N=1$, the collection
consists of just one element, $[-\tfrac{99}{98},\tfrac{99}{98}]^2$.  For $N\geq 2$, we choose
$\mathcal T_N:=\{T_N^k : 0\leq k < 20 N \}$ with
$$
T_N^k := \bigl\{\xi : \tfrac12 N < |\xi| < \tfrac{99}{98} N \text{ and } |\Arg(\xi)-\tfrac{2\pi k}{20\,N}| < \tfrac{\pi}{20\,N}  \bigr\},
$$
where $\Arg(\xi)$ is the angle between $\xi$ and the positive horizontal axis.  Given a tube $T\in\mathcal T:=\cup_N \mathcal T_N$, we define the \emph{center} as follows:
$c(T)=0$ if $T$ is the unique $T\in\mathcal T_1$ and $c(T_N^k)$ is defined by $|c(T_N^k)|=3N/4$ and $\Arg(c(T_N^k))=\tfrac{2\pi k}{20\,N}$.

Associated to each $T\in\mathcal T_N$, we define a Fourier restriction operator $P_T$.  To this end, let $\psi:\R\to[0,1]$ be smooth,
obey $\psi(\theta)=1$ when $|\theta|\leq\tfrac{4}{10}$, $\supp(\psi)\subseteq [-\tfrac6{10},\tfrac6{10}]$, and form a partition of unity via $\sum_{k\in\Z} \psi(\theta-k) =1$
for all $\theta\in\R$.  Now define
$$
\widehat{f_{T_N^k}} (\xi) := \widehat{\! P_{T_N^k} f}(\xi) := \psi\bigl( \tfrac{20N}{2\pi}\Arg(\xi) - k\bigr) \widehat{\!P_N f}(\xi)
$$
except in the special case $N=1$.  In this latter case, $T=[-\tfrac{99}{98},\tfrac{99}{98}]^2$ and
$$
\widehat{f_T} (\xi) := \widehat{\! P_T f}(\xi) := \widehat{\!P_1 f}(\xi).
$$
\end{definition}

\begin{remark}\label{R:boost tube}
Careful but mundane computation shows that the Lorentz transformation with parameter $\nu=c(T)$ maps the tube $T$ into
the ball $|\xi|\leq 2$.  More precisely, the Fourier support of $\W_\nu^{-1} f_T$ lies inside this ball.
\end{remark}

The principal significance of this remark is that it allows us to prove the following, which will be used in the proof of Theorem~\ref{T:InvStrich}.

\begin{lemma}\label{L:ditch tube} Given a tube $T\in\mathcal T_N$ let $\nu=c(T)$ be its center.  Then
\begin{equation*}
\bigl\| \W_{\nu}^{-1} P_T  f \bigr\|_{L^4_x} \lesssim \| P_{\leq 2} \W_{\nu}^{-1} f\|_{L^4_x}
\end{equation*}
uniformly in $T$ and $N$.
\end{lemma}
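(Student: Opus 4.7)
The plan is to conjugate by the Lorentz boost on the Fourier side and reduce the statement to the $L^4$-boundedness of a smooth, compactly supported Fourier multiplier whose symbol has uniformly bounded derivatives.

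Let $\chi_T$ denote the smooth cutoff symbol defining $P_T$ (cf.\ Definition~\ref{D:Tubes}). The formula $\widehat{\W_\nu^{-1} g}(\tilde\xi) = \jp{\xi}\jp{\tilde\xi}^{-1}\hat g(\xi)$ from Lemma~\ref{L:boost action} gives
\begin{equation*}
\widehat{\W_\nu^{-1} P_T f}(\tilde\xi) = \chi_T(\ell_\nu^{-1}(\tilde\xi)) \, \widehat{\W_\nu^{-1} f}(\tilde\xi),
\end{equation*}
that is, $\W_\nu^{-1} P_T f = \tilde\chi_T(\nabla) \W_\nu^{-1} f$ with $\tilde\chi_T(\tilde\xi) := \chi_T(\ell_\nu^{-1}(\tilde\xi))$. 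By Remark~\ref{R:boost tube} (applied to the slightly thickened smooth support of $\chi_T$), $\tilde\chi_T$ is supported in $\{|\tilde\xi|\leq 2\}$, where the cutoff $\phi(\tilde\xi/2)$ defining $P_{\leq 2}$ equals $1$. Hence $\tilde\chi_T(\nabla) = \tilde\chi_T(\nabla) P_{\leq 2}$, and the conclusion of the lemma reduces to the uniform bound
\begin{equation*}
\|\tilde\chi_T(\nabla) h\|_{L^4_x} \lesssim \|h\|_{L^4_x}.
\end{equation*}

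By Young's convolution inequality, this will follow from $\|\mathcal F^{-1}(\tilde\chi_T)\|_{L^1_x} \lesssim 1$, which, in view of the uniform compact support $\{|\tilde\xi|\leq 2\}$, is in turn a consequence of uniform $C^k$-bounds on $\tilde\chi_T$ for any fixed integer $k>1$.

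The main (and only nontrivial) step is to verify these derivative bounds. Decompose coordinates into components parallel ($\|$) and perpendicular ($\perp$) to $\nu=c(T)$. From $\ell_\nu^{-1}(\tilde\xi) = \tilde\xi^\perp + \jp{\nu}\tilde\xi^\| + \nu\jp{\tilde\xi}$, one sees that on the region $|\tilde\xi|\leq 2$, every parallel derivative of $\ell_\nu^{-1}$ is $O(\jp{\nu})\sim N$, while every perpendicular derivative is $O(1)$. On the other hand, $\chi_T$ is concentrated on a tube of dimensions $\sim N$ (parallel to $\nu$) by $\sim 1$ (perpendicular), giving $\partial_{\xi^\|}^a \partial_{\xi^\perp}^b \chi_T = O(N^{-a})$. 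Combining these observations via the chain rule, each factor of $N$ inherited from the Jacobian of $\ell_\nu^{-1}$ is absorbed by a compensating factor of $1/N$ arising from a parallel derivative of $\chi_T$. We conclude $\partial^\alpha_{\tilde\xi}\tilde\chi_T = O_\alpha(1)$ uniformly in $T$ and $N$, which completes the proof.
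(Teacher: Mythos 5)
Your proof is correct and is essentially the paper's: both arguments conjugate $P_T$ through the boost to the multiplier $\chi_T\circ\ell_\nu^{-1}$, use Remark~\ref{R:boost tube} to see that it is supported where the symbol of $P_{\leq 2}$ equals one, and obtain uniform symbol bounds by pairing the $O(\jp{\nu})$ factors coming from the Jacobian of $\ell_\nu^{-1}$ against the $O(\jp{\nu}^{-1})$ gain from each parallel derivative of $\chi_T$ (the paper then quotes the Mikhlin theorem while remarking that the compact support makes this excessive --- which is precisely the Young's-inequality shortcut you take). Two harmless slips: in $\R^2$ one needs $C^k$ control with $k>2$, not $k>1$, to conclude that the inverse Fourier transform of $\tilde\chi_T$ lies in $L^1_x$; and the perpendicular derivative of the \emph{parallel} component of $\ell_\nu^{-1}$ is $O(\jp{\nu})$ rather than $O(1)$, but under the chain rule it always multiplies a factor $\partial_{\xi^\|}\chi_T=O(N^{-1})$, so the compensation mechanism you describe in the next sentence still closes the argument.
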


\begin{proof}
In view of Remark~\ref{R:boost tube}, \eqref{E:LP defn}, and \eqref{E:Fourier Boost}, we need to prove boundedness of the multiplier $m(\nabla)$ defined by
$$
m(\tilde\xi) = [\phi(\xi/N) - \phi(2\xi/N)] \psi\bigl( \tfrac{20N}{2\pi}\Arg(\xi) - k\bigr) =: \tilde m(\xi),
$$
where $\tilde\xi=\ell_\nu(\xi)$ as in \eqref{tildexi from xi}.  (Minor modifications are needed when $N=1$.)

We will apply the Mikhlin Multiplier Theorem (in the form of \cite[\S IV.3.2]{stein:small}); as $m$ is supported inside $\{|\tilde \xi|\leq 2\}$, this is perhaps a little excessive.
For the single tube when $N=1$ (or indeed any finite collection of $N$) the result follows easily.  Our only obligation is to check the uniformity as $N\to\infty$.

The computations that led to \eqref{E:Boost Jacobian} show that
\begin{equation}
\begin{bmatrix}\partial_{\tilde\xi^\|} \\ \partial_{\tilde\xi^\perp} \end{bmatrix}  = A(\tilde\xi) \begin{bmatrix} \jp{\nu} \partial_{\xi^\|} \\ \partial_{\xi^\perp} \end{bmatrix}
\end{equation}
where the entries of the matrix $A(\tilde\xi)$ obey symbol estimates of order zero uniformly in $\nu$.  On the other hand, direct computation shows
$$
\Bigl| \bigl(\jp{\nu} \partial_{\xi^\|}\bigr)^\alpha (\partial_{\xi^\perp})^\beta \tilde m(\xi) \Bigr| \lesssim_{\alpha,\beta} 1.
$$
The result then follows by combining these estimates via standard symbol manipulations.
\end{proof}

\begin{lemma}[Bilinear Strichartz]\label{L:BilinearStrich}  Fix $N\geq 1$ and let $T_1,T_2\in \mathcal T_N$.  Suppose $f,g\in L^2_x(\R^2)$ obey $\supp \hat f \subseteq T_1$
and $\supp \hat g \subseteq T_2$.  Then
\begin{equation}
\bigl\| e^{-it\jpn} f \,e^{-it\jpn} g \bigr\|_{L^{2}_{t,x}} \lesssim \frac{N}{\jp{\dist(T_1,T_2)}} \|f\|_{L^2_x} \|g\|_{L^2_x}.
\end{equation}
\end{lemma}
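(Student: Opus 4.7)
The plan is a standard bilinear Plancherel argument adapted to the Klein--Gordon dispersion relation. Setting $u = e^{-it\jpn}f$ and $v = e^{-it\jpn}g$, a direct computation gives
$$
\widehat{uv}(\tau,\eta) = \int_{\R^2} \hat f(\xi_1)\,\hat g(\eta-\xi_1)\,\delta\bigl(\tau+\jp{\xi_1}+\jp{\eta-\xi_1}\bigr)\,d\xi_1,
$$
and Plancherel in $(t,x)$ reduces the bilinear estimate to controlling the $L^2_{\tau,\eta}$ norm of this expression. For each fixed $\eta$, the delta function concentrates the integrand on the level curve $\Gamma_{\eta,\tau}=\{\xi_1:\Phi_\eta(\xi_1)=-\tau\}$ of $\Phi_\eta(\xi_1):=\jp{\xi_1}+\jp{\eta-\xi_1}$. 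Applying the coarea formula, Cauchy--Schwarz on $\Gamma_{\eta,\tau}$, and coarea once more in $\tau$ yields
$$
\|uv\|_{L^2_{t,x}}^2 \le \|f\|_{L^2}^2\|g\|_{L^2}^2 \cdot \sup_{\eta,\tau} \int_{\Gamma_{\eta,\tau}\cap\Omega_\eta} \frac{d\mathcal{H}^1(\xi_1)}{|\nabla_{\xi_1}\Phi_\eta|},
$$
where $\Omega_\eta:=\{\xi_1:\xi_1\in T_1,\ \eta-\xi_1\in T_2\}$ encodes the Fourier support constraints. The remaining task is to show this supremum is $\lesssim N^2/\jp{\dist(T_1,T_2)}^2$.

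Let $d=\dist(T_1,T_2)$, and consider first the case $d\gtrsim 1$. The Jacobian $\nabla_{\xi_1}\Phi_\eta = \xi_1/\jp{\xi_1} - \xi_2/\jp{\xi_2}$ (with $\xi_2=\eta-\xi_1$) measures the transversality of the Klein--Gordon hyperboloids at $\xi_1$ and $\xi_2$. Since $|\xi_i|\sim N$ on $\Omega_\eta$, rewriting the difference as $(\xi_1\jp{\xi_2}-\xi_2\jp{\xi_1})/(\jp{\xi_1}\jp{\xi_2})$ and expanding the numerator shows $|\nabla_{\xi_1}\Phi_\eta|\gtrsim d/N$; morally, the group-velocity difference of two frequency-$N$ waves is comparable to their angular separation $\theta\sim d/N$. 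For the curve length, $\Omega_\eta$ is the intersection of the two strips $T_1$ and $\eta-T_2$ (each of width $\sim 1$ and length $\sim N$), which cross at angle $\pi-\theta$; elementary planar geometry gives $\diam(\Omega_\eta)\lesssim 1/\sin\theta \lesssim N/d$, whence $\mathcal{H}^1(\Gamma_{\eta,\tau}\cap\Omega_\eta)\lesssim N/d$. Multiplying, the supremum is $\lesssim N^2/d^2$ as required.

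The main obstacle is the degenerate case of adjacent (or coincident) tubes, where $d=0$ and the transversality estimate collapses. Here $\jp{d}=1$ and the target bound becomes $\|uv\|_{L^2_{t,x}}\lesssim N\,\|f\|_{L^2}\|g\|_{L^2}$, which follows from H\"older combined with the half-derivative Strichartz estimate of Lemma~\ref{L:half Strichartz}, using $\|\jpn^{1/2} P_T h\|_{L^2}\lesssim N^{1/2}\|h\|_{L^2}$ via Bernstein for $T\in\mathcal{T}_N$. The same elementary argument also disposes of the trivial case $N=1$, in which $\mathcal{T}_N$ consists of a single tube.
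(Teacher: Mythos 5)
Your overall strategy (bilinear Plancherel plus the coarea formula) is a legitimate alternative to the paper's argument, which instead dualizes against a test function $F\in L^2_{t,x}$ and performs the explicit change of variables $(\xi,\eta)\mapsto(\omega,\zeta,\beta)=(-\jp{\xi}-\jp{\eta},\xi+\eta,\xi_2)$ after rotating $T_1$ onto the $\xi_1$-axis; there the transverse coordinate $\beta$ of $T_1$ ranges over an interval of length $O(1)$ and the Jacobian is the \emph{longitudinal} component $|\xi_1/\jp{\xi}-\eta_1/\jp{\eta}|\gtrsim \jp{\dist(T_1,T_2)}^2/N^2$. Your reduction of the small-separation (and $N=1$) cases to Lemma~\ref{L:half Strichartz} and your transversality bound $|\nabla_{\xi_1}\Phi_\eta|\gtrsim d/N$ are both correct.

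The gap is in the arc-length estimate, specifically the chain $\mathcal{H}^1(\Gamma_{\eta,\tau}\cap\Omega_\eta)\leq\diam(\Omega_\eta)\lesssim 1/\sin\theta\lesssim N/d$, which fails when $T_2$ is nearly antipodal to $T_1$. In that regime $\theta$ is close to $\pi$, so $d\sim N\sin(\theta/2)\sim N$ while $\sin\theta=2\sin(\theta/2)\cos(\theta/2)$ is small; the strips $T_1$ and $\eta-T_2$ are then nearly \emph{parallel} rather than transverse. Concretely, if $T_2=-T_1$ and $\eta=0$, then $\Omega_\eta=T_1$, which has diameter $\sim N$, whereas your bound asserts $\diam(\Omega_\eta)\lesssim N/d\sim 1$. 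Feeding $\mathcal{H}^1\lesssim N$ into your supremum gives only $N\cdot(N/d)\sim N$ rather than $N^2/d^2\sim 1$, i.e., a loss of $N^{1/2}$ in the final estimate. The bound is still true there, but for a different reason: for nearly antipodal tubes $\nabla_{\xi_1}\Phi_\eta=\xi_1/\jp{\xi_1}-\xi_2/\jp{\xi_2}$ is nearly parallel to the common long axis of the two strips, so each level curve $\Gamma_{\eta,\tau}$ crosses $\Omega_\eta$ transversally to its long direction and meets it in a set of length $O(1)$ (in the model case $\eta=0$, $\Gamma_{0,\tau}$ is a circle centered at the origin, which meets the sector $T_1$ in an arc of length $\sim 1$). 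So you must bound the length of the level curve inside $\Omega_\eta$ using the direction of $\nabla_{\xi_1}\Phi_\eta$, not merely the diameter of $\Omega_\eta$; alternatively, adopt the paper's coordinates, in which the width of $T_1$ supplies the bounded parameter and the longitudinal component of the group-velocity difference supplies the Jacobian, sidestepping the issue entirely.
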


\begin{proof}
By the Strichartz inequality Lemma~\ref{L:half Strichartz}, we need only consider the case when $\dist(T_1,T_2)\geq 100$.  Moreover, by rotation symmetry we may assume that
$T_1$ lies along the $\xi_1$ axis.

Given a generic $F\in L^2(\R\times\R^2)$, let
\begin{align*}
I:={}& \iint_{\R\times\R^2} \hat F(t,x) [e^{-it\jpn} f](x) [e^{-it\jpn} g](x)\,dx\,dt \\
={}& \iint_{\R^2\times\R^2} F\bigl(-\jp{\xi}-\jp{\eta},\xi+\eta\bigr) \hat f(\xi) \hat g(\eta) \,d\xi\,d\eta.
\end{align*}
Next we change variables according to $\omega=-\jp{\xi}-\jp{\eta}$, $\zeta=\xi+\eta$, and $\beta=\xi_2$.
On the support of the integrand, the Jacobian satisfies
$$
J^{-1} = \Bigl| \frac{\partial(\omega,\zeta,\beta)}{\partial(\xi,\eta)}\Bigr| = \Bigl| \frac{\xi_1}{\jp{\xi}} - \frac{\eta_1}{\jp{\eta}} \Bigr| \gtrsim \frac{\jp{\dist(T_1,T_2)}^2}{N^2}
$$
and $\beta$ varies over an interval of length $O(1)$. Using this information and the Cauchy--Schwarz inequality,
\begin{align*}
|I| &\lesssim \biggl| \iiint_{\R\times\R^2\times\R} F(\omega,\zeta) \hat f(\xi(\omega,\zeta,\beta)) \hat g(\eta(\omega,\zeta,\beta)) J\,d\omega\,d\zeta\,d\beta \biggr| \\
&\lesssim \|F\|_{L^2} \int_\R \biggl( \iint_{\R^2\times\R} \bigl|\hat f(\xi(\omega,\zeta,\beta))\bigr|^2 \bigl|\hat g(\eta(\omega,\zeta,\beta))\bigr|^2 J^2 \,d\omega\,d\zeta \biggr)^{\frac12} \,d\beta\\
&\lesssim \|F\|_{L^2} \biggl( \iiint_{\R\times\R^2\times\R} \bigl|\hat f(\xi(\omega,\zeta,\beta))\bigr|^2 \bigl|\hat g(\eta(\omega,\zeta,\beta))\bigr|^2 J^2 \,d\omega\,d\zeta\,d\beta\biggr)^{\frac12} \\
&\lesssim \|F\|_{L^2} \biggl( \iint_{\R^2\times\R^2} \bigl|\hat f(\xi)\bigr|^2 \bigl|\hat g(\eta)\bigr|^2 J \,d\xi\,d\eta\biggr)^{\frac12}\\
&\lesssim \|F\|_{L^2} \|J\|_{L^\infty}^{1/2} \|f\|_{L^2_x} \|g\|_{L^2_x} .
\end{align*}
The inequality now follows from duality and our bound on $J$.
\end{proof}

\begin{corollary}[Tube decoupling]\label{C:tube decoupling}
Given a dyadic $N\geq 1$ and an $f\in L^2_x(\R^2)$,
\begin{equation}
\bigl\| e^{-it\jpn} P_N f \bigr\|_{L^{4}_{t,x}}^4 \lesssim N^{5/2} \sup_{T\in\mathcal T_N} \bigl\| e^{-it\jpn} P_T f \bigr\|_{L^{4}_{t,x}}  \bigl\| P_N f \bigr\|_{L^2_x}^3
\end{equation}
where $\mathcal T_N$ and $P_T$ are as in Definition~\ref{D:Tubes}.
\end{corollary}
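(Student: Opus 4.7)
The plan is to reduce the fourth-power estimate to a sum of bilinear $L^2_{t,x}$ norms of tube-localized free evolutions, and then control the off-diagonal pairs by interpolating Lemma~\ref{L:BilinearStrich} against the trivial H\"older bound $\|h_T h_{T'}\|_{L^2}\le \|h_T\|_{L^4}\|h_{T'}\|_{L^4}$. Writing $u := e^{-it\jpn}P_N f$ and $h_T := e^{-it\jpn}P_T f$, so $u = \sum_T h_T$, I expand one factor of $|u|^2$ and use Cauchy--Schwarz in $(t,x)$:
\[
\|u\|_{L^4_{t,x}}^4 = \sum_{T,T'}\int h_T\overline{h_{T'}}|u|^2\,dx\,dt \le \|u\|_{L^4_{t,x}}^2 \sum_{T,T'}\|h_T h_{T'}\|_{L^2_{t,x}}.
\]
After dividing, the corollary reduces to the bound $\sum_{T,T'\in\mathcal T_N}\|h_T h_{T'}\|_{L^2_{t,x}} \lesssim N^{5/4}\sup_T\|h_T\|_{L^4_{t,x}}^{1/2}\|P_N f\|_{L^2_x}^{3/2}$, after which squaring completes the proof.

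For the off-diagonal pairs $T\ne T'$, the two bounds $\|h_Th_{T'}\|_{L^2}\le \|h_T\|_{L^4}\|h_{T'}\|_{L^4}$ and $\|h_T h_{T'}\|_{L^2}\lesssim \tfrac{N}{\jp{\dist(T,T')}}\|P_T f\|_{L^2}\|P_{T'} f\|_{L^2}$ combine through the elementary inequality $\min(A,B)\le A^{1/4}B^{3/4}$ to give
\[
\|h_T h_{T'}\|_{L^2} \lesssim (\|h_T\|_{L^4}\|h_{T'}\|_{L^4})^{1/4}\Bigl(\tfrac{N}{\jp{\dist(T,T')}}\|P_T f\|_{L^2}\|P_{T'} f\|_{L^2}\Bigr)^{3/4}.
\]
Bounding $\|h_T\|_{L^4}^{1/4}\|h_{T'}\|_{L^4}^{1/4}$ by $\sup_T\|h_T\|_{L^4}^{1/2}$ and applying Young's convolution inequality on $\ell^2(\mathcal T_N)$ with kernel $\jp{\dist(\cdot,\cdot)}^{-3/4}$ and sequence $a_T = \|P_T f\|_{L^2}^{3/4}$ leaves the bound $\sup_T\|h_T\|_{L^4}^{1/2}N^{3/4}\|K\|_{\ell^1}\|a\|_{\ell^2}^2$. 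Now $\|K\|_{\ell^1}\lesssim N^{1/4}$, since the tubes in $\mathcal T_N$ have pairwise distances of sizes $\sim 1,2,\dots,N$, so $\sum_k \jp{k}^{-3/4}\sim N^{1/4}$; and H\"older together with the almost-orthogonality $\sum_T\|P_T f\|_{L^2}^2\lesssim\|P_N f\|_{L^2}^2$ coming from the partition-of-unity construction in Definition~\ref{D:Tubes} give $\|a\|_{\ell^2}^2 \le (\#\mathcal T_N)^{1/4}\|P_N f\|_{L^2}^{3/2}\lesssim N^{1/4}\|P_N f\|_{L^2}^{3/2}$. Multiplying yields the required contribution $\sup_T\|h_T\|_{L^4}^{1/2}\,N^{5/4}\|P_N f\|_{L^2}^{3/2}$.

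For the diagonal piece $\sum_T\|h_T\|_{L^4_{t,x}}^2$, I factor out one copy of $\sup_T\|h_T\|_{L^4}^{1/2}$ and control the rest using Lemma~\ref{L:half Strichartz}, which together with the Fourier localization of $P_T f$ in $|\xi|\sim N$ yields $\|h_T\|_{L^4_{t,x}}\lesssim N^{1/2}\|P_T f\|_{L^2_x}$. Cauchy--Schwarz with $\#\mathcal T_N\lesssim N$ then gives $\sum_T\|h_T\|_{L^4}\lesssim N\|P_N f\|_{L^2}$ and the same Strichartz/Bernstein estimate bounds the remaining factor $\sup_T\|h_T\|_{L^4}^{1/2}\lesssim N^{1/4}\|P_N f\|_{L^2}^{1/2}$; the two combine to the same target. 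The main technical point, and really the only obstacle, is the choice of interpolation exponent $3/4$ in the off-diagonal estimate: this is precisely the power that renders the kernel $\jp{\dist(T,T')}^{-3/4}$ summable in $\ell^1$ with only polynomial growth $N^{1/4}$, so that applying bilinear Strichartz at the $\alpha=1$ endpoint --- which would incur a logarithmic loss from $\sum_k k^{-1}\sim \log N$ --- is avoided and the final power $N^{5/2}$ after squaring matches the statement.
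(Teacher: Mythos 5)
Your proof is correct and follows essentially the same route as the paper's: both expand the $L^4$ norm into bilinear tube interactions and interpolate with exponent $3/4$ between the trivial H\"older bound and Lemma~\ref{L:BilinearStrich}, then sum using the decay of $\jp{\dist(T,T')}^{-3/4}$ together with the almost-orthogonality of the $P_T f$. The only (cosmetic) differences are that you perform the summation via Young/Schur plus H\"older where the paper uses discrete Hardy--Littlewood--Sobolev, and that you treat the diagonal terms separately; both variants produce the same powers of $N$.
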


\begin{proof}
By H\"older's inequality, the fact that $\sum_{T\in \mathcal T_N} P_T = P_N$, and then Lemma~\ref{L:BilinearStrich},
\begin{align*}
\bigl\| e^{-it\jpn} & P_N f \bigr\|_{L^4_{t,x}}^2 \\
&\lesssim \sum_{T,T'} \bigl\| e^{-it\jpn} P_T f \bigr\|_{L^4_{t,x}}^{1/4} \bigl\| e^{-it\jpn} P_{T'} f \bigr\|_{L^4_{t,x}}^{1/4}
    \bigl\| e^{-it\jpn} P_T f e^{-it\jpn} P_{T'} f \bigr\|_{L^2_{t,x}}^{3/4} \\
&\lesssim \sup_{T''\in\mathcal T_N} \bigl\| e^{-it\jpn} P_{T''} f \bigr\|_{L^{4}_{t,x}}^{1/2}  \sum_{T,T'} \frac{N^{3/4}}{\jp{\dist(T,T')}^{3/4} } \|P_T f\|_{L^2_x}^{3/4} \|P_{T'} f\|_{L^2_x}^{3/4}  .
\end{align*}
Next, by applying the Hardy--Littlewood--Sobolev inequality in the sum and then the orthogonality of all but adjacent $f_T$, we get
\begin{align*}
\bigl\| e^{-it\jpn} P_N  f \bigr\|_{L^4_{t,x}}^2
&\lesssim N^{3/4} \sup_{T''\in\mathcal T_N} \bigl\| e^{-it\jpn} P_{T''} f \bigr\|_{L^{4}_{t,x}}^{1/2} \biggl(\sum_T \|P_T f\|_{L^2_x}^{6/5}\biggr)^{5/4}\\
&\lesssim N^{5/4} \sup_{T''\in\mathcal T_N} \bigl\| e^{-it\jpn} P_{T''} f \bigr\|_{L^{4}_{t,x}}^{1/2} \bigl\| P_N f \bigr\|_{L^2_x}^{3/2},
\end{align*}
which yields the claim.
\end{proof}

\begin{theorem}[Bilinear Restriction, \cite{taoGAFA03}]\label{T:bilin tao}
Let $f,g\in L^2_x(\R^2)$ have Fourier support in the region $|\xi|\leq 4$ and suppose that for some $c>0$,
$$
M:=\dist(\supp \hat f,\supp \hat g) \geq c \max \{\diam(\supp \hat f),\diam(\supp \hat g)\}.
$$
Then for $q>\frac{5}{3}$,
$$
\bigl\| [e^{-it\jpn} f][e^{-it\jpn} g]\bigr\|_{L^q_{t,x}} \lesssim_c M^{2-\frac{4}{q}}\|f\|_{L^2_x} \|g\|_{L^2_x}.
$$
\end{theorem}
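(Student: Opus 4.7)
The plan is to recognize this as a direct application of Tao's bilinear restriction theorem for elliptic surfaces \cite{taoGAFA03}, applied to the Klein--Gordon surface
\begin{equation*}
S = \{(\xi, -\jp{\xi}) : |\xi|\leq 4\} \subset \R^2 \times \R,
\end{equation*}
after verifying that the hypotheses of that theorem are met on this compact region. The operators $f\mapsto e^{-it\jpn} f$ are (up to harmless factors) the adjoint restriction operators associated to $S$, so the bilinear $L^q_{t,x}$ estimate we want is exactly the type of bound Tao's theorem produces.

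The first step is to check uniform ellipticity of $S$. A direct computation gives
\begin{equation*}
\partial_i\partial_j(-\jp{\xi}) = -\frac{\delta_{ij}\jp{\xi}^2 - \xi_i\xi_j}{\jp{\xi}^3},
\end{equation*}
which is negative-definite with eigenvalue $-\jp{\xi}^{-1}$ in the direction perpendicular to $\xi$ and eigenvalue $-\jp{\xi}^{-3}$ in the direction parallel to $\xi$; both are bounded above and below by absolute constants on $\{|\xi|\leq 4\}$. Hence the principal curvatures of $S$ have one sign and are uniformly comparable to $1$ in this region. The second step is transversality: since the Gauss map of $S$ is $\xi \mapsto \xi/\jp\xi$, which is a bi-Lipschitz diffeomorphism on $\{|\xi|\leq 4\}$, the outward normals to $S$ over the two supports $\supp\hat f$ and $\supp\hat g$ are separated in angle by a quantity comparable to $M$, with the implicit constants depending only on $c$ (using the diameter bound $\diam(\supp\hat f),\diam(\supp\hat g)\leq M/c$ to make sure the normals do not fan out too much within each piece).

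Once these hypotheses are in place, Tao's theorem applies. To read off the precise factor $M^{2-4/q}$, I would carry out the standard parabolic rescaling: pick $\xi_0$ halfway between representative points of the two supports and change variables $\xi = \xi_0 + M\eta$, $x \mapsto M^{-1} x$, $t \mapsto M^{-2} t$. On the rescaled frequency scale the two pieces have diameter $O(1/c)$ and unit separation, and the rescaled phase $-M^{-2}\jp{\xi_0+M\eta}$ converges (modulo an affine part absorbed into a translation in $(t,x)$) to a smooth elliptic phase with uniformly bounded and uniformly nondegenerate Hessian. Tao's theorem at unit scale gives an $O_c(1)$ bound in $L^q_{t,x}$, and undoing the rescaling multiplies both sides by the stated power of $M$: accounting for the Jacobian $M^{-2}$ in $\|\hat f\|_{L^2}$ (and similarly for $g$) and the factor $M^{-4/q-2}$ on the $L^q_{t,x}$ norm gives exactly the $M^{2-4/q}$ on the right.

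The only substantive issue would be a self-contained reproof of Tao's theorem itself, which involves induction-on-scale, Whitney decompositions, and delicate wave-packet analysis well beyond the scope of the present argument; however, since we are entitled to quote \cite{taoGAFA03} directly, the entire obligation reduces to the two short verifications above (ellipticity of the Hessian and transversality of the Gauss map) together with the elementary parabolic rescaling that tracks the power of $M$.
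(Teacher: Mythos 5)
Your proposal is correct and matches the paper's treatment: the paper likewise disposes of this theorem by citing \cite{taoGAFA03} (specifically the discussion in Section~9 there on extending the paraboloid result to compact surfaces with strictly positive principal curvatures, such as $\{(\jp\xi,\xi):|\xi|\leq 4\}$). Your added verifications of the Hessian's uniform definiteness, the transversality via the Gauss map, and the parabolic rescaling that produces the factor $M^{2-\frac4q}$ are exactly the details the paper leaves to the reader.
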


\begin{proof}
While the main result in \cite{taoGAFA03} is stated for the Schr\"odinger propagator, the discussion in Section~9 of that paper explains how it extends to compact surfaces
whose principal curvatures are strictly positive, in our case, $\{(\jp\xi,\xi) : |\xi|\leq 4\}$.
\end{proof}

This bilinear estimate can be used to obtain a form of refined Strichartz inequality showing that if the free evolution is large, then the Fourier transform of the
initial data must concentrate on some cube.  This idea was first developed in \cite{MVV1999} in the 2D Schr\"odinger setting.  By modifying their arguments (and
those of \cite{BegoutVargas} for higher dimensions) it was shown in \cite[\S4.4]{ClayNotes} that the free evolution of this bubble of Fourier concentration must have nontrivial
spacetime norm.  These arguments apply equally well to the Klein--Gordon setting once one has the appropriate bilinear estimate, Theorem~\ref{T:bilin tao}.  In this
way, we obtain the following:

\begin{corollary}[Cube decoupling]\label{C:cube decoupling}
For $f\in L^2_x(\R^2)$ with $\supp \hat f\subseteq\{|\xi|\leq4\}$,
\begin{align*}
\bigl\| e^{-it\jpn} f \bigr\|_{L^{4}_{t,x}(\R\times\R^{2})}
    &\lesssim  \|f\|_{L_x^2(\R^2)}^{\frac34} \biggl( \sup_{Q} |Q|^{-\frac{3}{22}}
        \bigl\| e^{-it\jpn} f_Q \bigr\|_{L^{11/2}_{t,x}(\R\times\R^2)} \biggr)^{\frac14}.
\end{align*}
Here the supremum is taken over all dyadic cubes $Q$ and $f_Q=P_Q f$ denotes the Fourier restriction of $f$ to $Q$.  Without loss of generality,
we may require the cube $Q$ to have side-length no more than eight.
\end{corollary}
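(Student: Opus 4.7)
The plan is to adapt the refined Strichartz inequality of Moyua--Vargas--Vega \cite{MVV1999} (extended by B\'egout--Vargas and streamlined in \S 4.4 of \cite{ClayNotes}) to the Klein--Gordon setting, using Tao's bilinear restriction estimate (Theorem~\ref{T:bilin tao}) in place of its Schr\"odinger analogue. Since $\supp \hat f \subseteq \{|\xi|\leq 4\}$, the Klein--Gordon dispersion surface $\{(-\jp\xi,\xi)\}$ over this region is a fixed compact hypersurface of strictly positive Gaussian curvature, so the harmonic-analytic ingredients used for the paraboloid carry over with only cosmetic changes.

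First, I would perform a Whitney decomposition of the off-diagonal $\{(\xi,\eta)\in\R^2\times\R^2 : \xi\neq\eta,\ |\xi|,|\eta|\leq 4\}$ as a disjoint union over pairs $(Q,Q')$ of dyadic cubes with $\ell(Q)=\ell(Q')=:\ell$ and $\dist(Q,Q')\sim\ell$. Let $\mathcal W$ be this family and write $F := e^{-it\jpn}f$, $F_Q := e^{-it\jpn}f_Q$. Almost everywhere we then have
\[
|F(t,x)|^2 = \sum_{(Q,Q')\in\mathcal W} F_Q(t,x)\,\overline{F_{Q'}(t,x)}.
\]
The spacetime Fourier support of $F_Q\overline{F_{Q'}}$ sits on the image of $Q\times Q'$ under $(\xi,\eta)\mapsto(\jp\eta-\jp\xi,\xi-\eta)$; its spatial projection lies in an $\ell$-cube at distance $\sim\ell$ from the origin, which separates distinct pairs within each scale and separates pairs across different scales into disjoint annuli $|\zeta|\sim\ell$. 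Bounded overlap of these supports in $\R^3$ gives the almost-orthogonality bound
\[
\|F\|_{L^4_{t,x}}^4 = \bigl\||F|^2\bigr\|_{L^2_{t,x}}^2 \lesssim \sum_{(Q,Q')\in\mathcal W}\|F_Q F_{Q'}\|_{L^2_{t,x}}^2.
\]

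Second, for each Whitney pair I would interpolate Theorem~\ref{T:bilin tao} at some $q_0\in(5/3,2)$,
\[
\|F_Q F_{Q'}\|_{L^{q_0}_{t,x}} \lesssim \ell^{2-4/q_0}\|f_Q\|_{L^2}\|f_{Q'}\|_{L^2},
\]
with the Cauchy--Schwarz bound $\|F_Q F_{Q'}\|_{L^{11/4}_{t,x}} \leq \|F_Q\|_{L^{11/2}_{t,x}}\|F_{Q'}\|_{L^{11/2}_{t,x}}$, using the exponent that places $L^2_{t,x}$ between them (explicitly $\theta=11/26$ at $q_0=5/3$). Setting $A := \sup_Q |Q|^{-3/22}\|F_Q\|_{L^{11/2}_{t,x}}$ and substituting $\|F_Q\|_{L^{11/2}}\leq A\,\ell^{3/11}$ into the interpolated bound causes the $\ell$-powers to cancel, yielding, after squaring,
\[
\|F_Q F_{Q'}\|_{L^2_{t,x}}^2 \lesssim A^{22/13}\,\|f_Q\|_{L^2}^{15/13}\|f_{Q'}\|_{L^2}^{15/13}.
\]

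Third, I would sum over $\mathcal W$ using AM--GM, the bounded number of Whitney partners per cube, the Bessel inequality $\sum_{\ell(Q)=\ell}\|f_Q\|_{L^2}^2 \leq \|f\|_{L^2}^2$ at each fixed scale, and the cross-scale almost-orthogonality recorded above, to arrive at $\|F\|_{L^4_{t,x}}^4 \lesssim A^{22/13}\|f\|_{L^2}^{30/13}$. Finally, the universal bound $A \lesssim \|f\|_{L^2}$ --- obtained by interpolating $\|F_Q\|_{L^\infty_{t,x}} \lesssim \ell\,\|f_Q\|_{L^2}$ (Bernstein) with $\|F_Q\|_{L^4_{t,x}} \lesssim \|f_Q\|_{L^2}$ (Strichartz) --- converts this into $\|F\|_{L^4_{t,x}}^4 \lesssim A\,\|f\|_{L^2}^3$, which is the claim. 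I expect the hardest part to be the bookkeeping in the last step: one must verify that the powers of $\ell$ coming from bilinear restriction, Cauchy--Schwarz, and the $|Q|^{-3/22}$ in $A$ cancel on the nose for each pair, and that the cross-scale almost-orthogonality is enough to prevent a logarithmic divergence in the sum over dyadic scales. Everything geometric about Klein--Gordon enters only through the positive curvature hypothesis of Theorem~\ref{T:bilin tao}, which is uniformly satisfied on the Fourier region $\{|\xi|\leq 4\}$.
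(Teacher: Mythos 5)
Your overall strategy---Whitney decomposition, almost orthogonality, interpolation of Theorem~\ref{T:bilin tao} against the $L^{11/2}$ quantity, and the final conversion using $\sup_Q|Q|^{-3/22}\|e^{-it\jpn}f_Q\|_{L^{11/2}}\lesssim\|f\|_{L^2}$---is exactly the route the paper intends (it gives no proof, only the citations to \cite{MVV1999, BegoutVargas} and \cite[\S4.4]{ClayNotes}), and your last step and the pair-by-pair cancellation of the $\ell$-powers are correct. But two steps do not survive scrutiny. The first is the almost-orthogonality: you expand $|F|^2=\sum F_Q\overline{F_{Q'}}$ and assert that the spatial Fourier supports ``separate distinct pairs within each scale.'' They do not: the spatial support of $F_Q\overline{F_{Q'}}$ lies in $Q-Q'$, which is unchanged when both cubes are translated by the same vector, so arbitrarily many same-scale Whitney pairs share it. The standard fix is to expand the unconjugated square $F^2=\sum F_QF_{Q'}$, whose spacetime Fourier support is $\{(-\jp\xi-\jp\eta,\xi+\eta)\}$: the variable $\xi+\eta$ pins down the pair within a scale up to $O(1)$, and the strict convexity of $\xi\mapsto\jp\xi$ on $\{|\xi|\leq4\}$, through $\jp\xi+\jp\eta-2\jp{\tfrac{\xi+\eta}2}\sim|\xi-\eta|^2$, recovers the scale from the temporal variable. (Your displayed inequality is salvageable since $|F_Q\overline{F_{Q'}}|=|F_QF_{Q'}|$ pointwise, but this convexity identity is the one place the Klein--Gordon geometry enters beyond Theorem~\ref{T:bilin tao} and must be checked.)

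The more serious gap is the summation over scales, which you flag but do not resolve. After AM--GM you need $\sum_{(Q,Q')\in\mathcal W}(a_Qa_{Q'})^{\theta}\lesssim\|f\|_{L^2}^{4\theta}$ with $a_Q=\|f_Q\|_{L^2}^2$ and $\theta=15/26\in(\tfrac12,1)$, and this is false: if $\hat f$ consists of $M+1$ equal-mass bumps located at $0$ and at $2^{-3j}e_1$ for $1\leq j\leq M$, then for each $j$ there is a Whitney pair at scale $\sim2^{-3j}$ with $a_Q\gtrsim1$ and $a_{Q'}\sim M^{-1}$, so the left side is $\gtrsim M^{1-\theta}\to\infty$ while $\|f\|_{L^2}=1$. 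The per-scale Bessel inequality contributes nothing here (the same nested configuration contributes at every one of the $\sim M$ scales), and the cross-scale almost-orthogonality has already been spent in producing $\sum\|F_QF_{Q'}\|_{L^2}^2$; moreover, since your exponents $11/2$ and $3/22$ are tuned so that the $\ell$-powers cancel exactly, no leftover geometric decay in the scale is available from the interpolation. This is precisely the point where the cited arguments do something more refined: they run the bilinear estimate against $\|\hat f\|_{L^{p'}(Q)}$ norms with $p'<2$, so that H\"older on the cube produces a factor $|Q|^{1/p'-1/2}$ of decay relative to $\|f_Q\|_{L^2}$, which is what makes the sum over dyadic scales converge. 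As written, your proof does not close.
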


\begin{theorem}[Inverse Strichartz]\label{T:InvStrich}
Let $\{f_n\}\subseteq H^1_x(\R^2)$ and suppose that
\begin{equation}\label{E:InvStrich input}
\lim_{n\to\infty}\|f_n\|_{H^1_x(\R^2)} = A \quad\text{and}\quad
\lim_{n\to\infty}\|e^{-it\jpn} f_n\|_{L^4_{t,x}(\R\times\R^2)} = \eps > 0.
\end{equation}
Then there exist a subsequence in $n$, $\phi\in L^2_x(\R^2)$, $\{\lambda_n\}\subseteq [\tfrac{1}{8},\infty)$,
$\{\nu_n\}\subseteq\R^2$, and $\{(t_n,x_n)\}\subseteq \R\times\R^2$ so that we have the following:
\begin{gather}\label{E:InvStrich params converge}
\lambda_n\to\lambda_\infty\in [\tfrac{1}{8},\infty] \qtq{and} \nu_n\to\nu\in\R^2\\
\lambda_\infty < \infty \implies \phi\in H^1_x.
\end{gather}
The functions $f_n$ in the subsequence contain a nontrivial wave packet
\begin{gather} \label{E:InvStrich phi_n}
\phi_n :=\begin{cases} T_{x_n} e^{it_n\jpn} \W_{\nu_n} D_{\lambda_n} \phi   & \text{if $\lambda_{\infty} < \infty$} \\
     T_{x_n} e^{it_n\jpn} \W_{\nu_n} D_{\lambda_n} P_{\leq \lambda_n^{\theta}}\phi  &\text{if $\lambda_{\infty} = \infty$} \end{cases}
\end{gather}
$($with power $\theta=\tfrac1{100})$ in the sense that
\begin{gather}
\label{E:InvStrich H1 decoup}
\lim_{n\to\infty} \| f_n\|_{H^1_x}^2 - \| f_n - \phi_n \|_{H^1_x}^2 - \| \phi_n \|_{H^1_x}^2  =0, \\
\label{E:InvStrich H1 non 0}
\liminf_{n\to\infty} \| \phi_n \|_{H^1_x} \gtrsim \eps \bigl( \tfrac{\eps}{A} \bigr)^{\frac{397}{3}}, \\
\label{E:InvStrich Snorm}
\limsup_{n\to\infty} \bigl\| e^{-it\jpn} (f_n - \phi_n) \bigr\|_{L^4_{t,x}(\R\times\R^2)}
    \leq \eps \Bigl[ 1 - c \bigl(\tfrac{\eps}A\bigr)^C\Bigr]^{1/4},\\
\intertext{and lastly,}
\label{E:InvStrich1}
D_{\lambda_n}^{-1} \W_{\nu_n}^{-1} T_{x_n}^{-1} e^{-it_n\jpn} f_n
    \rightharpoonup \phi \quad\text{weakly in $L^2_x(\R^2)$.}
\end{gather}
Here $c$ and $C$ are constants and all limits as $n\to\infty$ are along the subsequence.
\end{theorem}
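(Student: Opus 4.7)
The plan is to extract the bubble $\phi_n$ through three successive refinements of the lower bound \eqref{E:InvStrich input}, each further localizing the concentration in phase space and invoking one of the decoupling results established in this section.

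First, apply the annular decoupling Lemma~\ref{L:Annular} to produce a dyadic frequency $N_n$ with $\|e^{-it\jpn} P_{N_n} f_n\|_{L^4_{t,x}} \gtrsim \eps^2/A$. Combining this with the Strichartz--Bernstein bound $\|e^{-it\jpn} P_{N_n} f_n\|_{L^4_{t,x}} \lesssim \jp{N_n}^{-1/2} A$ forces $N_n \lesssim (A/\eps)^4$, so after extracting a subsequence we may assume $N_n$ equals a fixed $N$. The tube decoupling Corollary~\ref{C:tube decoupling} then supplies a tube $T_n \in \mathcal T_N$ carrying a quantitatively non-negligible portion of this $L^4_{t,x}$ mass. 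Since its center $\nu_n := c(T_n)$ satisfies $|\nu_n|\lesssim N$, a further subsequence ensures $\nu_n \to \nu \in \R^2$. Setting $g_n := \W_{\nu_n}^{-1} P_{T_n} f_n$, Remark~\ref{R:boost tube} gives $\supp \hat g_n \subseteq \{|\xi|\leq 2\}$, and the Lorentz invariance \eqref{E:Boost linear soln} of the linear flow (a change of spacetime variable with unit Jacobian) preserves the $L^4_{t,x}$ norm, so $\|e^{-it\jpn} g_n\|_{L^4_{t,x}} \gtrsim (\eps/A)^{C_1}$ for some absolute constant $C_1$.

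Next, with $g_n$ now in the compactly-supported regime, the cube decoupling Corollary~\ref{C:cube decoupling} produces a dyadic cube $Q_n$ of side length $L_n \leq 8$ satisfying $|Q_n|^{-3/22} \|e^{-it\jpn} (g_n)_{Q_n}\|_{L^{11/2}_{t,x}} \gtrsim (\eps/A)^{C_2}$. Interpolating this against the (upper) Strichartz bound $\|e^{-it\jpn}(g_n)_{Q_n}\|_{L^4_{t,x}} \lesssim A$ produces a spacetime point $(t_n, x_n)$ at which $\bigl|[e^{-it_n\jpn} (g_n)_{Q_n}](x_n)\bigr| \gtrsim L_n (\eps/A)^{C_3}$. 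Set $\lambda_n := 1/L_n \geq 1/8$, pass to a subsequence so that $\lambda_n \to \lambda_\infty \in [1/8,\infty]$, and define
$$
h_n := D_{\lambda_n}^{-1} \W_{\nu_n}^{-1} T_{x_n}^{-1} e^{-it_n\jpn} f_n.
$$
Since $D_{\lambda_n}$ is $L^2_x$-unitary and $\W_{\nu_n}^{-1}$ is uniformly bounded on $L^2_x$ by Lemma~\ref{L:boost action} (as $\nu_n$ is bounded), $\{h_n\}$ is bounded in $L^2_x$ and a further subsequence yields $h_n \rightharpoonup \phi$ weakly in $L^2_x$.

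To verify the non-triviality \eqref{E:InvStrich H1 non 0}, rewrite the pointwise bound at $(t_n, x_n)$ as an $L^2_x$ pairing $|\langle h_n, \eta_n\rangle_{L^2_x}| \gtrsim (\eps/A)^{C_3}$, where $\eta_n$ is obtained from the inverse Fourier transform of the characteristic function of the unit cube by the same sequence of symmetries used to define $h_n$; by Lemma~\ref{L:h_n}, $\{\eta_n\}$ is precompact in $L^2_x$ and bounded away from zero, so the weak convergence $h_n \rightharpoonup \phi$ forces $\|\phi\|_{L^2_x}\gtrsim (\eps/A)^{C}$. When $\lambda_\infty < \infty$, the uniformly bounded Fourier support of $h_n$ lifts this to an $H^1_x$ lower bound on $\phi$; when $\lambda_\infty = \infty$, the Fourier support of $h_n$ lies in a ball of radius $O(\lambda_n^\theta)$, matching the low-frequency truncation $P_{\leq \lambda_n^\theta}$ in the second branch of \eqref{E:InvStrich phi_n}. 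The Hilbert decomposition \eqref{E:InvStrich H1 decoup} and the $L^4_{t,x}$ decay \eqref{E:InvStrich Snorm} then both follow from the Br\'ezis--Lieb Lemma~\ref{L:BrezisLieb}, once $h_n \rightharpoonup \phi$ is upgraded to almost-everywhere convergence of the underlying linear evolutions via Lemma~\ref{L:pointwise} --- which handles $\lambda_\infty<\infty$ directly and $\lambda_\infty = \infty$ through the non-relativistic approximation \eqref{phase approx}.

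The main technical obstacle lies in the interplay between the two non-compact degrees of freedom --- the Lorentz boost $\W_{\nu_n}$ and the dilation $D_{\lambda_n}$ --- neither of which is an $H^1_x$-isometry. Their composition acts on the Fourier side in the rather intricate way described by Lemma~\ref{L:h_n}, and it is only after (i) localizing the boost parameter $\nu_n$ to a bounded set via the Strichartz--Bernstein bound of the first step, and (ii) performing the case-split on whether $\lambda_\infty$ is finite or infinite, that one can identify a single bubble $\phi$ with the required properties. The case $\lambda_\infty = \infty$ corresponds precisely to the NLS-inside-NLKG embedding highlighted in the introduction, and the invocation of Lemma~\ref{L:pointwise}(c)--(d) here anticipates the non-relativistic reduction that will reappear in Section~\ref{S:embed}.
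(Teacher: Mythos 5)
Your outline follows the paper's architecture (annulus $\to$ tube $\to$ boost $\to$ cube $\to$ pointwise concentration $\to$ weak limit $\to$ Br\'ezis--Lieb), but it omits a step that is essential rather than cosmetic: after cube decoupling you never recenter the cube $Q_n$ at the frequency origin. The cube produced by Corollary~\ref{C:cube decoupling} has side length $\lambda_n^{-1}$ and a center $\xi_n$ with $|\xi_n|\lesssim 1$ that need not tend to zero. You take $\nu_n=c(T_n)$ only, and define $h_n=D_{\lambda_n}^{-1}\W_{\nu_n}^{-1}T_{x_n}^{-1}e^{-it_n\jpn}f_n$. Since $\widehat{D_{\lambda}^{-1}g}(\xi)=\lambda^{-1}\hat g(\xi/\lambda)$, the Fourier mass of $h_n$ that carries the concentration sits near $\lambda_n\xi_n$. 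In the case $\lambda_n\to\infty$ with $\xi_n\to\xi_\infty\neq 0$ this escapes to frequency infinity, so your $h_n$ may converge weakly to $0$ (or at least the bubble you isolated is not seen by the weak limit), and the non-triviality bound \eqref{E:InvStrich H1 non 0} fails. Equally, the residual modulation $e^{i\lambda_n\xi_n\cdot x}$ cannot be written in the canonical form \eqref{E:InvStrich phi_n}. The paper fixes this with a second Lorentz boost $\W_{\xi_n}^{-1}$ (which to leading order translates frequency by $-\xi_n$), at the cost of having to handle the Wigner rotation in $\W_{\xi_n}^{-1}\W_{\tilde\nu_n}^{-1}=R_n\W_{\nu_n}^{-1}$; neither appears in your argument.

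A second, related gap: by skipping Lemma~\ref{L:ditch tube} and applying cube decoupling directly to $g_n=\W_{\nu_n}^{-1}P_{T_n}f_n$, your pointwise concentration bound is for $P_{Q_n}\W_{\nu_n}^{-1}P_{T_n}f_n$, i.e.\ it still carries the angular cutoff $P_{T_n}$. The pairing $\langle h_n,\eta_n\rangle$ you then invoke, with $\eta_n$ built from the characteristic function of the cube and $h_n$ built from the \emph{full} $f_n$, reproduces $[P_{Q_n}e^{-it_n\jpn}\W_{\nu_n}^{-1}f_n](x_n)$ \emph{without} the $P_{T_n}$, so it is not the quantity you bounded from below; moreover the test functions needed to absorb $P_{T_n}$ are not covered by the compactness statement of Lemma~\ref{L:h_n}. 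This is exactly what Lemma~\ref{L:ditch tube} is for: it upgrades the $L^4_{t,x}$ lower bound for $\W_{\nu_n}^{-1}P_{T_n}f_n$ to one for $P_{\leq 2}\W_{\nu_n}^{-1}f_n$ before cube decoupling is applied. Finally, note that Br\'ezis--Lieb alone does not give the quantitative decrement in \eqref{E:InvStrich Snorm}; you also need a lower bound of the form $\|e^{-i\lambda_\infty^2 s\jp{\lambda_\infty^{-1}\nabla}}\phi\|_{L^4_{s,y}}\gtrsim\eps(\eps/A)^C$, which requires a separate argument using the decay estimates \eqref{E:K fourier decay} and is absent from your sketch.
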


\begin{proof}
The final subsequence appearing in the conclusion of the theorem is the result of passing to subsequences on several successive occasions.
For the sake of simplicity/clarity, we do not attempt to represent this with any notation; any ${n\to\infty}$ limit is understood to be
along the subsequence extracted at that point.

By Lemma~\ref{L:Annular} there are dyadic $N_n$ so that
\begin{equation*}
 \bigl\| e^{-it\jpn}  P_{N_n} f_n \bigr\|_{L^4_{t,x}} \gtrsim \eps^2 A^{-1}.
\end{equation*}
Further decomposing this frequency shell into tubes and applying Corollary~\ref{C:tube decoupling} we deduce the
existence of tubes $T_n \in \mathcal T_{N_n}$ so that
\begin{equation*}
\liminf_{n\to \infty} \bigl\| e^{-it\jpn}  P_{T_n} f_n \bigr\|_{L^4_{t,x}} \gtrsim \eps^8 A^{-7} N_n^{1/2} \gtrsim \eps^8 A^{-7}.
\end{equation*}
Incidentally, since the LHS here is $\lesssim A$ by virtue of the Strichartz inequality, we may infer that $N_n\lesssim (A/\eps)^{16}$.

Next, to bring ourselves into the setting of Corollary~\ref{C:cube decoupling}, we apply a Lorentz boost with parameter
$\tilde\nu_n=c(T_n)$ to transport $T_n$ to the origin.  As Lorentz boosts preserve the $L^4_{t,x}$ norm (they are spacetime volume preserving) we have
\begin{equation*}
\liminf_{n\to \infty} \bigl\| e^{-it\jpn} \W_{\tilde\nu_n}^{-1}  P_{T_n} f_n \bigr\|_{L^4_{t,x}} \gtrsim \eps^8 A^{-7}
\end{equation*}
and, as noted in Remark~\ref{R:boost tube}, the Fourier support of $\W_{\tilde\nu_n}^{-1}  P_{T_n} f$ is contained inside the ball $|\xi|\leq 2$.  This remark allowed us
to prove Lemma~\ref{L:ditch tube}, which we now apply to obtain
\begin{equation*}
\liminf_{n\to \infty} \bigl\| e^{-it\jpn} P_{\leq 2} \W_{\tilde\nu_n}^{-1}  f_n \bigr\|_{L^4_{t,x}} \gtrsim \eps^8 A^{-7}
\end{equation*}
and thence, via Corollary~\ref{C:cube decoupling}, the existence of a dyadic cube $Q_n$, so that
\begin{equation}\label{E:cube selected}
 \bigl\| e^{-it\jpn} P_{Q_n}  P_{\leq 2}  \W_{\tilde\nu_n}^{-1}  f_n \bigr\|_{L^{11/2}_{t,x}} \gtrsim \eps^{32} A^{-31} \lambda_n^{-3/11},
\end{equation}
where $\lambda_n^{-1}\leq 8$ denotes the side-length of $Q_n$. We used Lemma~\ref{L:boost action} here to see that
\begin{equation}\label{E:tmp bound}
\bigl\| P_{\leq 2}  \W_{\tilde\nu_n}^{-1}  f_n \bigr\|_{L^2_x} \leq \bigl\| \W_{\tilde\nu_n}^{-1}  f_n \bigr\|_{L^2_x}
\leq \bigl\| \W_{\tilde\nu_n}^{-1}  f_n \bigr\|_{H^{1/2}_x} = \bigl\| f_n \bigr\|_{H^{1/2}_x} \lesssim A.
\end{equation}
As $\lambda_{n}^{-1}\in(0,8]$ we may pass to a subsequence so that $\lambda_n$ converges to some $\lambda_\infty\in [1/8,\infty]$.  As we will need this
later, we set $\xi_n$ equal to the center of the cube $Q_n$; these form a bounded sequence and so passing to a further subsequence we may assume that they converge
to some $\xi_\infty$.

To continue from \eqref{E:cube selected} we first use $L^p$-boundedness of $P_{\leq 2}$ to discard this operator.  Then combining the result with H\"older's inequality
and the Strichartz inequality we obtain
\begin{align*}
\eps^{32} A^{-31} \lambda_n^{-3/11} &\lesssim \bigl\| e^{-it\jpn} P_{Q_n}  \W_{\tilde\nu_n}^{-1}  f_n \bigr\|_{L^{4}_{t,x}} ^{8/11}
    \bigl\| e^{-it\jpn} P_{Q_n} \W_{\tilde\nu_n}^{-1}  f_n \bigr\|_{L^{\infty}_{t,x}} ^{3/11} \\
&\lesssim A^{8/11} \bigl\| e^{-it\jpn} P_{Q_n} \W_{\tilde\nu_n}^{-1}  f_n \bigr\|_{L^{\infty}_{t,x}} ^{3/11},
\end{align*}
which then implies the existence of $\tilde t_n\in\R$ and $\tilde x_n\in \R^2$ so that
\begin{equation}\label{E:big somewhere}
\bigl| P_{Q_n} e^{-i\tilde t_n\jpn} \W_{\tilde\nu_n}^{-1}  f_n \bigr| (\tilde x_n) \gtrsim \lambda_n^{-1} \eps^{\frac{352}3} A^{-\frac{349}3}.
\end{equation}

We have now isolated all the parameters needed to find our bubble.  At present, some are adorned with a tilde because they will be replaced later when we reorder the
symmetries (which do not commute) and because $Q_n$ is not centered at the origin which necessitates an additional Lorentz boost with parameter $\xi_n$.
As in \eqref{E:tmp bound}, Lemma~\ref{L:boost action} implies that
\begin{equation}\label{E:fake seq}
 D_{\lambda_n}^{-1} \W_{\xi_n}^{-1} T_{\tilde x_n}^{-1} e^{-i\tilde t_n\jpn} \W_{\tilde\nu_n}^{-1}  f_n
\end{equation}
form a bounded sequence in $L^2_x$; indeed, since $|\xi_n|\lesssim 1$ and $|\tilde \nu_n|\lesssim N_n\lesssim (A/\eps)^{16}$, the sequence will also be bounded in $H^1_x$
if $\lambda_n$ is bounded, that is, if $\lambda_\infty<\infty$.  Hence, after passing to a subsequence, we have a weak limit, say, $\tilde\phi\in L^2_x$;
moreover, $\tilde\phi\in H^1_x$ when $\lambda_\infty<\infty$.  That this limit has nontrivial norm follows from \eqref{E:big somewhere} as we will now explain.
Let $\hat h$ be the characteristic function of $[-\tfrac12,\tfrac12]^2$ and define
\begin{equation}\label{E:h_n defn}
h_n :=  D_{\lambda_n}^{-1} \W_{\xi_n}^{-1} m_0(\nabla)^{-1} e^{i\xi_n x} D_{\lambda_n} h
\end{equation}
where $m_0(\nabla)=m_0(\nabla,\xi_n)$ is as in Lemma~\ref{L:boost action}.
By Lemma~\ref{L:h_n} these functions lie in a compact set in $L^2_x$ and so we may pass to a subsequence along which we have strong (=norm) convergence, say $h_n\to h_\infty$.  The lemma
also implies that $\| h_n \|_{L^2_x}\lesssim 1$, so
\begin{equation}\label{E:phi v h_n}
\| \tilde\phi \|_{L^2_x} \gtrsim \lim_{n\to\infty} \bigl| \langle h_n,\ \tilde\phi\rangle_{L^2_x} \bigr|.
\end{equation}
Next, using the definition of $\tilde\phi$, Lemma~\ref{L:boost action}, the unitarity of the other symmetries, and finally \eqref{E:big somewhere}, we have
\begin{align}
\lim_{n\to\infty} \langle h_n,\ \tilde\phi\rangle_{L^2_x}
    &= \lim_{n\to\infty} \bigl\langle h_n,\ D_{\lambda_n}^{-1} \W_{\xi_n}^{-1} T_{\tilde x_n}^{-1} e^{-i\tilde t_n\jpn} \W_{\tilde\nu_n}^{-1}  f_n \bigr\rangle_{L^2_x} \notag\\
    &= \lim_{n\to\infty} \bigl\langle T_{\tilde x_n} e^{i\xi_n x} D_{\lambda_n} h,\ e^{-i\tilde t_n\jpn} \W_{\tilde\nu_n}^{-1}  f_n \bigr\rangle_{L^2_x} \notag\\
    &= \lim_{n\to\infty} \lambda_n \bigl[ P_{Q_n} e^{-i\tilde t_n\jpn} \W_{\tilde\nu_n}^{-1}  f_n \bigr] (\tilde x_n) \notag\\
    &\gtrsim \eps^{\frac{352}3} A^{-\frac{349}3}. \label{phi v h}
\end{align}
Combining this with \eqref{E:phi v h_n} yields non-triviality of $\tilde\phi$:
\begin{equation}
\| \tilde\phi \|_{L^2_x} \gtrsim \eps^{\frac{352}3} A^{-\frac{349}3} = \eps \,(\tfrac\eps A\bigr)^{\frac{349}3}.
\end{equation}

Next we use \eqref{E:Boost translates} to reorder the symmetries on the sequence \eqref{E:fake seq} that converges (weakly) to $\tilde\phi$:
\begin{equation}
 D_{\lambda_n}^{-1} \W_{\xi_n}^{-1} T_{\tilde x_n}^{-1} e^{-i\tilde t_n\jpn} \W_{\tilde\nu_n}^{-1}  f_n
    = D_{\lambda_n}^{-1} \W_{\xi_n}^{-1} \W_{\tilde\nu_n}^{-1} T_{x_n}^{-1} e^{-it_n\jpn} f_n,
\end{equation}
where $(-\tilde t_n, -\tilde x_n)=L_{\tilde \nu_n}(-t_n,-x_n)$.  In general, the composition of Lorentz boosts is not a pure boost but also includes a spatial rotation.
We define $\nu_n$ so that $\W_{\xi_n}^{-1} \W_{\tilde\nu_n}^{-1} = R_n \W_{\nu_n}^{-1}$ for some rotation $R_n\in SO(2)$.  As this is a compact group,
we may pass to a subsequence so that $R_n\to R$ and then define $\phi=R^{-1}\tilde \phi$.  Rotations commute with
dilations and preserve the $L^2_x$ and $H^1_x$ norms. Thus
\begin{equation}\label{fn to phi}
D_{\lambda_n}^{-1} \W_{\nu_n}^{-1} T_{x_n}^{-1} e^{-it_n\jpn} f_n \underset{n\to\infty} \rightharpoonup \phi
    \qtq{with} \|\phi \|_{L^2_x} \gtrsim \eps^{\frac{352}3} A^{-\frac{349}3}.
\end{equation}
The weak convergence is in $H^1_x$ when $\lambda_\infty <\infty$ and merely in $L^2_x$ when $\lambda_\infty =\infty$.  Note that the new sequence of
boost parameters $\nu_n$ inherits boundedness from $\tilde\nu_n$ and $\xi_n$; more precisely, $|\nu_n|\lesssim (A/\eps)^{16}$.
By passing to a further subsequence, we can guarantee convergence of $\nu_n$ as stated in \eqref{E:InvStrich params converge}.
In addition, \eqref{fn to phi} settles \eqref{E:InvStrich1}.

Let us now turn to proving decoupling of the $H^1_x$ norm.  We begin with \eqref{E:InvStrich H1 non 0}, treating only the case $\lambda_\infty=\infty$
since the case $\lambda_\infty<\infty$ is similar but easier.  By Lemma~\ref{L:boost action},
\begin{align*}
\| \phi_n \|_{H^1_x} &= \| \W_{\nu_n} D_{\lambda_n} P_{\leq \lambda_n^{\theta}}\phi \|_{H^1_x}
    \gtrsim \jp{\nu_n}^{-1} \| D_{\lambda_n} P_{\leq \lambda_n^{\theta}}\phi \|_{H^1_x} \gtrsim \jp{\nu_n}^{-1} \| P_{\leq \lambda_n^{\theta}}\phi \|_{L^2_x}
\end{align*}
and thence by $\lambda_n\to\infty$, $|\nu_n| \lesssim (A/\eps)^{16}$, and \eqref{fn to phi},
\begin{align*}
\liminf_{n\to\infty} \| \phi_n \|_{H^1_x} \gtrsim (\eps/A)^{16} \| \phi \|_{L^2_x} \gtrsim \bigl(\tfrac\eps A\bigr)^{\frac{397}{3}} \, \eps.
\end{align*}
This yields \eqref{E:InvStrich H1 non 0} as promised.  Next we consider \eqref{E:InvStrich H1 decoup}, beginning with the basic Hilbert space identity
\begin{equation*}
\| f_n\|_{H^1_x}^2 - \| f_n - \phi_n \|_{H^1_x}^2 - \| \phi_n \|_{H^1_x}^2 = 2 \langle f_n - \phi_n,\ \phi_n \rangle_{H^1_x}.
\end{equation*}
Again we confine our discussion to the more interesting $\lambda_n\to\infty$ case, where
\begin{align*}
\langle f_n - \phi_n,&\ \phi_n \rangle_{H^1_x}
    = \bigl\langle T_{x_n}^{-1} e^{-it_n\jpn} f_n - \W_{\nu_n} D_{\lambda_n} P_{\leq \lambda_n^{\theta}}\phi,
        \  \W_{\nu_n} D_{\lambda_n} P_{\leq \lambda_n^{\theta}}\phi \bigr\rangle_{H^1_x}\\
&= \bigl\langle \W_{\nu_n}^{-1} T_{x_n}^{-1} e^{-it_n\jpn} f_n - D_{\lambda_n} P_{\leq \lambda_n^{\theta}}\phi,
    \  m_1(\nabla;\nu_n)^{-1} D_{\lambda_n} P_{\leq \lambda_n^{\theta}}\phi \bigr\rangle_{H^1_x}\\
&= \bigl\langle D_{\lambda_n}^{-1} \W_{\nu_n}^{-1} T_{x_n}^{-1} e^{-it_n\jpn} f_n - P_{\leq \lambda_n^{\theta}}\phi,
    \  \jp{\lambda_n^{-1}\nabla}^2 m_1(\lambda_n^{-1} \nabla;\nu_n)^{-1} P_{\leq \lambda_n^{\theta}}\phi \bigr\rangle_{L^2_x}
\end{align*}
by Lemma~\ref{L:boost action} and~\eqref{E:dilate m}.  Thus, by \eqref{fn to phi},
\begin{gather*}
P_{\leq \lambda_n^{\theta}}\phi \to \phi, \qtq{and}
    \jp{\lambda_n^{-1}\nabla}^2 m_1(\lambda_n^{-1} \nabla;\nu_n)^{-1} P_{\leq \lambda_n^{\theta}}\phi \to \jp{\nu_\infty}^{-1} \phi \quad\text{in $L^2_x$},
\end{gather*}
we deduce $\langle f_n - \phi_n,\phi_n \rangle_{H^1_x} \to 0$ and so \eqref{E:InvStrich H1 decoup} follows.

Finally, we turn to the proof of \eqref{E:InvStrich Snorm} which shows that the free evolution of $\phi_n$ captures (at least as $n\to\infty$)
a positive proportion of the evolution of $f_n$.  Much of the dirty work has been encapsulated for us in Lemma~\ref{L:pointwise}, as we will see.
We present the details in the case $\lambda_n\lesssim 1$ as adapting the argument to $\lambda_n\to\infty$ just involves minor modifications to the formulae that follow.

Using \eqref{E:dilate m} and making the change of variables $t=\lambda_n^2 s$, $x=\lambda_n y$, to account
for the fact that $D_{\lambda_n}^{-1}$ is not an isometry on $L^4_{x}$, we have
\begin{align}
    \notag
\bigl\| e^{-it\jpn} (f_n - \phi_n) \bigr\|_{L^4_{t,x}} &= \bigl\| e^{-it\jpn} ( \W_{\nu_n}^{-1} e^{-it_n\jpn} T_{x_n}^{-1} f_n - D_{\lambda_n} \phi) \bigr\|_{L^4_{t,x}} \\
&= \bigl\| e^{-i\lambda_n^2 s\jp{\lambda_n^{-1}\nabla}} ( D_{\lambda_n}^{-1} \W_{\nu_n}^{-1} e^{-it_n\jpn} T_{x_n}^{-1} f_n - \phi) \bigr\|_{L^4_{s,y}},
    \label{E:almost a.e.}
\end{align}
which throws us into the path of Lemma~\ref{L:pointwise} with $g_n=D_{\lambda_n}^{-1} \W_{\nu_n}^{-1} e^{-it_n\jpn} T_{x_n}^{-1} f_n$ and $g = \phi$.  This lemma allows us to
apply Lemma~\ref{L:BrezisLieb}, or more precisely \eqref{E:BrezisLieb}, and so obtain
\begin{align*}
 \limsup_{n\to\infty} &\bigl\| e^{-i\lambda_n^2 s\jp{\lambda_n^{-1}\nabla}} ( D_{\lambda_n}^{-1} \W_{\nu_n}^{-1} e^{-it_n\jpn} T_{x_n}^{-1} f_n - \phi) \bigr\|_{L^4_{s,y}}^4 \\
\leq{}&\limsup_{n\to\infty}  \bigl\|e^{-i\lambda_n^2 s\jp{\lambda_n^{-1}\nabla}} D_{\lambda_n}^{-1} \W_{\nu_n}^{-1} e^{-it_n\jpn} T_{x_n}^{-1} f_n \bigr\|_{L^4_{s,y}}^4
    - \bigl\| e^{-i\lambda_\infty^2 s\jp{\lambda_\infty^{-1}\nabla}} \phi \bigr\|_{L^4_{s,y}}^4.
\end{align*}
Reversing the computations in \eqref{E:almost a.e.} this becomes
\begin{align*}
\limsup_{n\to\infty} \bigl\| e^{-it\jpn} (f_n - \phi_n)  \bigr\|_{L^4_{t,x}}^4 \leq  \limsup_{n\to\infty}  \bigl\| e^{-it\jpn} f_n  \bigr\|_{L^4_{t,x}}^4
    - \bigl\| e^{-i\lambda_\infty^2 s\jp{\lambda_\infty^{-1}\nabla}} \phi  \bigr\|_{L^4_{s,y}}^4.
\end{align*}
This is very close to implying \eqref{E:InvStrich Snorm}; in view of \eqref{E:InvStrich input}, all that is missing is a lower bound of the form
\begin{equation}\label{S phi >0}
     \bigl\| e^{-i\lambda_\infty^2 s\jp{\lambda_\infty^{-1}\nabla}} \phi  \bigr\|_{L^4_{s,y}} \gtrsim  \eps \bigl(\tfrac\eps A\bigr)^C.
\end{equation}

We will not bother to determine the value of $C$ appearing in \eqref{S phi >0}.  In accordance with this, we will use the symbol $C$
to denote a variety of powers which vary from place to place as we develop the proof of \eqref{S phi >0}.

From \eqref{phi v h} we have
$$
| \langle R^{-1}h_\infty,\ \phi\rangle | \gtrsim \eps \bigl( \tfrac{\eps}{A}\bigr)^C,
$$
where $h_\infty$ is the $L^2_x$-limit of the functions $h_n$ defined in \eqref{E:h_n defn} and $R$ is the limiting rotation defined above; recall $\tilde \phi=R\phi$.
Note that $h_\infty$ inherits the estimates \eqref{E:K fourier decay} enjoyed by $h_n$.  Together with $\|\phi\|_{L^2_x} \lesssim A$ (which follows from
\eqref{E:InvStrich input} and \eqref{E:InvStrich1}) and $|\nu_n|\lesssim (A/\eps)^C$, these estimates guarantee the existence of radii $M,r\sim (A/\eps)^C$ so that
$$
\bigl| \langle e^{-i\lambda_\infty^2 s\jp{\lambda_\infty^{-1}\nabla}} \tilde h,\ e^{-i\lambda_\infty^2 s\jp{\lambda_\infty^{-1}\nabla}} \phi\rangle \bigr|
    = \bigl| \langle \tilde h,\ \phi\rangle\bigr| \gtrsim \eps \bigl( \tfrac{\eps}{A}\bigr)^C,
$$
where $\tilde h = P_{\leq M} \, \chi R^{-1}h_\infty$ and $\chi$ is a smooth cutoff to $\{|x|\leq r\}$.

To complete the proof of \eqref{S phi >0}, we merely need to show
$$
\| e^{-i\lambda_\infty^2 s\jp{\lambda_\infty^{-1}\nabla}} \tilde h \|_{L^{4/3}_x} \lesssim \bigl(\tfrac{A}\eps\bigr)^C
$$
uniformly in $\lambda_\infty$ and $s\in[-1,1]$.
First we note that $\| \tilde h\|_{L^{4/3}_x} \lesssim (A/\eps)^C$, by construction.  To extend this to non-zero values of $s$ we
use the Mikhlin multiplier theorem.  The requisite input is
$$
\bigl| \partial^\alpha_\xi \ s\lambda^2 \jp{\lambda^{-1}\xi} \bigr|  \lesssim_\alpha 1 \qtq{when} |\alpha|\geq 1
$$
uniformly for $s\in[-1,1]$ and $\lambda\gtrsim 1$.  This suffices since $\tilde h$ has Fourier support inside a bounded region, namely, $|\xi|\lesssim (A/\eps)^C$.
\end{proof}

\begin{corollary} \label{Cor:Simplify}
After passing to a further subsequence in $n$ (and possibly changing $\phi$ and $x_n$), we may assume in addition that the parameters in the conclusion of Theorem~\ref{T:InvStrich}
satisfy the following: If $\lambda_n$ does not converge to $+\infty$, then $\lambda_n\equiv 1$ and $\nu_n\equiv 0$.  Moreover, irrespective of the behaviour of
$\lambda_n$, we may assume that $t_n$ obeys $\tfrac{t_n}{\lambda_n^2}\to \pm \infty$ or $t_n\equiv 0$.
\end{corollary}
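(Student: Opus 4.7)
The plan is to successively pass to subsequences and absorb any parameters with finite limits into the profile $\phi$ and spatial center $x_n$, exploiting strong continuity of $D_\lambda$, $\W_\nu$, and $e^{it\jpn}$ on $H^1_x$, together with the NLS approximation from Lemma~\ref{L:pointwise}. All conclusions of Theorem~\ref{T:InvStrich} are stable under $o_{H^1_x}(1)$ perturbations of $\phi_n$; this is the precise sense in which ``$\phi$ and $x_n$'' may change.

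Suppose first that $\lambda_n\not\to\infty$. By \eqref{E:InvStrich params converge}, pass to a subsequence with $\lambda_n\to\lambda_\infty\in[\tfrac18,\infty)$, $\nu_n\to\nu_\infty\in\R^2$, and $\phi\in H^1_x$. Replace $\phi$ by $\W_{\nu_\infty}D_{\lambda_\infty}\phi\in H^1_x$ and reset $\lambda_n\equiv 1$, $\nu_n\equiv 0$; strong continuity of $D_\lambda$ (for $\lambda$ bounded away from $0$ and $\infty$) and of $\W_\nu$ on $H^1_x$ (via \eqref{E:Fourier Boost} and dominated convergence) renders this an $o_{H^1_x}(1)$ modification of $\phi_n$. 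Then extract a further subsequence along which either $|t_n|\to\infty$, giving $t_n/\lambda_n^2 = t_n \to \pm\infty$, or $t_n\to t_\infty\in\R$; in the latter case, further replace $\phi$ by $e^{it_\infty\jpn}\phi$ and set $t_n\equiv 0$ — again an $o_{H^1_x}(1)$ perturbation by strong continuity of $e^{it\jpn}$.

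In the case $\lambda_n\to\infty$, extract a subsequence so that $\nu_n\to\nu_\infty$ and $s_n := t_n/\lambda_n^2$ converges in $[-\infty,\infty]$. If $|s_n|\to\infty$, nothing remains to do; otherwise $s_n\to s_\infty\in\R$, and the goal is to set $t_n\equiv 0$ by adjusting $\phi$ and $x_n$. The key calculation uses \eqref{E:Boost translates} with $(\tau,y)=(t_n,0)$, then \eqref{E:dilate m} and $T_yD_\lambda=D_\lambda T_{y/\lambda}$, to rewrite $e^{it_n\jpn}\W_{\nu_n}D_{\lambda_n} = \W_{\nu_n}T_{-\nu_n t_n}D_{\lambda_n}e^{i\jp{\nu_n}t_n\jp{\lambda_n^{-1}\nabla}}$. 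Setting $x'_n := x_n - \nu_n t_n/\jp{\nu_n}$, a second application of \eqref{E:Boost translates} with $(\tau,y) = (0,-\nu_n t_n/\jp{\nu_n})$ gives $T_{x'_n}\W_{\nu_n} = T_{x_n}\W_{\nu_n}T_{-\nu_n t_n}e^{i(|\nu_n|^2 t_n/\jp{\nu_n})\jpn}$; combining these, and using the algebraic identity $\jp{\nu_n}t_n - |\nu_n|^2 t_n/\jp{\nu_n} = t_n/\jp{\nu_n}$ (a consequence of $\jp{\nu_n}^2-|\nu_n|^2=1$), one arrives at
\[
\phi_n = T_{x'_n}\W_{\nu_n}D_{\lambda_n}\,e^{i\lambda_n^2 s^*_n\jp{\lambda_n^{-1}\nabla}}P_{\leq\lambda_n^\theta}\phi,
\qquad
s^*_n := \frac{s_n}{\jp{\nu_n}}\to \frac{s_\infty}{\jp{\nu_\infty}} =: s^*_\infty\in\R.
\]

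To finish, factor $e^{i\lambda_n^2 s^*_n\jp{\lambda_n^{-1}\nabla}} = e^{i\lambda_n^2 s^*_n}\,e^{i\lambda_n^2 s^*_n[\jp{\lambda_n^{-1}\nabla}-1]}$, pass to a further subsequence along which the unimodular phase $e^{i\lambda_n^2 s^*_n}$ converges to some $\beta\in S^1$, and apply the estimate \eqref{1} from the proof of Lemma~\ref{L:pointwise}(d) — upgraded from $H^4_x$ to $L^2_x$ by density of smooth functions and uniform $L^2$-boundedness of the operators — at the bounded time $-s^*_n\to -s^*_\infty$ to obtain $e^{i\lambda_n^2 s^*_n[\jp{\lambda_n^{-1}\nabla}-1]}P_{\leq\lambda_n^\theta}\phi \to e^{-is^*_\infty\Delta/2}\phi$ in $L^2_x$. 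Setting $\phi_{\text{new}} := \beta\, e^{-is^*_\infty\Delta/2}\phi\in L^2_x$, the wave packet $T_{x'_n}\W_{\nu_n}D_{\lambda_n}P_{\leq\lambda_n^\theta}\phi_{\text{new}}$ corresponding to $t_n\equiv 0$ differs from $\phi_n$ by $o_{L^2_x}(1)$; the outer $D_{\lambda_n}$ lifts this $L^2$-error to $o_{H^1_x}(1)$ since the error is frequency-localized at scale $\lambda_n^\theta$ with $\theta<1$, preserving the conclusions of Theorem~\ref{T:InvStrich}. The main obstacle is the algebraic identity $\jp{\nu_n}t_n - |\nu_n|^2 t_n/\jp{\nu_n} = t_n/\jp{\nu_n}$ in the $\lambda_n\to\infty$ case: it records how a Lorentz boost converts a time translation into a Galilean one in the moving frame and guarantees that, after the correction $x_n\mapsto x'_n$, the residual scaled time $s^*_n$ remains bounded, so that the NLS approximation closes in a single step.
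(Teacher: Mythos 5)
Your proof is correct and follows essentially the same route as the paper: absorb convergent parameters into $\phi$ via strong operator convergence, and in the $\lambda_n\to\infty$ case use the commutation rule \eqref{E:Boost translates} together with the phase approximation \eqref{phase approx} to replace $\phi$ by (a phase times) $e^{-i\tilde t_\infty\Delta/2}\phi$ while shifting $x_n$ by $\nu_n t_n/\jp{\nu_n}$. Your two-step derivation of the identity $T_{x_n}e^{it_n\jpn}\W_{\nu_n}=T_{x_n'}\W_{\nu_n}e^{i\jp{\nu_n}^{-1}t_n\jpn}$ is just a longer path to the one-line formula the paper quotes directly.
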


\begin{proof}
Suppose $\lambda_n \to \lambda_{\infty} \in [\tfrac 18,\infty)$.  Then $D_{\lambda_n}$ and $D_{\lambda_n}^{-1}$ converge strongly to $D_{\lambda_{\infty}}$ and
$D_{\lambda_{\infty}}^{-1}$, respectively, as operators both on $L^2_x(\R^2)$ and $H^1_x(\R^2)$.  Thus we may replace $\phi$ by $D_{\lambda_{\infty}} \phi$
and set $\lambda_n\equiv 1$, whilst retaining the conclusions of Theorem~\ref{T:InvStrich}.  In the case of \eqref{E:InvStrich Snorm}, we invoke the Strichartz inequality.
By the same reasoning, we may replace $\phi$ with $\W_{\nu}\phi$ and set $\nu_n\equiv 0$.

We turn now to the discussion of $t_n$.  By passing to a subsequence, we may assume $\tfrac{t_n}{\jp{\nu_n}\lambda_n^2} \to \tilde t_{\infty} \in [-\infty, \infty]$;
we just need to treat the case $\tilde t_\infty\in \R$.  Invoking \eqref{E:Boost translates}, we have
\begin{align*}
T_{x_n}e^{it_n\jpn} \W_{\nu_n} = T_{x_n - \frac{\nu_n}{\jp{\nu_n}}t_n} \W_{\nu_n} e^{i\jp{\nu_n}^{-1}t_n \jpn}.
\end{align*}
If $\lambda_n \equiv 1$ and $\nu_n \equiv 0$, our assumption guarantees $t_n \to \tilde t_{\infty}$, and hence $e^{it_n \jpn}\phi \to e^{i\tilde t_{\infty}\jpn}\phi$ in $H^1_x(\R^2)$.
In this case, we replace $\phi$ by $e^{i\tilde t_{\infty}\jpn}\phi$ and argue as above.

If $\lambda_n \to \infty$, we pass to a subsequence so that $e^{i\jp{\nu_n}^{-1}t_n}\to e^{it_\infty}$ for some $t_\infty\in [0, 2\pi)$.  Thus by \eqref{phase approx},
$$
e^{i\jp{\nu_n}^{-1}t_n} e^{i\jp{\nu_n}^{-1}t_n(\jp{\lambda_n^{-1}\nabla}-1)}P_{\leq \lambda_n^{\theta}}\phi - e^{it_{\infty}}e^{-i\tilde t_{\infty} \Delta/2}P_{\leq \lambda_n^{\theta}} \phi
        \to 0 \qtq{in $L^2_x(\R^2)$.}
$$
As $D_{\lambda_n}P_{\leq \lambda_n^{\theta}}$ is a bounded operator from $L^2_x(\R^2)$ to $H^1_x(\R^2)$, we may replace $\phi$ by $e^{it_{\infty}}e^{-i\tilde t_{\infty} \Delta/2}\phi$,
set $t_n\equiv 0$, and change $x_n$ to $x_n-\tfrac{\nu_n}{\jp{\nu_n}}t_n$.  We can then argue as above.
\end{proof}


\section{Linear profile decomposition} \label{S:lpd}


From Theorem~\ref{T:InvStrich}, we may deduce the existence of a linear profile decomposition.  We continue to work with the first order Klein--Gordon  equation \eqref{nlkg1st}.

\begin{theorem}[Linear profile decomposition] \label{T:lpd}
Let $\{v_n\}$ be a bounded sequence of $H^1_x(\R^2)$ functions.  Then, after passing to a subsequence, there exists $J_0 \in [1,\infty]$ and for each integer $1\leq j < J_0$ there also exist
\begin{CI}
\item a function $0 \neq \phi^j \in L^2_x(\R^2)$,
\item a sequence $\{\lambda_n^j\} \subset [1,\infty)$ such that either $\lambda_n^j \to \infty$ or $\lambda_n^j \equiv 1$,
\item a sequence $\nu_n^j \to \nu^j \in \R^2$ which is identically $0$ if $\lambda_n^j \equiv 1$,
\item a sequence $\{(t_n^j,x_n^j)\} \subset \R \times \R^2$ such either $t_n^j/(\lambda_n^j)^2\to \pm \infty$ or $t_n^j\equiv 0$.
\end{CI}
Let $P_n^j$ denote the projections defined by
$$
P_n^j \phi^j :=
\begin{cases} \phi^j \in H^1_x(\R^2), &\qtq{if $\lambda^j_n \equiv  1$} \\
P_{\leq (\lambda_n^j)^{\theta}}\phi^j, &\qtq{if $\lambda_n^j \to \infty$,}
\end{cases}
$$
with $\theta = \tfrac{1}{100}$.  Then for all $1 \leq J < J_0$, we have a decomposition
\begin{equation} \label{E:lpd}
v_n = \sum_{j=1}^J T_{x_n^j} e^{it_n^j \jpn}\W_{\nu_n^j}  D_{\lambda_n^j} P_n^j \phi^j + w_n^J,
\end{equation}
satisfying
\begin{gather}
\label{E:lpd ST norms to zero}
\lim_{J \to \infty} \limsup_{n \to \infty} \|e^{-it\jpn}w_n^J\|_{L^4_{t,x}(\R \times \R^2)} = 0, \\
\label{E:lpd H1 decoup}
\lim_{n \to \infty} \biggl\{ \|v_n\|_{H^1_x}^2 - \sum_{j=1}^J \| T_{x_n^j} e^{it_n^j \jpn}  \W_{\nu_n^j}D_{\lambda_n^j} P_n^j \phi^j\|_{H^1_x}^2 - \|w_n^J\|_{H^1_x}^2 \biggr\}= 0, \\
\label{E:lpd wk lim}
D_{\lambda_n^j}^{-1} \W_{\nu_n^j}^{-1}T_{x_n^j}^{-1}  e^{-it_n^j \jpn} w_n^J \rightharpoonup 0, \qtq{weakly in $L^2_x(\R^2)$ for any $j \leq J$.}
\end{gather}
Finally, we have the following orthogonality condition: for any $j \neq j'$,
\begin{gather} \label{E:lpd orthogonality}
\lim_{n \to \infty}\biggl\{ \frac{\lambda_n^j}{\lambda_n^{j'}} + \frac{\lambda_n^{j'}}{\lambda_n^j} + \lambda_n^{j}|\nu_n^j - \nu_n^{j'}|
        +  \frac{|s_n^{jj'}|}{(\lambda_n^{j'})^2}  +  \frac{|y_n^{jj'}|}{\lambda_n^{j'}} \biggr\}= \infty,
\end{gather}
where $(-s_n^{jj'},y_n^{jj'}) := L_{\nu_n^{j'}}(t_n^{j'}-t_n^{j},x_n^{j'}-x_n^{j})$.
\end{theorem}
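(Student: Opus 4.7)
The plan is to iterate Theorem~\ref{T:InvStrich}, using Corollary~\ref{Cor:Simplify} to normalize parameters at each step, and then to verify the orthogonality condition by a contradiction argument driven by the weak-limit property \eqref{E:lpd wk lim}. Set $w_n^0 := v_n$ and $A := \sup_n \|v_n\|_{H^1_x}$. At stage $J \ge 1$, put $\eps_{J-1} := \limsup_n \|e^{-it\jpn} w_n^{J-1}\|_{L^4_{t,x}}$; if $\eps_{J-1}=0$ terminate with $J_0=J$, else apply Theorem~\ref{T:InvStrich} (after passing to a subsequence) and Corollary~\ref{Cor:Simplify} to $\{w_n^{J-1}\}$ to extract $\phi^J,\lambda_n^J,\nu_n^J,t_n^J,x_n^J$, and define
\[
  w_n^J := w_n^{J-1} - T_{x_n^J}e^{it_n^J \jpn}\W_{\nu_n^J} D_{\lambda_n^J} P_n^J \phi^J.
\]
A Cantor diagonal extraction then produces a single subsequence that works for every $J$. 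The $H^1_x$-decoupling \eqref{E:lpd H1 decoup} follows by telescoping \eqref{E:InvStrich H1 decoup}; combining the resulting bound $\sum_j \|\phi^j\|_{H^1_x}^2 \lesssim A^2$ with the lower bound \eqref{E:InvStrich H1 non 0} forces $\eps_{J-1}\to 0$, yielding \eqref{E:lpd ST norms to zero}. The weak limit \eqref{E:lpd wk lim} with $j = J$ at stage $J$ is a restatement of \eqref{E:InvStrich1}; its validity for $j < J$ will be established alongside orthogonality.

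The main obstacle is \eqref{E:lpd orthogonality}, which I would establish jointly with \eqref{E:lpd wk lim} by strong induction on $j'$. Suppose orthogonality holds for all pairs $(k,k')$ with $k < k' < j'$, and \eqref{E:lpd wk lim} holds for every $j \le j'-1$ at every intermediate stage. Assume for contradiction that \eqref{E:lpd orthogonality} fails for some $j<j'$: then, passing to a further subsequence, each of the five quantities in \eqref{E:lpd orthogonality} has a finite limit. Using the commutation relation \eqref{E:Boost translates} between boosts and space--time translations, the dilation identity \eqref{E:dilate m}, the Jacobian computation \eqref{E:Boost Jacobian}, and the boundedness of composed boosts in the sense of Lemma~\ref{L:boost action}, I would compute the strong operator limit
\[
  A_n^{jj'} := D_{\lambda_n^{j'}}^{-1}\W_{\nu_n^{j'}}^{-1} T_{x_n^{j'}}^{-1} e^{-it_n^{j'}\jpn} T_{x_n^j} e^{it_n^j\jpn} \W_{\nu_n^j} D_{\lambda_n^j} \longrightarrow A^{jj'}
\]
on Schwartz test functions (with a case analysis according to whether each of $\lambda_n^j,\lambda_n^{j'}$ is identically $1$ or tends to $\infty$; the third case $\lambda_n^j/\lambda_n^{j'}\to 0$ or $\infty$ is excluded by the hypothesis). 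Since each factor is (asymptotically) an isometry on $H^{1/2}_x$, the limit operator $A^{jj'}$ is nondegenerate, so $A^{jj'}\phi^j \neq 0$.

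To close the contradiction, apply the inductive weak-limit hypothesis \eqref{E:lpd wk lim} at stage $j'-1$ with index $j$, namely $D_{\lambda_n^j}^{-1}\W_{\nu_n^j}^{-1}T_{x_n^j}^{-1}e^{-it_n^j\jpn} w_n^{j'-1} \rightharpoonup 0$ in $L^2_x$. Because $(A_n^{jj'})^*$ converges strongly on a dense set of test functions and the sequence $w_n^{j'-1}$ is $L^2_x$-bounded, the operator identity $D_{\lambda_n^{j'}}^{-1}\W_{\nu_n^{j'}}^{-1}T_{x_n^{j'}}^{-1}e^{-it_n^{j'}\jpn} = A_n^{jj'}\bigl(D_{\lambda_n^j}^{-1}\W_{\nu_n^j}^{-1}T_{x_n^j}^{-1}e^{-it_n^j\jpn}\bigr)$ yields
\[
  D_{\lambda_n^{j'}}^{-1}\W_{\nu_n^{j'}}^{-1}T_{x_n^{j'}}^{-1}e^{-it_n^{j'}\jpn} w_n^{j'-1} \rightharpoonup 0 \quad\text{weakly in }L^2_x.
\]
However, \eqref{E:InvStrich1} applied at stage $j'$ identifies this weak limit as $\phi^{j'} \ne 0$, the desired contradiction; thus \eqref{E:lpd orthogonality} holds. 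The inductive step for \eqref{E:lpd wk lim} at index $j<J$ at stage $J$ then follows routinely: subtracting the $J$-th profile term from $w_n^{J-1}$, its contribution tested in the $j$-parameters vanishes weakly because the already-established orthogonality between $(j,J)$ makes the adjoint of the relevant operator send any fixed test function weakly to zero.
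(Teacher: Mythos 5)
Your skeleton is the paper's: iterate Theorem~\ref{T:InvStrich} (normalized by Corollary~\ref{Cor:Simplify}), telescope \eqref{E:InvStrich H1 decoup} for the $H^1_x$ decoupling, deduce \eqref{E:lpd ST norms to zero} from the quantitative gain at each stage (your route via \eqref{E:InvStrich H1 non 0} plus summability is a legitimate variant of the paper's direct use of \eqref{E:InvStrich Snorm}; both work because the $\eps_J$ are non-increasing), and derive \eqref{E:lpd wk lim} for $j<J$ together with \eqref{E:lpd orthogonality} from a dichotomy for the transition operators $A_n^{jj'}$. Your contradiction argument is in fact a slightly streamlined packaging of the paper's: you transport the inductively known weak limit of $w_n^{j'-1}$ in the $j$-frame to the $j'$-frame via the strongly convergent $A_n^{jj'}$ and contradict \eqref{E:InvStrich1}, whereas the paper applies $A_n^{jj'}$ to $\phi^{j'}$ and decomposes $w_n^{j'-1}=w_n^{j}-\sum(\text{profiles})$ using minimality of $j'$. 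These are the same argument, and your induction closes.

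The one substantive gap is the step you label routine: the claim that when \eqref{E:lpd orthogonality} \emph{holds} for a pair $(j,J)$, the operators $A_n^{jJ}$ (equivalently, their adjoints applied to fixed test functions) converge to zero in the weak operator topology on $L^2_x$. This assertion is load-bearing at every stage of your induction --- it is what propagates \eqref{E:lpd wk lim} from $J-1$ to $J$ for each $j<J$, which in turn feeds the next orthogonality step --- and it is where most of the work in the paper's Lemma~\ref{L:orthogonality} lies. One must factor $A_n^{jj'}$ (modulo a compact family of rotations) into a uniformly bounded Fourier multiplier, the dilation $D_{\lambda_n^j/\lambda_n^{j'}}$, the pullback by the rescaled boost map $\ell_\nu^\lambda$, and the phase and modulation factors $e^{-is_n^{jj'}\jp{\xi/\lambda_n^{j'}}}$ and $e^{-iy_n^{jj'}\xi/\lambda_n^{j'}}$, and then run a case analysis: an $L^\infty_\xi$--$L^1_\xi$ bound when the scales separate, disjointness of Fourier supports when the rescaled boost parameters separate, van der Corput (using $\Delta_\xi\jp{\xi/\lambda}\geq\lambda^{-2}\jp{\xi/\lambda}^{-1}$) when $|s_n^{jj'}|/(\lambda_n^{j'})^2\to\infty$, and Riemann--Lebesgue when only $|y_n^{jj'}|/\lambda_n^{j'}\to\infty$. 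None of this is automatic, and your proposal gives no indication of it. A second, minor point: in the converse direction, ``each factor is asymptotically an isometry on $H^{1/2}_x$'' is not quite the right justification for injectivity of the strong limit; what you need is that the $A_n^{jj'}$ are uniformly bounded below on $L^2_x$ (true because the boost parameters stay bounded, so the multipliers $m_0$ are bounded above and below), whence any strong limit is bounded below and in particular injective.
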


\begin{proof} Given a sequence $v_n$ as above, by passing to a subsequence we may assume that for some $A_0, \eps_0 \geq 0$,
$$
\lim_{n \to \infty} \|v_n\|_{H_x^1(\R^2)} = A_0 \qtq{and} \lim_{n \to \infty} \|e^{-it\jpn} v_n\|_{L^4_{t,x}(\R \times \R^2)} = \eps_0.
$$
(Note that the Strichartz inequality guarantees $\eps_0 \lesssim A_0$.)  If $\eps_0 = 0$, then we set $J_0 = 1$  and the claim follows; note that in this case there are no $\phi^j$'s.
Otherwise, we apply Theorem~\ref{T:InvStrich} as strengthened by Corollary~\ref{Cor:Simplify} to find $\{\nu_n^1\}$, $\{\lambda_n^1\}$, $\{(t_n^1,x_n^1)\}$, and $\phi^1$.  We set
$$
w_n^1: = v_n - T_{x_n^1}e^{it_n^1\jpn}\W_{\nu_n^1}D_{\lambda_n^1}P_n^1 \phi^1.
$$
Note that by \eqref{E:InvStrich1},
$$\
D_{\lambda_n^1}^{-1} \W_{\nu_n^1}^{-1}T_{x_n^1}^{-1} e^{-it_n^1 \jpn}  w_n^1 \rightharpoonup 0 \qtq{weakly in $L^2_x(\R^2)$,}
$$
which gives \eqref{E:lpd wk lim} when $J=1$.  Moreover, by \eqref{E:InvStrich H1 decoup},
\begin{equation} \label{E:decoup J=1}
\lim_{n \to \infty}\Bigl\{ \|v_n\|_{H^1_x(\R^2)}^2 - \|T_{x_n^1}e^{it_n^1\jpn}\W_{\nu_n^1}D_{\lambda_n^1}P_n^1 \phi^1\|_{H^1_x(\R^2)}^2 - \|w_n^1\|_{H^1_x(\R^2)}^2\Bigr\} = 0,
\end{equation}
which is \eqref{E:lpd H1 decoup} when $J=1$.

By passing to a further subsequence if necessary, we may now assume that
$$
\lim_{n \to \infty}\|w_n^1\|_{H_x^1(\R^2)} = A_1 \qtq{and} \lim_{n \to \infty} \|e^{-it\jpn} w_n^1\|_{L^4_{t,x}(\R \times \R^2)} = \eps_1
$$
for some $A_1, \eps_1\geq0$.  Furthermore, by \eqref{E:decoup J=1} we have $A_1 \leq A_0$, while by \eqref{E:InvStrich Snorm},
$$
\eps_1 \leq \eps_0\bigl[1-c(\tfrac{\eps_0}{A_0})^C\bigr]^{1/4}.
$$
If $\eps_1 = 0$, then we set $J_0 = 2$ and stop.  Otherwise, we apply Theorem~\ref{T:InvStrich} to $w_n^1$ to obtain parameters $\nu_n^2$, $\lambda_n^2$, $(t_n^2,x_n^2)$,
and a function $\phi^2$.  We then set
$$
w_n^2 := w_n^1 - T_{x_n^2}e^{it_n^2\jpn}\W_{\nu_n^2}D_{\lambda_n^2}P_n^2 \phi^2.
$$

Arguing as above, we obtain \eqref{E:lpd wk lim} when $j=J=2$, and also
\begin{equation} \label{E:decoup J=2}
\lim_{n \to \infty} \Bigl\{\|w_n^1\|_{H^1_x(\R^2)}^2 - \|T_{x_n^2}e^{it_n^2\jpn}\W_{\nu_n^2}D_{\lambda_n^2}P_n^2 \phi^2\|_{H^1_x(\R^2)}^2 - \|w_n^2\|_{H^1_x(\R^2)}^2 \Bigr\}= 0.
\end{equation}
Adding \eqref{E:decoup J=1} and \eqref{E:decoup J=2}, we obtain \eqref{E:lpd H1 decoup} when $J=2$.

Continuing in this fashion, one of two things occurs.  Either we reach some finite $j$ so that $\eps_j = 0$, in which case we set $J_0 = j+1$, or we obtain an infinite number of sequences
of parameters $\nu_n^j$, $\lambda_n^j$, $(t_n^j,x_n^j)$, and an infinite collection of functions $\phi^j$, in which case we set $J_0 = \infty$.

At this point, tracing back through the definition of the $w_n^J$, we have a decomposition of the form \eqref{E:lpd}.  That \eqref{E:lpd ST norms to zero} holds is a tautology if the
algorithm terminates and follows from \eqref{E:InvStrich Snorm} otherwise.  The claim \eqref{E:lpd H1 decoup} may be established inductively, arguing as for the case $J=2$ above.
As above, the weak limit in \eqref{E:lpd wk lim} is zero when $j = J$, but to conclude \eqref{E:lpd wk lim} in the case $1\leq j<J$,  as well as the orthogonality condition
\eqref{E:lpd orthogonality}, we will have to do a little more work.  In particular, we will make use of the following:

\begin{lemma}[Orthogonality] \label{L:orthogonality}
For $j \neq j'$ we define a sequence of operators
\begin{equation} \label{E:ortho op}
A_n^{jj'}:=D_{\lambda_n^{j}}^{-1} \W_{\nu_n^{j}}^{-1} T_{x_n^{j}}^{-1} e^{-it_n^{j}\jpn}  T_{x_n^{j'}} e^{it_n^{j'} \jpn} \W_{\nu_n^{j'}} D_{\lambda_n^{j'}}.
\end{equation}
If the orthogonality condition \eqref{E:lpd orthogonality} holds, then $A_n^{jj'}$ converges to zero in the weak operator topology on $\mathcal{B}(L^2_x(\R^2))$.
Conversely, if the orthogonality condition fails, then after passing to a subsequence, both $A_n^{jj'}$ and its adjoint converge to injective operators in the
strong operator topology on $\mathcal{B}(L^2_x(\R^2))$.
\end{lemma}

The deduction of the remaining conclusions in the theorem from this lemma is straightforward.   Indeed, \eqref{E:lpd wk lim} for $1\leq j<J$ follows from \eqref{E:lpd orthogonality} (which
we have yet to prove) and Lemma~\ref{L:orthogonality}.

To obtain \eqref{E:lpd orthogonality}, we argue by contradiction and use the crucial fact that, if defined, $\phi^j \neq 0$.   As an example, consider $j=2$ and choose the minimal $j'$
for which \eqref{E:lpd orthogonality} fails.  For the sake of this example, suppose $j'=4$.  As the orthogonality condition fails, Lemma~\ref{L:orthogonality} guarantees that
$$
A_n^{2\,4}\phi^4\to \psi\neq 0
$$
strongly in $L^2_x$.  On the other hand, as $\phi^4=\wlim  D_{\lambda_n^4}^{-1} \W_{\nu_n^4}^{-1} T_{x_n^4}^{-1}e^{-it_n^4\jpn} w_n^3$ and the adjoint of $A_n^{2\,4}$ converges strongly,
\begin{align*}
\psi&=\wlim_{n\to \infty}A_n^{2\,4}\Bigl[D_{\lambda_n^4}^{-1}\W_{\nu_n^4}^{-1}T_{x_n^4}^{-1}e^{-it_n^4\jpn}
    \bigl(w_n^2 - T_{x_n^3} e^{it_n^3 \jpn} \W_{\nu_n^3}D_{\lambda_n^3}\phi^3\bigr)\Bigr]=0,
\end{align*}
which contradicts $\psi\neq 0$.  Note here that we used the previously proved $j=J$ case of \eqref{E:lpd wk lim} to treat the first term and the minimality of $j'$
and Lemma~\ref{L:orthogonality} to treat the second term.
\end{proof}

We move now to the proof of the lemma.

\begin{proof}[Proof of Lemma~\ref{L:orthogonality}]
Let us first note that the adjoint of $A_n^{jj'}$ is $A_n^{j'j}$; this follows from Lemma~\ref{L:boost action} and the fact that $m_0$ appearing there commutes with translations
and free evolutions. Thus, strong convergence of the adjoint will follow by reversing $j$ and $j'$.

We begin by rewriting $A_n^{jj'}$ in a more convenient form; more precisely, using \eqref{E:Boost translates} we have
$$
A_n^{jj'}=D_{\lambda_n^{j}}^{-1} \W_{\nu_n^{j}}^{-1} \W_{\nu_n^{j'}} T_{y_n^{jj'}} e^{-is_n^{jj'}\jpn} D_{\lambda_n^{j'}} .
$$
Writing $ \W_{\nu_n^{j}}^{-1} \W_{\nu_n^{j'}}= R_n^{jj'}\W_{\nu_n^{jj'}}$ and using the fact that rotations and dilations commute, we obtain
$$
A_n^{jj'}= R_n^{jj'} D_{\lambda_n^{j}}^{-1} \W_{\nu_n^{jj'}}T_{y_n^{jj'}} e^{-is_n^{jj'}\jpn} D_{\lambda_n^{j'}} .
$$
Note that $|\nu_n^{jj'}|  \sim |\nu_n^{j'} - \nu_n^j|$, with the implicit constant depending on the upper bound for $|\nu_n^j|+|\nu_n^{j'}|$.
As rotations form a compact group of unitary operators, we may neglect $R_n^{jj'}$ in what follows.

On the Fourier side, a careful computation using \eqref{E:Fourier Boost} yields that for a function $f \in L^2_x(\R^2)$,
\begin{align*}
\widehat{A_n^{jj'}f}(\xi) &=  \tfrac{\lambda_n^{j'}}{\lambda^j_n}  \tfrac{\jp{\ell_{\nu_n^{jj'}}(\xi/\lambda_n^j)}}{\jp{\xi/\lambda_n^j}} e^{-iy_n^{jj'} \ell_{\nu_n^{jj'}}(\xi/\lambda_n^{j})}
 e^{-is_n^{jj'} \langle \ell_{\nu_n^{jj'}}(\xi/\lambda_n^{j})\rangle}\hat{f}\bigl(\lambda_n^{j'} \ell_{\nu_n^{jj'}}(\tfrac{\xi}{\lambda_n^{j}})\bigr),
\end{align*}
which we rewrite as
\begin{align*}
\widehat{A_n^{jj'}f}(\xi) &= B_n^{jj'}C_n^{jj'}E_n^{jj'}F_n^{jj'}G_n^{jj'} \widehat{f}(\xi)
\end{align*}
with
\begin{gather*}
B_n^{jj'} :=  \tfrac{\jp{\ell_{\nu_n^{jj'}}(\xi/\lambda_n^{j})}}{\jp{\xi/\lambda_n^{j}}},
\qquad
C_n^{jj'} := D_{\lambda_n^j/\lambda_n^{j'}},
\qquad
E_n^{jj'} :=(\ell_{\nu_n^{jj'}}^{\lambda_n^{j'}})^{*}, \\
F_n^{jj'} := e^{-is_n^{jj'}\jp{\xi/\lambda_n^{j'}}},
\qquad
G_n^{jj'} := e^{-iy_n^{jj'}\xi/\lambda_n^{j'}}.
\end{gather*}
Here, we used the notation
$$
\ell_{\nu}(\xi) := \xi^{\perp} + \jp{\nu}\xi^{\parallel} - \jp{\xi}\nu \qtq{and} \ell_{\nu}^{\lambda}(\xi) := \lambda \ell_{\nu}(\lambda^{-1}\xi)
        = \xi^{\perp} + \jp{\nu}\xi^{\parallel} - \jp{\tfrac{\xi}{\lambda}}\lambda \nu,
$$
while $^{*}$ is used to denote the pullback, that is,  $(\ell_{\nu}^{\lambda})^{*} \widehat{f}:= \widehat{f} \circ \ell_{\nu}^{\lambda}$.

To continue, by passing to a subsequence, we may assume the following:
\begin{enumerate}
\item[(i)] Either $\tfrac{\lambda_n^{j'}}{\lambda_n^j} + \tfrac{\lambda_n^j}{\lambda_n^{j'}} \to \infty$ or $\tfrac{\lambda_n^{j'}}{\lambda_n^j} \to \lambda_{\infty} \in (0,\infty)$.
\item[(ii)] Either $\lambda_n^{j'}|\nu_n^j - \nu_n^{j'}| \to \infty$ or there exists a diffeomorphism $\ell_{\infty}$, whose Jacobian is bounded both above and below, such that
$\ell_{\nu_n^{jj'}}^{\lambda_n^{j'}} \to \ell_{\infty}$, uniformly on compact subsets of $\R^2$.
\item[(iii)] Either $|s_n^{jj'}|/(\lambda_n^{j'})^2 \to \infty$ or $s_n^{jj'}/(\lambda_n^{j'})^2 \to s_{\infty} \in \R$.
\item[(iv)] Either $|y_n^{jj'}|/\lambda_n^{j'} \to \infty$ or $y_n^{jj'}/\lambda_n^{j'} \to y_{\infty} \in \R$.
\end{enumerate}

We start by addressing the second half of the lemma.  Assume therefore that \eqref{E:lpd orthogonality} fails.  In this case, it is easy to check that each of $B_n^{jj'}$ through $G_n^{jj'}$
converges strongly to an injective operator and hence so does their product.

It remains therefore to consider the case when \eqref{E:lpd orthogonality} holds.  Directly from the definition, we see that $A_n^{jj'}$ forms a uniformly bounded sequence
of operators on $L_x^2(\R^2)$; thus it suffices to show that
$$
\lim_{n \to \infty} \langle A_n^{jj'} \phi, \psi \rangle_{L^2_x(\R^2)} = 0
$$
for every pair of Schwartz functions $\phi$ and $\psi$ with compact Fourier support.

The sequence of functions
$$
 \xi \mapsto \frac{\bigl\langle \ell_{\nu_n^{jj'}}(\xi/\lambda_n^{j})\bigr\rangle}{\jp{\xi/\lambda_n^{j}}}
$$
is uniformly bounded and converges uniformly on compact sets by virtue of our assumption that the sequences $\nu_n^j,\nu_n^{j'}$ converge in $\R^2$ and that the sequence
$\lambda_n^j$ converges in $[1,\infty]$.  Consequently, $B_n^{jj'}$ and its adjoint converge in the strong operator topology on $\mathcal{B}(L^2_x(\R^2))$, and we may
disregard $B_n^{jj'}$ in what follows.

As $E_n^{jj'}$ through $G_n^{jj'}$ are isometries on $L^\infty_\xi$,
$$
\bigl|\langle A_n^{jj'} \phi, \psi \rangle_{L^2_x(\R^2)}\bigr|\lesssim \frac{\lambda_n^{j'}}{\lambda_n^j} \|\hat \phi\|_{L^\infty_\xi} \|\hat \psi\|_{L^1_\xi};
$$
indeed, the first factor on the right is the norm of $C_n^{jj'}$ in $L_\xi^\infty$.  This proves weak convergence to zero in the case when $\lambda_n^{j'}/\lambda_n^j\to 0$.
The case when $\lambda_n^{j'}/\lambda_n^j\to \infty$ can be handled by reversing the roles of $j$ and $j'$ and recalling that the adjoint of $A_n^{jj'}$ is $A_n^{j'j}$.
This leaves the case when the ratio converges to a finite positive number; in this scenario, $C_n^{jj'}$ and its adjoint both converge strongly and so may be neglected in what follows.

Looking back at the definition of $\ell_\nu^\lambda$, we see that if $\lambda_n^{j'}|\nu_n^{jj'}| \to \infty$, then
$$
E_n^{jj'}F_n^{jj'}G_n^{jj'} \hat{\phi} \qtq{and} \hat \psi
$$
have disjoint supports for large $n$, which proves weak convergence to zero.  If on the contrary $\lambda_n^{j'}\nu_n^{jj'}$ converges then, by observation (ii) above,
$E_n^{jj'}$ and its adjoint converge in the strong operator topology on $\mathcal{B}(L^2_x(\R^2))$, and we may disregard it in what follows.

It remains only to show that
\begin{equation}\label{E:RL}
\lim_{n\to\infty} \int_{\R^2} \hat\phi(\xi) \overline{\hat\psi(\xi)} \exp\bigl\{-is_n^{jj'}\jp{\xi/\lambda_n^{j'}} - i \xi y_n^{jj'}/\lambda_n^{j'}  \bigr\} \,d\xi = 0
\end{equation}
whenever $(\lambda_n^{j'})^{-2}|s_n^{jj'}|\to\infty$ or $|y_n^{jj'}|/\lambda_n^{j'} \to\infty$.  In the former case, this follows from the van der Corput lemma after noting that
$\Delta_\xi\, \jp{\xi/\lambda} \geq \lambda^{-2} \jp{\xi/\lambda}^{-1}$.  If only the latter sequence of parameters diverge, then \eqref{E:RL} reduces to the Riemann--Lebesgue lemma.

This completes the proof of Lemma~\ref{L:orthogonality}.
\end{proof}

We end this section with a few propositions that will be useful when we apply the linear profile decomposition in Section~\ref{S:min blowup} to extract a minimal blowup solution.

\begin{proposition}[Energy decoupling] \label{P:nrg decoup}
Suppose $\{v_n\}$ is a bounded sequence of $H^1_x$ functions.  Then after passing to a subsequence, the linear profile decomposition \eqref{E:lpd} satisfies the following: for each $J < J_0$,
\begin{equation} \label{E:nrg decoup}
\lim_{n \to \infty} \Bigl\{ E(v_n) - \sum_{j=1}^J E(T_{x_n^j} e^{it_n^j \jpn} \W_{\nu_n^j} D_{\lambda_n^j}P_n^j \phi^j) - E(w_n^J) \Bigr\} = 0.
\end{equation}
\end{proposition}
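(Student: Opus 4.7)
The plan is to argue by induction on $J$, with the base case treating a single profile at a time. Given \eqref{E:nrg decoup} for $J-1$, the statement for $J$ is equivalent to the corresponding statement for the decomposition $w_n^{J-1} = \phi_n^J + w_n^J$, for which the weak $L^2_x$-convergence \eqref{E:lpd wk lim} holds by construction. Thus it suffices to prove the base case: if $f_n = \phi_n + w_n$ where $\phi_n = T_{x_n} e^{it_n \jpn} \W_{\nu_n} D_{\lambda_n} P_n \phi$ and $D_{\lambda_n}^{-1} \W_{\nu_n}^{-1} T_{x_n}^{-1} e^{-it_n \jpn} w_n \rightharpoonup 0$ weakly in $L^2_x$, then $E(f_n) - E(\phi_n) - E(w_n) \to 0$.

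Writing $E(f) = \tfrac{1}{2}\|f\|_{H^1_x}^2 + \tfrac{\mu}{4}\int|\Re f|^4\,dx$, the quadratic part is already handled by \eqref{E:lpd H1 decoup}, so I only need
\begin{equation*}
\int_{\R^2}|\Re f_n|^4\,dx - \int_{\R^2}|\Re \phi_n|^4\,dx - \int_{\R^2}|\Re w_n|^4\,dx \to 0.
\end{equation*}
I would handle this according to the normalizations from Corollary~\ref{Cor:Simplify}.

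In the principal scenario, $\lambda_n \equiv 1$, $\nu_n \equiv 0$, and $t_n \equiv 0$, so $\phi_n = T_{x_n}\phi$ for a fixed $\phi \in H^1_x$. After translation, $T_{x_n}^{-1}f_n \rightharpoonup \phi$ weakly in $L^2_x$; uniform $H^1_x$-boundedness together with Rellich's compact embedding and a diagonal argument give, along a subsequence, almost-everywhere convergence $T_{x_n}^{-1}f_n \to \phi$. Applying the refined Fatou lemma (Lemma~\ref{L:BrezisLieb}) to $\{\Re T_{x_n}^{-1}f_n\}$ with limit $\Re\phi$ in $L^4_x$, and undoing the translation, yields the required quartic decoupling.

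In the remaining scenarios, either $t_n \to \pm\infty$ with $\lambda_n \equiv 1$, or $\lambda_n \to \infty$, my approach is to show $\|\Re\phi_n\|_{L^4_x} \to 0$; once this is established, H\"older's inequality and the uniform $L^4_x$-bound on $\Re w_n$ (from Sobolev embedding and the $H^1_x$-bound) ensure that every cross term in the expansion of $|\Re f_n|^4$ vanishes. If $\lambda_n \equiv 1$ and $t_n \to \pm\infty$, the dispersive estimate of Lemma~\ref{L:Dispersive} applied to a Schwartz approximant of $\phi$ in $H^1_x$ shows $\|e^{it_n\jpn}\phi\|_{L^4_x} \to 0$, with the approximation error absorbed by the Sobolev embedding $H^{1/2}_x \hookrightarrow L^4_x$. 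If $\lambda_n \to \infty$, a direct computation using the Jacobian identity \eqref{E:Boost Jacobian} shows that the Fourier support of $\phi_n = T_{x_n}e^{it_n\jpn}\W_{\nu_n} D_{\lambda_n} P_{\leq \lambda_n^{\theta}}\phi$ is concentrated in a set of volume $\lesssim \jp{\nu_n}\lambda_n^{2(\theta-1)}$, while $\|\phi_n\|_{L^2_x}$ is uniformly bounded; Cauchy--Schwarz on the Fourier side then gives $\|\phi_n\|_{L^\infty_x} \lesssim \jp{\nu_n}^{3/2}\lambda_n^{\theta-1}\|\phi\|_{L^2_x} \to 0$ (using $\theta = \tfrac{1}{100}$ and the boundedness of $\nu_n$), and interpolation yields $\|\phi_n\|_{L^4_x} \to 0$.

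The main difficulty lies in the principal scenario: applying Brezis--Lieb demands pointwise convergence, which we secure by upgrading the given weak $L^2_x$-convergence to almost-everywhere convergence using the $H^1_x$-bound and Rellich. The other scenarios are conceptually straightforward but require careful bookkeeping of the Fourier supports through the composition $\W_{\nu_n} \circ D_{\lambda_n} \circ P_{\leq \lambda_n^{\theta}}$, to certify that the profile's mass is squeezed into an ever shrinking region in frequency space and is therefore uniformly small in $L^\infty_x$ (and hence in $L^4_x$).
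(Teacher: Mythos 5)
Your proposal is correct and follows essentially the same route as the paper: induction on $J$, reduction to the quartic term via \eqref{E:lpd H1 decoup}, Rellich plus the Br\'ezis--Lieb lemma in the case $\lambda_n\equiv1$, $t_n\equiv0$, the dispersive estimate when $t_n\to\pm\infty$, and a Fourier-support/Bernstein argument showing $\|\phi_n\|_{L^4_x}\to0$ when $\lambda_n\to\infty$. The only cosmetic difference is that you re-derive Bernstein's inequality by Cauchy--Schwarz on the Fourier side where the paper simply cites it.
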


\begin{proof}
We will prove that the energy decouples in the inverse Strichartz theorem, that is, in the case $J=1$; the general case follows by induction.  Furthermore, by \eqref{E:lpd H1 decoup},
it suffices to show that
\begin{equation} \label{E:lpd L4x decoup}
\lim_{n \to 0} \Bigl\{\|\Re v_n\|_{L^4_x}^4 - \| \Re \phi_n\|_{L^4_x}^4 - \|\Re w_n\|_{L^4_x}^4\Bigr\} = 0,
\end{equation}
where
$$
\phi_n = T_{x_n} e^{it_n\jpn} \W_{\nu_n} D_{\lambda_n} P_n \phi,
$$
with $P_n = 1$ if $\lambda_n \equiv 1$ and $P_n = P_{\leq \lambda_n^{\theta}}$ if $\lambda_n \to \infty$.

We start by considering the case when $\lambda_n \equiv 1$; recall that in this case we in fact have $\phi \in H^1_x$,
$$
\phi_n = T_{x_n}e^{it_n\jpn}\phi
$$
(because $\nu_n \equiv 0$ and $P_n$ is the identity), and either $t_n\to \pm \infty$ or $t_n\equiv 0$.  Approximating $\phi$ in $H^1_x$ by Schwartz functions and applying the dispersive estimate,
we see that
$$
\|e^{it_n\jpn} \phi\|_{L^4_x} \to 0  \qtq{when} t_n \to \pm \infty.
$$
Claim \eqref{E:lpd L4x decoup} now follows easily.   Next we consider the case $\lambda_n\equiv 1$ and $t_n \equiv 0$.  By \eqref{E:lpd wk lim}, we have $T_{-x_n} w_n \rightharpoonup 0$, weakly
in $H^1_x$.   Thus, by Rellich's theorem, a subsequence of $T_{-x_n} w_n$ converges a.e.\ to $0$, and so \eqref{E:lpd L4x decoup} follows by applying  Lemma~\ref{L:BrezisLieb} with
$F_n=\Re T_{-x_n}v_n$ and $F=\Re\phi$.

It remains to consider the case when $\lambda_n \to \infty$, which we treat with the following lemma.

\begin{lemma} \label{L:L4x to zero} If $\lambda_n \to \infty$, then
\begin{equation} \label{E:L4x to zero}
\lim_{n \to \infty} \|\phi_n\|_{L^4_x} = 0.
\end{equation}
\end{lemma}

\begin{proof}
We will use Bernstein's inequality, which implies that
\begin{align}  \label{E:Bernstein app}
\|\phi_n\|_{L^4_x} &\lesssim \bigl[\diam(\supp \widehat{\phi}_n)\bigr]^{1/2} \|\phi_n\|_{L^2_x}.
\end{align}

We note that since $|\partial_{\xi_j} \ell_{\nu_n}(\xi)| \lesssim \jp{\nu_n}$ for $\xi \in \R^2$, by \eqref{E:Fourier Boost}  we have
\begin{align}
\notag \diam(\supp \widehat{\phi}_n) &= \diam(\supp ( [\W_{\nu_n} D_{\lambda_n} P_n \phi]\widehat{\ })) \\
\label{E:bound support} &\lesssim \jp{\nu_n} \diam(\supp( [D_{\lambda_n}P_n \phi]\widehat{\ })) \lesssim \jp{\nu_n} \lambda_n^{\theta-1}.
\end{align}
Furthermore, by Lemma~\ref{L:boost action}, we have
\begin{align} \label{E:bound L2}
\|\phi_n\|_{L^2_x} = \|\W_{\nu_n} D_{\lambda_n} P_n \phi\|_{L^2_x} \lesssim \jp{\nu_n} \|\phi\|_{L^2_x}.
\end{align}

Using \eqref{E:bound support} and \eqref{E:bound L2} to bound the right side of \eqref{E:Bernstein app}, and then using boundedness of the $\nu_n$, we obtain
$$
\|\phi_n\|_{L^4_x} \lesssim \jp{\nu_n} \lambda_n^{\frac{\theta-1}2}\|\phi\|_{L^2_x} \to 0,
$$
and the lemma is proved.
\end{proof}

By Lemma~\ref{L:L4x to zero}, we have $\|v_n - w_n\|_{L^4_x} \to 0$, and \eqref{E:lpd L4x decoup} follows.  This completes the proof of the proposition.
\end{proof}

\begin{proposition}[Decoupling of nonlinear profiles] \label{P:nonlinear decoup} Let $\psi^j$ and $\psi^{j'}$ be functions in $C^{\infty}_c(\R \times \R^2)$.  Given parameters
$\nu_n^j$, $\nu_n^{j'}$, $(t_n^j,x_n^j)$, $(t_n^{j'},x_n^{j'})$, $\lambda_n^j$, $\lambda_n^{j'}$ as above, we define $\psi_n^j$ by
\begin{align} \label{E:nl profile approx}
\bigl[\psi^j_n(\cdot + t_n^j,\cdot + x_n^j) \circ L_{\nu_n^j}^{-1}\bigr](t,x) := \tfrac{e^{-it}}{\lambda_n^j} \psi^j\Bigl(\tfrac{t}{(\lambda_n^j)^2},\tfrac{x}{\lambda_n^j}\Bigr),
\end{align}
and similarly for $\psi^{j'}_n$.  Then under the orthogonality condition \eqref{E:lpd orthogonality}, we have
\begin{equation} \label{E:nonlinear decoup}
\lim_{n \to \infty}
\| \psi^j_n \psi^{j'}_n\|_{L^2_{t,x}(\R \times \R^2)} = 0.
\end{equation}
\end{proposition}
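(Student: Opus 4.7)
The proof is by a case analysis based on which term in the orthogonality condition \eqref{E:lpd orthogonality} diverges along a subsequence. The argument rests on two principles: the $L^2_{t,x}$ norm is invariant under Lorentz boosts and spacetime translations (each has unit Jacobian), so one profile can always be normalized; and, by \eqref{E:nl profile approx} together with the change of variables $(s,y) = (t/(\lambda_n^j)^2, x/\lambda_n^j)$ and Lorentz invariance of Lebesgue measure,
$$
\|\psi_n^j\|_{L^p_{t,x}}^p = (\lambda_n^j)^{4-p}\|\psi^j\|_{L^p}^p.
$$

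\textbf{Case 1 (scales separate).} Suppose $\lambda_n^j/\lambda_n^{j'} \to 0$ along a subsequence; the reverse is symmetric since the claim is symmetric in $j \leftrightarrow j'$. By H\"older and the scaling identity,
$$
\|\psi_n^j \psi_n^{j'}\|_{L^2}^2 \leq \|\psi_n^{j'}\|_{L^\infty}^2 \|\psi_n^j\|_{L^2}^2 \lesssim \bigl(\lambda_n^j/\lambda_n^{j'}\bigr)^2 \longrightarrow 0.
$$

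\textbf{Case 2 (scales comparable).} Otherwise, pass to a subsequence with $\lambda_n^j/\lambda_n^{j'} \to c \in (0,\infty)$; then by \eqref{E:lpd orthogonality} one of $\lambda_n^{j'}|\nu_n^j - \nu_n^{j'}|$, $|s_n^{jj'}|/(\lambda_n^{j'})^2$, $|y_n^{jj'}|/\lambda_n^{j'}$ must diverge. Apply the change of variables $(t,x) = L_{\nu_n^{j'}}(s,y) + (t_n^{j'}, x_n^{j'})$ and then the rescaling $s = (\lambda_n^{j'})^2 s'$, $y = \lambda_n^{j'} y'$. Using \eqref{E:nl profile approx}, the integral becomes
$$
\|\psi_n^j \psi_n^{j'}\|_{L^2}^2 = \int \bigl|\psi^j\bigl(\tilde s/(\lambda_n^j)^2,\, \tilde y/\lambda_n^j\bigr)\bigr|^2 \,\bigl|\psi^{j'}(s', y')\bigr|^2 \,ds'\,dy',
$$
where
$$
(\tilde s,\tilde y) = L_{\nu_n^j}\bigl[L_{\nu_n^{j'}}((\lambda_n^{j'})^2 s',\, \lambda_n^{j'} y') + (t_n^{j'} - t_n^j,\, x_n^{j'} - x_n^j)\bigr].
$$
Since $\psi^{j'}$ is supported in a fixed compact set $K \subset \R\times\R^2$, it suffices (by the dominated convergence theorem) to show that for each $(s',y') \in K$, the pair $(\tilde s/(\lambda_n^j)^2,\, \tilde y/\lambda_n^j)$ eventually leaves any fixed compact set, except possibly on a set of $(s',y')$ of measure tending to zero.

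The translation contribution is $L_{\nu_n^j}(t_n^{j'} - t_n^j, x_n^{j'} - x_n^j) = L_{\nu_n^j} L_{-\nu_n^{j'}}(-s_n^{jj'}, y_n^{jj'})$, and because $\nu_n^j,\nu_n^{j'}$ are bounded the operator $L_{\nu_n^j}L_{-\nu_n^{j'}}$ has uniformly bounded norm (and inverse). So if $|s_n^{jj'}|/(\lambda_n^{j'})^2 + |y_n^{jj'}|/\lambda_n^{j'} \to \infty$, then after dividing by $((\lambda_n^j)^2, \lambda_n^j)$ this translation piece diverges, pushing $(\tilde s/(\lambda_n^j)^2, \tilde y/\lambda_n^j)$ out of every compact set independently of $(s',y') \in K$.

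The linear contribution is $L_{\nu_n^j}L_{\nu_n^{j'}}((\lambda_n^{j'})^2 s', \lambda_n^{j'} y')$. Writing $L_{\nu_n^j}L_{\nu_n^{j'}} = L_{\nu_n^{j'}}^2 \circ (L_{-\nu_n^{j'}} L_{\nu_n^j})$ with $L_{-\nu_n^{j'}} L_{\nu_n^j} = \mathrm{Id} + O(|\nu_n^j - \nu_n^{j'}|)$, one checks from \eqref{E:L boost} that after dividing by the scales $((\lambda_n^j)^2, \lambda_n^j)$, the main term is a bounded affine function of $(s',y')$ while the correction has magnitude comparable to $\lambda_n^{j'}|\nu_n^j - \nu_n^{j'}|$ in the direction $\nu_n^j - \nu_n^{j'}$. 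Hence if $\lambda_n^{j'}|\nu_n^j - \nu_n^{j'}| \to \infty$, $(\tilde s/(\lambda_n^j)^2, \tilde y/\lambda_n^j)$ once again leaves every fixed compact set, with the exceptional set of $(s',y') \in K$ (those for which the correction vanishes to leading order, i.e., a lower-dimensional slab) having measure tending to zero.

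\textbf{Main obstacle.} The bookkeeping in the velocity-separation sub-case requires one to verify explicitly that $L_{-\nu_n^{j'}} L_{\nu_n^j} - \mathrm{Id}$ has a component of size $\sim |\nu_n^j - \nu_n^{j'}|$ that survives the rescaling, rather than cancelling against the leading $L_{\nu_n^{j'}}^2$ term. This is a direct but somewhat tedious linear-algebra calculation using only the definition of $L_\nu$ in \eqref{E:L boost}; once carried out, it identifies the exceptional set as a neighbourhood of a codimension-one subspace of $K$, whose measure is controlled by the reciprocal of $\lambda_n^{j'}|\nu_n^j - \nu_n^{j'}|$ and hence tends to zero.
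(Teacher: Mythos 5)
Your proposal is correct and follows essentially the same route as the paper: H\"older plus the $L^p$ scaling identity when the scales separate, and, when the scales are comparable, a change of variables into one profile's frame followed by an estimate on the measure of the support overlap (empty overlap for diverging spacetime translations, a slab of width $\sim(\lambda_n^{j'}|\nu_n^j-\nu_n^{j'}|)^{-1}$ for diverging boost parameters). The only points to tidy are bookkeeping ones you essentially flag yourself: the composed map should be $L_{\nu_n^j}\circ L_{\nu_n^{j'}}^{-1}$, whose boost part has parameter of size $\sim|\nu_n^j-\nu_n^{j'}|$ and which also carries a Wigner rotation to be absorbed into $\psi^j$ after passing to a subsequence, and the uniform boundedness of the integrand makes the measure estimate suffice directly without invoking dominated convergence.
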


\begin{proof}
Let
$$
L_{\nu}^{\lambda}(t,x) := \bigl(\jp{\nu}t-\tfrac{\nu}{\lambda} \cdot x, x^{\perp} + \jp{\nu}x^{\parallel} - \lambda \nu t\bigr)
$$
and let $\lambda_n^{jj'} = \lambda_n^j/\lambda_n^{j'}$.  Additionally, let $R_n^{jj'}$ be spatial rotations and $\nu_n^{jj'}$ be boost parameters such that
$$
L_{\nu_n^{j'}} \circ L_{\nu_n^j}^{-1}(t,x) = L_{\nu_n^{jj'}}(t,R_n^{jj'}x)
$$
(cf.\ the proof of Lemma~\ref{L:orthogonality}).  Recall that $|\nu_n^{jj'}| \sim |\nu_n^j - \nu_n^{j'}|$.

With this notation (using the fact that spatial dilations and rotations commute), we compute
\begin{align} \label{E:psijj' mess}
&\int_{\R \times \R^2}|\psi_n^j \psi_n^{j'}|^2\, dx\, dt\\  \notag
&\qquad = \int_{\R \times \R^2}\Bigl|\tfrac{1}{\lambda_n^{jj'}}\psi^j\Bigl(\tfrac{t}{(\lambda_n^{jj'})^2},\tfrac{(R_n^{jj'})^Tx}{\lambda_n^{jj'}}\Bigr)
        \psi^{j'}\Bigl(\cdot - \tfrac{s_n^{jj'}}{(\lambda_n^{j'})^2},\cdot - \tfrac{y_n^{jj'}}{\lambda_n^{j'}}\Bigr) \circ L_{\nu_n^{jj'}}^{\lambda_n^{j'}}(t,x)\Bigr|^2\, dx\, dt.
\end{align}
As before, by passing to a subsequence, we may assume that $R_n^{jj'}$ converges.  By absorbing the limit into $\psi^j$ and using continuity, it suffices to treat the case
when $R_n^{jj'}$ is the identity.

If $\lambda_n^{jj'} \to 0$, then by H\"older's inequality and \eqref{E:psijj' mess}, we have
$$
\int_{\R \times \R^2} |\psi_n^j \psi_n^{j'}|^2\, dx\, dt \leq (\lambda_n^{jj'})^2\|\psi^j\|_{L^2_{t,x}}^2\|\psi^{j'}\|_{L^{\infty}_{t,x}}^2 \to 0.
$$
Similarly, if $\lambda_n^{jj'} \to \infty$, we have
$$
\int_{\R \times \R^2} |\psi_n^j \psi_n^{j'}|^2\, dx\, dt \leq (\lambda_n^{jj'})^{-2}\|\psi^j\|_{L^{\infty}_{t,x}}^2\|\psi^{j'}\|_{L^2_{t,x}}^2 \to 0;
$$
here we have used the fact that $L_{\nu}^{\lambda}$ is volume-preserving.  We may thus assume that $\lambda_n^{jj'}$ converges to some positive number,
and arguing as above, it suffices to treat the case when $\lambda_n^{jj'} \equiv 1$.

Let
$$
S_n^{jj'} = \supp(\psi^j) \cap \supp\Bigl(\psi^{j'}\Bigl(\cdot - \tfrac{s_n^{jj'}}{(\lambda_n^{j'})^2},\cdot - \tfrac{y_n^{jj'}}{\lambda_n^{j'}}\Bigr) \circ L_{\nu_n^{jj'}}^{\lambda_n^{j'}}\Bigr).
$$
If $(t,x) \in S_n^{jj'}$, then $|t|+|x| \lesssim 1$ because $\supp \psi^j$ is compact, and
$$
\Bigl|x^{\perp} + \jp{\nu_n^{jj'}}x^{\parallel} - \lambda_n^{j'}\nu_n^{jj'}t - \tfrac{y_n^{jj'}}{\lambda_n^{j'}}\Bigr| \lesssim 1
$$
because $\supp \psi^{j'}$ is compact.  Thus if $|\lambda_n^{j'}\nu_n^{jj'}| \to \infty$, then by H\"older's inequality and \eqref{E:psijj' mess} we have that
$$
\int_{\R \times \R^2} |\psi_n^j \psi_n^{j'}|^2\, dx\, dt \leq |S_n^{jj'}|\|\psi^j\|_{L^{\infty}_{t,x}}^2 \|\psi^{j'}\|_{L^{\infty}_{t,x}}^2 \lesssim |\lambda_n^{j'}\nu_n^{jj'}|^{-1} \to 0.
$$
Finally, if $\lambda_n^{j'} \nu_n^{jj'}$ remains bounded while $\Bigl|\Bigl(\frac{s_n^{jj'}}{(\lambda_n^{jj'})^2},\frac{y_n^{j'}}{\lambda_n^{j'}}\Bigr)\Bigr| \to \infty$,
then $S_n^{jj'}$ is eventually empty. This completes the proof of the proposition.
\end{proof}


\section{Isolating NLS inside nonlinear Klein--Gordon} \label{S:embed}


In this section, we consider the mass-critical nonlinear Schr\"odinger equation in the form
\begin{equation} \label{NLS}
(i \partial_t + \tfrac{1}{2}\Delta)w =\mu\tfrac{3}{8}|w|^2w,
\end{equation}
with $\mu=\pm1$ as in \eqref{nlkg} and \eqref{nlkg1st}.  This normalization of the nonlinear Schr\"odinger equation appears naturally
in connection to NLKG and can be reduced to \eqref{normal nls} by rescaling $w$ and $x$.  Correspondingly, the ground state solution associated to \eqref{NLS}
is $w_Q(t,x):=e^{-it}\sqrt{\tfrac83}Q(\sqrt2 \,x)$ where $Q$ is as in \eqref{E:Q defn}.  Note that $M(w_Q) =\tfrac43M(Q)$.  Under this rescaling, Conjecture~\ref{Conj:NLS} takes the
following form:

\begin{conjecture}[Global well-posedness of NLS]  \label{C:NLS}  Fix a value of $\mu = \pm 1$.  Let $w_0 \in L^2_x(\R^2)$ and in the focusing case assume that $M(w_0)<\tfrac43M(Q)$.
Then there exists a unique global solution $w$ to \eqref{NLS} with $w(0) = w_0$.  Furthermore, this solution satisfies
$$
 \|w\|_{L^4_{t,x}(\R \times \R^2)} \leq C(M(w_0)),
$$
for some continuous function $C$.  As a consequence, the solution $w$ scatters both forward and backward in time, that is, there exist $w_\pm\in L_x^2$ such that
$$
\|w(t) -e^{it\Delta/2} w_\pm\|_{L_x^2} \to 0 \qtq{as} t\to \pm \infty.
$$
Conversely, for each $w_\pm$ there is a global solution $w$ to \eqref{NLS} so that the above holds.
\end{conjecture}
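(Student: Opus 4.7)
The plan is to reduce Conjecture~\ref{C:NLS} to Conjecture~\ref{Conj:NLS} by a simple rescaling of the spatial variable and the amplitude. Since the latter is known in the defocusing case by \cite{Dodson:2D} and for spherically symmetric focusing data by \cite{KTV}, this immediately settles the corresponding cases of Conjecture~\ref{C:NLS}, leaving only the non-radial focusing case open just as for Conjecture~\ref{Conj:NLS}.

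Concretely, I would verify that if $w$ solves \eqref{normal nls} with datum $w_0$, then
$$
\tilde w(t,x) := \sqrt{\tfrac{8}{3}}\,w\bigl(t,\,\sqrt{2}\,x\bigr)
$$
solves \eqref{NLS} with datum $\tilde w_0(x) = \sqrt{8/3}\,w_0(\sqrt{2}\,x)$, and that this correspondence is invertible. A direct substitution, balancing the factor $\tfrac12$ in front of $\Delta$ against the spatial dilation by $\sqrt{2}$, and the factor $\tfrac{3}{8}$ in the nonlinearity against the amplitude $\sqrt{8/3}$, confirms the intertwining. Under this rescaling the mass transforms according to $M(\tilde w_0) = \tfrac{4}{3}M(w_0)$, and the ground state $Q$ of \eqref{E:Q defn} is sent to $w_Q(0,x) = \sqrt{8/3}\,Q(\sqrt{2}\,x)$ with $M(w_Q) = \tfrac{4}{3}M(Q)$. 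Thus the threshold $M(w_0) < M(Q)$ appearing in Conjecture~\ref{Conj:NLS} maps exactly onto the threshold $M(\tilde w_0) < \tfrac{4}{3}M(Q)$ in Conjecture~\ref{C:NLS}, so the two hypotheses correspond precisely (and both become vacuous in the defocusing case).

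Next, I would observe that the $L^4_{t,x}(\R\times\R^2)$ norm rescales by a fixed constant (explicitly, $\|\tilde w\|_{L^4_{t,x}}^{4} = \tfrac{32}{9}\|w\|_{L^4_{t,x}}^{4}$, since the amplitude contributes $(8/3)^2$ and the spatial Jacobian contributes $1/2$), so the spacetime bound $\|w\|_{L^4_{t,x}} \leq C(M(w_0))$ granted by Conjecture~\ref{Conj:NLS} yields the corresponding bound $\|\tilde w\|_{L^4_{t,x}} \leq \tilde C(M(\tilde w_0))$ for $\tilde w$ with the continuous function $\tilde C$ obtained by composing $C$ with the affine change $m \mapsto \tfrac{3}{4}m$. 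Scattering transfers in the same way: since the Schr\"odinger propagators satisfy $e^{it\Delta/2}[\sqrt{8/3}\,f(\sqrt{2}\,\cdot)](x) = \sqrt{8/3}\,[e^{it\Delta}f](\sqrt{2}\,x)$, if $w(t) - e^{it\Delta}w_\pm \to 0$ in $L^2_x$, then $\tilde w(t) - e^{it\Delta/2}\tilde w_\pm \to 0$ in $L^2_x$ with $\tilde w_\pm = \sqrt{8/3}\,w_\pm(\sqrt{2}\,\cdot)$, and the converse wave operator assertion follows by the same bijection.

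In short, there is no independent content to prove here beyond recording the rescaling; the only obstacle is the input Conjecture~\ref{Conj:NLS} itself, which remains open in the non-radial focusing case. The awkward constants $\tfrac12$ and $\tfrac{3}{8}$ in \eqref{NLS} are not arbitrary: they are precisely the constants produced by the non-relativistic limit of \eqref{nlkg} (reflected in the phase approximation \eqref{phase approx} and in the cubing of $\cos$-type oscillations $\Re v \approx \tfrac12(v + \bar v)$ that contributes the factor $\tfrac{3}{8}$ via the non-resonant part being small in $X^{s,b}$), which is why the statement is recorded in this form for later use in Section~\ref{S:embed}.
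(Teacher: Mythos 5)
Your rescaling argument is exactly what the paper does (and all your constants check out): the paper records Conjecture~\ref{C:NLS} simply as the image of Conjecture~\ref{Conj:NLS} under $w\mapsto\sqrt{8/3}\,w(t,\sqrt2\,x)$, offering no further proof since the statement remains a conjecture rather than a theorem. The one point worth tightening is the final wave-operator clause: Conjecture~\ref{Conj:NLS} as literally stated contains no converse assertion, so it does not transfer ``by the same bijection''; rather, the existence of wave operators below the mass threshold is a standard consequence of the local theory at infinity together with the asserted global spacetime bounds, and this is implicitly folded into the package when the paper restates the conjecture in the form needed for Section~\ref{S:embed}.
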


The goal of this section is to prove the following theorem:

\begin{theorem} \label{T:embed}
Assume that Conjecture~\ref{C:NLS} holds.  Let sequences $\nu_n \to \nu \in \R^2$, $\lambda_n \to \infty$, and $\{t_n\} \in \R$ be given.  Assume that either $t_n \equiv 0$ or
$t_n/\lambda_n^2 \to \pm \infty$.  Let $\phi \in L^2_x(\R^2)$, and in the focusing case assume also that $M(\phi) < \tfrac43 M(Q)$.  If we define
$$
\phi_n := T_{x_n}e^{it_n\jpn}\W_{\nu_n} D_{\lambda_n}P_{\leq \lambda_n^{\theta}} \phi
$$
for $\theta=\tfrac1{100}$, then for each $n$ sufficiently large, there exists a global solution $v_n$ to \eqref{nlkg1st} with initial data $v_n(0) = \phi_n$, which satisfies
$$
S_{\R}(v_n) \lesssim_{M(\phi)} 1.
$$
Furthermore, for every $\eps > 0$, there exist $N_{\eps}$ and a function $\psi_{\eps} \in C^{\infty}_c(\R \times \R^2)$ such that for all $n > N_{\eps}$,
\begin{equation}\label{E:Cinfty approx0}
\bigl\| \Re\bigl\{v_n \circ L_{\nu_n}^{-1}(t+ \tilde t_n, x + \tilde x_n)
        - \tfrac{e^{-it}}{\lambda_n}\psi_{\eps}(\tfrac{t}{\lambda_n^2},\tfrac{x}{\lambda_n})\bigr\}\bigr\|_{L^4_{t,x}(\R \times \R^2)} < \eps
\end{equation}
where $(\tilde t_n,\tilde x_n):=L_{\nu_n}(t_n,x_n)$ is the center of the wave packet in boosted coordinates.
\end{theorem}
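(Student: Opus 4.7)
The plan is to build an approximate NLKG solution from the NLS solution supplied by Conjecture~\ref{C:NLS} and then upgrade it to a true solution via Proposition~\ref{P:stability}. Using Lorentz invariance (Corollary~\ref{C:boostable}), I work in the boosted frame centered at $(\tilde t_n,\tilde x_n)$, so it suffices to construct a solution to \eqref{nlkg1st} in these coordinates with appropriately transformed initial data. Depending on the asymptotics of $t_n/\lambda_n^2$, I select an NLS solution $w$ as follows: if $t_n \equiv 0$, take $w(0)=\phi$; if $t_n/\lambda_n^2 \to \pm\infty$, take $w$ to be the NLS solution scattering to $\phi$ backward or forward in time. Conjecture~\ref{C:NLS} makes $w$ global with $\|w\|_{L^4_{t,x}} \lesssim_{M(\phi)} 1$, and the projection $P_{\leq \lambda_n^\theta}$ is absorbed by replacing $\phi$ with a truncation of $w_\pm$ using \eqref{E:lpd wk lim}-type arguments.

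The approximate solution in the boosted frame is
\begin{equation*}
\tilde v_n^\flat(\tilde t,\tilde x) := \lambda_n^{-1}\, e^{-i\tilde t}\, w\bigl(\tfrac{\tilde t}{\lambda_n^2},\tfrac{\tilde x}{\lambda_n}\bigr).
\end{equation*}
Using the expansion $\jp{\lambda_n^{-1}\nabla} = 1 - \tfrac{\Delta}{2\lambda_n^2} + O(\lambda_n^{-4}|\nabla|^4)$ (valid on the frequency scale $\lesssim \lambda_n^{\theta-1}$ that $\tilde v_n^\flat$ occupies), together with the pointwise identity
\begin{equation*}
[\Re(e^{-i\tilde t}w)]^3 = \tfrac{3}{8}\, e^{-i\tilde t}|w|^2 w + \tfrac{3}{8}\, e^{i\tilde t}|w|^2\bar w + \tfrac{1}{4}\Re(e^{-3i\tilde t}w^3)
\end{equation*}
and the normalization $iw_s + \tfrac12\Delta w = \mu\tfrac38|w|^2 w$ of \eqref{NLS}, one checks that the resonant $\lambda_n^{-3}e^{-i\tilde t}|w|^2 w$ contributions cancel. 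The residual in \eqref{nlkg1st} takes the form $e_1+e_2$, where $e_1$ is a higher-order Taylor-remainder error of size $O(\lambda_n^{-5})$ and $e_2$ collects the non-resonant oscillations $\lambda_n^{-3}\jpn^{-1}e^{i\tilde t}|w|^2\bar w$ and $\lambda_n^{-3}\jpn^{-1}\Re(e^{-3i\tilde t}w^3)$.

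The hard part is controlling $e_2$ in the $L^1_t H_x^{1/2}$ norm demanded by Proposition~\ref{P:stability}, since $e_2$ is not small in any pointwise sense. Here one exploits the fact that the phases $\jpn\pm k$ for $k\in\{1,3\}$ are bounded away from zero (on the relevant low-frequency support), i.e.\ non-resonant with the linear KG propagator $e^{-i\tilde t\jpn}$. Integration by parts in the Duhamel integral transfers a time derivative onto $|w|^a\bar w^b$; since $w$ depends on $\tilde t$ only through $s=\tilde t/\lambda_n^2$, each such derivative costs a factor of $\lambda_n^{-2}$, which yields an acceptable $X^{s,b}$-style bound. This is the essence of the $X^{s,b}$ arguments referenced in the introduction and parallel to those of \cite{KKSV} for NLS inside gKdV.

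Finally, the non-relativistic expansion of $\jpn$ breaks down once $|\tilde t/\lambda_n^2|$ becomes of order $\lambda_n^{1/2}$ or larger, so I split time into a moderate region $|\tilde t/\lambda_n^2|\leq T$, on which the above construction yields a small error once $T$ is chosen so that $\|w\|_{L^4_{s,y}(|s|>T)}<\eta$, and a large-time region where I replace $w$ by the free Schr\"odinger evolution of its scattering data $w_\pm$. On this latter region the approximate solution reduces essentially to a free Klein--Gordon evolution whose $L^4_{t,x}$ norm is small by Proposition~\ref{P:large times approx}. Gluing the two regimes and invoking Proposition~\ref{P:stability} produces the desired global $v_n$ with $S_\R(v_n)\lesssim_{M(\phi)} 1$. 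The approximation \eqref{E:Cinfty approx0} then follows by further approximating $w$ by a $\psi_\eps\in C^\infty_c(\R\times\R^2)$ in $L^4_{s,y}$ and transferring this approximation to $v_n$ via the stability inequality.
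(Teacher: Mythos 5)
Your treatment of the case $\nu_n\equiv 0$ follows the paper's argument in all essentials: the same ansatz $\lambda_n^{-1}e^{-it}w(t/\lambda_n^2,x/\lambda_n)$ on $|t|\leq T\lambda_n^2$ glued to free Klein--Gordon flow outside, the same resonant cancellation produced by the factor $\tfrac38$, the same $X^{s,b}$-style integration by parts for the $e^{it}$ and $e^{-3it}$ terms, and the same large-time dispersion input before invoking Proposition~\ref{P:stability}. (Two small points: you need the persistence-of-regularity bounds $\||\nabla|^s w_n\|\lesssim\lambda_n^{s\theta}$ furnished by the truncation $P_{\leq\lambda_n^\theta}$ to close the error estimates, and the Taylor remainder contributes $T\lambda_n^{-2+4\theta}$ in $L^1_tH^{1/2}_x$ rather than $O(\lambda_n^{-5})$; neither affects the structure.)

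The genuine gap is your opening reduction, where you dispose of the boost by ``working in the boosted frame'' with ``appropriately transformed initial data.'' Two things go wrong. First, the first-order equation \eqref{nlkg1st} is not itself Lorentz invariant; one must boost the \emph{real} solution $u_n^0=\Re v_n^0$ and then re-form $v_n^1=(1+i\jpn^{-1}\partial_t)(u_n^0\circ L_{\nu_n})$, which is not $v_n^0\circ L_{\nu_n}$. Second, and more seriously, the operator $\W_{\nu_n}$ appearing in $\phi_n$ intertwines with $L_{\nu_n}$ only for solutions of the \emph{linear} equation (cf.\ \eqref{E:Boost linear soln}). Consequently, if you build $v_n^0$ in the boosted frame with data $T_{\tilde x_n}e^{i\tilde t_n\jpn}D_{\lambda_n}P_{\leq\lambda_n^\theta}\phi$ and boost back, the resulting solution has initial data equal to $\phi_n$ \emph{plus} the trace on the tilted slice $L_{\nu_n}(\{\tilde t=0\})$ of the Duhamel (nonlinear) part of $v_n^0$; this discrepancy must be shown to vanish in $H^1_x$ before Proposition~\ref{P:stability} can be applied. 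That is the content of Proposition~\ref{P:vn1}, and it is not free: it uses the stress--energy/divergence-theorem machinery of Corollary~\ref{C:boostable} together with the key estimate that $\|u_n^0\|_{L^4_{t,x}(\Omega_n)}\to 0$ on the spacetime wedge $\Omega_n$ between the slices $\{t=0\}$ and $L_{\nu_n}(\{\tilde t=0\})$, which in turn exploits the wave-packet concentration (amplitude $\sim\lambda_n^{-1}$ on a region of measure $O(\lambda_n^3)$ inside the wedge). Nor can one argue in the opposite direction by ``transforming the initial data,'' since the data of $u_n\circ L_{\nu_n}^{-1}$ on $\{\tilde t=0\}$ is the trace of the unknown solution on a tilted slice and is not determined by $\phi_n$ a priori. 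You need to supply this step.
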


\begin{remark}
As we will see in due course, the proof of Theorem~\ref{T:ST bounds} in the focusing case relies only on the consideration of those $\phi$ with $M(\phi)<M(Q)$.
Thus, the full Conjecture~\ref{Conj:NLS} (or equivalently, Conjecture~\ref{C:NLS}) covers more cases than are needed here.  In a similar vein,
a proof of Conjecture~\ref{Conj:NLS} in the focusing case up to some intermediate mass threshold $M_*<M(Q)$ yields a corresponding result for NLKG.
\end{remark}

We begin with the proof in the case $\nu_n\equiv 0$, which adapts the ideas in \cite[\S4]{KKSV}.  This result is then used to treat the general case.

\begin{proof}[Proof in the case $\nu_n\equiv 0$]  As the first order nonlinear Klein--Gordon equation is invariant under spatial translations, we may assume $x_n\equiv 0$.
Thus, \eqref{E:Cinfty approx0} will follow from
\begin{equation} \label{E:Cinfty approx}
\bigl\| v_n(t+t_n,x) -  \tfrac{e^{-it}}{\lambda_n}\psi_{\eps}(\tfrac{t}{\lambda_n^2},\tfrac{x}{\lambda_n})\bigr\|_{L^4_{t,x}(\R \times \R^2)} < \eps.
\end{equation}

We begin by defining solutions to \eqref{NLS}; we will later modify these to produce approximate solutions to \eqref{nlkg1st}.

In the case when $t_n\equiv 0$, we let $w_n$ be the solution to \eqref{NLS} with initial data
$$
w_n(0) = P_{\leq \lambda_n^{\theta}} \phi.
$$
Similarly, we let $w_\infty$ be the solution to \eqref{NLS} with initial data
$$
w_\infty(0)=\phi.
$$

In the case when $t_n/\lambda_n^2\to -\infty$ (respectively $t_n/\lambda_n^2 \to +\infty$), we denote by $w_n$ the solutions to \eqref{NLS} that scatter forward (respectively backward)
in time to $e^{it \Delta/2} P_{\leq \lambda_n^{\theta}}\phi$.  Correspondingly, we define $w_\infty$ to be the solution to \eqref{NLS} that scatters forward (respectively backward)
in time to $e^{it \Delta/2} \phi$.  (The signs here are correct because if the bulk of the solution is living around time $t_n \to -\infty$, then time 0 is well inside the forward scattering regime.)

As we assume Conjecture~\ref{C:NLS} holds (and that $M(\phi) < \frac43 M( Q)$ in the focusing case), all the solutions to \eqref{NLS} defined above are global and moreover,
$$
S_{\R}(w_n) +S_\R(w_\infty) \lesssim_{M(\phi)} 1.
$$

We begin with a few basic observations about the sequence $w_n$, which will be helpful in what follows.

\begin{lemma}\label{L:basic NLS facts}
For $s\geq 0$, the solutions $w_n$ defined above satisfy
\begin{equation} \label{E:persistence of reg}
\||\nabla|^s w_n\|_{L^{\infty}_tL^2_x(\R \times \R^2)} + \||\nabla|^s w_n\|_{L^4_{t,x}(\R \times \R^2)}\lesssim_{M(\phi)} \lambda_n^{s\theta}
\end{equation}
and
\begin{equation}
\label{E:time derivative}
\|\jpn^s \partial_t w_n\|_{L^4_{t,x}} \lesssim_{M(\phi)}  \lambda_n^{(2+s)\theta}.
\end{equation}
Furthermore, we have the approximation
\begin{equation} \label{E:limit w_infty}
\lim_{n \to \infty}\Bigl\{ \|w_n - w_{\infty}\|_{L^{\infty}_t L^2_x} +\|w_n-w_{\infty}\|_{L^4_{t,x}} + \|D_{\lambda_n}(w_n - P_{\leq \lambda_n^{\theta}}w_{\infty})\|_{L^{\infty}_t H^{\frac12}_x}\Bigr\} = 0.
\end{equation}
\end{lemma}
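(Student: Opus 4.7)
Our plan is to establish \eqref{E:persistence of reg}, \eqref{E:time derivative}, and \eqref{E:limit w_infty} in sequence, by combining Conjecture~\ref{C:NLS} with Strichartz estimates, Bernstein's inequality, and standard mass-critical NLS stability and persistence-of-regularity arguments.

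For \eqref{E:persistence of reg}, note that in all cases the scattering data for $w_n$ (or $w_n(0)$ itself when $t_n \equiv 0$) has Fourier support in $\{|\xi|\le\lambda_n^\theta\}$, so Bernstein's inequality yields $\||\nabla|^s w_n(0)\|_{L^2_x} \lesssim \lambda_n^{s\theta} M(\phi)^{1/2}$ (and likewise for the scattering data). Conjecture~\ref{C:NLS} furnishes $\|w_n\|_{L^4_{t,x}} \lesssim_{M(\phi)} 1$ uniformly in $n$. I would then partition $\R$ into $O_{M(\phi)}(1)$ subintervals $I_k$ on each of which $\|w_n\|_{L^4_{t,x}(I_k)}$ falls below a fixed small threshold $\eta_0$, and apply Strichartz to the Duhamel representation to obtain
\[
\||\nabla|^s w_n\|_{L^\infty_t L^2_x \cap L^4_{t,x}(I_k)} \lesssim \||\nabla|^s w_n(t_k)\|_{L^2_x} + \|w_n\|_{L^4_{t,x}(I_k)}^2 \||\nabla|^s w_n\|_{L^4_{t,x}(I_k)}
\]
via the fractional product rule; for $\eta_0$ small enough the last term absorbs into the left, and iterating through the $I_k$ propagates the initial regularity bound globally.

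For \eqref{E:time derivative}, the NLS equation gives $\partial_t w_n = \tfrac{i}{2}\Delta w_n - \tfrac{3i\mu}{8}|w_n|^2 w_n$, so
\[
\|\jpn^s \partial_t w_n\|_{L^4_{t,x}} \lesssim \|\jpn^{s+2} w_n\|_{L^4_{t,x}} + \|\jpn^s(|w_n|^2 w_n)\|_{L^4_{t,x}}.
\]
The first summand is $\lesssim \lambda_n^{(s+2)\theta}$ directly from \eqref{E:persistence of reg}. For the second, the fractional product rule gives $\|\jpn^s(|w_n|^2 w_n)\|_{L^4_{t,x}} \lesssim \|w_n\|_{L^\infty_{t,x}}^2 \|\jpn^s w_n\|_{L^4_{t,x}}$, and I would establish $\|w_n\|_{L^\infty_{t,x}} \lesssim \lambda_n^\theta$ via a Littlewood--Paley decomposition: the low-frequency contribution $\|P_{\leq \lambda_n^\theta} w_n\|_{L^\infty_x} \lesssim \lambda_n^\theta M(\phi)^{1/2}$ by Bernstein, while \eqref{E:persistence of reg} with a sufficiently large auxiliary $s'$ gives $\|P_M w_n\|_{L^\infty_x} \lesssim M^{1-s'}\lambda_n^{s'\theta}$ for each dyadic $M > \lambda_n^\theta$, whose geometric sum is also $\lesssim \lambda_n^\theta$.

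For \eqref{E:limit w_infty}, the central point is that $w_n(0) \to w_\infty(0)$ in $L^2_x$ (or the corresponding scattering data converge in $L^2_x$), since $P_{\leq \lambda_n^\theta}$ tends strongly to the identity on $L^2_x$ as $\lambda_n \to \infty$. Because $w_\infty$ has finite $L^4_{t,x}$ norm by Conjecture~\ref{C:NLS}, the mass-critical NLS stability theorem applied with $w_\infty$ as the reference and $w_n$ as the perturbation yields $\|w_n - w_\infty\|_{L^\infty_t L^2_x \cap L^4_{t,x}} \to 0$. For the third term, the change of variables $(D_\lambda f)\widehat{\ }(\xi) = \lambda\hat f(\lambda\xi)$ gives
\[
\|D_{\lambda_n} g\|_{H^{1/2}_x}^2 = \int \jp{\xi/\lambda_n} |\hat g(\xi)|^2 \,d\xi \leq \|g\|_{L^2_x}^2 + \lambda_n^{-1}\|g\|_{H^{1/2}_x}^2,
\]
and taking $g = w_n - P_{\leq \lambda_n^\theta}w_\infty$, the $L^2_x$ part vanishes by the stability conclusion together with $\|P_{\leq \lambda_n^\theta}w_\infty - w_\infty\|_{L^2_x}\to 0$, while the $H^{1/2}_x$ piece is $\lesssim \lambda_n^{\theta/2}$ by combining \eqref{E:persistence of reg} with Bernstein on $P_{\leq \lambda_n^\theta}w_\infty$, giving $\lambda_n^{-1}\|g\|_{H^{1/2}_x}^2 \lesssim \lambda_n^{\theta-1} \to 0$ since $\theta = 1/100 < 1$. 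The main obstacle to anticipate is the bookkeeping in the persistence-of-regularity iteration and ensuring constants remain uniform in $n$ when invoking the NLS stability theorem (which in turn hinges on a uniform-in-$n$ bound for $\|w_\infty\|_{L^4_{t,x}}$ from Conjecture~\ref{C:NLS}), but these are by now standard moves in the mass-critical setting.
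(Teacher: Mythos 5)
Your overall strategy coincides with the paper's: Bernstein at the (scattering) data plus persistence of regularity for \eqref{E:persistence of reg}; the equation together with \eqref{E:persistence of reg} for \eqref{E:time derivative}; and mass-critical stability theory for the first two terms of \eqref{E:limit w_infty}. For the nonlinear term in \eqref{E:time derivative} the paper uses H\"older in $L^4_tL^{12}_x\times(L^\infty_tL^{12}_x)^2$ followed by Sobolev embedding, whereas you use a pointwise bound $\|w_n\|_{L^\infty_{t,x}}\lesssim\lambda_n^{\theta}$ obtained from Littlewood--Paley and Bernstein; both routes land on $\lambda_n^{(2+s)\theta}$ and are acceptable. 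Your algebra for the third term of \eqref{E:limit w_infty} is likewise essentially the paper's.

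The one genuine gap is in your treatment of the $L^2_x$ part of $w_n-P_{\leq\lambda_n^\theta}w_\infty$: the norm in \eqref{E:limit w_infty} is $L^\infty_tH^{1/2}_x$, so after invoking stability for $\|w_n-w_\infty\|_{L^\infty_tL^2_x}$ you still need
\begin{equation*}
\|P_{>\lambda_n^\theta}w_\infty\|_{L^\infty_tL^2_x}\to 0,
\end{equation*}
and the justification you give (strong convergence of $P_{\leq\lambda_n^\theta}$ to $\Id$ on $L^2_x$) only yields convergence for each fixed $t$, not uniformly in $t$. To upgrade this one needs precompactness of the orbit $\{w_\infty(t):t\in\R\}$ in $L^2_x$, which is exactly where Conjecture~\ref{C:NLS} enters again: for $|t|\geq T$ one replaces $w_\infty(t)$ by $e^{it\Delta/2}w_\pm$ up to an error small in $L^\infty_tL^2_x$ and uses $\|P_{>\lambda_n^\theta}w_\pm\|_{L^2_x}\to 0$; for $|t|\leq T$ one combines continuity of $t\mapsto w_\infty(t)$, compactness of $[-T,T]$, and the uniform boundedness plus strong convergence of the operators $P_{>\lambda_n^\theta}$. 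This is a short argument, but it is a necessary one and the rest of your proof does not supply it.
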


\begin{proof}
The first inequality follows from persistence of regularity arguments (cf. \cite[Lemma 3.10]{Matador}) and the fact that by Bernstein's inequality,
$$
\||\nabla|^s P_{\leq \lambda_n^{\theta}}\phi \|_{L^2_x(\R^2)} \lesssim \lambda_n^{s\theta}\|\phi\|_{L^2_x(\R^2)}.
$$

To prove inequality \eqref{E:time derivative}, we use the equation \eqref{NLS} together with \eqref{E:persistence of reg}, H\"older's inequality, and Sobolev embedding:
\begin{align*}
\|\jpn^s\partial_t &w_n\|_{L^4_{t,x}} \\
&\lesssim \|\jpn^s\Delta w_n\|_{L^4_{t,x}(\R \times \R^2)} + \|\jpn^sw_n \|_{L_t^4 L^{12}_x(\R\times \R^2)}\|w_n \|_{L_t^\infty L^{12}_x(\R\times \R^2)}^2\\
&\lesssim_{M(\phi)} \lambda_n^{(2+s)\theta} + \|\jpn^{1/3+s} w_n\|_{L^4_{t,x}(\R \times \R^2)}\||\nabla|^{5/6} w_n\|_{L_t^\infty L_x^2(\R \times \R^2)}^2\\
&\lesssim_{M(\phi)} \lambda_n^{(2+s)\theta}.
\end{align*}

That the first two terms in \eqref{E:limit w_infty} tend to zero is a consequence of the stability theory for the mass-critical NLS; this result may be found in \cite{ClayNotes} or \cite{Matador}.

We turn now to the final term on the left side of \eqref{E:limit w_infty}; changing variables and using the triangle inequality and \eqref{E:persistence of reg}, we obtain
\begin{align*}
\| D_{\lambda_n}(w_n & - P_{\leq \lambda_n^{\theta}}w_{\infty})\|_{L^{\infty}_t H^{1/2}_x}\\
&=\|\jp{\lambda_n^{-1} \nabla}^{1/2}(w_n - P_{\leq \lambda_n^{\theta}}w_{\infty})\|_{L^{\infty}_t L^2_x}\\
&\leq \|\jp{\lambda_n^{-1} \nabla}^{1/2}P_{\geq \lambda_n} w_n\|_{L^{\infty}_t L^2_x} + \|\jp{\lambda_n^{-1} \nabla}^{1/2}P_{\leq \lambda_n} (w_n - w_{\infty})\|_{L^{\infty}_t L^2_x} \\
&\qquad +  \|\jp{\lambda_n^{-1} \nabla}^{1/2}P_{\lambda_n^{\theta}\leq \cdot\leq \lambda_n} w_{\infty}\|_{L^{\infty}_t L^2_x}\\
&\lesssim \lambda_n^{-1/2} \| |\nabla|^{1/2} w_n\|_{L_t^\infty L_x^2} + \|w_n-w_\infty\|_{L_t^\infty L_x^2} +  \|P_{\geq \lambda_n^{\theta}} w_{\infty}\|_{L^{\infty}_t L^2_x}\\
&\lesssim_{M(\phi)}  \lambda_n^{-1/2+\theta/2} +  \|w_n-w_\infty\|_{L_t^\infty L_x^2}  +\|P_{\geq\lambda_n^{\theta}} w_{\infty}\|_{L^{\infty}_t L^2_x}.
\end{align*}
It is immediate that the first term on the right-hand side above converges to zero as $n\to \infty$, while the convergence to zero of the second term follows from \eqref{E:persistence of reg}.
It remains to consider the third term.

By our assumption that Conjecture~\ref{C:NLS} holds, $w_{\infty}$ scatters both forward and backward in time; let $w_\pm \in L_x^2$ be the scattering states.  Then
\begin{align*}
\|P_{\geq \lambda_n^{\theta}} & w_{\infty}\|_{L^{\infty}_tL^2_x([T,\infty)\times \R^2)} + \|P_{\geq \lambda_n^{\theta}} w_{\infty}\|_{L^{\infty}_tL^2_x((-\infty, -T]\times \R^2)}\\
&\lesssim \|w_{\infty} - e^{it\Delta/2}w_+\|_{L^{\infty}_t L^2_x([T,\infty)\times \R^2)} + \|w_{\infty} - e^{it\Delta/2}w_-\|_{L^{\infty}_t L^2_x((-\infty, -T]\times \R^2)} \\
&\quad + \|P_{\geq \lambda_n^{\theta}}w_+\|_{L^2_x(\R^2)} +\|P_{\geq \lambda_n^{\theta}}w_-\|_{L^2_x(\R^2)},
\end{align*}
which can be made arbitrarily small by choosing $T$ and $n$ sufficiently large.   On the other hand, for each fixed $T>0$, the continuity of the mapping $t \mapsto w_{\infty}(t)$
together with the compactness of $[-T,T]$ and the fact that the sequence of operators $P_{\geq \lambda_n^{\theta}}$ is uniformly bounded and converges to zero in the strong operator
topology on $L^2_x(\R^2)$ yield
$$
\lim_{n\to \infty} \|P_{\geq \lambda_n^{\theta}} w_{\infty}\|_{L^{\infty}_t L^2_x([-T,T]\times \R^2)}=0.
$$
This completes the proof of the lemma.
\end{proof}

We now use the solutions $w_n$ to NLS to construct approximate solutions to \eqref{nlkg1st}.  Let $T$ be a large, positive real number, to be determined later.  We define
\begin{align} \label{E:approx solutions}
\tilde{v}_n (t) :=
\begin{cases}
e^{-it}D_{\lambda_n}w_n(t/\lambda_n^2), &\text{if $|t| \leq T\lambda_n^2$,} \\
e^{-i(t-T\lambda_n^2)\jpn}\tilde{v}_n(T\lambda_n^2), &\text{if $t > T\lambda_n^2$,} \\
e^{-i(t+T\lambda_n^2)\jpn}\tilde{v}_n(-T\lambda_n^2), &\text{if $t<-T\lambda_n^2$,}
\end{cases}
\end{align}
with the idea that $v_n(t)-\tilde v_n(t-t_n)$ will be small.  Ultimately however, our approximate solution will be a further modification of this.  In particular, we will make an adjustment
on the middle interval which is small in $L^{\infty}_t H^{1/2}_x(\R \times \R^2)$.  This will result in an analogous change on the outer intervals, but as we will show
(using the Strichartz inequality), the modification is negligible in this regime.

Forgetting the above-mentioned technical issues for now, we give an explanation as to why $\tilde{v}_n$ might be an approximate solution to \eqref{nlkg1st}.  On the middle interval,
we can use the estimate \eqref{phase approx} to show that the above transformation takes solutions to the linear Schr\"odinger equation to approximate solutions of the first-order
linear Klein--Gordon equation.  The behavior of the nonlinearities on this interval is a bit more mysterious, but the specific factor $\frac{3}{8}$ appearing in \eqref{NLS} will ensure
that certain resonant error terms cancel, while the non-resonant error terms will be subdued via the use of $X^{s,b}$-type estimates.  As $t$ tends to infinity,
the differences in the two dispersion relations become amplified and the approximation breaks down.  Fortunately, by this time the NLS solution is well dispersed and so
resembles a free evolution.  This behaviour is inherited by the Klein--Gordon evolution as we intimated in \eqref{E:approx solutions}; see also Lemma~\ref{L:skg dispersion}.

We now set about supplying the details behind these heuristics.  As each heuristic introduces some small error, we will need to use the stability theory
(Proposition~\ref{P:stability}) to construct the final solution $v_n$. This eventuality dictates which estimates we need to prove, beginning with the following:
By the Strichartz inequality and Lemma~\ref{L:basic NLS facts},
\begin{align}\label{E:vn bounds}
\|\tilde{v}_n\|_{L^{\infty}_tH^{1/2}_x(\R \times \R^2)} + \|\tilde{v}_n\|_{L^4_{t,x}(\R \times \R^2)}
&\lesssim \|D_{\lambda_n} w_n\|_{L_t^\infty H_x^{1/2}} +\|D_{\lambda_n} w_n(t/\lambda_n^2)\|_{L_{t,x}^4}\notag\\
&\lesssim_{M(\phi)} 1 + \lambda_n^{-1/2} \| |\nabla|^{1/2} w_n\|_{L_t^\infty L_x^2}\notag\\
&\lesssim_{M(\phi)} 1+ \lambda_n^{-1/2 +\theta/2}\lesssim_{M(\phi)} 1.
\end{align}

\begin{lemma}[Matching initial data] \label{L:initial data}If $\tilde{v}_n$ is given by \eqref{E:approx solutions}, then
\begin{align} \label{E:approx initial data}
\lim_{T \to \infty} \limsup_{n \to \infty} \|\tilde{v}_n(-t_n) - \phi_n\|_{H^{1/2}_x(\R^2)} = 0.
\end{align}
\end{lemma}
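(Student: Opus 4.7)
If $t_n\equiv 0$, the piecewise definition \eqref{E:approx solutions} gives $\tilde v_n(0)=D_{\lambda_n}w_n(0)=D_{\lambda_n}P_{\leq\lambda_n^\theta}\phi=\phi_n$, so the difference vanishes identically. Assume henceforth $t_n/\lambda_n^2\to-\infty$, the case $+\infty$ being symmetric. For $n$ large enough that $-t_n>T\lambda_n^2$, the second line of \eqref{E:approx solutions} yields $\tilde v_n(-t_n)=e^{i(t_n+T\lambda_n^2)\jpn}e^{-iT\lambda_n^2}D_{\lambda_n}w_n(T)$, while $\phi_n=e^{it_n\jpn}D_{\lambda_n}P_{\leq\lambda_n^\theta}\phi$. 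Factoring the unitary $e^{it_n\jpn}$ out of the $H^{1/2}_x$ norm and using \eqref{E:dilate m} to pass $D_{\lambda_n}$ through the remaining Fourier multiplier, I arrive at
\[
\|\tilde v_n(-t_n)-\phi_n\|_{H^{1/2}_x}=\bigl\|\jp{\lambda_n^{-1}\nabla}^{1/2}\bigl[U_n(T)w_n(T)-P_{\leq\lambda_n^\theta}\phi\bigr]\bigr\|_{L^2_x},
\]
where $U_n(T):=e^{iT\lambda_n^2(\jp{\lambda_n^{-1}\nabla}-1)}$ is unitary on $L^2_x$. Bounding $\jp{\lambda_n^{-1}\xi}^{1/2}\lesssim 1+\lambda_n^{-1/2}|\xi|^{1/2}$ reduces the problem to controlling the $L^2_x$ norm of the bracket, plus an $H^{1/2}_x$ correction of order $\lambda_n^{-1/2}\bigl(\||\nabla|^{1/2}w_n(T)\|_{L^2}+\||\nabla|^{1/2}P_{\leq\lambda_n^\theta}\phi\|_{L^2}\bigr)\lesssim \lambda_n^{(\theta-1)/2}\to 0$, by \eqref{E:persistence of reg} and Bernstein.

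\medskip

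\noindent\textbf{Core convergence.} To prove $\lim_T\limsup_n\|U_n(T)w_n(T)-P_{\leq\lambda_n^\theta}\phi\|_{L^2_x}=0$, insert the intermediate point $e^{-iT\Delta/2}w_n(T)$. For $[U_n(T)-e^{-iT\Delta/2}]w_n(T)$, split $w_n(T)=P_{\leq\lambda_n^{2\theta}}w_n(T)+P_{>\lambda_n^{2\theta}}w_n(T)$: on the low frequencies the phase approximation \eqref{phase approx} bounds the symbol difference by $O(T\lambda_n^{-2+8\theta})$, which vanishes as $n\to\infty$ for each fixed $T$ since $\theta=\tfrac1{100}$; on the high-frequency tail, Bernstein combined with \eqref{E:persistence of reg} gives $\|P_{>\lambda_n^{2\theta}}w_n(T)\|_{L^2}\lesssim \lambda_n^{-s\theta}\to 0$ for any $s>0$, uniformly in $T$. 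For $e^{-iT\Delta/2}w_n(T)-P_{\leq\lambda_n^\theta}\phi$, replace $w_n$ by $w_\infty$ using the uniform bound \eqref{E:limit w_infty}, exploit that $w_\infty$ scatters forward to $e^{it\Delta/2}\phi$ to obtain $\|e^{-iT\Delta/2}w_\infty(T)-\phi\|_{L^2}\to 0$ as $T\to\infty$, and note $\|P_{>\lambda_n^\theta}\phi\|_{L^2}\to 0$ as $n\to\infty$. Taking $\limsup_n$ first and then $T\to\infty$ closes the estimate.

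\medskip

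\noindent\textbf{Main obstacle.} The only delicate point is reconciling the quartic phase error $T\lambda_n^{-2}|\xi|^4$ from \eqref{phase approx} with the fact that the NLS solution $w_n$ need not remain frequency-localized below $\lambda_n^\theta$, since the cubic nonlinearity can transfer mass to higher modes. The resolution is to exploit \eqref{E:persistence of reg}, which controls the $\dot H^s$ norm of $w_n$ by $\lambda_n^{s\theta}$: truncating at the slightly enlarged scale $\lambda_n^{2\theta}$ costs only an $\lambda_n^{-s\theta}$ Bernstein tail, while keeping the phase error bounded by $T\lambda_n^{-2+8\theta}$, which remains negative in $\lambda_n$ for $\theta=\tfrac1{100}$. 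All other steps are standard applications of the scattering of $w_\infty$ and the NLS stability theory recorded in Lemma~\ref{L:basic NLS facts}.
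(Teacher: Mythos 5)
Your proof is correct and follows essentially the same route as the paper: the trivial case $t_n\equiv 0$, reduction via the unitary propagators to a comparison at time $T$, and a triangle-inequality splitting into the $w_n$-versus-$w_\infty$ stability estimate, the forward scattering of $w_\infty$, and the Klein--Gordon/Schr\"odinger phase discrepancy controlled by \eqref{phase approx}. The only cosmetic difference is that the paper applies the phase approximation to the fixed profile $\hat\phi$ (so dominated convergence suffices), whereas you apply it to $w_n(T)$ and therefore need the extra frequency truncation at $\lambda_n^{2\theta}$ together with \eqref{E:persistence of reg} --- a correct, if slightly heavier, variant of the same step.
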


\begin{proof}
In the case when $t_n\equiv 0$ (recall that $\phi_n=e^{it_n\jpn}D_{\lambda_n}P_{\leq \lambda_n^\theta} \phi$, as we assume $x_n \equiv \nu_n\equiv0$) we have $\tilde v_n(-t_n)=\phi_n$, so there is nothing more to prove.

Consider now the case when $t_n/\lambda_n^2\to -\infty$; the case $t_n/\lambda_n^2\to \infty$ can be treated similarly.  In this case, for any finite choice of $T$, we eventually have that $-t_n > \lambda_n^2 T$.
Thus, we can rewrite \eqref{E:approx initial data} as
\begin{align}\label{infty match}
\lim_{T \to \infty} \limsup_{n \to \infty} \|e^{-iT\lambda_n^2}D_{\lambda_n}w_n(T) - e^{-i\lambda_n^2T\jpn}D_{\lambda_n} P_{\leq \lambda_n^\theta}\phi\|_{H^{1/2}_x(\R^2)} = 0
\end{align}

By the triangle inequality,
\begin{align*}
 \|&e^{-iT\lambda_n^2}D_{\lambda_n}w_n(T) - e^{-i\lambda_n^2T\jpn}D_{\lambda_n} P_{\leq \lambda_n^\theta}\phi\|_{H^{1/2}_x(\R^2)}\\
&\lesssim  \bigl\|D_{\lambda_n}\bigl[w_n(T) - P_{\leq \lambda_n^\theta}w_\infty(T)\bigr]\bigr\|_{H^{1/2}_x(\R^2)}\\
&\quad +\bigl \|D_{\lambda_n}P_{\leq \lambda_n^\theta}\bigl[w_\infty(T) - e^{i\lambda_n^2T(1-\jp{\lambda_n^{-1}\nabla})} \phi\bigr]\bigr\|_{H^{1/2}_x(\R^2)}\\
&\lesssim  \bigl\|D_{\lambda_n}\bigl[w_n(T) - P_{\leq \lambda_n^\theta}w_\infty(T)\bigr]\bigr\|_{H^{1/2}_x(\R^2)} + \|w_\infty(T) - e^{i\lambda_n^2T(1-\jp{\lambda_n^{-1}\nabla})} \phi\|_{L^2_x(\R^2)}\\
&\lesssim  \bigl\|D_{\lambda_n}\bigl[w_n(T) - P_{\leq \lambda_n^\theta}w_\infty(T)\bigr]\bigr\|_{H^{1/2}_x(\R^2)} + \|w_\infty(T) - e^{iT\Delta/2} \phi\|_{L^2_x(\R^2)}\\
&\quad + \bigl\|\,\bigl[1-e^{iT\lambda_n^2(1-\jp{\lambda_n^{-1}\xi}+\frac12 \lambda_n^{-2}|\xi|^2)} \bigr]\hat\phi\bigr\|_{L^2_{\xi}(\R^2)}.
\end{align*}
Now \eqref{infty match} follows from \eqref{E:limit w_infty}, the definition of $w_\infty$ (in the case $t_n/\lambda_n^2\to -\infty$), and the dominated convergence theorem, respectively.
\end{proof}

Next, we show that $\tilde{v}_n$ are approximate solutions to \eqref{nlkg1st}, starting with the large time intervals.   More precisely, we will prove

\begin{proposition} [Large time intervals]  \label{P:large times approx} With the notation above,
$$
\lim_{T \to \infty} \limsup_{n \to \infty} \|e^{-i(t-\lambda_n^2T)\jpn} \tilde{v}_n(T\lambda_n^2)\|_{L^4_{t,x}((\lambda_n^2T,\infty) \times \R^2)} = 0
$$
and analogously on the time interval $(-\infty,-\lambda_n^2 T)$.
\end{proposition}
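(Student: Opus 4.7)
\medskip

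\noindent\textbf{Proof plan.} The plan is to reduce the claim, by rescaling, to a statement about how the linear Klein--Gordon propagator $e^{-i\lambda_n^2 s \jp{\lambda_n^{-1}\nabla}}$ compares with the linear Schr\"odinger propagator, and then to use the fact that $w_n(T)$ is close in $L^2_x$ to the scattering state of $w_\infty$ for large $T$. Using $e^{-it\jpn}D_{\lambda_n} = D_{\lambda_n} e^{-it\jp{\lambda_n^{-1}\nabla}}$ (which is \eqref{E:dilate m}), the scale-invariance of $L^4_{t,x}$ under $(t,x)\mapsto(\lambda_n^2 s,\lambda_n y)$, and the harmless overall phase $e^{-i\lambda_n^2 s}$, the required bound is equivalent to
\begin{equation*}
\lim_{T\to\infty}\limsup_{n\to\infty} \bigl\|e^{-i\lambda_n^2 s [\jp{\lambda_n^{-1}\nabla}-1]} w_n(T) \bigr\|_{L^4_{s,y}((0,\infty)\times\R^2)} = 0.
\end{equation*}

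I would split $w_n(T) = P_{\leq\lambda_n^{\theta}}w_n(T) + P_{>\lambda_n^{\theta}}w_n(T)$ and treat the two pieces separately. For the high-frequency part, the uniform Strichartz estimate in Lemma~\ref{L:half Strichartz} bounds the $L^4_{s,y}$ norm by $\|\jp{\lambda_n^{-1}\nabla}^{1/2}P_{>\lambda_n^\theta} w_n(T)\|_{L^2_x}$. I would further split this according to whether the frequency is below or above $\lambda_n$: the intermediate-frequency contribution is controlled by $\|P_{>\lambda_n^{\theta}}w_n(T)\|_{L^2_x}$, which tends to $0$ as $n\to\infty$ because $w_n(T)\to w_\infty(T)$ in $L^2_x$ by \eqref{E:limit w_infty} and $P_{>\lambda_n^{\theta}}w_\infty(T)\to 0$ in $L^2_x$ by dominated convergence on the Fourier side; the very-high-frequency contribution is bounded by $\lambda_n^{-1/2}\||\nabla|^{1/2}w_n(T)\|_{L^2_x} \lesssim \lambda_n^{-1/2+\theta/2}$, which vanishes by \eqref{E:persistence of reg} and $\theta = 1/100$.

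For the low-frequency part, I would run the telescoping estimate
\begin{align*}
\bigl\|e^{-i\lambda_n^2 s[\jp{\lambda_n^{-1}\nabla}-1]}P_{\leq\lambda_n^{\theta}}w_n(T)\bigr\|_{L^4_{s,y}((0,\infty)\times\R^2)}
&\leq \mathrm{I}_n + \mathrm{II}_n + \mathrm{III},
\end{align*}
where $\mathrm{I}_n$ is the error produced by replacing $w_n(T)$ with $w_\infty(T)$, $\mathrm{II}_n$ is the error produced by replacing $e^{-i\lambda_n^2 s[\jp{\lambda_n^{-1}\nabla}-1]}P_{\leq\lambda_n^\theta}$ with $e^{is\Delta/2}$, and $\mathrm{III} = \|e^{is\Delta/2}w_\infty(T)\|_{L^4_{s,y}((0,\infty)\times\R^2)}$. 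Term $\mathrm{I}_n$ is handled by Lemma~\ref{L:half Strichartz} and $\|w_n(T)-w_\infty(T)\|_{L^2_x}\to 0$ from \eqref{E:limit w_infty}; term $\mathrm{II}_n$ vanishes by Lemma~\ref{L:pointwise}(d) applied to the \emph{fixed} function $w_\infty(T)\in L^2_x$; and term $\mathrm{III}$ tends to $0$ as $T\to\infty$ because $w_\infty$ scatters in $L^2_x$ to some $w_+^{\infty}$, so
\begin{equation*}
\mathrm{III} \lesssim \|w_\infty(T)-e^{iT\Delta/2}w_+^{\infty}\|_{L^2_x} + \|e^{i\sigma\Delta/2}w_+^{\infty}\|_{L^4_{\sigma,y}((T,\infty)\times\R^2)},
\end{equation*}
and both summands vanish in the $T\to\infty$ limit (the first by the definition of scattering, the second by Strichartz and the dominated convergence theorem). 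The symmetric argument handles the interval $(-\infty,-\lambda_n^2 T)$.

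The main technical difficulty is that Lemma~\ref{L:pointwise}(d) controls the discrepancy between $e^{-i\lambda_n^2 s[\jp{\lambda_n^{-1}\nabla}-1]}P_{\leq\lambda_n^{\theta}}$ and $e^{is\Delta/2}$ only for a \emph{fixed} $L^2_x$-function, whereas the naive target $w_n(T)$ depends on $n$; the device of substituting $w_\infty(T)$ for $w_n(T)$, at the cost of an $L^2_x$-error bounded by the NLS stability theory input \eqref{E:limit w_infty}, is the crux of the argument. The two limits $n\to\infty$ and $T\to\infty$ must be taken in this order: first $n\to\infty$ to access the asymptotic equivalence of the two propagators at low frequencies (together with the convergence $w_n\to w_\infty$), and only afterwards $T\to\infty$ to use the scattering of the limiting NLS solution $w_\infty$.
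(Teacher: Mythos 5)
Your proof is correct, but its second half takes a genuinely different route from the paper. Both arguments begin the same way: after rescaling, one uses \eqref{E:limit w_infty} to trade $w_n(T)$ for $w_\infty(T)$ and then invokes the scattering of $w_\infty$ (the paper packages this as Lemma~\ref{L:kinda scattering}, replacing $\tilde v_n(T\lambda_n^2)$ by $e^{-iT\lambda_n^2}D_{\lambda_n}e^{iT\Delta/2}P_{\leq\lambda_n^\theta}w_+$ up to an $H^{1/2}_x$-small error). The divergence is in how one shows that data which is already well-dispersed for Schr\"odinger has small forward Klein--Gordon evolution. The paper proves this directly by a stationary phase computation (Lemma~\ref{L:skg dispersion}): writing the evolution as an oscillatory integral, checking that the Hessian of the phase is nondegenerate for $t\geq\lambda_n^2T$, and deriving the pointwise bound $|f_n(t,x)|\lesssim_\psi \lambda_n^2/t$, which after interpolation and time integration gives the quantitative rate $T^{-1/4}$. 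You instead recycle Lemma~\ref{L:pointwise}(d): the low-frequency Klein--Gordon propagator converges to $e^{it\Delta/2}$ in $L^4_{t,x}(\R\times\R^2)$ \emph{globally in time} when applied to the fixed function $w_\infty(T)$, so the entire forward-in-time smallness is imported from the Schr\"odinger side, where it reduces to $\|e^{is\Delta/2}w_\infty(T)\|_{L^4_{s,y}((0,\infty)\times\R^2)}\to0$ as $T\to\infty$, a consequence of scattering and the Strichartz inequality. Your route is shorter and avoids a second stationary-phase argument, at the price of being purely qualitative, whereas the paper's lemma yields an explicit decay rate (which, however, is not used elsewhere). Your treatment of the high frequencies $P_{>\lambda_n^\theta}w_n(T)$ via the $\lambda$-uniform Strichartz estimate and \eqref{E:persistence of reg} is sound, and your order of limits ($n\to\infty$ first for fixed $T$, then $T\to\infty$) is exactly what the statement requires.
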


This proposition is an immediate consequence of the two lemmas that follow; indeed, one merely needs to combine them with the triangle inequality.
The first lemma shows that for a large enough time, we can safely approximate the nonlinear solutions $w_n$ by solutions to the free Schr\"odinger equation.

\begin{lemma} \label{L:kinda scattering}
Let $w_+$ be the forward-in-time scattering state of the NLS solution $w_{\infty}$ defined above.  Then
$$
\lim_{T \to \infty} \limsup_{n \to \infty} \bigl\|e^{-i(t-\lambda_n^2T)\jpn} \bigl[\tilde{v}_n(T\lambda_n^2)
- e^{-iT\lambda_n^2}D_{\lambda_n} e^{iT\Delta/2} P_{\leq \lambda_n^{\theta}} w_+\bigr]\bigr\|_{L^4_{t,x}((\lambda_n^2T,\infty) \times \R^2)} = 0.
$$
A similar statement holds on the time interval $(-\infty,-\lambda_n^2T)$, but with $w_-$ in place of $w_+$.
\end{lemma}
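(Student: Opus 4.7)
The plan is to bound the $L^4_{t,x}$ norm using the Strichartz inequality and then exploit the gain coming from the dilation $D_{\lambda_n}$. Since $\tilde v_n(T\lambda_n^2) = e^{-iT\lambda_n^2}D_{\lambda_n}w_n(T)$ and the scalar $e^{-iT\lambda_n^2}$ is unimodular, the expression under the norm is
$$e^{-i(t-\lambda_n^2T)\jpn}D_{\lambda_n}h_n, \qquad\text{where}\qquad h_n := w_n(T) - e^{iT\Delta/2}P_{\leq \lambda_n^\theta}w_+.$$
Translating time by $\lambda_n^2T$ and enlarging the integration interval to all of $\R$, the Strichartz inequality (Lemma~\ref{L:half Strichartz} with $\lambda=1$ and $q=r=4$) gives
$$\bigl\|e^{-i(t-\lambda_n^2T)\jpn}D_{\lambda_n}h_n\bigr\|_{L^4_{t,x}((\lambda_n^2T,\infty)\times \R^2)}\lesssim \|D_{\lambda_n}h_n\|_{H^{1/2}_x}.$$
The scaling identities $\|D_{\lambda_n}f\|_{L^2_x} = \|f\|_{L^2_x}$ and $\||\nabla|^{1/2}D_{\lambda_n}f\|_{L^2_x} = \lambda_n^{-1/2}\||\nabla|^{1/2}f\|_{L^2_x}$ further reduce this to
$$\|D_{\lambda_n}h_n\|_{H^{1/2}_x}\lesssim \|h_n\|_{L^2_x} + \lambda_n^{-1/2}\||\nabla|^{1/2}h_n\|_{L^2_x}.$$

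Next I would control the $L^2_x$ piece by the triangle-inequality decomposition
$$h_n = \bigl[w_n(T)-w_\infty(T)\bigr] + \bigl[w_\infty(T)-e^{iT\Delta/2}w_+\bigr] + e^{iT\Delta/2}(1-P_{\leq\lambda_n^\theta})w_+,$$
where the first summand vanishes in $L^\infty_tL^2_x$ as $n\to \infty$ by \eqref{E:limit w_infty}, the second vanishes in $L^2_x$ as $T\to\infty$ by the very definition of the forward scattering state $w_+$ of $w_\infty$, and the third has $L^2_x$ norm $\|P_{>\lambda_n^\theta}w_+\|_{L^2_x}\to 0$ as $n\to\infty$. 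Taking first $n\to\infty$ and then $T\to\infty$ therefore forces $\|h_n\|_{L^2_x}\to 0$.

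For the remaining term $\lambda_n^{-1/2}\||\nabla|^{1/2}h_n\|_{L^2_x}$, the persistence-of-regularity estimate \eqref{E:persistence of reg} yields $\||\nabla|^{1/2}w_n\|_{L^\infty_tL^2_x}\lesssim_{M(\phi)}\lambda_n^{\theta/2}$, and Bernstein's inequality gives $\||\nabla|^{1/2}P_{\leq\lambda_n^\theta}w_+\|_{L^2_x}\lesssim \lambda_n^{\theta/2}\|w_+\|_{L^2_x}$. Since $\theta = 1/100$, the dilation gain $\lambda_n^{-1/2}$ beats the frequency-cutoff growth $\lambda_n^{\theta/2}$, giving a bound of $\lesssim \lambda_n^{-1/2+\theta/2}\to 0$ uniformly in $T$.

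The main conceptual obstacle will be recognizing that one cannot simply bound the Klein--Gordon Strichartz norm by an ordinary $H^{1/2}_x$ norm of $h_n$ itself, since $w_+$ lies only in $L^2_x$ and $|\nabla|^{1/2}w_+$ need not be defined; the key trick is to undo the dilation inside the $H^{1/2}_x$ bound, applying Strichartz at the unit scale, and only then splitting the bound into an $L^2_x$-small piece (handled by scattering and by \eqref{E:limit w_infty}) and a regularity piece for which the dilation supplies the essential $\lambda_n^{-1/2}$ gain against the Bernstein-level growth. The analogous argument on $(-\infty,-\lambda_n^2T)$ with $w_-$ in place of $w_+$ is identical after time reversal.
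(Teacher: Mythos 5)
Your proof is correct and follows essentially the same route as the paper: reduce via the Strichartz inequality to an $H^{1/2}_x$ bound on the difference, then exploit the dilation $D_{\lambda_n}$ together with the NLS approximation and the scattering of $w_\infty$. The only difference is bookkeeping: the paper invokes the third limit in \eqref{E:limit w_infty} to replace $w_n(T)$ by $P_{\leq \lambda_n^{\theta}}w_\infty(T)$ and then uses the resulting frequency localization to drop from $H^{1/2}_x$ to $L^2_x$, whereas you split $\|D_{\lambda_n}h_n\|_{H^{1/2}_x}$ into an $L^2_x$ piece plus a $\lambda_n^{-1/2}\||\nabla|^{1/2}h_n\|_{L^2_x}$ piece controlled by persistence of regularity and Bernstein --- the same ingredients that underlie \eqref{E:limit w_infty} itself.
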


\begin{proof}  We will give the argument for the forward-in-time statement.  By the Strichartz inequality for the first-order Klein--Gordon propagator, it suffices to prove
$$
\lim_{T \to \infty} \limsup_{n \to \infty} \|\tilde{v}_n(T\lambda_n^2) - e^{-iT\lambda_n^2}D_{\lambda_n} e^{iT\Delta/2} P_{\leq \lambda_n^{\theta}} w_+\|_{H^{1/2}_x(\R^2)} = 0.
$$
By Lemma~\ref{L:basic NLS facts}, it suffices to show that
$$
\lim_{T \to \infty} \limsup_{n \to \infty}\bigl\|D_{\lambda_n}P_{\leq \lambda_n^{\theta}}\bigl[w_{\infty}(T) - e^{iT\Delta/2}w_+\bigr]\bigr\|_{H^{1/2}_x(\R^2)}= 0.
$$
To see this, we note that
$$
\bigl\|D_{\lambda_n}P_{\leq \lambda_n^{\theta}}\bigl[w_{\infty}(T) - e^{iT\Delta/2}w_+\bigr]\bigr\|_{H^{1/2}_x(\R^2)} \lesssim \| w_{\infty}(T) - e^{iT\Delta/2}w_+\|_{L^2_x(\R^2)},
$$
which converges to zero as $T\to \infty$.
\end{proof}

From the previous lemma, we know that $\tilde v_n(T\lambda_n^2)$ is well approximated by the free Schr\"odinger evolution of a specific function $w_+$.  The second step in the proof
of Proposition~\ref{P:large times approx} is to show that for $T$ large, the free first-order Klein--Gordon evolution of this function (into the future) is small.  Colloquially,
a solution that is well-dispersed for Schr\"odinger is also well-dispersed for Klein--Gordon.

\begin{lemma} \label{L:skg dispersion}
Let $\psi \in L^2_x(\R^2)$ and let $\lambda_n \to \infty$ be a sequence of positive numbers.  Then
\begin{equation} \label{E:skg dispersion}
\lim_{T \to \infty} \limsup_{n \to \infty} \|e^{-i(t-\lambda_n^2T)\jpn}e^{-iT\lambda_n^2}D_{\lambda_n}e^{iT\Delta/2}P_{\leq \lambda_n^{\theta}} \psi\|_{L^4_{t,x}((\lambda_n^2T,\infty) \times \R^2)} = 0.
\end{equation}
\end{lemma}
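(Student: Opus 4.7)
The plan is to rescale to the natural Schr\"odinger variables and then appeal to the global Strichartz estimate for the free Schr\"odinger equation. First I would change variables via $\tau=t-\lambda_n^2 T$, $s=\tau/\lambda_n^2$, and $y=x/\lambda_n$. Writing the free propagator $e^{-i\tau\jpn}$ on the Fourier side and substituting $\eta=\lambda_n\xi$ on the resulting integral, the norm in \eqref{E:skg dispersion} is seen to equal
\begin{equation*}
\bigl\|e^{-is\lambda_n^2\jp{\lambda_n^{-1}\nabla}}\,e^{iT\Delta/2}P_{\leq \lambda_n^\theta}\psi\bigr\|_{L^4_{s,y}((0,\infty)\times\R^2)};
\end{equation*}
the $\lambda_n^4$ Jacobian cancels against the $\lambda_n^{-4}$ coming from the fact that $D_{\lambda_n}$ is $L^2_x$-preserving (not $L^4_x$-preserving), and the overall constant phase $e^{-iT\lambda_n^2}$ has modulus one. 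Multiplying by the unimodular phase $e^{is\lambda_n^2}$ and using that $e^{iT\Delta/2}$ commutes with the Littlewood--Paley projection $P_{\leq \lambda_n^\theta}$, this is the $L^4_{s,y}((0,\infty)\times\R^2)$ norm of
\begin{equation*}
G_n(s,y):=\bigl[e^{-is\lambda_n^2[\jp{\lambda_n^{-1}\nabla}-1]}\,P_{\leq \lambda_n^\theta}\,\bigl(e^{iT\Delta/2}\psi\bigr)\bigr](y).
\end{equation*}

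Next, for each fixed $T$, apply Lemma~\ref{L:pointwise}(d) with $g:=e^{iT\Delta/2}\psi\in L^2_x$ (which works since $\theta=\tfrac{1}{100}\in(0,\tfrac12)$). This yields $\|G_n-e^{is\Delta/2}g\|_{L^4_{s,y}(\R\times\R^2)}\to 0$ as $n\to\infty$. Because $e^{is\Delta/2}g=e^{i(s+T)\Delta/2}\psi$, the triangle inequality gives
\begin{equation*}
\limsup_{n\to\infty}\|G_n\|_{L^4_{s,y}((0,\infty)\times\R^2)}=\|e^{i(s+T)\Delta/2}\psi\|_{L^4_{s,y}((0,\infty)\times\R^2)}=\|e^{is\Delta/2}\psi\|_{L^4_{s,y}((T,\infty)\times\R^2)}.
\end{equation*}

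Finally, the global Strichartz inequality for the free Schr\"odinger equation gives $\|e^{is\Delta/2}\psi\|_{L^4_{s,y}(\R\times\R^2)}\lesssim\|\psi\|_{L^2_x}<\infty$, so dominated convergence in the $s$ variable yields $\|e^{is\Delta/2}\psi\|_{L^4((T,\infty)\times\R^2)}\to 0$ as $T\to\infty$. Combining the three steps gives the claim; the analogous backward-in-time statement follows by the same argument applied on $(-\infty,0)$. I do not foresee any serious obstacle: the only thing to be careful about is the bookkeeping of the change of variables and of the pulled-out phase $e^{is\lambda_n^2}$ that extracts the Schr\"odinger-type phase $\lambda_n^2[\jp{\lambda_n^{-1}\xi}-1]\approx\tfrac12|\xi|^2$ from the Klein--Gordon phase, so that Lemma~\ref{L:pointwise}(d) becomes directly applicable.
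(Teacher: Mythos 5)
Your argument is correct, and it takes a genuinely different route from the paper's. After the parabolic change of variables (which, as you note, exactly preserves the $L^4_{t,x}$ norm because $D_{\lambda_n}$ is $L^2_x$-normalized), you reduce the claim to Lemma~\ref{L:pointwise}(d) applied with $g=e^{iT\Delta/2}\psi$, together with the vanishing of the tail $\|e^{is\Delta/2}\psi\|_{L^4_{s,y}((T,\infty)\times\R^2)}$ as $T\to\infty$, which follows from the global Schr\"odinger Strichartz estimate. The paper instead works directly with the composed propagator: after reducing to Schwartz $\psi$ with compact Fourier support, it analyzes the full phase $x\cdot\xi-t\jp{\lambda_n^{-1}\xi}+T\lambda_n^2\bigl[\jp{\lambda_n^{-1}\xi}-1-|\xi|^2/(2\lambda_n^2)\bigr]$ by stationary phase (checking that the Hessian is a small perturbation of the identity and invoking the Morse lemma), obtaining the pointwise bound $|f_n(t,x)|\lesssim_\psi \lambda_n^2/t$ for $t\geq\lambda_n^2T$ and hence the quantitative rate $T^{-1/4}$ after interpolating with the $L^2_x$ bound. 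Your approach is shorter and reuses machinery already established in Section~\ref{S:tools}; its only cost is that it produces a qualitative limit rather than an explicit rate in $T$, which is all that Proposition~\ref{P:large times approx} requires. The one point worth making explicit is the order of limits: Lemma~\ref{L:pointwise}(d) is stated for a fixed $g\in L^2_x$ and its conclusion is global in time, so applying it with $g=e^{iT\Delta/2}\psi$ for each fixed $T$ before sending $T\to\infty$ is legitimate, exactly as you have arranged it.
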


\begin{proof}
By the Strichartz inequality \eqref{E:half Strichartz} together with the easy inequality
$$
\|\jpn^{1/2} D_{\lambda_n} P_{\leq \lambda_n^{\theta}} (\psi-\tilde{\psi})\|_{L^2_x(\R^2)} \lesssim \|\psi-\tilde{\psi}\|_{L^2_x(\R^2)},
$$
we may assume that $\psi$ is a Schwartz function with compact frequency support.  Consequently, for $n$ sufficiently large,  $\supp \hat \psi \subset \{ |\xi| < \lambda_n^\theta\}$
and we can therefore ignore the projection operator $P_{\leq \lambda_n^\theta}$ in what follows.

Next, we set
$$
f_n(t) := e^{-it \jp{\lambda_n^{-1}\nabla}}e^{iT\lambda_n^2[\jp{\lambda_n^{-1}\nabla} - 1 + \Delta/(2\lambda_n^2)]}\psi
$$
and observe that with this notation, \eqref{E:skg dispersion} becomes
$$
\lim_{T \to \infty} \limsup_{n \to \infty} \lambda_n^{-1/2}\|f_n\|_{L_{t,x}^4((\lambda_n^2T,\infty) \times \R^2)}=0.
$$
The proof will be via a stationary phase argument.
We write
\begin{align*}
f_n(t,x) &= \int_{\R^2} e^{-i (t/\lambda_n^2)h_{n,x}(\xi)}\hat{\psi}(\xi)\, d\xi,
\end{align*}
where
$$
h_{n,x}(\xi):= - \tfrac{\lambda_n^2}t \Bigl\{ x\cdot\xi - t \jp{\lambda_n^{-1}\xi} +T\lambda_n^2\bigl[\jp{\lambda_n^{-1}\xi} - 1 - \lambda_n^{-2}|\xi|^2/2\bigr]\Bigl\}.
$$
A computation yields
\begin{gather*}
\partial_i\partial_j h_{n,x}(\xi) = \Bigl[\tfrac{\delta_{ij}}{\jp{\lambda_n^{-1}\xi}} - \tfrac{\xi_i\xi_j}{\lambda_n^2\jp{\lambda_n^{-1}\xi}^3}\Bigr] \Bigl( 1 - \tfrac{\lambda_n^2T}{t}\Bigr)
        + \delta_{ij} \tfrac{\lambda_n^2T}{t},
\end{gather*}
where $\delta_{ij}$ denotes the Kronecker delta.  Thus,
\begin{equation} \label{E:hessian h}
\partial_i\partial_j h_{n,x}(\xi) = \delta_{ij}  + O(\lambda_n^{-2(1-2\theta)}),
\end{equation}
uniformly for $|\xi| \lesssim \lambda_n^{2\theta}$ and $t \geq \lambda_n^2T$.  If $\nabla h_{n,x}$ does not vanish in $\{|\xi| \lesssim \lambda_n^{2\theta}\}$, then by \eqref{E:hessian h},
$|\nabla h_{n,x}| \gtrsim 1$ uniformly on $\supp \hat{\psi} \subset \{|\xi| \lesssim \lambda_n^{\theta}\}$.  If $\nabla h_{n,x}$ vanishes in $\{|\xi| \lesssim \lambda_n^{2\theta}\}$,
then by the Morse lemma (cf.\ \cite[p.~346]{stein:large}) and \eqref{E:hessian h}, there exists $\eta_{n,x}$ which is a diffeomorphism (uniformly in $n,x,\xi$) on $\supp \hat{\psi}$
and such that $h_{n,x}(\xi) = |\eta_{n,x}(\xi)|^2 + c_{n,x}$.  In either case, by the principle of stationary phase, we have for $t \geq \lambda_n^2 T$ and $x \in \R^2$ that
$$
|f_n(t,x)| \lesssim_{\psi} \tfrac{\lambda_n^2}{t}.
$$
By interpolation with the trivial $L^2_x$ bound, we obtain
$$
\|f_n(t)\|_{L^4_x(\R^2)} \lesssim_{\psi} \bigl(\tfrac{\lambda_n^2}{t}\bigr)^{1/2}.
$$
Integrating this with respect to time we get
$$
\|f_n\|_{L^4_{t,x}((\lambda_n^2T,\infty)\times \R^2)} \lesssim_{\psi} \lambda_n^{1/2}T^{-1/4},
$$
which completes the proof the lemma.
\end{proof}

We now turn to showing that $\tilde v_n$ is an approximate solution to \eqref{nlkg1st} on the middle time interval.  A computation shows that on this middle time interval, $\tilde v_n$ satisfies
the following approximate first-order Klein--Gordon equation:
\begin{align} \label{E:middle interval}
(-i \partial_t + \jpn)\tilde{v}_n + \mu {\jpn}^{-1} (\Re \tilde{v}_n)^3 = e_1 + e_2 + e_3 + e_4,
\end{align}
where
\begin{align*}
e_1 &:= e^{-it}D_{\lambda_n}\Bigl\{\bigl[\jp{\lambda_n^{-1}\nabla} -1 + \tfrac{\Delta}{2\lambda_n^2} \bigr]w_n\bigl(\tfrac t{\lambda_n^2}\bigl)\Bigr\}, \\
e_2 &:=  \mu \bigl[\jpn^{-1} -1\bigr] (\Re \tilde{v}_n)^3\\
e_3 &:= \tfrac{1}{4}\Re e^{-3it}\Bigl[ D_{\lambda_n}w_n\bigl(\tfrac t{\lambda_n^2}\bigl)\Bigr]^3, \\
e_4 &:= \tfrac{3}{8}  e^{it} \bigl|D_{\lambda_n}w_n\bigl(\tfrac t{\lambda_n^2}\bigl)\bigr|^2 \overline{D_{\lambda_n}w_n\bigl(\tfrac t{\lambda_n^2}\bigl)}.
\end{align*}

The error terms $e_1$ and $e_2$ can be estimated in spaces for which Proposition~\ref{P:stability} applies.  We will estimate $e_1$ in $L^1_t H^{1/2}_x$.  As
$$
\Bigl|\jp{\lambda_n^{-1}\xi} - 1 - \tfrac{|\xi|^2}{2\lambda_n^2}\Bigr| \leq \tfrac12 \tfrac{|\xi|^4}{\lambda_n^4},
$$
by H\"older's inequality and \eqref{E:persistence of reg}, we obtain
\begin{align}\label{E:e_1}
\|e_1 & \|_{L^1_t H^{1/2}_x([-\lambda_n^2T,\lambda_n^2T] \times \R^2)} \notag \\
&\lesssim T\lambda_n^{-2} \Bigl( \|\Delta^2 w_n\|_{L^{\infty}_t L^2_x(\R \times \R^2)} + \lambda_n^{-1/2} \|\jpn^{9/2} w_n\|_{L^{\infty}_t L^2_x(\R \times \R^2)} \Bigr)\notag\\
&\lesssim T\lambda_n^{-2 +4 \theta} \to 0 \qtq{as} n\to \infty.
\end{align}

Next we estimate $e_2$ in $L^{4/3}_t W^{1, 4/3}_x$.   Noting that
$$
\jpn [\jpn^{-1}-1]= \nabla \frac{\nabla}{1+\jpn}
$$
and that by the Mikhlin multiplier theorem the second factor on the right is bounded on $L_{t,x}^{4/3}$, we have
\begin{align}\label{E:e_2}
\|\jpn e_2&\|_{L_{t,x}^{4/3} ([-\lambda_n^2T,\lambda_n^2T] \times \R^2)} \notag\\
&\lesssim \| \nabla(\Re \tilde{v}_n)^3\|_{L_{t,x}^{4/3}([-\lambda_n^2T,\lambda_n^2T] \times \R^2)} \notag\\
&\lesssim \bigl\|D_{\lambda_n}\tfrac{\nabla}{\lambda_n} w_n\bigl(\tfrac t{\lambda_n^2}\bigl)\bigr\|_{L_{t,x}^4[-\lambda_n^2T,\lambda_n^2T] \times \R^2)}
        \bigl\|D_{\lambda_n} w_n\bigl(\tfrac t{\lambda_n^2}\bigr)\bigr\|_{L_{t,x}^4(\R \times \R^2)}^2 \notag\\
&\lesssim \lambda_n^{-1+\theta}   \to 0 \qtq{as} n\to \infty.
\end{align}
In the last inequality we used \eqref{E:persistence of reg}.

Unfortunately, the error terms $e_3$ and $e_4$ are not small in either of the spaces $L_t^1H^{1/2}_x$ or $L_t^{4/3} W_x^{1, 4/3}$ for which Proposition~\ref{P:stability} applies.
However, they oscillate in spacetime like $e^{\pm 3it}$ and $e^{it}$, respectively, and the frequencies $\pm(3,0)$ and $(1,0)$ are far from the surface $\{(-\jp{\xi},\xi)\}$.
This allows us to use $X^{s,b}$-type arguments in the manner of \cite{CCT}.

\begin{lemma} \label{L:Xsb-type}
For $j=3,4$ let $f_{n,j}$ solve the equation
$$
(-i \partial_t + \jpn)f_{n,j} = e_j,  \qquad f_{n,j}(0) = 0.
$$
Then
$$
\|f_{n,j}\|_{L^\infty_t H^{1/2}_x([-\lambda_n^2 T,\lambda_n^2 T]\times \R^2)} +  \|f_{n,j}\|_{L^4_{t,x}([-\lambda_n^2T,\lambda_n^2 T]\times \R^2)} \lesssim \lambda_n^{-2+3\theta}.
$$
\end{lemma}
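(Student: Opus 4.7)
The plan is to apply a normal-form integration by parts in time to exploit the temporal non-resonance of $e_3$ and $e_4$ with the Klein--Gordon propagator $e^{-it\jpn}$; this is the $X^{s,b}$-type mechanism alluded to just before the statement of the lemma. The starting point will be the expansion
\[
e_3 = \tfrac18 e^{-3it}[D_{\lambda_n}w_n]^3 + \tfrac18 e^{3it}[\overline{D_{\lambda_n}w_n}]^3, \qquad e_4 = \tfrac38 e^{it}|D_{\lambda_n}w_n|^2 \overline{D_{\lambda_n}w_n},
\]
which exhibits both errors as finite linear combinations of terms of the form $e^{i\omega s}g(s,x)$ with $\omega\in\{-3,1,3\}$ and $g$ a cubic polynomial in $D_{\lambda_n}w_n(s/\lambda_n^2)$ and its conjugate. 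Each such $g$ has spatial Fourier support in $\{|\xi|\lesssim 3\lambda_n^{\theta-1}\}$, so for $n$ large one has $\jp{\xi}=1+O(\lambda_n^{2\theta-2})$ and hence $|\omega+\jp{\xi}|\geq 1$ (the worst case being $\omega=-3$). Consequently, after composition with a smooth cutoff to $\{|\xi|\leq 1\}$, the Fourier multiplier $M_\omega:=\omega+\jpn$ is invertible with $M_\omega^{-1}$ uniformly bounded on $L^p_x$, and the $L^2_x$ and $H^{1/2}_x$ norms are equivalent on the Fourier support of $g$.

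For each oscillatory piece I would write the Duhamel representation $f_{n,j}^{(\omega)}(t)=i\int_0^t e^{-i(t-s)\jpn}e^{i\omega s}g(s)\,ds$ and integrate by parts in $s$ via $e^{isM_\omega}=(iM_\omega)^{-1}\partial_s e^{isM_\omega}$, obtaining
\[
f_{n,j}^{(\omega)}(t) = M_\omega^{-1}\bigl[e^{i\omega t}g(t)-e^{-it\jpn}g(0)\bigr] - \int_0^t e^{-i(t-s)\jpn}e^{i\omega s}M_\omega^{-1}\partial_s g(s)\,ds.
\]

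The three terms would then be estimated as follows. For the boundary terms, H\"older combined with Bernstein on the tight Fourier support of $\tilde v_n$ (yielding $\|\tilde v_n\|_{L^\infty_{t,x}}\lesssim\lambda_n^{\theta-1}\|w_n\|_{L^\infty_tL^2_x}\lesssim\lambda_n^{\theta-1}$ from \eqref{E:persistence of reg}) gives
\[
\|g\|_{L^\infty_tL^2_x}+\|g\|_{L^4_{t,x}} \lesssim \|\tilde v_n\|_{L^\infty_{t,x}}^2\bigl(\|\tilde v_n\|_{L^\infty_tL^2_x}+\|\tilde v_n\|_{L^4_{t,x}}\bigr)\lesssim \lambda_n^{-2+2\theta}.
\]
Using boundedness of $M_\omega^{-1}$ and the $L^2_x\sim H^{1/2}_x$ equivalence on the Fourier support of $g$, this controls the first boundary term in both $L^\infty_tH^{1/2}_x$ and $L^4_{t,x}$, while the second boundary term is a free Klein--Gordon evolution whose $L^\infty_tH^{1/2}_x\cap L^4_{t,x}$ norm is bounded via Strichartz (Lemma~\ref{L:Strichartz}) by the same quantity. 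For the commutator term, Strichartz again yields
\[
\bigl\|\text{commutator}\bigr\|_{L^\infty_tH^{1/2}_x\cap L^4_{t,x}} \lesssim \|\jpn^{1/2}M_\omega^{-1}\partial_s g\|_{L^{4/3}_{t,x}}\lesssim\|\partial_s g\|_{L^{4/3}_{t,x}},
\]
and the product rule combined with $\partial_t[D_{\lambda_n}w_n(t/\lambda_n^2)]=\lambda_n^{-2}D_{\lambda_n}[\partial_s w_n](t/\lambda_n^2)$, the identity $\|D_{\lambda_n}f(\cdot/\lambda_n^2)\|_{L^4_{t,x}}=\|f\|_{L^4_{t,x}}$, and \eqref{E:time derivative} gives
\[
\|\partial_s g\|_{L^{4/3}_{t,x}} \lesssim \|\tilde v_n\|_{L^4_{t,x}}^2\cdot\lambda_n^{-2}\|\partial_s w_n\|_{L^4_{t,x}}\lesssim\lambda_n^{-2+2\theta}.
\]
Summing over the finitely many $\omega$ would complete the proof with the sharper bound $\lambda_n^{-2+2\theta}$, which implies the stated $\lambda_n^{-2+3\theta}$ bound since $\lambda_n\geq 1$.

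The essence of the scheme is that the $\lambda_n^{-2}$ factor required in the lemma is not visible in the pointwise size of $g$ but is manufactured by the integration by parts: the non-resonant inversion $M_\omega^{-1}$ trades the amplitude of $g$ for $\partial_s g$, and the latter acquires the $\lambda_n^{-2}$ gain from the slow-time scaling $s=t/\lambda_n^2$ of the NLS profile $w_n$. The main technical point to verify will be the uniform-in-$n$ boundedness of $\jpn^{1/2}M_\omega^{-1}$ as a Fourier multiplier on the relevant tight low-frequency support; this follows (for $n$ large) from the frequency cutoff $P_{\leq\lambda_n^\theta}$ in the definition of $w_n$ combined with the pointwise lower bound on $|\omega+\jp{\xi}|$ noted above.
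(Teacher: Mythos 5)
Your scheme is, at heart, the same normal-form argument as the paper's: your boundary term $M_\omega^{-1}e^{i\omega t}g(t)$ is precisely the paper's correction $\tfrac12 e_4$ (the paper simply replaces $M_1^{-1}=(1+\jpn)^{-1}$ by its value $\tfrac12$ at $\xi=0$ and carries the discrepancy $(\jpn-1)$ as an extra error), and your commutator term is the paper's $\lambda_n^{-5}\partial_t(w_n\bar w_n^2)$ term. The quantitative estimates you propose for $g$, $g(0)$, and $\partial_s g$ all land on the correct powers of $\lambda_n$, and the Strichartz bookkeeping is essentially right (the dual-norm weight should be $\jpn$ rather than $\jpn^{1/2}$ to match Lemma~\ref{L:Strichartz} at the $H^{1/2}$ level, but this is immaterial where the multiplier is bounded).

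There is, however, one genuine gap: the assertion that each $g(s)$ has spatial Fourier support in $\{|\xi|\lesssim\lambda_n^{\theta-1}\}$ is false for $s\neq 0$. The projection $P_{\leq\lambda_n^{\theta}}$ is applied only to the initial data of $w_n$; the nonlinear Schr\"odinger flow does not preserve compact Fourier support, so $w_n(s)$ — and hence $g(s)$ — has Fourier tails at all frequencies. This matters in exactly one place where it cannot be waved away: the piece of $e_3$ with phase $e^{-3it}$, for which $M_{-3}=\jpn-3$ vanishes on the sphere $\jp{\xi}=3$, i.e.\ $|\xi|=2\sqrt2$, so $M_{-3}^{-1}$ is not a bounded multiplier on all of $L^2_x$. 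Inserting the cutoff $P_{\leq 1}$ as you suggest repairs the inversion, but then you must separately estimate the untreated high-frequency remainder $\int_0^t e^{-i(t-s)\jpn}e^{i\omega s}P_{>1}g(s)\,ds$, for which no normal-form gain is available; this can be done (the persistence-of-regularity bounds \eqref{E:persistence of reg} show $\|P_{>1}g\|$ decays faster than any power of $\lambda_n$, since $P_{>1}$ acting on the rescaled profile sees frequencies $>\lambda_n$ of $w_n\bar w_n^2$), but it is a step you have omitted, and the same caveat applies wherever you trade $L^2_x$ for $H^{1/2}_x$ or invoke ``Bernstein on the tight Fourier support.'' The paper's device of subtracting the constant-coefficient correction $\tfrac12 e_4$ sidesteps all of this: no multiplier is ever inverted, and the resulting error $(\jpn-1)G$ is handled through the globally bounded identity $\jpn(\jpn-1)=-\Delta(1+\jpn^{-1})^{-1}$ together with $\|\Delta w_n\|\lesssim\lambda_n^{2\theta}$, so no claim about the Fourier support of the nonlinear solution is needed.
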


\begin{proof}
We will prove the lemma for $j=4$.  The argument for $j=3$ is almost identical.  We compute
\begin{align}\label{E:fnj}
(-i \partial_t + \jpn)(f_{n,4} - \tfrac12 e_4)
&= \tfrac{3}{16}\Bigl\{\frac{ie^{it}}{\lambda_n^5} \bigl[\partial_t (w_n \overline{w_n}^2)\bigr](\lambda_n^{-2}t,\lambda_n^{-1}x) \\
&\quad \qquad - \frac{e^{it}}{ \lambda_n^3}\bigl[(\jp{\lambda_n^{-1}\nabla}-1)(w_n \overline{w_n}^2)\bigr](\lambda_n^{-2}t,\lambda_n^{-1}x)\Bigr\}.\notag
\end{align}
Thus by Lemma~\ref{L:Strichartz}  and the triangle inequality, it suffices to bound $e_4$ in $L^\infty_t H^{1/2}_x$ and $L_{t,x}^4$ and the right-hand side of the identity above
in dual Strichartz spaces.  Estimating much as in \eqref{E:e_2}, we obtain
\begin{align*}
\|e_4\|_{L^\infty_t H^{1/2}_x([-\lambda_n^2 T,\lambda_n^2 T]\times \R^2)}
&\lesssim \|\jpn^{1/2} D_{\lambda_n}w_n\|_{L_t^\infty L_x^6(\R\times\R^2)} \|D_{\lambda_n}w_n\|_{L_t^\infty L_x^6(\R\times\R^2)}^{2}\\
&\lesssim \lambda_n^{-2 +2\theta}.
\end{align*}
Similarly, using Sobolev embedding and \eqref{E:persistence of reg}, we have
\begin{align*}
\| e_4\|_{L^4_{t,x}([-\lambda_n^2T,\lambda_n^2T]\times \R^2)}
&\lesssim \lambda_n^{-2} \|w_n^3\|_{L_{t,x}^4(\R\times\R^2)}\\
&\lesssim \lambda_n^{-2} \|w_n\|_{L_t^4L_x^{12}(\R\times\R^2)} \|w_n\|_{L_t^\infty L_x^{12}(\R\times\R^2)}^2\\
&\lesssim \lambda_n^{-2} \||\nabla|^{1/3} w_n\|_{L_{t,x}^4(\R\times\R^2)} \||\nabla|^{5/6} w_n\|_{L_t^\infty L_x^2(\R\times\R^2)}^2\\
&\lesssim \lambda_n^{-2+2\theta}.
\end{align*}

We turn now to estimating the right-hand side of \eqref{E:fnj}.  Noting that
$$
\jpn [\jpn-1]= -\Delta [1+\jpn^{-1}]^{-1}
$$
and that by the Mikhlin multiplier theorem the second factor on the right is bounded on $L_{t,x}^{4/3}$, we estimate
\begin{align*}
\Bigl\| \jpn\Bigl\{& \frac{e^{it}}{ \lambda_n^3}\bigl[(\jp{\lambda_n^{-1}\nabla}-1)(w_n \overline{w_n}^2)\bigr](\lambda_n^{-2}t,\lambda_n^{-1}x)\Bigr\}\Bigr\|_{L_{t,x}^{4/3}([-\lambda_n^2 T,\lambda_n^2 T]\times \R^2)} \\
&\lesssim \lambda_n^{-2} \Bigl\{ \|\Delta w_n\|_{L_{t,x}^4(\R\times\R^2)} \|w_n\|_{L_{t,x}^4(\R\times\R^2)}^2 + \|\nabla w_n\|_{L_{t,x}^4(\R\times\R^2)}^2 \| w_n\|_{L_{t,x}^4(\R\times\R^2)}  \Bigr\}\\
&\lesssim \lambda_n^{-2+2\theta}.
\end{align*}
Note the application of \eqref{E:persistence of reg} in the last step.

Finally, using \eqref{E:persistence of reg} and \eqref{E:time derivative},
\begin{align*}
\Bigl\| \jpn\Bigl\{& \frac{ie^{it}}{\lambda_n^5} \bigl[\partial_t (w_n \overline{w_n}^2)\bigr](\lambda_n^{-2}t,\lambda_n^{-1}x)\Bigr\}\Bigr\|_{L_{t,x}^{4/3}([-\lambda_n^2 T,\lambda_n^2 T]\times \R^2)} \\
&\lesssim \lambda_n^{-2} \|\jpn \partial_t(w_n \overline{w_n}^2)\|_{L_{t,x}^{4/3}(\R\times \R^2)} \\
&\lesssim \lambda_n^{-2} \Bigl\{ \|\jpn \partial_t w_n\|_{L_{t,x}^4(\R\times \R^2)}  \|w_n\|_{L_{t,x}^4(\R\times \R^2)}^2\\
&\quad +  \|\partial_t w_n\|_{L_{t,x}^4(\R\times \R^2)} \|\jpn w_n\|_{L_{t,x}^4(\R\times \R^2)} \|w_n\|_{L_{t,x}^4(\R\times \R^2)}    \Bigr\}\\
&\lesssim \lambda_n^{-2+3\theta}.
\end{align*}

This completes the proof of the lemma.
\end{proof}

We now make the promised modification in $\tilde{v}_n$ and show that the modified sequence approximately solves \eqref{nlkg1st}.  With $f_{n,j}$ as defined above, we consider the sequence
\begin{equation}\label{E:tilde tilde v}
\vtt_n(t) :=
\begin{cases}
\tilde{v}_n(t) - f_{n,3}(t) - f_{n,4}(t), &\qtq{if $|t| \leq T\lambda_n^2$,} \\
e^{-i(t-T\lambda_n^2)\jpn}\vtt_n(T\lambda_n^2), &\qtq{if $t > T\lambda_n^2$,} \\
e^{-i(t+T\lambda_n^2)\jpn}\vtt_n(-T\lambda_n^2), &\qtq{if $t<-T\lambda_n^2$.}
\end{cases}
\end{equation}
The key facts about $\vtt_n$ are that it is a good enough approximate solution to allow us to invoke Proposition~\ref{P:stability} and that it is close enough to $\tilde v_n$ to allow us to deduce that
the resulting solutions $v_n$ obey \eqref{E:Cinfty approx0}.  We collect these together in the following proposition:

\begin{proposition}  \label{L:vtt}
For each $\eps>0$ there exist $T$ and $N$ so that for each $n\geq N$,
$$
(-i \partial_t + \jpn)\vtt_n + \mu \jpn^{-1} (\Re \vtt_n)^3 = \tilde{e}_1 + \tilde{e}_2 +\tilde{e}_3,
$$
with
$$
\|\tilde{e}_1\|_{L^1_tH^{1/2}_x(\R\times \R^2)} + \|\jpn(\tilde{e}_2+\tilde{e}_3)\|_{L^{4/3}_{t,x}(\R\times \R^2)}  \leq \eps.
$$
Moreover,
\begin{align}\label{E:diff vn}
\bigl\| \vtt_n -\tilde v_n \bigr\|_{L^\infty_t H^{1/2}_x(\R\times \R^2)}  +  \|\vtt_n -\tilde v_n\|_{L_{t,x}^4(\R\times \R^2)}  \leq \eps.
\end{align}
In particular, by \eqref{E:vn bounds} we have $\| \vtt_n \|_{L^\infty_t H^{1/2}_x(\R\times \R^2)}  +  \|\vtt_n \|_{L_{t,x}^4(\R\times \R^2)}  \lesssim _{M(\phi)} 1$.
\end{proposition}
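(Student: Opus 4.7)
The plan is to decompose the equation error of $\vtt_n$ according to the piecewise definition \eqref{E:tilde tilde v} and estimate each piece with tools already in hand.  On the middle interval $|t|\leq T\lambda_n^2$, using \eqref{E:middle interval} for $\tilde v_n$ together with $(-i\partial_t + \jpn)f_{n,j} = e_j$, one computes
\begin{equation*}
(-i\partial_t + \jpn)\vtt_n + \mu \jpn^{-1}(\Re \vtt_n)^3 = e_1 + e_2 + \mu \jpn^{-1}\bigl[(\Re \vtt_n)^3 - (\Re \tilde v_n)^3\bigr],
\end{equation*}
while on the outer intervals $\vtt_n$ is a free first-order Klein--Gordon evolution, so the residual is just $\mu \jpn^{-1}(\Re \vtt_n)^3$.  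Accordingly, I would set $\tilde e_1 := e_1 \mathbf{1}_{|t|\leq T\lambda_n^2}$, $\tilde e_2 := e_2 \mathbf{1}_{|t|\leq T\lambda_n^2}$, and put the entire cubic discrepancy (on all of $\R$) into $\tilde e_3$.

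The bounds on $\tilde e_1$ and $\tilde e_2$ are immediate from \eqref{E:e_1} and \eqref{E:e_2}: for fixed $T$ both tend to zero in the stated norms as $n\to\infty$.  For $\tilde e_3$ on the middle interval, writing $a = \Re\tilde v_n$ and $b = \Re(f_{n,3} + f_{n,4})$, the trinomial expansion $(a-b)^3 - a^3 = -3a^2 b + 3ab^2 - b^3$ together with H\"older's inequality gives
\begin{equation*}
\|\jpn \tilde e_3\|_{L^{4/3}_{t,x}(|t|\leq T\lambda_n^2)} \lesssim \|b\|_{L^4_{t,x}}\bigl(\|a\|_{L^4_{t,x}}^2 + \|b\|_{L^4_{t,x}}^2\bigr),
\end{equation*}
which is $o(1)$ since $\|\tilde v_n\|_{L^4_{t,x}} \lesssim_{M(\phi)} 1$ by \eqref{E:vn bounds} and $\|f_{n,j}\|_{L^4_{t,x}} \lesssim \lambda_n^{-2+3\theta}$ by Lemma~\ref{L:Xsb-type}.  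On the outer intervals, $\|\jpn \tilde e_3\|_{L^{4/3}_{t,x}} = \|\vtt_n\|_{L^4_{t,x}}^3$, so by the triangle inequality it suffices to show that each of $\|\tilde v_n\|_{L^4_{t,x}(|t|>T\lambda_n^2)}$ and $\|\vtt_n - \tilde v_n\|_{L^4_{t,x}(|t|>T\lambda_n^2)}$ is small.  The first term is precisely what Proposition~\ref{P:large times approx} controls (letting $T\to\infty$ after $n\to\infty$), while the second term is the free Klein--Gordon evolution of the boundary jump $-(f_{n,3}+f_{n,4})(\pm T\lambda_n^2)$, of size $\lesssim \lambda_n^{-2+3\theta}$ by Lemma~\ref{L:Xsb-type}, hence negligible by Strichartz.

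The bound \eqref{E:diff vn} is proved in the same spirit: on the middle interval $\vtt_n - \tilde v_n = -(f_{n,3} + f_{n,4})$ is $o(1)$ in both $L^\infty_t H^{1/2}_x$ and $L^4_{t,x}$ by Lemma~\ref{L:Xsb-type}, while on the outer intervals $\vtt_n - \tilde v_n$ is the free Klein--Gordon evolution of this boundary value, so the same smallness propagates by Strichartz.  The main obstacle is a careful ordering of the quantifiers: \eqref{E:e_1} carries an unfavourable factor of $T$, so I would first pick $T = T(\eps)$ large enough that Proposition~\ref{P:large times approx} reduces the outer-interval contribution to $\|\jpn \tilde e_3\|_{L^{4/3}_{t,x}}$ to at most $\eps/2$, and only then choose $N = N(T,\eps)$ so that for $n\geq N$ the remaining factors of the form $T\lambda_n^{-2+4\theta}$, $\lambda_n^{-1+\theta}$, and $\lambda_n^{-2+3\theta}$ close the estimate.
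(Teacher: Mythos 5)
Your proposal is correct and follows essentially the same route as the paper: the same piecewise decomposition of the error (with $\tilde e_1=e_1$, $\tilde e_2=e_2$ on the middle interval and the entire cubic discrepancy in $\tilde e_3$), the same use of \eqref{E:e_1}, \eqref{E:e_2}, H\"older with Lemma~\ref{L:Xsb-type} and \eqref{E:vn bounds} for the middle interval, and Proposition~\ref{P:large times approx} plus Strichartz for the outer intervals. Your remark on the quantifier ordering (choose $T$ first via Proposition~\ref{P:large times approx}, then $N$ to absorb the $T\lambda_n^{-2+4\theta}$ factor) matches the paper's handling exactly.
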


\begin{proof}
Let $I_n := [-\lambda_n^2T,\lambda_n^2T]$.  On this interval, direct computation reveals
\begin{gather*}
\tilde{e}_1 = e_1, \quad \tilde{e}_2 = e_2, \qtq{and}
\tilde{e}_3 = \mu \jpn^{-1}\Bigl[\bigl(\Re (\tilde{v}_n - f_{n,3} - f_{n,4})\bigr)^3 - (\Re \tilde{v}_n)^3\Bigr].
\end{gather*}
By  \eqref{E:e_1} and \eqref{E:e_2}, we have
$$
\|\tilde{e}_1\|_{L^1_t H^{1/2}_x(I_n\times \R^2)} + \|\jpn \tilde{e}_2\|_{L^{4/3}_{t,x}(I_n \times \R^2)} \lesssim T \lambda_n^{-2+4\theta} + \lambda_n^{-1+\theta}.
$$
On the other hand, by the triangle inequality, H\"older, \eqref{E:vn bounds}, and Lemma~\ref{L:Xsb-type},
\begin{align*}
\|\jpn\tilde{e}_3\|_{L^{4/3}_{t,x}(I_n\times \R^2)}
&\lesssim \bigl[ \|\tilde{v}_n\|_{L^4_{t,x}(I_n\times \R^2)}^2 + \|f_{n,3}\|_{L^4_{t,x}(I_n\times \R^2)}^2 + \|f_{n,4}\|_{L^4_{t,x}(I_n\times \R^2)}^2 \bigr] \\
&\qquad \times \bigl[ \|f_{n,3}\|_{L^4_{t,x}(I_n\times \R^2)} + \|f_{n,4}\|_{L^4_{t,x}(I_n\times \R^2)} \bigr]\\
&\lesssim \lambda_n^{-2+3\theta}.
\end{align*}
These bounds show that for any $T$ one may choose $N$ sufficiently large so that for $n\geq N$,
$$
\|\tilde{e}_1\|_{L^1_tH^{1/2}_x(I_n\times \R^2)} + \|\jpn(\tilde{e}_2+\tilde{e}_3)\|_{L^{4/3}_{t,x}(I_n\times \R^2)}  \leq \tfrac12 \eps.
$$

For the complementary time intervals, $\tilde e_1\equiv \tilde e_2 \equiv 0$ and $\tilde e_3 = \mu \jpn^{-1} (\Re \vtt_n)^3$.   By Proposition~\ref{P:large times approx},
Lemma~\ref{L:Xsb-type}, and the Strichartz inequality, we have that for $T$ and $n$ sufficiently large,
\begin{align*}
\bigl\| \jpn \tilde e_3 \bigr\|_{L^{4/3}_{t,x}(|t|\geq T \lambda_n^2)} &\lesssim \bigl\| \vtt_n \bigr\|_{L^{4}_{t,x}(|t|\geq T \lambda_n^2)}^3 \\
&\lesssim \bigl\| \tilde v_n \bigr\|_{L^{4}_{t,x}(|t|\geq T \lambda_n^2)}^3 +  \bigl\| f_{n,3} \bigr\|_{L^{\infty}_tH^{1/2}_x}^3 + \bigl\| f_{n,4} \bigr\|_{L^{\infty}_tH^{1/2}_x}^3 \\
&\leq \tfrac12 \eps.
\end{align*}

We now turn our attention to \eqref{E:diff vn}.  The contribution from $I_n$ is controlled by Lemma~\ref{L:Xsb-type}, while the contribution from the complementary time intervals
is controlled by combining this lemma with the Strichartz inequality.

This completes the proof of the proposition.
\end{proof}

We are now ready to complete the proof of Theorem~\ref{T:embed} in the case $\nu_n\equiv 0$.  Combining Lemma~\ref{L:initial data} and Proposition~\ref{L:vtt},
we are in a position to apply the stability result Proposition~\ref{P:stability} with $\vtt_n$ as the approximate solution, and so obtain (for $n$ sufficiently large)
a solution $v_n$ to \eqref{nlkg1st} with initial data $v_n(0)=\phi_n$ and finite scattering size.  Moreover, by \eqref{E:diff vn},
\begin{align}\label{E:vn is tvn}
\lim_{n\to \infty} \bigl\{\| v_n (t) -\tilde v_n(t-t_n) \|_{L^\infty_t H^{1/2}_x(\R\times \R^2)}  +  \|v_n(t) -\tilde v_n(t-t_n)\|_{L_{t,x}^4(\R\times \R^2)}\bigr\}=0.
\end{align}

Finally, we verify \eqref{E:Cinfty approx}.  By the density of $C^\infty_c$ in $L^4_{t,x}$ we may choose $\psi_\eps$ so that
$$
\bigl\| e^{-it} D_{\lambda_n} \bigl[ \psi_\eps(\lambda_n^{-2}t) - w_\infty(\lambda_n^{-2}t) \bigr] \bigr\|_{L^4_{t,x}} = \| \psi_\eps - w_\infty \|_{L^4_{t,x}} < \tfrac12 \eps.
$$
Combining this with \eqref{E:vn is tvn}, we see that it suffices to show
\begin{align*}
\| \tilde v_n - e^{-it} D_{\lambda_n} w_\infty(\lambda_n^{-2}t) \|_{L_{t,x}^4(\R\times \R^2)} < \tfrac12 \eps \quad\text{for $n$ sufficiently large}.
\end{align*}
By the definition of $\tilde v_n$ and the triangle inequality,
\begin{align*}
\| \tilde v_n - e^{-it}& D_{\lambda_n} w_\infty(\lambda_n^{-2}t) \|_{L_{t,x}^4(\R\times \R^2)}  \\
& \lesssim \| \tilde v_n \|_{L_{t,x}^4(|t|> T\lambda_n^2)}  + \| w_n - w_\infty \|_{L_{t,x}^4(\R\times \R^2)} + \| w_\infty \|_{L_{t,x}^4(|t|> T)}.
\end{align*}
Each of these can be made arbitrarily small by first choosing $T$ large and then $n$ also sufficiently large; specifically, we apply Proposition~\ref{P:large times approx},
\eqref{E:limit w_infty}, and the dominated convergence theorem, respectively.

This completes the treatment of Theorem~\ref{T:embed} in the case $\nu_n\equiv0$.
\end{proof}

We now turn to the general case, in which our only assumption on $\nu_n$ is that $\nu_n \to \nu \in \R^2$.

\begin{proof}[Proof in the general case]
Recall that $(\tilde t_n,\tilde x_n) :=L_{\nu_n}(t_n,x_n)$ and hence, by the commutation rule \eqref{E:Boost translates},
\begin{equation}\label{E:Boosted phi}
\phi_n = T_{x_n} e^{it_n \jpn} \W_{\nu_n} D_{\lambda_n} P_{\leq \lambda_n^{\theta}} \phi
    = \W_{\nu_n} T_{\tilde x_n} e^{i\tilde t_n\jpn} D_{\lambda_n}P_{\leq \lambda_n^{\theta}} \phi.
\end{equation}
By spatial translation invariance, we may alter $x_n$ to whatever value is convenient (previously, we set $x_n\equiv 0$).  For this part of the proof we
choose $x_n =\nu_nt_n/\jp{\nu_n}$, which has the effect that $\tilde x_n \equiv 0$ and $\tilde t_n = t_n/\jp{\nu_n}$.

By our proof in the case $\nu_n \equiv 0$, for $n$ sufficiently large there is a global solution $v_n^0$ to \eqref{nlkg1st} with initial data
\begin{equation}\label{E:vn0 data}
v_n^0 (0) = T_{\tilde x_n} e^{i\tilde t_n\jpn} D_{\lambda_n}P_{\leq \lambda_n^{\theta}} \phi.
\end{equation}
Moreover, it obeys $S_{\R}(v_n^0) \lesssim_{M(\phi)} 1$ and for each $\eps > 0$, there exists $\psi_{\eps}^0 \in C^{\infty}_c(\R \times \R^2)$ and $N_{\eps}^0$ such that
\begin{equation} \label{E:approx vn0}
\bigl\| \Re\bigl\{ v_n^0(t+\tilde t_n,x+\tilde x_n) - \tfrac{e^{-it}}{\lambda_n} \psi_{\eps}^0(\tfrac{t}{\lambda_n^2}, \tfrac{x}{\lambda_n}) \bigr\} \bigr\|_{L^4_{t,x}} < \eps
\end{equation}
whenever $n \geq N_{\eps}^0$.

As $v_n^0$ solves \eqref{nlkg1st}, so $u_n^0:=\Re v_n^0$ solves \eqref{nlkg} and thus by Lorentz invariance, $u_n^1:=u_n^0\circ L_{\nu_n}$ also solves \eqref{nlkg}.  Note
that it is necessary to pass through real solutions here since
\begin{equation}\label{E:vn1 defn}
v_n^1 := (1+i\jpn^{-1}\partial_t) u_n^1 = (1+i\jpn^{-1}\partial_t) \Re v_n^0\circ L_{\nu_n}
\end{equation}
(cf. \eqref{E:real to complex}) will not equal $v_n^0\circ L_{\nu_n}$ except in some exceptional circumstances.  The former solves \eqref{nlkg1st}, while the latter need not.

By the manner in which it is constructed, we expect $v_n^1(t)$ to be a good approximation to the sought-after $v_n(t)$.  Both are solutions to \eqref{nlkg1st};
however, they have different initial data because $\W_{\nu_n}$ only faithfully represents the action of Lorentz boosts on solutions of the \emph{linear} Klein--Gordon equation.
Nevertheless, we will now prove the discrepancy to be small, which will allow us to apply the stability result Proposition~\ref{P:stability}.

\begin{proposition} \label{P:vn1}
For $n$ sufficiently large, $v_n^1$ is a global strong solution to \eqref{nlkg1st}.  Moreover, $\sup_n  S_\R(v_n^1) \lesssim_{M(\phi)} 1$ and
\begin{equation}\label{E:vn1-phin}
     \lim_{n \to \infty} \|v_n^1(0) - \phi_n\|_{H^1_x} = 0.
\end{equation}
\end{proposition}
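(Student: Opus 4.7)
My plan unfolds in three steps. For the first two claims, the scattering bound follows immediately from the Lorentz invariance of the $L^4_{t,x}$ norm: since $L_{\nu_n}$ is volume-preserving,
\[
S_\R(v_n^1) \;=\; \iint |\Re v_n^1|^4\,dx\,dt \;=\; \iint |u_n^0\circ L_{\nu_n}|^4\,dx\,dt \;=\; S_\R(v_n^0) \;\lesssim_{M(\phi)}\; 1
\]
by the previously-proved $\nu_n\equiv 0$ case. For the strong solution statement, note that $v_n^0(0)$ has Fourier support in $\{|\xi|\leq\lambda_n^\theta\}$ on which $\jp{\lambda_n^{-1}\xi}\leq 2$ for $n$ large, so $\|v_n^0(0)\|_{H^1_x} \lesssim \|\phi\|_{L^2_x}$, and persistence of regularity (Proposition~\ref{P:lwp}(vi)) places $v_n^0\in L^\infty_t H^1_x$. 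Lorentz invariance of the (real) NLKG equation together with Corollary~\ref{C:boostable} then makes $u_n^1 = u_n^0\circ L_{\nu_n}$ a global real strong solution in $C^0_t(H^1_x\times L^2_x)$ (the norm changes by at most a factor $\jp{\nu_n}\lesssim 1$), whence $v_n^1 = u_n^1 + i\jpn^{-1}\partial_t u_n^1$ is a strong solution to \eqref{nlkg1st} in $C^0_t H^1_x$.

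The remaining central claim is \eqref{E:vn1-phin}. I would decompose $v_n^0 = v_n^{0,\text{lin}} + r_n$ with $v_n^{0,\text{lin}}(t) := e^{-it\jpn}v_n^0(0)$, which yields $v_n^1 = A_n + B_n$ where $A_n$ and $B_n$ are the complex forms of $(\Re v_n^{0,\text{lin}})\circ L_{\nu_n}$ and $(\Re r_n)\circ L_{\nu_n}$, respectively. A direct Fourier calculation (equivalent to \eqref{E:Boost linear soln} combined with \eqref{E:Fourier Boost}) shows that $v_n^{0,\text{lin}}\circ L_{\nu_n}$ is itself a complex first-order linear Klein--Gordon solution with initial data $\W_{\nu_n}v_n^0(0)$; hence $A_n = v_n^{0,\text{lin}}\circ L_{\nu_n}$ exactly, and $A_n(0) = \W_{\nu_n}v_n^0(0)$. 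With our choice $x_n = \nu_n\tilde t_n = \nu_n t_n/\jp{\nu_n}$, applying the commutation rule \eqref{E:Boost translates} with $(\tilde\tau,\tilde y) = (\tilde t_n, 0)$ gives $(\tau,y) = L_{\nu_n}^{-1}(\tilde t_n,0) = (t_n, x_n)$, so
\[
A_n(0) \;=\; \W_{\nu_n}\,e^{i\tilde t_n\jpn}D_{\lambda_n}P_{\leq\lambda_n^\theta}\phi \;=\; T_{x_n}e^{it_n\jpn}\W_{\nu_n}D_{\lambda_n}P_{\leq\lambda_n^\theta}\phi \;=\; \phi_n.
\]
Thus $v_n^1(0) - \phi_n = B_n(0)$, and the proof reduces to proving $\|B_n(0)\|_{H^1_x}\to 0$.

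This last step is where I expect the main obstacle. Writing $R := \Re r_n$ and $R_\nu := R\circ L_{\nu_n}$, one checks from the definitions that $R$ satisfies the linear Klein--Gordon equation $R_{tt}-\Delta R + R = -\mu(\Re v_n^0)^3$ with vanishing initial data at $t = 0$ (as $r_n(0)=0$ and $\partial_t\Re r_n(0)=\Re[-i\mu\jpn^{-1}(\Re v_n^0(0))^3]=0$), and that the reality of $R_\nu$ kills the cross-term in
\[
\|B_n(0)\|_{H^1_x}^2 \;=\; \|\jpn R_\nu(0)\|_{L^2_x}^2 + \|(R_\nu)_t(0)\|_{L^2_x}^2 \;=\; 2 E_{\text{lin}}(R_\nu(0)).
\]
A stress--energy argument modelled on the proof of Corollary~\ref{C:boostable}(iii) (applying the divergence theorem in the spacetime wedge between $\{t=0\}$ and the boosted slice $L_{\nu_n}(0,\R^2)$) expresses $E_{\text{lin}}(R_\nu(0))$ as an integral of $-\mu(\Re v_n^0)^3\bigl(\jp{\nu_n}R_t - \nu_n\cdot\nabla R\bigr)$ over that wedge. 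To drive this quantity to zero I would exploit the quantitative approximation $v_n^0\approx\tilde v_n = e^{-it}D_{\lambda_n}w_n(\cdot/\lambda_n^2)$ furnished by Proposition~\ref{L:vtt}, upgraded to $H^1_x$ via Lemma~\ref{L:basic NLS facts}, and invoke the $X^{s,b}$-type cancellation of Lemma~\ref{L:Xsb-type}. The crucial structural fact is that $\tilde v_n$ has Fourier support concentrated in $\{|\xi|\lesssim\lambda_n^{\theta-1}\}$, so by the phase expansion \eqref{phase approx} Lorentz boosts act on it as Galilean modulations up to $O(\lambda_n^{\theta-1})$ errors; the resonant contributions of the cubic nonlinearity are matched exactly by the NLS coefficient $\tfrac{3}{8}$, while the non-resonant ones gain negative powers of $\lambda_n$ through $X^{s,b}$-type integrations by parts in time. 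The chief technical nuisance, absent when $\nu_n\equiv 0$, is that the boosted slice traverses every time level, so the estimates must be made uniform across the entire long interval $[-T\lambda_n^2, T\lambda_n^2]$ on which the cubic source carries non-negligible mass.
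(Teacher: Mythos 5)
Your treatment of the first two claims and your reduction of \eqref{E:vn1-phin} match the paper exactly: the $L^4_{t,x}$ bound by volume preservation of $L_{\nu_n}$, the strong-solution statement via Corollary~\ref{C:boostable}, the splitting of $v_n^0$ into its free evolution plus the Duhamel part, the identification $A_n(0)=\W_{\nu_n}v_n^0(0)=\phi_n$ via \eqref{E:Boost translates} with the choice $\tilde x_n\equiv 0$, and the stress--energy/divergence-theorem identity expressing $\|v_n^1(0)-\phi_n\|_{H^1_x}^2$ as a wedge integral of $-\mu (u_n^0)^3\bigl[\jp{\nu_n}\partial_t\tilde u_n^0-\nu_n\cdot\nabla\tilde u_n^0\bigr]$ over $\Omega_n$. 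All of this is correct.

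The gap is in the last step, where you stop short of an actual argument. The paper does \emph{not} need any $X^{s,b}$ or resonance analysis here, and your stated worry --- that the estimate must be made uniform over all of $[-T\lambda_n^2,T\lambda_n^2]$ because the cubic source carries $O(1)$ mass there --- is precisely the point the intended argument sidesteps. One applies H\"older to the wedge integral to get
\begin{equation*}
\iint_{\Omega_n}|u_n^0|^3\,|\nabla_{t,x}\tilde u_n^0|\,dx\,dt \;\lesssim\; \|u_n^0\|_{L^4_{t,x}(\Omega_n)}^3\,\|\nabla_{t,x}\tilde u_n^0\|_{L^4_{t,x}(\R\times\R^2)},
\end{equation*}
bounds the second factor by $O_{M(\phi)}(1)$ via Strichartz and persistence of regularity \eqref{E:nlkg persistence}, and then shows $\|u_n^0\|_{L^4_{t,x}(\Omega_n)}\to 0$ by pure geometry: replacing $u_n^0$ by the compactly supported profile $\lambda_n^{-1}\psi(\tfrac{t-\tilde t_n}{\lambda_n^2},\tfrac{x}{\lambda_n})$ from \eqref{E:approx vn0}, its support forces $|x|\lesssim_\psi\lambda_n$, while $(t,x)\in\Omega_n$ forces $|t|<|x|$; hence the integrand is supported on a set of spacetime measure $\lesssim_\psi\lambda_n^3$, and $\lambda_n^{-4}\cdot\lambda_n^3=\lambda_n^{-1}\to 0$. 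In other words, the cubic source does carry $O(1)$ scattering size globally, but it lives at the Schr\"odinger scales $|t|\sim\lambda_n^2$, $|x|\lesssim\lambda_n$, and therefore has vanishing overlap with the light-cone-shaped wedge swept out by the boost. As written, your proposal identifies the correct quantity to estimate but does not prove it is small, and the machinery you gesture at (Lemma~\ref{L:Xsb-type}, resonant cancellation via the factor $\tfrac38$) is aimed at a different problem (making $\tilde v_n$ an approximate solution on the middle interval), not at the wedge integral. You should replace that paragraph with the H\"older-plus-support argument above.
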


\begin{proof}
By Corollary~\ref{C:boostable}, we have that $u_n^1$ is a strong solution to \eqref{nlkg}.  This implies that $v_n^1$ is a strong solution to \eqref{nlkg1st}.  As $S_\R(v_n^1)=S_\R(v_n^0)$,
the spacetime bound is inherited directly from $v_n^0$.

We now turn to the initial data.  As in the proof of Corollary~\ref{C:boostable}, we decompose
$$
u_n^0 = \uolin_n + \tilde u_n^0,
$$
where $\uolin_n$ solves the linear Klein--Gordon equation with initial data
$$
(1+i\jpn^{-1}\partial_t)\uolin_n(0) = v_n(0) = \W_{\nu_n}^{-1} \phi_n.
$$
Then by the action of $\W_\nu$ on linear solutions (cf. \eqref{E:Boost linear soln} or \eqref{E:lin o L is Wa}), we have
\begin{align*}
(1+i\jpn^{-1}\partial_t) [\uolin_n\circ L_{\nu_n}](0) &= \W_{\nu_n} v_n(0) = \phi_n,
\end{align*}
from which we deduce that $\|v_n^1(0) - \phi_n\|_{H^1_x} = \|\tilde{u}_n^0 \circ L_{\nu_n}(0,\cdot) \|_{H^1_x}$.

By construction, $\tilde{u}_n^0(0,\cdot)\equiv 0$.  Thus, we need only estimate the contribution from the nonlinearity in the spacetime region
\begin{align*}
\Omega_n = \bigl\{(t,x) : 0 < \jp{\nu_n}t < -\nu_n \cdot x\bigr\}  \cup \bigl\{(t,x) : -\nu_n \cdot x < \jp{\nu_n} t < 0 \bigr\}.
\end{align*}

To do this, we argue in much the same manner as in the proof of Corollary~\ref{C:boostable}, using $\tilde u_n^0$ in place of the $\tilde{u}$ appearing there.
As in \eqref{E:2},
\begin{align*}
\limsup_{n\to\infty} \tfrac12 \|\tilde{u}_n^0 \circ L_{\nu_n}(0,\cdot) \|_{H^1_x}^2
    &\leq \limsup_{n\to\infty}  \iint_{\Omega_n} |\nabla_{t,x} \cdot \mathfrak{p}_n| \,dx\,dt \\
&\lesssim \limsup_{n\to\infty}  \iint_{\Omega_n}|u_n^0(t,x)|^3|\nabla_{t,x}\tilde{u}_n^0(t,x)|\, dx\, dt \\
&\lesssim \limsup_{n\to\infty} \|u_n^0\|_{L^4_{t,x}(\Omega_n)}^3 \|\nabla_{t,x}\tilde{u}_n^0\|_{L^4_{t,x}(\R \times \R^2)}.
\end{align*}
Therefore, to complete the proof of the proposition, we merely need to verify the following:  For $n$ sufficiently large,
\begin{equation} \label{E:control grad tildeu}
\|\nabla_{t,x}\tilde{u}_n^0\|_{L^4_{t,x}(\R \times \R^2)} \lesssim_{M(\phi)} 1
\end{equation}
and
\begin{equation} \label{E:u small on omega}
\lim_{n \to \infty}  \|u_n^0\|_{L^4_{t,x}(\Omega_n)} = 0.
\end{equation}

We begin with \eqref{E:control grad tildeu}.  By the triangle inequality,
$$
\|\nabla_{t,x}\tilde{u}_n^0\|_{L^4_{t,x}} \leq \|\nabla_{t,x}u_n^0\|_{L^4_{t,x}} + \|\nabla_{t,x}\uolin_n\|_{L^4_{t,x}}.
$$
Furthermore, by the Strichartz inequality, the linear term satisfies the bound
\begin{align*}
\|\nabla_{t,x}\uolin_n\|_{L^4_{t,x}} &\lesssim \| v_n^0(0) \|_{H^{3/2}_x} = \|D_{\lambda_n}P_{\leq\lambda_n^{\theta}}\phi\|_{H^{3/2}_x} \lesssim_{M(\phi)} 1.
\end{align*}

To bound the contribution coming from $u_n^0$, we use persistence of regularity \eqref{E:nlkg persistence} and the fact that $S_{\R}(u_n^0) \lesssim_{M(\phi)} 1$ to see that
$$
\|\nabla_{t,x} u_n^0\|_{L^4_{t,x}} \lesssim_{M(\phi)} \|\jpn^{3/2} D_{\lambda_n}P_{\leq \lambda_n^{\theta}} \phi\|_{L^2_x} \lesssim_{M(\phi)} 1.
$$

This completes the proof of \eqref{E:control grad tildeu}; we turn now to \eqref{E:u small on omega}.

By the approximation \eqref{E:approx vn0} and the triangle inequality, it suffices to show that
\begin{equation} \label{E:u small 1}
\lim_{n \to \infty} \int_{\Omega_n} \lambda_n^{-4} \bigl| \psi\bigl(\tfrac{t-\tilde t_n}{\lambda_n^2},\tfrac{x-\tilde x_n}{\lambda_n}\bigr) \bigr|^4 \,dx\,dt = 0
\end{equation}
for every $\psi \in C^{\infty}_c(\R \times \R^2)$.

To do this, we consider the support of the integrand.  As $\tilde x_n\equiv 0$,
$$
\bigl(\tfrac{t-\tilde t_n}{\lambda_n^2},\tfrac{x-\tilde x_n}{\lambda_n}\bigr) \in \supp \psi \implies |x|\lesssim_\psi \lambda_n,
\qtq{while} (t,x) \in \Omega_n \implies |t| < |x|.
$$
Therefore, the support of the integrand has measure $\lesssim_\psi \lambda_n^3$ and so
$$
\text{LHS\eqref{E:u small 1}} \lesssim_\psi \lambda_n^{-4} \lambda_n^3 \|\psi\|_{L^{\infty}_{t,x}} \lesssim_\psi \lambda_n^{-1} \to 0.
$$
This completes the proof of \eqref{E:u small on omega}, and so the proof of Proposition~\ref{P:vn1}.
\end{proof}

We now return to the proof of Theorem~\ref{T:embed} in the general case.  Combining Proposition~\ref{P:vn1} with Proposition~\ref{P:stability}, we deduce
that for $n$ sufficiently large there exists a global solution $v_n$ to \eqref{nlkg1st} with $v_n(0) = \phi_n$ and $S_\R(v_n)\lesssim_{M(\phi)} 1$.
Moreover,
$$
\lim_{n \to \infty} \bigl\|\Re\bigl\{v_n - v_n^1 \bigr\}\bigr\|_{L^4_{t,x}} = 0.
$$

Combining this estimate with $\Re v_n^0 = \Re v_n^1\circ L_{\nu_n}^{-1}$ and \eqref{E:approx vn0} yields \eqref{E:Cinfty approx0}, thus completing the proof of Theorem~\ref{T:embed}.
\end{proof}


\section{Minimal-energy blowup solutions}\label{S:min blowup}


The goal of this section is to prove Theorem~\ref{T:reduct}, which asserts that failure of of our main result, Theorem~\ref{T:ST bounds}, would imply the existence
and almost periodicity (modulo translations) of minimal-energy counterexamples.

As described in the introduction, if Conjecture~\ref{Conj:NLKG} were to fail, then there would exist a \emph{critical energy} $E_c >0$ (also $E_c < E(Q)$ in the focusing case),
defined to be the unique positive number possessing the following properties:  First, if $u$ is a real-valued, global solution to \eqref{nlkg} with $E(u) < E_c$
(and $M(u(0)) < M(Q)$ in the focusing case), then $S_{\R}(u) \lesssim_{E(u)} 1$; second, there exists a sequence $u_n$ of global solutions to \eqref{nlkg}
such that $E(u_n) \leq E_c$ (and $M(u_n(0)) < M(Q)$ in the focusing case), $\lim_{n \to \infty} E(u_n) = E_c$, and $\lim_{n \to \infty} S_{\R}(u_n) = \infty$.

We pause now for two remarks on the preceding discussion.  First, the fact that $E_c>0$ is a consequence of the small-data theory presented in Proposition~\ref{P:lwp}.
In the focusing case, we also invoke \eqref{E:coercive E} to see that for $M(u(0)) < M(Q)$, small energy implies small $H^1_x$ norm.

Second, the solutions $u_n$ used in the definition of $E_c$ are stated to be global (in time).  This involves no loss of generality as can be seen in at least two ways:
Either (a) we choose $E(u_n)$ to converge to $E_c$ from below; thus, not only is $u_n$ global but, by the definition of $E_c$, even admits global spacetime bounds.
Or (b) we invoke Corollary~\ref{Cor:gwp}.

The main step in proving Theorem~\ref{T:reduct} is the following proposition.

\begin{proposition}[Palais--Smale condition modulo translations] \label{P:palais smale}
Fix $\mu = \pm1$, suppose Conjecture~\ref{Conj:NLS} holds but Conjecture~\ref{Conj:NLKG} fails for this value of $\mu$, and let $E_c$ denote the critical energy.
Accordingly, let $u_n$ be a sequence of global solutions to \eqref{nlkg} such that  the following hold
\begin{gather}
E(u_n) \leq E_c \text{ and } E_c = \lim_{n \to \infty}E(u_n),  \\
M(u_n(0)) < M(Q) \text{ in the focusing case, and}  \label{E:wee mass}\\
\lim_{n \to \infty} S_{\leq 0}(u_n) = \lim_{n \to \infty} S_{\geq 0}(u_n) = \infty.  \label{E:infinte S}
\end{gather}
Then after passing to a subsequence, $(u_n(0),\partial_t u_n(0))$ converges in $H^1_x \times L^2_x$, modulo translations.
\end{proposition}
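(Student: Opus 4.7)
The plan is to run a standard Kenig--Merle concentration-compactness argument, with the crucial twist that Section~\ref{S:embed} is invoked to dispose of the NLS-limit scenario. I pass to the first-order formulation via $v_n := u_n + i\jpn^{-1}\partial_t u_n$, so the desired conclusion becomes: a subsequence of $\{v_n(0)\}$ converges modulo translations in $H^1_x$. Since $M(u_n(0)) + E(u_n) < C$ (and in the focusing case Proposition~\ref{P:coercive} furnishes the necessary $H^1_x$ bound), $\{v_n(0)\}$ is bounded in $H^1_x$, so Theorem~\ref{T:lpd} produces a profile decomposition $v_n(0) = \sum_{j=1}^J T_{x_n^j} e^{it_n^j\jpn}\W_{\nu_n^j} D_{\lambda_n^j} P_n^j \phi^j + w_n^J$ together with energy decoupling (Proposition~\ref{P:nrg decoup}) and pairwise asymptotic orthogonality \eqref{E:lpd orthogonality}.

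Next I associate a nonlinear profile $V^j_n$ to each $\phi^j$. When $\lambda_n^j \equiv 1$ (hence $\nu_n^j \equiv 0$), $V^j_n$ is the standard NLKG solution with initial data $\phi^j$ at $t=0$ when $t_n^j \equiv 0$, or the solution scattering to $e^{-it\jpn}\phi^j$ at $\mp\infty$ when $t_n^j \to \pm\infty$, whose existence is guaranteed by Proposition~\ref{P:lwp}(iii). When $\lambda_n^j \to \infty$, Theorem~\ref{T:embed} provides a global strong NLKG solution with $S_\R(V_n^j) \lesssim_{M(\phi^j)} 1$ (in the focusing case one verifies $M(\phi^j) < \tfrac43 M(Q)$ using the uniform bound $M(u_n(0)) < M(Q)$ and that masses decouple sufficiently via the lower semicontinuity afforded by the weak limit \eqref{E:lpd wk lim}). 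With the nonlinear profiles in hand, I form the approximation $\tilde v_n^J := \sum_{j=1}^J V^j_n(\cdot,\cdot\,) + e^{-it\jpn} w_n^J$, evaluated after suitable symmetry transport for each profile.

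The minimality argument then forces a single profile. Since $E_c = \lim E(v_n) = \sum_j E(V^j_n(0)) + E(w_n^J) + o(1)$ and every $E(V^j_n(0)) \geq 0$ eventually (again by \eqref{E:coercive E} and mass control in the focusing case), if two profiles carried positive energy then each would satisfy $E(V^j_n) \leq E_c - \delta$ for some $\delta > 0$ and large $n$, so each would admit global spacetime bounds by definition of $E_c$. Proposition~\ref{P:nonlinear decoup} makes the cross terms in the nonlinearity of $\tilde v_n^J$ small in $L^2_{t,x}$, so $\tilde v_n^J$ is a genuine approximate solution in the sense of Proposition~\ref{P:stability}; stability then gives $\sup_n S_\R(v_n) < \infty$, contradicting \eqref{E:infinte S}. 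Hence only one profile survives, $w_n^1 \to 0$ in $H^1_x$ (using energy decoupling and coercivity), and the single-profile case must now be analyzed.

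Finally I rule out the degenerate parameter scenarios for the unique profile $\phi$ with parameters $(\lambda_n,\nu_n,t_n,x_n)$. If $\lambda_n \to \infty$, Theorem~\ref{T:embed} gives finite $S_\R(V_n^1)$ which by stability forces $\sup_n S_\R(v_n) < \infty$, contradicting \eqref{E:infinte S}; therefore $\lambda_n \equiv 1$ and $\nu_n \equiv 0$. If $t_n \to +\infty$, then $V^1$ scatters to $e^{-it\jpn}\phi$ as $t\to -\infty$, so $S_{\leq -T}(V^1) \to 0$ as $T\to\infty$; after time-translation by $t_n$ and application of stability, $S_{\leq 0}(v_n)$ is eventually bounded, contradicting $\lim S_{\leq 0}(u_n) = \infty$. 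The analogous argument rules out $t_n\to -\infty$ using $\lim S_{\geq 0}(u_n) = \infty$. Hence $t_n \equiv 0$, so $v_n(0) - T_{x_n}\phi = w_n^1 \to 0$ in $H^1_x$, which after unwinding the real/complex correspondence \eqref{E:real to complex} yields the desired convergence of $(u_n(0),\partial_t u_n(0))$ in $H^1_x\times L^2_x$ modulo translations. The main obstacle is the $\lambda_n \to \infty$ case, where Conjecture~\ref{Conj:NLS} and the delicate NLS/NLKG embedding of Theorem~\ref{T:embed} are indispensable; the remainder of the argument is the now-routine Kenig--Merle--Keraani contradiction scheme.
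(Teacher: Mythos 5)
Your overall architecture is the same as the paper's: first-order reformulation, linear profile decomposition, the dichotomy between a single profile carrying all of $E_c$ and many profiles each with energy $\leq E_c-\delta$, exclusion of the multi-profile case via nonlinear profiles, Proposition~\ref{P:nonlinear decoup}, and Proposition~\ref{P:stability}, and then the case analysis $\lambda_n\to\infty$ / $t_n\to\pm\infty$ / $t_n\equiv 0$ for the surviving profile. Most of this is fine.

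There is, however, one concrete gap in the focusing case: your parenthetical verification that $M(\phi^j)<\tfrac43 M(Q)$ when $\lambda_n^j\to\infty$, which is the hypothesis needed to invoke Theorem~\ref{T:embed}. You propose to get this from mass decoupling and weak lower semicontinuity, but the operator $\W_{\nu}$ is unitary only on $H^{1/2}_x$, not on $L^2_x$; a direct computation with \eqref{E:Fourier Boost} and \eqref{E:Boost Jacobian} shows that for a profile concentrating at frequency zero one has $M(\phi_n^j)\to \jp{\nu_\infty^j}^{-1}M(\phi^j)$, i.e.\ the boost \emph{shrinks} the mass of the wave packet relative to that of the profile (this is exactly the phenomenon behind the travelling-soliton example $M(u^\nu)=\jp{\nu}^{-1}M(Q)$ in the introduction). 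Consequently, mass decoupling only yields $M(\phi^j)\leq \jp{\nu_\infty^j}\cdot 2E_c$, and since the boost parameters produced by the inverse Strichartz theorem are bounded only in terms of $(A/\eps)^{16}$, this does not give $M(\phi^j)<\tfrac43 M(Q)$. The correct route (Lemma~\ref{L:check embed hypothesis} in the paper) is to use \emph{energy} decoupling instead: the $H^1_x$ norm picks up the factor $\jp{\nu_\infty^j}$ in the favourable direction, namely $\lim_n 2E(\phi_n^j)=\jp{\nu_\infty^j}\|\phi^j\|_{L^2_x}^2\leq 2E_c<M(Q)$, whence $M(\phi^j)<M(Q)$. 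With this substitution your argument goes through.
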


\begin{proof}
We will continue to work with the first-order version of our equation, \eqref{nlkg1st}.  Correspondingly, let
$$
v_n := u_n + i \jpn^{-1}\partial_t u_n .
$$
Thus, our goal is to prove that, after passing to a subsequence, $v_n(0)$ converges modulo translations in $H^1_x$.
Using Proposition~\ref{P:coercive} in conjunction with \eqref{E:wee mass}, we observe that in the focusing case, $v_n$ satisfies
\begin{align}
\label{E:mass vn}
  \| v_n(0) \|_{L^2_x}^2 =: M(v_n(0)) & \leq 2E_c < M(Q),
\end{align}
and that in both the focusing and defocusing cases, we have
\begin{align}
\label{E:H1 vn}
\|v_n(0)\|_{H^1_x}^2 & \lesssim E(v_n) \leq E_c.
\end{align}

As it is bounded in $H^1_x$, we may apply Theorem~\ref{T:lpd} to the sequence $v_n(0)$ to obtain a linear profile decomposition
\begin{equation} \label{E:ps lpd}
v_n(0) = \sum_{j=1}^J \phi_n^j + w_n^J, \quad 1 \leq J < J_0,
\end{equation}
where
\begin{equation} \label{E:phinj}
\phi_n^j = T_{x_n^j} e^{it_n^j \jpn} \W_{\nu_n^j} D_{\lambda_n^j} P_n^j \phi^j.
\end{equation}
Note that $J_0 > 1$, for otherwise, \eqref{E:infinte S} would be inconsistent with \eqref{E:lpd ST norms to zero}.  Passing to a further subsequence,
we may assume that $M(\phi_n^j)$ and $E(\phi_n^j)$ converge for each $1\leq j <J_0$.

From Proposition~\ref{P:nrg decoup}, we also know that the energy decouples:
\begin{equation} \label{E:bound nrg sum}
\lim_{n \to \infty} \sum_{j=1}^J E(\phi_n^j) + E(w_n^J) = \lim_{n \to \infty} E(v_n) = E_c,
\end{equation}
for each $1 \leq J < J_0$.

\begin{lemma} \label{L:setup 2 cases}
After passing to a subsequence, one of the following scenarios occurs:
{\rm\textbf{Case I.}} There is only a single profile and it satisfies
\begin{equation} \label{E:one profile}
\lim_{n \to \infty} E(\phi_n^1) = E_c.
\end{equation}
{\rm\textbf{Case II.}} There exists $\delta > 0$ such that for every $1 \leq j < J_0$,
\begin{equation} \label{E:small profiles}
\lim_{n \to \infty} E(\phi_n^j) < E_c - \delta.
\end{equation}
Irrespective of the above, in the focusing case we also have that for each $j$ and $J$,
\begin{align}\label{E:bound single mass}
 M( \phi_n^j) < M(Q) \qtq{and} M(w_n^J)<M(Q)
\end{align}
when $n$ is sufficiently large $($possibly depending on $j$, $J)$.
\end{lemma}

\begin{proof}
We begin with \eqref{E:bound single mass}.  By \eqref{E:lpd wk lim} and a simple inductive argument (as in the proof of \eqref{E:lpd H1 decoup}), we see that the mass decouples, that is,
$$
\lim_{n \to \infty} \Bigl\{  M(v_n(0)) - \sum_{j=1}^J M(\phi_n^j) - M(w_n^J) \Bigr\} =0 \qtq{for each} 1 \leq J < J_0.
$$
As masses are non-negative, this and \eqref{E:mass vn} imply the validity of \eqref{E:bound single mass}.

With \eqref{E:bound single mass} proved, \eqref{E:coercive E} implies that the summands in \eqref{E:bound nrg sum} are positive for $n$ sufficiently large (depending on $J$)
in the focusing case; in the defocusing case this is manifestly true.  This positivity allows us to deduce that if \eqref{E:one profile} fails then \eqref{E:small profiles} must hold.  For $j=1$, this is obvious.  For $j\geq 2$
we may set $\delta = \lim_{n\to\infty} E(\phi_n^1)$, which is positive since $\phi^1\neq 0$, by construction.
\end{proof}

Now we move to the consideration of the two scenarios described in Lemma~\ref{L:setup 2 cases}.

\medskip

{\bf Case I:}  Assume that \eqref{E:one profile} occurs.  Then by \eqref{E:bound nrg sum}, together with \eqref{E:coercive E} invoked for $w_n^J$, we have that
\begin{equation} \label{E:wn1 to 0}
v_n - \phi_n^1 = w_n^1 \to 0 \qtq{in $H^1_x$.}
\end{equation}
Our analysis now breaks into three sub-cases.

\medskip

{\bf Case IA:}  Suppose that $\lambda_n^1 \to \infty$.  We will apply Theorem~\ref{T:embed}, but in the focusing case we must first verify the following:

\begin{lemma} \label{L:check embed hypothesis}
Assume that we are in the focusing case.  Let $1 \leq j < J_0$ and assume that $\lim_{n \to \infty} \lambda_n^j = \infty$.  Then
$$
M(\phi^j) < M(Q).
$$
\end{lemma}

\begin{proof}
By \eqref{E:bound nrg sum} and Lemma~\ref{L:boost action}, we have
\begin{align*}
2E_c &\geq \lim_{n \to \infty} 2E(\phi_n^j) = \lim_{n \to \infty} \|T_{x_n^j}e^{it_n^j\jpn} \W_{\nu_n^j}D_{\lambda_n^j}P_n^j \phi^j\|_{H^1_x}^2\\
&= \lim_{n \to \infty} \int  \jp{\xi/\lambda_n^j} \bigl\langle\ell_{-\nu_n^j}(\xi/\lambda_n^j)\bigr\rangle |P_{\leq (\lambda_n^j)^{\theta}} \widehat{\phi}^j(\xi)|^2\, d\xi
    = \jp{\nu_{\infty}^j}\|\phi^j\|_{L^2_x}^2.
\end{align*}
Since $2E_c < 2E(Q) = M(Q)$, the lemma is proved.
\end{proof}

Thus by Theorem~\ref{T:embed}, for $n$ sufficiently large, the solution $v_n^1$ to \eqref{nlkg1st} with initial data $v_n^1(0) = \phi_n^1$
is global and satisfies $S_{\R}(v_n^1) \lesssim_{E_c} 1$.  We may now use \eqref{E:wn1 to 0} and Proposition~\ref{P:stability} to conclude that $S_{\R}(v_n) < \infty$, a contradiction.

\medskip

We must therefore have that $\lambda_n^1 \equiv 1$.  This implies that $\phi^1 \in H^1_x$,
$$
\phi_n^1 = T_{x_n^1}e^{it_n^1\jpn}\phi^1,
$$
and either $t_n^1 \to \pm \infty$ or $t_n^1 \equiv 0$.

\medskip

{\bf Case IB:}  Suppose that $\lambda_n^1\equiv 1$ and  $t_n^1 \to \pm\infty$.  We treat the case $t_n^1 \to -\infty$, the other case being similar.
By the Strichartz inequality, $e^{-it\jpn}\phi^1 \in L^4_{t,x}(\R \times \R^2)$, and so
$$
\|e^{-i(t-t_n^1)\jpn}  \phi^1\|_{L^4_{t,x}([0,\infty) \times \R^2)} \to 0 \quad \text{as $n\to\infty$.}
$$
Hence by Proposition~\ref{P:lwp}, if $v_n^1$ is the solution to \eqref{nlkg1st} with initial data $v_n^1(0) = \phi_n^1$,
then for $n$ sufficiently large, $S_{\geq 0}(v_n^1) < \infty$.  As in Case IA, we can now use Proposition~\ref{P:stability}
to conclude that for $n$ large, $S_{\geq 0}(v_n)<\infty$, a contradiction.

\medskip

{\bf Case IC:}  If $\lambda_n^1\equiv 1$ and $t_n^1 \equiv 0$, we have reduced the linear profile decomposition \eqref{E:lpd} to
$$
T_{-x_n^1}v_n(0) = \phi^1 + T_{-x_n^1}w_n^1.
$$
Combining this with \eqref{E:wn1 to 0}, we have proved the proposition when \eqref{E:one profile} holds.

\medskip

{\bf Case II:}  We will show that this is inconsistent with \eqref{E:infinte S} by using \eqref{E:ps lpd} to produce a nonlinear profile decomposition of the $v_n$ and then
applying the stability theory.  We begin by introducing nonlinear profiles $v_n^j$; their definition depends on the behavior of~$\lambda_n^j$.

First assume that $j$ is such that $\lambda_n^j \equiv 1$.  Then $\phi^j \in H^1_x$ and
$$
\phi_n^j = T_{x_n^j}e^{it_n^j\jpn}\phi^j.
$$
If, in addition, $t_n^j \equiv 0$, then we let $v^j$ be the maximal-lifespan solution to \eqref{nlkg1st} with $v^j(0) = \phi^j$.  If $t_n^j \to -\infty$ (respectively $t_n^j \to \infty$),
then we let $v^j$ be the maximal-lifespan solution to \eqref{nlkg1st} which scatters forward (respectively backward) in time to $e^{-it\jpn}\phi^j$.

\begin{lemma} \label{L:vj global}
In Case~II, if $\lambda_n^j \equiv 1$ for some $j$, then $v^j$ defined as above is global.
\end{lemma}

\begin{proof}
This follows from Corollary~\ref{Cor:gwp}.  In the focusing case however, we must first establish
$$
M(\Re \phi^j) < M(Q) \qtq{and} E(\phi^j) < E(Q).
$$
Since $M(\phi^j) = M(\phi_n^j)$, the first inequality is immediate from \eqref{E:bound single mass}.  We turn to the energy bound.  If $t_n^j \equiv 0$, then $E(\phi^j) \equiv E(\phi_n^j)$,
and we are done.  If $t_n^j \to \pm \infty$, then by using the dispersive estimate \eqref{E:Dispersive} and approximating $\phi^j$ in $H^1_x$ by Schwartz functions, we see that
$$
\lim_{n \to \infty} E(\phi_n^j) = \lim_{n \to \infty} \tfrac12\|\phi_n^j\|_{H^1_x}^2 = \tfrac12 \|\phi^j\|_{H^1_x}^2 \geq E(\phi^j),
$$
and hence by Lemma~\ref{L:setup 2 cases}, $E(\phi^j) < E_c < E(Q)$ in this case as well.
\end{proof}

Thus if $\lambda_n^j \equiv 1$, we may define nonlinear profiles by
$$
v_n^j(t,x) := v^j(t-t_n^j,x-x_n^j).
$$

Next, suppose that $\lim_{n \to \infty}\lambda_n^j = \infty$.  Then by Theorem~\ref{T:embed} (and Lemma~\ref{L:check embed hypothesis} in the focusing case), for $n$ sufficiently large
we may define $v_n^j$ to be the solution to \eqref{nlkg1st} with initial data $v_n^j(0) = \phi_n^j$.

\begin{lemma} \label{L:nl profiles nice}
In Case~II, for each $j$ $($regardless of the behavior of the $\lambda_n^j)$ we have
\begin{align}
\label{E:same energy}
\lim_{n \to \infty} E(v_n^j) &= \lim_{n \to \infty} E(\phi_n^j),  \\
\label{E:bound mass vnj}
\lim_{n \to \infty} M(v_n^j(0)) &< M(Q) \text{ in the focusing case, and }\\
\label{E:bounded ST norm}
\lim_{n \to \infty} S_{\R}(v_n^j) &\lesssim \lim_{n \to \infty} E(v_n^j)^2.
\end{align}
Furthermore, for each $j$ and $\eps > 0$, there exists $\psi=\psi_{\eps} \in C^{\infty}_c(\R \times \R^2)$ and $N_{j,\eps}$ such that if $\psi_n^j$ is defined as in
\eqref{E:nl profile approx} and $n > N_{j,\eps}$, then we have
\begin{equation} \label{E:psi approximates v}
\|\Re(\psi_n^j - v_n^j)\|_{L^4_{t,x}(\R \times \R^2)} < \eps.
\end{equation}
\end{lemma}

\begin{proof}
Equality \eqref{E:same energy} is a tautology if $\lambda_n^j \to \infty$ or $\lambda_n^j \equiv 1$ and $t_n^j \equiv 0$, since in these cases $v_n^j(0) = \phi_n^j$.
If $\lambda_n^j \equiv 1$ and $t_n^j \to \pm\infty$, then by the definition of $v_n^j$ and \eqref{E:scattering nrg}, we have
$$
E(v_n^j) = E(v^j) = \tfrac12 \|\phi^j\|_{H^1_x}^2 = \lim_{n \to \infty} E(\phi_n^j),
$$
where for the last equality, we have used the dispersive estimate as in the proof of Lemma~\ref{L:vj global}.

Inequality \eqref{E:bound mass vnj} follows easily from \eqref{E:bound single mass} and the definition of~$v_n^j$.

When $\lim_{n\to\infty} E(\phi_n^j)$ is below the small data threshold, \eqref{E:bounded ST norm} follows from Proposition~\ref{P:lwp}.  Note that in the focusing case,
\eqref{E:bound single mass} and \eqref{E:coercive E} imply that the energy controls the $H^1_x$ norm.  On the other hand, by \eqref{E:bound nrg sum} the limiting energy
can only exceed this threshold for finitely many values of $j$.  For these cases, we invoke \eqref{E:small profiles} and the definition of $E_c$.  As we are invoking the
contradiction hypothesis here, there is no hope of being explicit about the constant in \eqref{E:bounded ST norm} other than that it is independent of $j$.

As for \eqref{E:psi approximates v}, in the case $\lambda_n^j \equiv 1$, this follows from the fact that $v_n^j$ is just a translate of $v^j \in L^4_{t,x}(\R \times \R^2)$.
In the case $\lambda_n^j \to \infty$, this approximation follows from Theorem~\ref{T:embed}.
\end{proof}

For $1 \leq J < J_0$, we let
$$
V_n^J(t) := \sum_{j=1}^J v_n^j(t) + e^{-it\jpn}w_n^J,
$$
which is defined globally for $n$ sufficiently large (depending on $J$).  Our immediate goal is to use Proposition~\ref{P:stability} to show
that $V_n^J(t)$ is a good approximation to $v_n(t)$ when $n$ and $J$ are sufficiently large.

\begin{lemma} \label{L:nlpd stability setup}
We have the following spacetime bounds on $V_n^J$
\begin{equation} \label{E:uniform st bounds}
\limsup_{J \to \infty} \limsup_{n \to \infty}\bigl\{ \|\Re V_n^J\|_{L^4_{t,x}} + \|V_n^J\|_{L^{\infty}_t H^{1/2}_x} \bigr\} < \infty.
\end{equation}
The $V_n^J$ are approximate solutions to \eqref{nlkg1st} in the sense that
$$
(-i\partial_t +\jpn)V_n^J + \mu \jpn^{-1}(\Re V_n^J)^3 =  E_n^J,
$$
where
\begin{equation} \label{E:nearly nlkg}
\lim_{J \to \infty} \limsup_{n \to \infty} \|\jpn E_n^J\|_{L^{4/3}_{t,x}} = 0.
\end{equation}
Furthermore, for each $J$ we have
\begin{equation} \label{E:initial H1 ok}
\lim_{n \to \infty} \| v_n(0) - V_n^J(0)\|_{H^1_x(\R^2)} = 0.
\end{equation}
\end{lemma}

\begin{proof}

We begin with \eqref{E:initial H1 ok}.  By the triangle inequality and the definitions,
\begin{equation*}
\lim_{n\to\infty} \| v_n(0) - V_n^J(0)\|_{H^1_x(\R^2)} \leq \lim_{n\to\infty}  \sum_{j=1}^J\|v_n^j(0) - \phi_n^j\|_{H^1_x(\R^2)} =0.
\end{equation*}
To see that the limit vanishes, we note that each of the summands is identically zero, except in the case when $\lambda_n^j \equiv 1$ and $|t_n^j| \to \infty$.  However, even in this case,
the difference tends to zero in $H^1_x(\R^2)$ by construction.

As a preliminary to the main part of the proof, we note that combining \eqref{E:bounded ST norm} and \eqref{E:bound nrg sum} yields
\begin{equation}\label{E:the dude}
\limsup_{J \to \infty}\limsup_{n \to \infty} \sum_{j=1}^J \|\Re v_n^j\|_{L^4_{t,x}}^2 \lesssim \lim_{J \to \infty}\lim_{n \to \infty} \sum_{j=1}^J E(v_n^j) \leq E_c.
\end{equation}

We now bound the $L^4_{t,x}$ term in \eqref{E:uniform st bounds}.  By \eqref{E:psi approximates v} and Proposition~\ref{P:nonlinear decoup}, the nonlinear profiles decouple
in the sense that whenever $j \neq j'$, we have
\begin{equation} \label{E:vs decoup}
\lim_{n \to \infty} \|\Re v_n^j \Re v_n^{j'}\|_{L^2_{t,x}(\R \times \R^2)} = 0.
\end{equation}
Combining this with \eqref{E:lpd ST norms to zero} and then using \eqref{E:the dude} shows
\begin{align}\label{E:uniform A}
&\limsup_{J \to \infty}\limsup_{n \to \infty} \|\Re V_n^J\|_{L^4_{t,x}}^4
=\limsup_{J \to \infty}\limsup_{n \to \infty} \sum_{j=1}^J \|\Re v_n^j\|_{L^4_{t,x}}^4
    \lesssim E_c^2.
\end{align}

Next, we prove \eqref{E:nearly nlkg}.  A simple computation shows that
$$
E_n^J(t) = \mu\jpn^{-1}\biggl\{\Re \sum_{j=1}^J v_n^j(t) + \Re e^{-it\jpn}w_n^J\biggr\}^3 - \mu \jpn^{-1}\sum_{j=1}^J (\Re v_n^j)^3,
$$
and so, by the triangle inequality it suffices to show
\begin{gather}
\label{E:error from w}
\lim_{J \to \infty} \limsup_{n \to \infty} \ \biggl\|\biggl(\sum_{j=1}^J \Re v_n^j + e^{-it\jpn}w_n^J\biggr)^3
    - \biggl(\sum_{j=1}^J \Re v_n^j\biggr)^3\biggr\|_{L^{4/3}_{t,x}(\R \times \R^2)}   = 0 \\
\label{E:error from parentheses}
\text{and} \qquad \lim_{J \to \infty} \limsup_{n \to \infty} \ \biggl\|\biggl(\sum_{j=1}^J \Re v_n^j\biggr)^3 - \sum_{j=1}^J \bigl(\Re v_n^j\bigr)^3\biggr\|_{L^{4/3}_{t,x}(\R \times \R^2)} = 0.
\end{gather}
We observe that
$$
\biggl|\biggl(\sum_{j=1}^J \Re v_n^j + e^{-it\jpn}w_n^J\biggr)^3 - \biggl(\sum_{j=1}^J \Re v_n^j\biggr)^3\biggr|
    \lesssim \bigl|e^{-it\jpn}w_n^J\bigr|^3 + \bigl|e^{-it\jpn}w_n^J\bigr| \biggl|\sum_{j=1}^J \Re v_n^j\biggr|^2,
$$
and so \eqref{E:error from w} follows from H\"older's inequality, \eqref{E:lpd ST norms to zero}, \eqref{E:the dude}, and \eqref{E:vs decoup}.  As
$$
\biggl|\biggl(\sum_{j=1}^J \Re v_n^j\biggr)^3 - \sum_{j=1}^J \bigl(\Re v_n^j\bigr)^3\biggr| \lesssim \mathop{\sum_{1 \leq j_1,j_2,j_3 \leq J}}_{j_1 \neq j_3} \bigl|\Re v_n^{j_1} \Re v_n^{j_2} \Re v_n^{j_3}\bigr|,
$$
we can use H\"older's inequality together with \eqref{E:vs decoup} and \eqref{E:bounded ST norm} to see that \eqref{E:error from parentheses} is true, even without sending $J\to\infty$.

Finally, we complete the proof of \eqref{E:uniform st bounds} by bounding the $L^{\infty}_tH^{1/2}_x$ norm.  By the Strichartz inequality, \eqref{E:initial H1 ok},
and then \eqref{E:H1 vn}, \eqref{E:uniform A}, and \eqref{E:nearly nlkg},
\begin{align*}
\limsup_{J \to \infty} & \limsup_{n \to \infty} \|V_n^J\|_{L^{\infty}_tH^{1/2}_x} \\
    &\lesssim \limsup_{J \to \infty} \limsup_{n \to \infty} \Bigl\{\|v_n(0)\|_{H^{1}_x} + \|\Re V_n^J \|_{L^{4}_{t,x}}^3 + \|\jpn E_n^J\|_{L^{4/3}_{t,x}} \Bigr\} < \infty.
\end{align*}
This completes the proof of \eqref{E:uniform st bounds} and so also the lemma.
\end{proof}

By Lemma~\ref{L:nlpd stability setup}, we may apply Proposition~\ref{P:stability} to conclude that in Case II, $v_n$ is defined globally and $S_{\R}(v_n) \lesssim_{E_c} 1$ for
$n$ sufficiently large.  This contradicts \eqref{E:infinte S} and so Case~II cannot occur.  Tracing back, we see that the only possibility is Case~IC, and
so Proposition~\ref{P:palais smale} is proved.
\end{proof}

Now we prove the existence of a minimal-energy, almost periodic blowup solution to \eqref{nlkg}.

\begin{proof}[Proof of Theorem~\ref{T:reduct}]
By the definition of the critical energy and Corollary~\ref{Cor:gwp}, there exists a sequence $u_n:\R \times \R^2 \to \R$ of global solutions to \eqref{nlkg}
with $E(u_n) \leq E_c$ (and $M(u_n(0)) < M(Q)$ in the focusing case), $\lim_{n \to \infty} E(u_n) = E_c$, and $\lim_{n \to \infty} S_{\R}(u_n) = \infty$.
For each $n$, we choose $t_n$ so that $S_{\leq t_n}(u_n) = S_{\geq t_n}(u_n)$.  By time-translation invariance, we may assume that $t_n \equiv 0$.  We thus have
$$
\lim_{n \to \infty} S_{\leq 0}(u_n) = \lim_{n \to \infty} S_{\geq 0}(u_n) = \infty.
$$

By Proposition~\ref{P:palais smale}, after passing to a subsequence, there exist a sequence $\{x_n\} \subset \R^2$ and a pair of functions $(u_0,u_1)$ so that
\begin{equation} \label{E:un to u0u1}
(T_{x_n}u_n(0),T_{x_n}\partial_t u_n(0)) \to (u_0,u_1), \qtq{strongly in $H^1_x \times L^2_x$.}
\end{equation}
The limit then satisfies $E(u_0,u_1)  = E_c$ (and $M(u_0) \leq 2E_c< M(Q)$ in the focusing case).  By Corollary~\ref{Cor:gwp}, there exists a global solution $u:\R \times \R^2 \to \R$
to \eqref{nlkg} with initial data $u(0) = u_0$ and $\partial_t u(0) = u_1$, satisfying
\begin{equation} \label{E:H1 bounds}
\|u\|_{L^{\infty}_t H^1_x} \lesssim E(u_0,u_1).
\end{equation}

We will show that this solution $u$ satisfies the conclusions of the theorem; it remains to be seen that $u$ blows up forward and backward in time and is almost periodic modulo translations.
If $S_{\geq 0}(u) < \infty$, then by \eqref{E:un to u0u1} and \eqref{E:H1 bounds}, we may apply Proposition~\ref{P:stability} to conclude that
$$
\lim_{n \to \infty} S_{\geq 0} (u_n) < \infty,
$$
a contradiction.  Therefore $u$ must blow up forward in time, and by a similar argument, $u$ must blow up backward in time as well.

Finally, for almost periodicity modulo translations, we observe that if $\{t_n'\} \subset \R$ is any sequence, we have
$$
S_{\geq 0} (u(\cdot + t_n')) \equiv S_{\leq 0} (u(\cdot + t_n')) \equiv \infty
$$
and so by Proposition~\ref{P:palais smale}, a subsequence of $(u(t_n'),\partial_t u(t_n'))$  converges in $H^1_x \times L^2_x$ modulo translations.
Thus, the orbit $\{(u(t),\partial_t u(t)): t \in \R\}$ is precompact modulo translations.  By the Arzel\`a--Ascoli Theorem, this
is equivalent to $u$ being almost periodic modulo translations in the sense of Definition~\ref{D:apmt}.  This completes the proof of the theorem.
\end{proof}


\section{Death of a soliton}


In this section, we will preclude the soliton-like solution, thus concluding the proof of Theorem~\ref{T:ST bounds}.  More precisely, we will prove

\begin{theorem}[No soliton]  \label{T:kill}
There are no minimal-energy blowup solutions to \eqref{nlkg} that are soliton-like in the sense of Theorem~\ref{T:reduct}.
\end{theorem}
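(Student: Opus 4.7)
The plan is to argue by contradiction: assume $u$ is a minimal-energy soliton-like solution with spatial center $x(t)$ and compactness modulus $C(\eta)$, and derive a contradiction in three stages.

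\smallskip

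\textbf{Step 1 (Zero momentum).} First I would show $P(u) = 0$. If not, then by the Einstein relation \eqref{E:Einstein} of Corollary~\ref{C:boostable}, $\sqrt{E(u)^2 - |P(u)|^2} < E(u) = E_c$, so Corollary~\ref{C:rest mass} produces a Lorentz-boosted global strong solution $u^\nu = u \circ L_\nu$ with $P(u^\nu) = 0$, $E(u^\nu) < E_c$, and (in the focusing case) $M(u^\nu(0)) < M(Q)$. But Lorentz boosts preserve the spacetime volume form and hence the $L^4_{t,x}$ norm, so $S_\R(u^\nu) = S_\R(u) = \infty$, contradicting the definition of $E_c$ as the threshold below which global spacetime bounds hold.

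\smallskip

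\textbf{Step 2 (Spatial center).} Next I would prove $|x(t)|/|t| \to 0$ as $|t| \to \infty$ (Lemma~\ref{L:x(t)}). The heuristic is that the momentum is the mean velocity of the wave packet, so $P(u) = 0$ rules out linear drift. Concretely one studies a truncated first moment of the momentum density, e.g.\ $\int \phi(x/R)(x-x(0)) \cdot u_t \nabla u\,dx$; differentiating and integrating by parts gives the total momentum $P(u) = 0$ modulo errors controlled by the tightness estimates \eqref{E:apmt u}--\eqref{E:apmt uhat2}, and the concentration of $u$ near $x(t)$ transfers this into the desired sublinear bound on $x(t)$.

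\smallskip

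\textbf{Step 3 (Virial contradiction).} Finally I would close via a truncated virial identity. With $\phi \in C_c^\infty(\R^2)$ equal to $1$ on $\{|x|\le 1\}$ and $R$ large, set
\begin{equation*}
V_R(t) := \int_{\R^2} \phi\bigl(\tfrac{x-x(t)}{R}\bigr)\,(x-x(t)) \cdot \bigl(-u_t\nabla u\bigr)(t,x)\,dx.
\end{equation*}
Cauchy--Schwarz and the uniform bound \eqref{E:coercive E} give $|V_R(t)| \lesssim R\,E_c$. Differentiating and using the conservation law $\partial_\mu \mathcal{T}^{\mu\nu} = 0$ for the stress-energy tensor \eqref{E:stressE}, with an appropriate correction term absorbing the $+u$ contribution, produces after integration by parts a leading term coercive in the total energy, supported where $\phi(\tfrac{x-x(t)}{R}) \approx 1$, plus annular errors on $|x-x(t)| \sim R$ and a drift contribution proportional to $\dot x(t)$. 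The annular errors vanish as $R$ exceeds $C(\eta)$, by \eqref{E:apmt u}--\eqref{E:apmt uhat2}, while the drift term is handled via Step~1. The upshot is a lower bound $V_R'(t) \gtrsim c\,E_c$ on a set of nearly full measure in $[0,T]$, yielding
\begin{equation*}
C\,R\,E_c \;\ge\; V_R(T) - V_R(0) \;=\; \int_0^T V_R'(t)\,dt \;\gtrsim\; c\,E_c\,T.
\end{equation*}
By Step~2 one may choose $R$ to grow sublinearly in $T$ (say $R = \sqrt{T}$) and send $T \to \infty$, producing the desired contradiction.

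\smallskip

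The main obstacle is the construction of the virial multiplier in Step~3. In two space dimensions the naive choice $x\cdot\nabla u$ yields a derivative with no definite sign, since the gradient contribution $\tfrac{d-2}{2}\int|\nabla u|^2$ vanishes identically, and the mass term $+u$ in \eqref{nlkg} breaks exact scale invariance. One must therefore combine the moment of the momentum density with a carefully chosen correction (typically of the form $\int \tilde\phi_R\, u\, u_t$) so that the $|u|^2$, $|\nabla u|^2$, and $|u_t|^2$ contributions assemble into a non-negative quadratic form bounded below by a multiple of the total energy, modulo errors absorbed by the almost periodicity of the orbit.
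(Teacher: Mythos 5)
Your three-step strategy is exactly the paper's: zero momentum via Lorentz boosting and minimality (Corollary~\ref{C:rest mass}, Remark~\ref{R:3:10}, and the fact that boosts preserve the $L^4_{t,x}$ norm), then $|x(t)|=o(t)$, then a truncated virial identity with a correction term of the form $\int u\,u_t$ to compensate for the vanishing of $\tfrac{d-2}{2}\int|\nabla u|^2$ in two dimensions. Two concrete slips remain, both fixable.

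First, in Step~2 the displayed functional is wrong for the stated purpose. You write down the truncated first moment of the \emph{momentum} density, $\int\phi(x/R)(x-x(0))\cdot u_t\nabla u\,dx$, and assert that its time derivative is $P(u)$ up to errors. It is not: differentiating that quantity produces virial-type terms (it is essentially your $V_R$ from Step~3). The object whose time derivative equals the momentum density, by $\partial_t\mathcal T^{00}+\partial_j\mathcal T^{0j}=0$, is the truncated first moment of the \emph{energy} density, $X_R(t)=\int x\,\phi(|x|/R)\,e_u(t,x)\,dx$; one shows $|\partial_t X_R|\lesssim\eta$ using $P(u)=0$ and the concentration estimate, while almost periodicity forces $X_R(t)\approx x(t)E(u)$, and comparing the two yields $|x(t)|=o(t)$. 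As literally written, your Step~2 does not close.

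Second, in Step~3 you center the cutoff at $x(t)$, which produces a term involving $\dot x(t)$ when you differentiate. Neither $P(u)=0$ nor $|x(t)|=o(t)$ controls $\dot x(t)$, and indeed $x(t)$ need not be differentiable, so "handled via Step~1" is not an argument. The remedy is the one the paper uses: keep the cutoff centered at the origin and take $R=C(\eta_1)+\sup_{t\in[T_0,T_1]}|x(t)|$, which by Step~2 is $\lesssim C(\eta_1)+\eta_1 T_1$ and hence still sublinear, exactly as your final inequality requires. Relatedly, the coercivity you assert for $\partial_t V_R$ needs two quantitative inputs you should make explicit: the sharp Gagliardo--Nirenberg inequality together with $M(u(t))<M(Q)$ to keep the quartic term from destroying positivity in the focusing case, and the absence of low-frequency concentration \eqref{E:apmt uhat2} to absorb the $-\eps\int|u|^2$ contribution generated by the $\int u\,u_t$ correction into $\eps\int|\nabla u|^2$.
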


To prove this theorem, we will argue by contradiction.  Let $u:\R\times\R^2\to \R$ be a soliton-like solution, that is, a minimal-energy blowup solution that is almost periodic modulo
translations (and satisfies $M(u(0))<M(Q)$ in the focusing case).  Then, invoking \eqref{E:coercive E} in the focusing case,
\begin{align}\label{E controls H1}
\|u\|_{L_t^\infty H^1_x}^2 + \|u_t\|_{L_t^\infty L_x^2}^2 \leq 4E(u).
\end{align}
By Corollary~\ref{C:rest mass}, Remark~\ref{R:3:10}, and the minimality of $u$ as a blowup solution, we must have that the momentum of $u$ is zero:
\begin{equation} \label{E:P=0}
P(u) = 0.
\end{equation}

Our next step will be to use \eqref{E:P=0} to control the motion of $x(t)$, which we do in the manner of \cite{DHR,KenigMerle,Berbec}.

\begin{lemma}[Controlling $x(t)$] \label{L:x(t)}
The spatial center function of $u$ satisfies $|x(t)| = o(t)$ as $|t| \to \infty$.
\end{lemma}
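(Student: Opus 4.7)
The plan is to argue by contradiction using a truncated center of energy, in the spirit of \cite{DHR,KenigMerle,Berbec}. Suppose to the contrary that there exist $\delta>0$ and times $t_n\to+\infty$ with $|x(t_n)|\geq\delta t_n$; the case $t_n\to-\infty$ is analogous. By replacing each $t_n$ with a time in $[0,t_n]$ at which $|x(\cdot)|$ nearly attains its supremum on that interval, I may arrange
$$
\sup_{0\leq s\leq t_n}|x(s)|\leq 2|x(t_n)|, \qquad t_n\to\infty, \qquad |x(t_n)|/t_n\geq\delta/2.
$$

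Fix a smooth cutoff $\phi:\R^2\to[0,1]$ with $\phi\equiv 1$ on $|x|\leq 1$ and $\phi\equiv 0$ on $|x|\geq 2$, and for $R>0$ introduce the truncated center of energy
$$
X_R^k(t):=\int_{\R^2}\phi(x/R)\,x_k\,\mathcal{T}^{00}(t,x)\,dx, \qquad k=1,2,
$$
where $\mathcal{T}^{\alpha\beta}$ is the stress-energy tensor from \eqref{E:stressE}. Its divergence-free property yields, after integration by parts,
$$
\partial_tX_R^k(t)=-\int\phi(x/R)\,u_t\,\partial_ku\,dx-\int\tfrac{x_k}{R}(\partial_j\phi)(x/R)\,u_t\,\partial_ju\,dx.
$$
Using $P(u)=0$ (see \eqref{E:P=0}), the first integral reduces to $\int(1-\phi(x/R))u_t\partial_ku\,dx$, and both remaining integrands are supported on $|x|\geq R$. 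Whenever $R\geq C(\eta)+|x(t)|$, one has the inclusion $\{|x|\geq R\}\subseteq\{|x-x(t)|\geq C(\eta)\}$, so combining the pointwise bound $|u_t\,\nabla u|\leq\tfrac12(u_t^2+|\nabla u|^2)$ with \eqref{E:apmt u} yields $|\partial_tX_R^k(t)|\lesssim\eta$.

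Set $R_n:=C(\eta)+2|x(t_n)|$. By the reduction above, $R_n\geq C(\eta)+|x(t)|$ for every $t\in[0,t_n]$, so integrating produces $|X_{R_n}^k(t_n)-X_{R_n}^k(0)|\lesssim\eta\,t_n$. Independently, splitting the defining integral of $X_{R_n}^k(t)$ according to whether $|x-x(t)|\leq C(\eta)$, and using $\int\mathcal{T}^{00}\,dx=E(u)$, $\int|\mathcal{T}^{00}|\,dx\lesssim E(u)$ (via the sharp Gagliardo--Nirenberg inequality together with \eqref{E controls H1}), and Remark~\ref{R:conc E},
$$
X_{R_n}^k(t)=E(u)\,x(t)_k+O\bigl(C(\eta)E(u)+\eta R_n\bigr)\quad\text{for }t\in[0,t_n].
$$
Combining the two estimates and using $\eta R_n\lesssim\eta C(\eta)+\eta|x(t_n)|$,
$$
E(u)|x(t_n)|\leq E(u)|x(0)|+C\eta\,t_n+C(\eta)E(u)+C\eta|x(t_n)|.
$$
For $\eta$ small enough that $C\eta\leq E(u)/2$, the last term is absorbed on the left; dividing by $t_n$, letting $n\to\infty$, and invoking $|x(t_n)|/t_n\geq\delta/2$ gives $E(u)\delta/4\leq C\eta$, contradicting the arbitrariness of $\eta$.

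The main obstacle is the tension in the choice of $R$: it must be large enough that the almost-periodicity tail bound renders $|\partial_tX_R|$ small uniformly on $[0,t_n]$, yet the identification error $\eta R$ in $X_R(t)\approx E(u)x(t)$ must remain dominated by the linear-in-$t_n$ target $E(u)|x(t_n)|$. This is precisely what necessitates the preliminary reduction to a near-supremum time, which ensures $R_n\sim|x(t_n)|$ so that the $\eta|x(t_n)|$ error can be absorbed. A minor technical point is the justification of the divergence computation for $(u,u_t)$ of only $H^1_x\times L^2_x$ regularity; this is handled by standard mollification of the stress-energy tensor in space and time, exactly as in the proof of Corollary~\ref{C:boostable}.
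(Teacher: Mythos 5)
Your proposal is correct and follows essentially the same route as the paper: a contradiction argument via a truncated center-of-energy $X_R(t)=\int \phi(x/R)\,x\,e_u(t,x)\,dx$, with the zero-momentum condition \eqref{E:P=0} forcing $|\partial_t X_R|\lesssim\eta$ on $[0,t_n]$ once $R$ is chosen comparable to $C(\eta)+\sup_{[0,t_n]}|x(\cdot)|$, played against the identification $X_R(t)\approx E(u)x(t)$ up to errors $O(C(\eta)E(u)+\eta R)$. The only cosmetic differences are your use of a near-supremum time (versus the paper's exact supremum) and writing the two-sided identification as an equality rather than a lower bound on the displacement $|X_{R_n}(t_n)-X_{R_n}(0)|$; the estimates and the final absorption of the $\eta|x(t_n)|$ error are the same.
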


\begin{proof}
By spatial-translation invariance, we may assume that $x(0) = 0$.  We argue by contradiction.  If the conclusion of the lemma did not hold, then there would exist $\delta > 0$
and a sequence $t_n \to \pm \infty$ such that
$$
|x(t_n)| > \delta|t_n|.
$$
Without loss of generality, we may assume that $t_n \to \infty$ and that
$$
 |x(t)| \leq |x(t_n)| \qtq{for all} 0\leq t \leq t_n.
$$

Now let $\eta>0$ be a small constant to be chosen later.  By Remark~\ref{R:conc E}, there exists $C(\eta)>0$ such that
\begin{align}\label{E:conc E}
\sup_{t\in \R} \int_{|x-x(t)|>C(\eta)} |u(t,x)|^2 + |\nabla u(t,x)|^2 + |u_t(t,x) |^2 + |u(t,x)|^4\, dx \leq \eta.
\end{align}
We define
$$
R_n := C(\eta) + |x(t_n)|.
$$
Finally, let $\phi$ be a smooth function with $\phi(r)=1$ for $r \leq 1$ and $\phi(r) = 0$ for $r \geq 2$ and define an approximation to $x(t)$ by
$$
X_{R_n}(t) := \int_{\R^2} x \phi(\tfrac{|x|}{R_n})e_u(t,x)\, dx,
$$
where $e_u$ denotes the energy density of $u$:
\begin{equation*}
e_u := \tfrac12 |u|^2 + \tfrac12|\nabla u|^2 + \tfrac12|u_t |^2 + \tfrac{\mu}4 |u|^4.
\end{equation*}

For each $n$, by the triangle inequality, \eqref{E controls H1}, and \eqref{E:conc E}
\begin{align*}
|X_{R_n}(0)| &\leq \int_{|x| \leq C(\eta)} |x||e_u(0,x)|\, dx + \int_{C(\eta)\leq |x|\leq 2R_n} |x| |e_u(0,x)|\, dx \\
&\lesssim C(\eta)E(u)+ \eta R_n.
\end{align*}
On the other hand, by the triangle inequality followed by \eqref{E:conc E} we also have
\begin{align*}
|X_{R_n}(t_n)|
&\geq |x(t_n)|E(u) - \int_{|x-x(t_n)| \leq C(\eta)} |x-x(t_n)| \phi(\tfrac{|x|}{R_n})|e_u(t_n)|\, dx\\
&\qquad - \int_{|x-x(t_n)| \geq C(\eta)} |x-x(t_n)| \phi(\tfrac{|x|}{R_n})|e_u(t_n)|\, dx\\
&\qquad -|x(t_n)| \int_{\R^2} \bigl[1-\phi(\tfrac{|x|}{R_n})\bigr]|e_u(t_n)|\, dx \\
&\geq |x(t_n)|\bigl[E(u) - 4\eta\bigr]- C(\eta)\bigl[2E(u) + 2\eta\bigr].
\end{align*}
Thus, taking $\eta$ sufficiently small compared to $E(u)$ we get
\begin{equation} \label{E:X(t)-X(0)}
\bigl|X_{R_n}(t_n) - X_{R_n}(0)\bigr| \gtrsim_{E(u)} |x(t_n)| - C(\eta).
\end{equation}

To derive a contradiction, we now seek an upper bound on the left-hand side of \eqref{E:X(t)-X(0)}.  A computation using \eqref{E:P=0} shows that
\begin{align*}
\partial_t X_{R_n}(t)&= \int_{\R^2} \Bigl[1-\phi(\tfrac{|x|}{R_n})\bigr] u_t \nabla u \, dx - \int_{\R^2} \tfrac{x}{|x|R_n}\phi'(\tfrac{|x|}{R_n}) u_t x\cdot\nabla u \, dx.
\end{align*}
Thus by \eqref{E:conc E},
\begin{equation} \label{E:X'(t)}
|\partial_tX_{R_n}(t)| \lesssim \eta.
\end{equation}

Combining \eqref{E:X(t)-X(0)} with \eqref{E:X'(t)} and the fundamental theorem of calculus, we obtain
$$
\eta t_n \gtrsim |X_{R_n}(t_n)-X_{R_n}(0)| \gtrsim_{E(u)} |x(t_n)| - C(\eta) \gtrsim_{E(u)} \delta t_n - C(\eta).
$$
Choosing $\eta$ sufficiently small (depending on $\delta$ and $E(u)$) and then choosing $n$ sufficiently large, we derive a contradiction.
\end{proof}

We are now in a position to complete the proof of Theorem~\ref{T:kill}.  We will use a virial-type argument.

Let $\eta_1>0$ and $\eta_2>0$ be small constants to be chosen later.  By Lemma~\ref{L:x(t)}, there exists $T_0=T_0(\eta_1)>0$ such that
\begin{align}\label{o(t)}
|x(t)|\leq \eta_1 t \qtq{for all} t\geq T_0.
\end{align}
By Remark~\ref{R:conc E}, there exist $C(\eta_1)>0$ and $C(\eta_2)>0$ such that
\begin{align}\label{E:conc E2}
\sup_{t\in \R} \int_{|x-x(t)|>C(\eta_1)} |u(t,x)|^2 + |\nabla u(t,x)|^2 + |u_t(t,x) |^2 + |u(t,x)|^4\, dx \leq \eta_1
\end{align}
and
\begin{align}\label{small freq small mass}
\sup_{t\in \R} \int_{|\xi|<1/C(\eta_2)} |\hat u(t,\xi)|^2 \, d\xi\leq \eta_2.
\end{align}
Using Plancherel and \eqref{small freq small mass}, we find
\begin{align}\label{FINITE mass}
\int_{\R^2} |u(t,x)|^2\, dx
&= \int_{|\xi|<1/C(\eta_2)} |\hat u(t,\xi)|^2 \, d\xi+ \int_{|\xi|\geq 1/C(\eta_2)} |\hat u(t,\xi)|^2\, d\xi \notag\\
&\leq \eta_2 + C(\eta_2)^2 \int_{\R^2} |\nabla u(t,x)|^2\, dx.
\end{align}

With $\phi$ as in the proof of Lemma~\ref{L:x(t)} and $0<\eps<1<R$ to be specified later, we define
$$
Z_{R}(t) = -\int_{\R^2}  \phi\bigl(\tfrac{|x|}{R}\bigr) u_t(t,x) x \cdot \nabla u(t,x) \, dx - (1-\eps) \int_{\R^2}u_t(t,x) u(t,x)\, dx.
$$
Note that by Cauchy--Schwarz and \eqref{E controls H1},
\begin{equation} \label{E:bound MR}
|Z_R(t)| \lesssim RE(u)\lesssim_u R.
\end{equation}

On the other hand, a computation establishes
\begin{align*}
\partial_t Z_R(t) &= \eps\bigl[ \|u(t)\|_{H^1_x}^2 +\|u_t\|_{L_x^2}^2 \bigr] + (1-2\eps) \int_{\R^2}|\nabla u(t)|^2+\tfrac{\mu}2|u(t)|^4 \, dx \\
&- 2\eps\int_{\R^2} |u(t)|^2\, dx-\int_{\R^2}\Bigl[1- \phi\bigl(\tfrac{|x|}{R}\bigr)\Bigr]\bigl[ |u_t(t)|^2- |u(t)|^2-\tfrac{\mu}2 |u(t)|^4 \bigr]\, dx\\
& +\int_{\R^2} \tfrac{|x|}{2R}\phi'\bigl(\tfrac{|x|}{R}\bigr)\bigl[ |u_t(t)|^2- |\nabla u(t)|^2-|u(t)|^2-\tfrac{\mu}2 |u(t)|^4 \bigr]\, dx\\
&+\int_{\R^2} \tfrac{1}{|x|R}\phi'\bigl(\tfrac{|x|}{R}\bigr) [x\cdot \nabla u(t)]^2\, dx.
\end{align*}
Invoking the sharp Gagliardo-Nirenberg inequality, \eqref{E:conc E2}, and \eqref{FINITE mass}, we find
\begin{align*}
|\partial_t Z_R(t)|&\geq \eps\bigl[ \|u(t)\|_{H^1_x}^2 +\|u_t\|_{L_x^2}^2 \bigr] - 2\eps \eta_2 - 10 \eta_1\\
&\quad + \Bigl\{ (1- 2\eps)\Bigl[1 + \mu \tfrac{M(u(t))}{M(Q)}\Bigr] -2\eps C(\eta_2)^2 \Bigr\} \int_{\R^2}|\nabla u(t)|^2\, dx
\end{align*}
for all $T_0\leq t\leq T_1$ and $R=C(\eta_1) + \sup_{t\in [T_0,T_1]}|x(t)|$.  Choosing $\eta_2$ small depending on $u$, then $\eps$ sufficiently small depending on $C(\eta_2)$ (and recalling
that in the focusing case we have $M(u(t))<M(Q)$), and finally $\eta_1$ small enough depending on $\eps$ and $u$, we derive
\begin{align}\label{lower bound}
|\partial_t Z_R(t)|&\gtrsim_u 1 \qtq{for} T_0\leq t\leq T_1 \text{ and } R=C(\eta_1) + \sup_{t\in [T_0,T_1]}|x(t)|.
\end{align}

Combining the fundamental theorem of calculus with \eqref{E:bound MR} and \eqref{lower bound}, and then invoking \eqref{o(t)}, we find
$$
 T_1-T_0 \lesssim_u C(\eta_1) + \eta_1 T_1 \qtq{for all} T_1>T_0.
$$
Choosing $\eta_1$ small depending on $u$ and $T_1$ sufficiently large, we derive a contradiction.

This completes the proof of Theorem~\ref{T:kill}.
\qed


\section{Finite time blowup}\label{S:blowup}


In this section we employ the method of Payne and Sattinger \cite{PayneSattinger} to prove Theorem~\ref{T:blowup}, whose statement we now repeat:

\begin{theorem}[Blowup]
Let $u$ be a maximal-lifespan solution to \eqref{nlkg} in the focusing case with initial data obeying
$$
E(u) < E(Q) \qtq{and} M(u(0)) > M(Q).
$$
Then the solution $u$ blows up in finite time in at least one time direction.
\end{theorem}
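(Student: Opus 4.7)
The plan is to implement the classical Payne--Sattinger convexity argument, which becomes essentially automatic given the machinery already assembled in Proposition~\ref{P:coercive}. Set $M(t) := \int_{\R^2}|u(t,x)|^2\, dx$. Since $E(u) < E(Q)$ and $M(u(0)) > M(Q)$, part~(ii) of that proposition applies throughout the maximal lifespan $I$, yielding both $\|\nabla u(t)\|_{L^2_x}^2 > M(Q)$ and $M(u(t)) > M(Q)$, together with the differential inequality $M''(t) > 6\int |u_t(t,x)|^2\,dx$. Using the explicit identity $M''(t) = -8E(u) + 6\int |u_t|^2 + 2\int |\nabla u|^2 + 2\int |u|^2 \, dx$ in combination with Pohozaev's identity $2E(Q) = M(Q)$ (see \eqref{E:Poh}), I obtain $M''(t) \geq 4M(Q) - 8E(u) > 0$ strictly.

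Next I would couple this with Cauchy--Schwarz applied to $M'(t) = 2\int u\,u_t\,dx$, yielding
\begin{equation*}
(M'(t))^2 \;\leq\; 4\,M(t)\int|u_t|^2\,dx \;<\; \tfrac{2}{3}\,M(t)\,M''(t).
\end{equation*}
Defining $N(t) := M(t)^{-1/2}$, a direct computation reveals
\begin{equation*}
N''(t) \;=\; \tfrac14 M(t)^{-5/2}\bigl[\,3(M'(t))^2 - 2\,M(t)\,M''(t)\,\bigr] \;<\; 0,
\end{equation*}
so $N$ is strictly concave and positive on $I$.

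The conclusion then follows from the elementary observation that a strictly concave positive function cannot exist on all of $\R$: if $N'(t_0) < 0$ for some $t_0 \in I$, then the concavity bound $N(t) \leq N(t_0) + N'(t_0)(t-t_0)$ forces $N$ (and therefore $I$) to terminate at some time no later than $t_0 - N(t_0)/N'(t_0) < \infty$; the case $N'(t_0) > 0$ produces backward finite-time termination symmetrically. Since $M''(t) > 0$, the function $M'$ is strictly increasing, so at least one of these two scenarios must occur (if $M'(0) \geq 0$, strict convexity gives $M'(t_0) > 0$ for any $t_0 > 0$ in $I$, i.e.\ $N'(t_0) < 0$; otherwise $M'(0) < 0$ gives backward blowup). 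At the finite endpoint, $M(t) \to \infty$ and hence $\|u(t)\|_{H^1_x} \to \infty$, which by the blowup alternative in Proposition~\ref{P:lwp} is exactly finite-time blowup. The argument has no genuine obstacle: all the analytic content was packed into Proposition~\ref{P:coercive}(ii), and the remainder is the standard algebraic manipulation of \cite{PayneSattinger}.
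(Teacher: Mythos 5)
Your proposal is correct and follows essentially the same route as the paper: Proposition~\ref{P:coercive}(ii) gives $M(t)>M(Q)$ and $M''(t)>6\int|u_t|^2$, Cauchy--Schwarz yields $(M')^2<\tfrac23 MM''$, and the strict concavity of $M(t)^{-1/2}$ rules out a global positive solution in at least one time direction. The only quibble is your final sentence: the concavity argument already shows the maximal interval must be finite in one direction (which is the definition of finite-time blowup here), so the additional claim that $M(t)\to\infty$ at the endpoint is unnecessary and not actually established by the argument (the $H^1_x$ norm could diverge before the mass does).
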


\begin{proof}
Let $M(t):=M(u(t))=\int |u(t,x)|^2\,dx$.  By part (ii) of Proposition~\ref{P:coercive}, we know that $M(t) > M(Q)$ (and so non-vanishing) and also that
\begin{equation*}
M''(t) > 6\int_{\R^2} |u_t(t,x)|^2\,dx.
\end{equation*}
Combining this with the Cauchy--Schwarz inequality we obtain
$$
\bigl[ M'(t) \bigr]^2 \leq 4 \biggl(\int_{\R^2} |u(t,x)|^2\,dx\biggr) \biggl(\int_{\R^2} |u_t(t,x)|^2\,dx\biggr) < \tfrac23 M(t) M''(t)
$$
and hence,
$$
\partial_{tt} \; M(t)^{-1/2}  = - \frac{2M''(t)M(t)-3M'(t)^2}{4M(t)^{5/2}} < 0.
$$
This says that $M(t)^{-1/2}$ is strictly concave, which is inconsistent with $M(t)^{-1/2}$ being a positive function defined on the whole real line.
In particular, if $M'(0)\geq 0$ then the solution must blow up in finite time in the future, while if $M'(0)\leq 0$, it must blow up in finite negative time.
\end{proof}



\begin{thebibliography}{10}
\newcommand{\msn}[1]{\href{http://www.ams.org/mathscinet-getitem?mr=#1}{\sc MR#1}}

\bibitem{AkahoriNawa}
T. Akahori and H.~Nawa, \emph{Blowup and scattering problems for the nonlinear Schr\"odinger equations.}
Preprint \texttt{arXiv:1006.1485}.

\bibitem{bahouri-gerard} H. Bahouri and P. G\'erard,
\emph{High frequency approximation of solutions to critical nonlinear wave equations.}
Amer. J. Math. \textbf{121} (1999), 131--175.
\msn{1705001}

\bibitem{BegoutVargas} P. Begout and A. Vargas,
\emph{Mass concentration phenomena for the $L^2$-critical nonlinear Schr\"odinger equation.}
Trans. Amer. Math. Soc. \textbf{359} (2007), 5257--5282.
\msn{2327030}

\bibitem{borg:scatter}
J. Bourgain, \emph{Global wellposedness of defocusing critical nonlinear Schr\"odinger equation in the radial case.}
J. Amer. Math. Soc. \textbf{12} (1999), 145--171.
\msn{1626257}

\bibitem{borg:book}
J. Bourgain, \emph{Global solutions of nonlinear Schr\"odinger equations.}
American Mathematical Society Colloquium Publications, \textbf{46}. American Mathematical Society, Providence, RI, 1999.
\msn{1691575}

\bibitem{BrezisLieb}
H.~Br\'ezis and E.~Lieb, \emph{A relation between pointwise convergence of functions and convergence of functionals.}
Proc. Amer. Math. Soc. \textbf{88} (1983), 486--490.
\msn{0699419}


\bibitem{cazenave:book}
T. Cazenave, \textit{Semilinear Schr\"odinger equations.}
Courant Lecture Notes in Mathematics, \textbf{10}. American Mathematical Society, 2003.
\msn{2002047}

\bibitem{CCT}
M.~Christ, J.~Colliander, and T.~Tao, \textit{Asymptotics, frequency modulation, and low regularity ill-posedness for canonical defocusing equations.}
Amer. J. Math. \textbf{125} (2003), 1235--1293.
\msn{2018661}

\bibitem{CKSTT:gwp}
J. Colliander, M. Keel, G. Staffilani, H. Takaoka, and T. Tao,
\emph{Global well-posedness and scattering for the energy-critical nonlinear Schr\"odinger equation in $\R^3$.}
Ann. Math. \textbf{167} (2008), 767--865.
\msn{2415387}

\bibitem{ConsSaut} P.~Constantin and J.-C. Saut,
\emph{Local smoothing properties of dispersive equations.}
J. Amer. Math. Soc. \textbf{1} (1988), 413--439.
\msn{0928265}

\bibitem{Dodson:3D}
B.~Dodson, \emph{Global well-posedness and scattering for the defocusing, $L^{2}$-critical, nonlinear Schr\"odinger equation when $d \geq 3$.}
Preprint \texttt{arXiv:0912.2467}

\bibitem{Dodson:2D}
B.~Dodson, \emph{Global well-posedness and scattering for the defocusing, $L^{2}$-critical, nonlinear Schr\"odinger equation when $d = 2$.}
Preprint \texttt{arXiv:1006.1375}

\bibitem{DHR}
T.~Duyckaerts, J.~Holmer, and S.~Roudenko, \emph{Scattering for the non-radial 3D cubic nonlinear Schr\"odinger equation.}  Math. Res. Lett. 1\textbf{5} (2008), 1233--1250.
\msn{2470397}

\bibitem{GV:decay}
J. Ginibre and G. Velo, \emph{Time decay of finite energy solutions of the nonlinear Klein--Gordon and Schr\"odinger equations.}
Ann. Inst. H. Poincar\'e Phys. Th\'eor. \textbf{43} (1985), 399--442.


\bibitem{IMMN}
S. Ibrahim, N. Masmoudi, M. Majdoub, and K. Nakanishi, \emph{Scattering for the two-dimensional energy-critical wave equation.}
Duke Math. J. \textbf{150} (2009), 287--329. \msn{2569615}

\bibitem{IMN}
S. Ibrahim, N. Masmoudi, and K. Nakanishi, \emph{Scattering threshold for the focusing nonlinear Klein--Gordon equation.} Preprint \texttt{arXiv:1001.1474}.

\bibitem{tao:keel}
M. Keel and T. Tao, \emph{Endpoint Strichartz estimates.} Amer. J. Math. \textbf{120} (1998), 955--980.
\msn{1646048}

\bibitem{KenigMerle:H1}
C. E. Kenig and F. Merle, \emph{Global well-posedness, scattering and blow up for the energy-critical, focusing, nonlinear
Schr\"odinger equation in the radial case.} Invent. Math. \textbf{166} (2006), 645--675.
\msn{2257393}

\bibitem{KenigMerle}
C.~E.~Kenig and F.~Merle, \emph{Global well-posedness, scattering and blow-up for the energy-critical focusing non-linear wave equation.}  Acta Math. \textbf{201} (2008), 147--212.
\msn{2461508}


\bibitem{keraani-l2}
S. Keraani, \emph{On the blow-up phenomenon of the critical nonlinear Schr\"odinger equation.}
J. Funct. Anal. \textbf{235} (2006), 171--192.
\msn{2216444}

\bibitem{KKSV}
R. Killip, S.~Kwon, S. Shao, and M. Visan, \emph{On the mass-critical generalized KdV equation.} Preprint
\texttt{arXiv:0907.5412}.

\bibitem{KTV}
R. Killip, T. Tao, and M. Visan, \emph{The cubic nonlinear Schr\"odinger equation in two dimensions with radial data.}
J. Eur. Math. Soc. (JEMS) \textbf{11} (2009), 1203--1258.  \msn{2557134}

\bibitem{ClayNotes} R. Killip and M. Visan, \emph{Nonlinear Schr\"odinger equations at critical regularity.}
    To appear in proceedings of the Clay summer school ``Evolution Equations'', June 23--July 18, 2008,
    Eidgen\"ossische Technische Hochschule, Z\"urich.

\bibitem{Berbec}
R. Killip and M. Visan, \emph{The focusing energy-critical nonlinear Schr\"odinger equation in dimensions five and higher.}
Amer. J. Math. \textbf{132} (2010), 361--424. \msn{2654778}

\bibitem{KVZ}
R. Killip, M. Visan, and X. Zhang, \emph{The mass-critical nonlinear Schr\"odinger equation with radial data in dimensions three
and higher.} Anal. PDE \textbf{1} (2008), 229--266. \msn{2472890}


\bibitem{kwong}
M. K.~Kwong, \emph{Uniqueness of positive solutions of $\Delta u-u+u^p=0$ in $\R^n$.}
Arch. Rational Mech. Anal. \textbf{105} (1989), 243--266.
\msn{0969899}

\bibitem{LiebLoss}
E. H. Lieb and M. Loss, \emph{Analysis.} Second edition. Graduate Studies in Mathematics, \textbf{14}.
American Mathematical Society, Providence, RI, 2001.
\msn{1817225}

\bibitem{Lions:IHP}
P.-L.~Lions, \emph{The concentration-compactness principle in the calculus of variations. The locally compact case. I.}
Ann. Inst. H. Poincar\'e Anal. Non Lin\'eaire \textbf{1} (1984), 109--145.
\msn{0778970}



\bibitem{MVV1999} A.~Moyua, A.~Vargas, and L.~Vega,
\emph{Restriction theorems and maximal operators related to oscillatory integrals in $\R\sp 3$.}
Duke Math. J. \textbf{96} (1999), 547--574.
\msn{1671214}

\bibitem{Nakanishi}
K. Nakanishi, \emph{Scattering theory for nonlinear Klein--Gordon equation with Sobolev critical power.}
Internat. Math. Res. Notices \textbf{1} (1999), 31--60.
\msn{1666973}

\bibitem{Nakanishi:scattering}
K. Nakanishi, \emph{Energy scattering for nonlinear Klein--Gordon and Schr\"odinger equations in spatial dimensions 1 and 2.}
J. Funct. Anal. \textbf{169} (1999), 201--225.
\msn{1726753}

\bibitem{Nakanishi2001}
K. Nakanishi, \emph{Remarks on the energy scattering for nonlinear Klein--Gordon and Schr\"odinger equations.} Tohoku Math. J. \textbf{53} (2001), 285--303. \msn{1829982}

\bibitem{Nakanishi2008} K. Nakanishi,
\emph{Transfer of global wellposedness from nonlinear Klein--Gordon equation to nonlinear Schr\"odinger equation.}
Hokkaido Math. J. \textbf{37} (2008), 749--771.
\msn{2474174}

\bibitem{PayneSattinger}
L. E. Payne and D. H. Sattinger,
\emph{Saddle points and instability of nonlinear hyperbolic equations.}
Israel J. Math. \textbf{22} (1975), 273--303. \msn{0402291}


\bibitem{RV}
E.~Ryckman and M.~Visan, \emph{Global well-posedness and scattering for the defocusing energy-critical nonlinear Schr\"odinger
equation in $\R^{1+4}$.} Amer. J. Math. \textbf{129} (2007), 1--60.
\msn{2288737}


\bibitem{Sjolin87}
P.~Sj\"olin, \emph{Regularity of solutions to the Schr\"odinger equation.}
Duke Math. J. \textbf{55} (1987), 699--715.
\msn{0904948}

\bibitem{stein:small}
E.~M. Stein, \emph{Singular integrals and differentiability properties of functions.}
Princeton Mathematical Series, \textbf{30}. Princeton University Press, Princeton, NJ, 1970.
\msn{0290095}

\bibitem{Stein:RC}
E.~M.~Stein, \emph{Some problems in harmonic analysis.}  In ``Harmonic analysis in Euclidean spaces
(Proc. Sympos. Pure Math., Williams Coll., Williamstown, Mass., 1978), Part 1.''
Proc. Sympos. Pure Math., XXXV, Part 1, Amer. Math. Soc., Providence, R.I., 1979.
\msn{0545235}

\bibitem{stein:large}
E.~M. Stein, \emph{Harmonic analysis: real-variable methods, orthogonality, and oscillatory integrals.}
Princeton Mathematical Series, \textbf{43}. Princeton University Press, Princeton, NJ, 1993.
\msn{1232192}

\bibitem{taoGAFA03}
T.~Tao, \emph{A sharp bilinear restrictions estimate for paraboloids.}
Geom. Funct. Anal. \textbf{13} (2003), 1359--1384.
\msn{2033842}


\bibitem{Tao:gkdv}
T. Tao, \emph{Two remarks on the generalised Korteweg-de Vries equation.} Discrete Contin. Dyn. Syst. \textbf{18} (2007), 1--14.
\msn{2276483}



\bibitem{TV}
T. Tao and M.~Visan, \emph{Stability of energy-critical nonlinear Schr\"odinger equations in high dimensions.}
Electron. J. Diff. Eqns. \textbf{118} (2005), 1--28.
\msn{2174550}

\bibitem{Matador}
T. Tao, M. Visan, and X. Zhang, \emph{The nonlinear Schr\"odinger equation with combined power-type nonlinearities.}
Comm. Partial Differential Equations \textbf{32} (2007), 1281--1343.
\msn{2354495}

\bibitem{TVZ:cc}
T. Tao, M. Visan, and X. Zhang, \emph{Minimal-mass blowup solutions of the mass-critical NLS.} Forum Math. \textbf{20} (2008), 881--919.
\msn{2445122}

\bibitem{TVZ:sloth} T. Tao, M.~Visan, and X. Zhang,
\emph{Global well-posedness and scattering for the mass-critical nonlinear Schr\"odinger equation for radial data in high dimensions.}
Duke Math. J. \textbf{140} (2007), 165--202.
\msn{2355070}


\bibitem{Monica:thesis art} M. Visan, \emph{The defocusing energy-critical nonlinear Schr\"odinger equation
in higher dimensions.} Duke Math. J. \textbf{138} (2007), 281--374.
\msn{2318286}

\bibitem{Vega88}
L.~Vega, \emph{Schr\"odinger equations: pointwise convergence to the initial data.}
Proc. Amer. Math. Soc. \textbf{102} (1988), 874--878.
\msn{0934859}

\bibitem{weinstein}
M. Weinstein, \emph{Nonlinear Schr\"odinger equations and sharp interpolation estimates.}
Comm. Math. Phys. \textbf{87} (1983), 567--576.
\msn{0691044}


\end{thebibliography}
\end{document}